\newcommand{\C}{\mathbb{C}}
\newcommand{\D}{\mathbb{D}}
\newcommand{\T}{\mathbb{T}}
\newcommand{\Te}{\mathbb{T}}
\newcommand{\Z}{\mathbb{Z}}
\newcommand{\R}{\mathbb{R}}
\newcommand{\calV}{\mathscr{V}}
\newcommand{\calD}{\mathscr{D}}
\newcommand{\calL}{\mathscr{L}}
\newcommand{\calQ}{\mathscr{Q}}
\newcommand{\diff}{\mathrm{d}}
\newcommand{\Tope}{\mathbf{T}}
\newcommand{\Iop}{\mathbf{I}}
\newcommand{\Jop}{\mathbf{J}}
\newcommand{\Strans}{\mathrm{S}}
\newcommand{\Ttrans}{\mathrm{T}}
\newcommand{\Rtrans}{\mathrm{R}}
\newcommand{\e}{\mathrm{e}}
\newcommand{\Ordo}{\mathrm{O}}
\newcommand{\ordo}{\mathrm{o}}
\newcommand{\imag}{\mathrm{i}}
\renewcommand{\Re}{\mathrm{Re}\,}
\newcommand{\Mfun}{\mathrm{M}}
\renewcommand{\hat}{\widehat}
\newcommand{\re}{\mathrm{Re}}
\newcommand{\im}{\mathrm{Im}}
\newcommand{\Hyp}{\mathbb{H}}
\newcommand{\ybeta}{y}
\DeclareFontFamily{U}{rcjhbltx}{}
\DeclareFontShape{U}{rcjhbltx}{m}{n}{<->rcjhbltx}{}
\DeclareSymbolFont{hebrewletters}{U}{rcjhbltx}{m}{n}
\DeclareMathSymbol{\shin}{\mathord}{hebrewletters}{152}
\newbox\qbox
\def\usecolor#1{\csname\string\color@#1\endcsname\space}
\newcommand\bordercolor[1]{\colsplit{1}{#1}}
\newcommand\fillcolor[1]{\colsplit{0}{#1}}
\newcommand\outline[1]{\leavevmode%
  \def\maltext{#1}%
  \setbox\qbox=\hbox{\maltext}%
  \boxgs{Q q 2 Tr \thickness\space w \fillcol\space \bordercol\space}{}%
  \copy\qbox%
}
\newcommand\colsplit[2]{\colorlet{tmpcolor}{#2}\edef\tmp{\usecolor{tmpcolor}}%
  \def\tmpB{}\expandafter\colsplithelp\tmp\relax%
  \ifnum0=#1\relax\edef\fillcol{\tmpB}\else\edef\bordercol{\tmpC}\fi}
\def\colsplithelp#1#2 #3\relax{%
  \edef\tmpB{\tmpB#1#2 }%
  \ifnum `#1>`9\relax\def\tmpC{#3}\else\colsplithelp#3\relax\fi
}
\def\thickness{.3}
\newtheorem{thm}{Theorem}[subsection]
\newtheorem{cor}[thm]{Corollary}
\newtheorem{lem}[thm]{Lemma}
\newtheorem{prop}[thm]{Proposition}
\theoremstyle{definition}
\newtheorem{defn}[thm]{Definition}
\newtheorem{example}[thm]{Example}
\theoremstyle{remark}
\newtheorem{rem}[thm]{Remark}
\numberwithin{equation}{subsection}
\numberwithin{figure}{subsection}
\title[Hyperbolic Fourier series and the Klein-Gordon equation]
{Hyperbolic Fourier series and the Klein-Gordon equation}
\begin{document}


\author[H. Hedenmalm]{H. Hedenmalm$^\ast$}

\address{Department of Mathematics
\\
The Royal Institute of Technology
\\
SE -- 100 44 Stockholm
\\
Sweden
\\ 
$\&$ Department  of Mathematics and Computer Sciences
\\
St. Petersburg State University
\\
St. Petersburg
\\
Russia
\\
$\&$ Department of Mathematics and Statistics
\\
University of Reading
\\
Reading
\\
U.K.}
\email{haakan00@gmail.com}

\author[A. Montes-R.]{A. Montes-Rodr\'{\i}guez} 
 \address{Department of Mathematics\\
 University of Sevilla\\
 ES--4180 Sevilla,
Spain}
 \email{amontes@us.es}
 
 

\let\thefootnote\relax\footnotetext{$^\ast$Corresponding author}

\keywords{Series expansion, theta functions}

\subjclass{81Q05, 42C30,  11F03, 42A10}

\maketitle

\date{\today}


\begin{abstract} 
In an effort to extend classical Fourier theory,
Hedenmalm and Montes-Rodr\'{\i}guez (2011) found that the function system
\[
e_m(x)=\e^{\imag\pi mx},\quad 
e_n^\dagger
(x)=e_n(-1/x)=\e^{-\imag\pi n/x}
\]
is weak-star complete  in $L^{\infty}(\R)$ when $m,n$ range over the integers
with $n\ne0$.
It turns out that the system can be used to provide unique representation
of functions and more generally distributions on the real line $\R$.
For instance, we may represent uniquely the unit point mass at a point
$x\in\R$:
\[
\delta_x(t)=A_0(x)+\sum_{n\ne0}\big(A_n(x)\,\e^{\imag\pi nt}
+B_n(x)\,\e^{-\imag\pi n/t}\big),
\]
with at most polynomial growth of the coefficients, so that the sum
converges in the sense of distribution theory.
In a natural sense, the system $\{A_n,B_n\}_n$ is biorthogonal to the
initial system $\{e_n,e_n^\dagger\}_n$ on the real line.
More generally, for a distribution $f$ on the compactified real line,
we may decompose it in a \emph{hyperbolic Fourier series}
\[
f(t)=a_0(f)+\sum_{n\ne0}\big(a_n(f)\,\e^{\imag\pi nt}+b_n(f)\,\e^{-\imag\pi n/t}\big),
\]
understood to converge in the sense of distribution theory.
Such hyperbolic Fourier series arise from two different considerations. One is
the Fourier interpolation problem of recovering a radial function $\phi$ 
on $\R^d$ from partial information on $\phi$ and its Fourier transform
$\hat \phi$, studied by Radchenko and Viazovska (2019).  
Another consideration is the interpolation theory of the Klein-Gordon
equation $\partial_x\partial_y u+u=0$.
For instance, the biorthogonal system $\{A_n,B_n\}_n$ leads to a collection
of solutions that vanish along the
lattice-cross of points $(\pi k,0)$ and $(0,\pi l)$ save for one of
these points.  
These interpolating solutions allow for 
restoration of a given solution $u$ from its values on the lattice-cross.
\end{abstract}

\maketitle


\section{Overview}
We develop the theory of hyperbolic Fourier series on the real line in 
Section \ref{int}.
We first do this in the sense of distribution theory, where it becomes
natural to consider harmonic extensions to the upper half-plane.
We consider more general harmonic functions in the upper half-plane,
and expand them in harmonic hyperbolic Fourier series, which can be 
viewed as expansion of the boundary "trace" ultradistributions. We explore
the relationship between growth and uniqueness of the coefficients in 
the expansion. The results are typically stated in Section \ref{int} and
the proofs are deferred to  Section \ref{sec:uniq-beyond} for the uniqueness
aspects, and to Section \ref{sec:Schwarz} for the existence. The key to the
proof of the existence is an application of the Schwarz reflection principle
for harmonic functions along a circular boundary. This gives rise to
an algorithm which successively expands the solution to the entire 
upper half-plane, with control of the growth of the solution (see Section
\ref{sec:Schwarz}). This permits us to obtain sharper coefficient bounds 
than what was available with the earlier methods based on integral kernels
and Eichler cohomology.
The expansion of a point mass on the line is particularly
curious, as it gives rise to a biorthogonal system of functions. Here, the
main results are stated in Section \ref{int} and the proofs are deferred
to  Section \ref{sec:Diracmass}. 
The hyperbolic Fourier series are intimately connected 
with solutions to the Klein-Gordon equation, and that aspect is studied
in Section \ref{sec:Klein-Gordon}, with proofs supplied in Sections
\ref{sec:KG-uniq} and \ref{sec:Diracmass}. We also introduce two
kinds of skewed hyperbolic Fourier series: power skewed and 
exponentially skewed. The power skewed hyperbolic Fourier series
are connected with recent advances in Fourier interpolation of radial
functions, whereas exponential skewing comes from considering specific 
perturbations the lattice-cross for the interpolation problem for solutions 
to the Klein-Gordon equation.
As for the details, see Section \ref{sec:skewed}. We throw light on
the connection with Fourier interpolation for radial functions in Section
\ref{sec:radial_FI}. 

A preliminary version of the present work was announced in 2021, see 
\cite{BHMRV1}.

\section{Some notation}
We write $\Z$ for the integers, $\R$ for the real line, 
$\C$ for the complex plane, $\Hyp$
for the upper half-plane, and we write, e.g., $\R_{>0}$ for the 
positive reals. We sometimes separate infinities, $-\infty$ and 
$+\infty$  should be thought of as different, while $\infty$ unites them in the
complex plane. We write $+\imag\infty$ in place of $\imag(+\infty)$.

\section{Hyperbolic Fourier series}
\label{int}

\subsection{Hyperbolic Fourier series: general considerations} 
If $f$ is a function on the real line $\R$ which is $2$-periodic,
i.e. $f(t+2)=f(t)$ holds for all $t\in\R$, we may attempt to represent 
it in the form of a Fourier series
\[
f(t)\sim \sum_{n\in\Z}a_n\,\e^{\imag\pi nt},
\]
where we write "$\sim$" in place of "$=$" to indicate that the 
representation need not be valid pointwise, but is nevertheless 
correct in a generalized sense. One such reasonable sense is that
of distribution theory, which would ask that
\begin{equation}
f(\varphi)=\langle f,\varphi\rangle_\R=\sum_{n\in\Z}a_n \langle 
\varphi,e_n \rangle_\R,
\label{eq:FSDS1}
\end{equation}
where we write $e_n(t):=\e^{\imag\pi nt}$, provided that $\varphi$ is a
\emph{test function}, which typically could be assumed to be
$C^\infty$-smooth and compactly supported. The notation $f(\varphi)$
may be confusing given that we use the function notation $f(t)$ at the
same time, so we will prefer to speak of $\langle f,\varphi\rangle_\R$
instead. Here, we should specify that we think of the indicated pairing 
as an extension of the bilinear form 
\[
\langle f,g\rangle_\R=\int_\R f(t)g(t)\diff t,
\] 
which is well-defined, e.g., for $f,g$ measurable with $fg\in L^1(\R)$. 
For smooth test functions $\varphi$, the decay 
$\langle \varphi,e_n\rangle_\R=\Ordo(|n|^{-k})$ 
as $|n|\to+\infty$ holds for any fixed positive integer $k$, so that 
convergence of the right-hand side of \eqref{eq:FSDS1} holds provided 
that $|a_n|=\Ordo(|n|^k)$ holds for some fixed $k$ as $|n|\to+\infty$. 
Please note that by considering a smaller collection of test functions 
$\varphi$, we can get faster decay of $\langle e_n,\varphi\rangle_\R$
as $|n|\to+\infty$, which in turn will allow us to make sense of the 
convergence in \eqref{eq:FSDS1} also when the coefficients $a_n$ grow
more rapidly as $|n|\to+\infty$.  
We would however like to represent more general functions or 
distributions on  the line $\R$ which need \emph{not be periodic}. 
Traditionally, this is done by considering Fourier integrals as the limit
of Fourier series with the period tending to infinity. Here, we follow 
the hunch in \cite{hed}, which suggests that we should add an additional
system of a similar form. As for notation, we let 
$\Z_{\ne0}:=\Z\setminus\{0\}$ denote the
nonzero integers.

\begin{defn}
A series of the form
\[
a_0+\sum_{n\in\Z_{\ne0}}\big(a_n\,\e^{\imag\pi nt}+b_n\,\e^{-\imag\pi n/t}\big)
\]
is called a \emph{hyperbolic Fourier series}. If we agree that $b_0=0$,
we may also write it in the form
 \[
\sum_{n\in\Z}\big(a_n\,\e^{\imag\pi nt}+b_n\,\e^{-\imag\pi n/t}\big).
\]
\end{defn}

\begin{rem}
This defines formal series, no convergence requirement is made whatsoever.
It is clear how to add two such series, but it is generally unclear how to
multiply them. The difficulty is that the product
\[
\e^{\imag\pi nt}\e^{-\imag\pi m/t}=\e^{\imag\pi nt-\imag\pi m/t}
\]
is not of the required form unless $n=0$ or $m=0$.
\end{rem}

We want to use hyperbolic Fourier series to represent functions or more 
generally distributions (or even ultradistributions) on the real line.  If we write 
\[
f_1(t)=a_0+\sum_{n\in\Z_{\ne0}}a_n\,\e^{\imag\pi nt},\quad
f_2(t)=\sum_{n\in\Z_{\ne0}}b_n\,\e^{\imag\pi nt},
\]
which represent $2$-periodic distributions on the line if the coefficient
growth is at most polynomial, then the hyperbolic Fourier series takes the
form
\[
f_1(t)+f_2(-1/t)=a_0+\sum_{n\in\Z_{\ne0}}\big(a_n\,\e^{\imag\pi nt}
+b_n\,\e^{-\imag\pi n/t}\big).
\]
\noindent {\sc Underlying philosophy:} If we think in terms of $f_1,f_2$ as 
functions, then each of them is pretty much arbitrary on the interval $]-1,1[$, 
with periodic extensions beyond that interval. In particular, 
$f_2(-1/t)$ is an arbitrary function on $\R\setminus[-1,1]$, so together with 
$f_1(t)$, which is arbitrary on $]-1,1[$, there is a chance that we may be able 
to represent more or less any function or distribution on the line.
This is inspired
by ideas from differential geometry, where we use local patches to describe a
given manifold. Here, we would use as coordinate functions $t\mapsto t$ on
$]-1,1[$ and $t\mapsto -1/t$ on $\R\setminus[-1,1]$. See Figure \ref{fig:1} for
a visualization of the compactified real line $\R\cup\{\infty\}$ as a 
great circle on the Riemann sphere. 

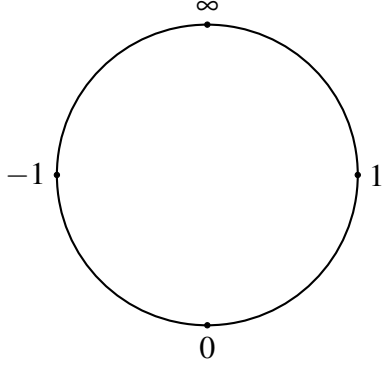
\begin{figure}
\label{fig:1}
\begin{center}
\begin{tikzpicture}
\draw[thick] (2,1) circle [radius=2];
\draw(4,1)node[right]{$1$};
\draw(2,3)node[above]{$\infty$};
\draw(0,1)node[left]{$-1$};
\draw(2,-1)node[below]{$0$};
\filldraw [black] (0,1) circle (1pt);
\filldraw [black] (4,1) circle (1pt);
\filldraw [black] (2,3) circle (1pt);
\filldraw [black] (2,-1) circle (1pt);
\end{tikzpicture}
\end{center}
\caption{The compactified real line thought of as a circle}
\end{figure}



\subsection{Convergence of hyperbolic Fourier series to a
distribution}
The above definition of hyperbolic Fourier series is formal. We now turn to
convergence issues. Let $\Jop_1,\Jop_1^\ast$ denote the transformations
\begin{equation}
\Jop_1\varphi(t):=t^{-2}\varphi(-1/t),\quad \Jop_1^\ast\psi(t):=\psi(-1/t), 
\label{eq:Jop1}
\end{equation}
for real $t$. By the change-of-variables formula, we know that
\begin{equation}
\langle \Jop_1\varphi,\psi\rangle_\R
=\langle\varphi,\Jop_1^\ast \psi\rangle_\R,
\label{eq:Jop2}
\end{equation}
whenever any of the two sides is well-defined as a Lebesgue integral.
If $\psi$ is a distribution on $\R$, we can use \eqref{eq:Jop2} as the 
definition of $\Jop_1^\ast\psi$ as a distribution. This suggests that we
should introduce as space of test functions
\[
C^\infty_1(\R\cup\{\infty\}):=\big\{\varphi\in C^\infty(\R):\,\Jop_1\varphi\in
C^\infty(\R)\big\},
\]
which amounts to considering the space of test functions that give rise to
$C^\infty$-smooth $1$-forms on the great circle model of $\R\cup\{\infty\}$
(see Figure \ref{fig:1}). The dual space to $C^\infty_1(\R\cup\{\infty\})$ we
call the \emph{distributions on the extended real line} $\R\cup\{\infty\}$.
Note that the test function space $C^\infty_1(\R\cup\{\infty\})$ contains
the well-known Schwartzian test function space $\mathscr{S}(\R)$ as a subspace.
We will write
\begin{equation}
e_n(t):=\e^{\imag\pi nt},\quad \Jop_1^\ast e_n(t)=\e^{-\imag\pi n/t},
\label{eq:HFSbasis1}
\end{equation}
to have a condensed notation for the basis elements of hyperbolic 
Fourier series.

\begin{defn}
\label{defn:distrconv}
For a distribution $f$  on the extended real line, we say that the hyperbolic
Fourier series expansion
\begin{equation}
f(t)
=a_0+\sum_{n\in\Z_{\ne0}}\big(a_n\,\e^{\imag\pi nt}+b_n\,\e^{-\imag\pi n/t}\big)
=a_0+\sum_{n\in\Z_{\ne0}}\big(a_n\,e_n(t)+b_n\,\Jop_1^\ast e_n(t)\big)
\label{eq:HFS0.1}
\end{equation}
\emph{converges in the sense of distribution theory} if for each 
$\varphi\in C^\infty_1(\R\cup\{\infty\})$,
\[
\langle\varphi,f\rangle_\R=a_0\langle \varphi,1\rangle_\R+
\sum_{n\in\Z_{\ne0}}a_n\,\langle \varphi, e_n\rangle_\R+
\sum_{n\in\Z_{\ne0}}b_n\,\langle\Jop_1\varphi, e_n\rangle_\R
\]
holds with absolute convergence. 
\end{defn}

\begin{rem}
The convergence in the above sense of distribution theory 
holds if and only if the coefficients grow at most polynomially: 
\begin{equation}
\exists k:\,\,\,|a_n|,|b_n|=\Ordo(|n|^k) \quad \text{as}\quad |n|\to+\infty.
\label{eq:growthcoeff0.00}
\end{equation}
\end{rem}

\subsection{Existence and uniqueness of hyperbolic Fourier
series expansion}
A first version of the main result on hyperbolic Fourier series runs as
follows.

\begin{thm}
\label{thm:hfs1}
Suppose $f$ is a distribution on the extended real line $\R\cup\{\infty\}$.
Then $f$ may be expanded in a hyperbolic Fourier series
\[
f(t)=a_0+\sum_{n\in\Z_{\ne0}}a_n\,\e^{\imag\pi nt}+b_n\,
\e^{-\imag\pi n/t},
\]
with coefficients that grow at most polynomially, i.e., with 
\eqref{eq:growthcoeff0.00}, so that the series converges in
the sense of distribution theory. 
These coefficients are then uniquely determined 
by $f$: $a_0=a_0(f)$, $a_n=a_n(f)$, $b_n=b_n(f)$.
\end{thm}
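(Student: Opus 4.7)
\emph{Uniqueness.} By linearity, it suffices to show that if
\[
0 = c_0 + \sum_{n \in \Z_{\ne 0}} \big(c_n\,e_n + d_n\,\Jop_1^\ast e_n\big)
\]
holds as a distribution on $\R \cup \{\infty\}$ with polynomial growth of the $c_n, d_n$, then all coefficients vanish. Setting $G(t) := c_0 + \sum_{n \ne 0} c_n e_n(t)$ and $H(s) := \sum_{n \ne 0} d_n e_n(s)$, both are $2$-periodic distributions on $\R$ and the identity becomes $G + \Jop_1^\ast H = 0$. Hence the common distribution $G = -\Jop_1^\ast H$ is invariant under $t \mapsto t+2$ (because $G$ is $2$-periodic) and under $t \mapsto t/(1-2t)$ (because $H$ is $2$-periodic); these two M\"obius transformations generate the principal congruence subgroup $\Gamma(2) \subset \mathrm{SL}_2(\Z)$. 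The plan is to reduce to the fact that the only $\Gamma(2)$-invariant distributions on $\R \cup \{\infty\}$ of polynomial growth are constants, which forces $c_n = d_n = 0$ for $n \ne 0$ and, comparing constant terms, $c_0 = 0$ as well. The requisite rigidity is precisely the weak-$\ast$ completeness of $\{e_n, e_n^\dagger\}$ in $L^\infty(\R)$ proved in the original Hedenmalm--Montes-Rodr\'iguez work, upgraded by a bootstrap to the polynomial-growth distributional setting as carried out in Section \ref{sec:uniq-beyond}.

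\emph{Existence.} The plan, following the overview, is to lift $f$ to the upper half-plane $\Hyp$ via the Poisson extension, producing a harmonic function $u$ on $\Hyp$ whose non-tangential boundary trace recovers $f$. The candidate basis elements admit natural holomorphic extensions: $e_n$ lifts to $\e^{\imag\pi nz}$ (use the conjugate for $n < 0$), and $\Jop_1^\ast e_n$ lifts to $\e^{-\imag\pi n/z}$, holomorphic on $\Hyp$ because $z \mapsto -1/z$ preserves $\Hyp$. Expanding $u$ asymptotically near the cusp $+\imag\infty$ in powers of $\e^{\imag\pi z}$ produces the candidate $a_n$, and switching to the coordinate $s = -1/z$ and expanding near $s = +\imag\infty$ (i.e., $z$ near $0$) produces the $b_n$. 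The device that propagates these two local expansions into a global representation is the Schwarz reflection principle across the unit semicircle $\{|z|=1\} \cap \overline{\Hyp}$, the fixed set of the anti-holomorphic involution $z \mapsto -1/\bar z$; each reflection extends the domain of validity and peels off further coefficients, and iterating covers all of $\Hyp$.

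The main obstacle is controlling the coefficient growth through the Schwarz-reflection iteration: a maximum-principle estimate bounds the coefficients produced at each step in terms of the existing ones, but sustaining polynomial growth through infinitely many reflections requires sharp bookkeeping, which the algorithmic construction in Section \ref{sec:Schwarz} is designed to supply. A secondary but nontrivial challenge is the upgrade of the original weak-$\ast$ completeness (an $L^\infty$ statement) to the vanishing of distributions of arbitrary polynomial growth needed for uniqueness.
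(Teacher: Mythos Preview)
Your existence sketch names the correct ingredients (Poisson lift to $\Hyp$, Schwarz reflection across $\Te_+$) but misdescribes how they are assembled. The paper does not expand the Poisson extension $u$ of $f$ near the two cusps to read off $a_n$ and $b_n$ directly; indeed $u$ is not $2$-periodic, so there is no Fourier expansion of $u$ at $+\imag\infty$ to write down. Instead, after splitting $f$ into holomorphic and conjugate-holomorphic pieces (Proposition~\ref{prop:beta3}), the paper introduces the \emph{Schwarz transform} $\shin$: given a $2$-periodic harmonic $h$ on $\Hyp$, replace its conjugate-holomorphic part by the composition with $\tau\mapsto 1/\bar\tau$. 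The point is that $\shin\,h$ is holomorphic and agrees with $h$ on $\Te_+$, and the \emph{ordinary} Fourier coefficients $c_n$ of $h$ become the hyperbolic Fourier coefficients of $\shin\,h$ (Proposition~\ref{prop:shinexpansion1}). So the problem becomes: given holomorphic $f$, find a $2$-periodic harmonic $h$ on $\Hyp$ with $h=f$ on $\Te_+$. This is done by first solving a Dirichlet problem on the fundamental domain $\calD_\Theta$ (boundary data $f$ on $\Te_+$, periodic conditions on the vertical edges), then alternately periodising and Schwarz-reflecting the difference $h-f$ to extend $h$ to all of $\Hyp$. The reflections do not ``peel off coefficients''; they enlarge the domain on which $h$ is defined, and only once $h$ lives on all of $\Hyp$ are the coefficients read off from \eqref{eq:get_coeff_cn}. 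The polynomial coefficient bound then comes from the pointwise iteration estimate (Lemma~\ref{prop:pointwiseest_iter}) combined with the Bondarenko--Radchenko--Seip average-height asymptotic (Proposition~\ref{prop:n_estimate_ave}).

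Your uniqueness argument is not the paper's and, as written, has a gap. You reduce to showing that a $\Gamma(2)$-invariant distribution on $\R\cup\{\infty\}$ of polynomial growth is constant, and invoke the Hedenmalm--Montes-Rodr\'\i{}guez weak-$\ast$ completeness plus an unspecified ``bootstrap''. That completeness theorem is an $L^1$--$L^\infty$ duality statement and does not by itself control distributions of arbitrary polynomial order; the bootstrap you allude to is the entire content of the uniqueness proof. The paper instead works on the harmonic extension in $\Hyp$: the vanishing hypothesis forces the holomorphic part $F_+(\tau)=a_0+\sum_{n>0}a_n e^{\imag\pi n\tau}$ to satisfy $F_+(\tau)+G_+(-1/\tau)=C$, making $F_+$ invariant under $\Ttrans^2$ and $\Strans\Ttrans^2\Strans$. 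The decisive input is then Lemma~1.1 of \cite{bh2}, which classifies such invariant holomorphic functions via the modular $\lambda$ function as polynomials $P(\lambda(\tau))$ of controlled degree; the polynomial growth assumption on the $a_n$ forces $\deg P\le 0$, hence $P=0$ (Theorem~\ref{thm:uniq1}). The transfer-operator machinery from \cite{hed} is used elsewhere in the paper (for the Klein--Gordon uniqueness results in Section~\ref{sec:KG-uniq}), but not for the uniqueness of hyperbolic Fourier series.
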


The proof of Theorem \ref{thm:hfs1} is supplied in Subsection
\ref{subsec:hfs_distr1} (see, e.g., Remark \ref{rem:equivalent_hfs}).

\begin{rem}
This theorem was anticipated in the 2011 work of Hedenmalm and 
Montes-Rodr\'\i{}guez \cite{hed}, where it was shown that the system of 
unimodular functions $e_n(t)=\e^{\imag\pi nt}$ and 
$\Jop_1^\ast e_n(t)=\e^{-\imag\pi n/t}$ together spanned a weak-star 
dense subspace of $L^\infty(\R)$ as $n$ ranges over the integers. 
It was the additional insight of Viazovska et al which was based on the
need to construct specific magic functions connected with optimal sphere 
packing problems in dimensions $8$ and $24$ which provided the 
framework to attain the result, see \cite{Viaz}, \cite{ckrv}. We should mention 
that the work on Fourier interpolation by Radchenko and Viazovska 
\cite{rad} is quite close to the above theorem, see, for instance, Section 
\ref{sec:radial_FI} below.
A holomorphic version of this theorem also appeared recently in \cite{bon}.
\end{rem}

More precise control may be obtained of the coefficients if 
the distribution we expand is a point mass.

\begin{cor}
\label{cor:hfs1.1}
Suppose $\delta_x$ is the associated unit point 
mass at a point $x\in\R$.
This point mass may be expanded in a hyperbolic Fourier series
\[
\delta_x(t)=A_0(x)+\sum_{n\in\Z_{\ne0}}\big(A_n(x)\,\e^{\imag\pi nt}+
B_n(x)\,\e^{-\imag\pi n/t}\big),
\]
which converges in the sense of distribution theory.
The coefficient functions $A_0,A_n,B_n$ for $n\in\Z_{\ne0}$ are all in
in the test function space $C^\infty_1(\R\cup\{\infty\})$, and
enjoy the growth control $A_0(x)=\Ordo(1+x^2)^{-1}$
and
\[
A_n(x),\,B_n(x)=\Ordo\bigg(\frac{|n|\log^2(|n|+1)}{1+x^2}\bigg),\qquad n\ne0,
\]
uniformly in $n$ and $x\in\R$. 
\end{cor}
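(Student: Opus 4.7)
The plan is to combine Theorem \ref{thm:hfs1} (existence and uniqueness) with the Schwarz reflection algorithm of Section \ref{sec:Schwarz} (for the sharp coefficient bounds). Since $\delta_x$ is a compactly supported distribution on $\R$, it is also a distribution on $\R\cup\{\infty\}$, and Theorem \ref{thm:hfs1} immediately yields unique coefficients $A_0(x), A_n(x), B_n(x)$ with some polynomial growth in $n$. Testing the expansion against $\varphi \in C^\infty_1(\R\cup\{\infty\})$ gives the reproducing formula
\[
\varphi(x) = A_0(x)\langle\varphi,1\rangle_\R + \sum_{n\ne 0}\big(A_n(x)\langle\varphi,e_n\rangle_\R + B_n(x)\langle\Jop_1\varphi,e_n\rangle_\R\big),
\]
which exhibits $\{A_0,A_n,B_n\}_n$ as the system biorthogonal to $\{1, e_n, \Jop_1^\ast e_n\}_n$.

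To show that $A_0, A_n, B_n \in C^\infty_1(\R\cup\{\infty\})$, I would use that the coefficient functionals are continuous (by the uniqueness in Theorem \ref{thm:hfs1}) and that $x\mapsto\delta_x$ is $C^\infty$-smooth as a map from $\R$ into the distribution space, so $x\mapsto A_n(x), B_n(x)$ is smooth by composition. The behaviour at $\infty$ comes from the identity $\varphi(x) = x^{-2}\Jop_1\varphi(-1/x)$, which shows that $\delta_x$ extends continuously across $x=\infty$ with $\delta_x(\varphi) = \Ordo(x^{-2})$ as $|x|\to\infty$, and that $x^2\delta_x$ corresponds under the inversion $x\mapsto -1/x$ to a similar point mass; this symmetry, combined with uniqueness, forces $\Jop_1 A_n$ and $\Jop_1 B_n$ to be smooth at $0$. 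Pushed through the linear coefficient functionals, this yields both membership in $C^\infty_1$ and the $1/(1+x^2)$ factor in the bounds.

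The $n$-dependent factor $|n|\log^2(|n|+1)$ is the main obstacle, and here general distribution theory is not enough: I would apply the Schwarz reflection algorithm of Section \ref{sec:Schwarz} to the Poisson extension $u_x(t+\imag y)=y/(\pi((t-x)^2+y^2))$ of $\delta_x$ to $\Hyp$. Reflecting $u_x$ successively across the circles $|z-m|=1$ ($m\in\Z$) realises the harmonic hyperbolic Fourier expansion strip by strip and produces explicit integral representations for the coefficients. One integration by parts in each Fourier integral accounts for the factor $|n|$, while the pointwise bound $\sup_{|t|\le C} u_x(t+\imag y) = \Ordo(1/(1+x^2))$ is preserved through the reflections and supplies the $1/(1+x^2)$ factor. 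The delicate $\log^2(|n|+1)$ arises because capturing the $n$-th Fourier mode requires descending to depth $\sim\log|n|$ in the reflection tree, with a harmonic-sum contribution $\sim\log|n|$ accumulated at each of two bookkeeping stages (one for the $A_n$'s, one for the $B_n$'s). Making this precise — in particular, ruling out any extra polynomial-in-$n$ loss from the overlap between reflected pieces — is the technically demanding part, and is exactly the sort of estimate that the reflection-based construction in Section \ref{sec:Schwarz} is designed to supply.
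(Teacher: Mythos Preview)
Your broad strategy matches the paper --- existence/uniqueness via Theorem~\ref{thm:hfs1}, the $1/(1+x^2)$ via the inversion symmetry, and the sharp $n$-bound via the Schwarz reflection machinery of Section~\ref{sec:Schwarz} --- but your account of the mechanism producing $|n|\log^2(|n|+1)$ is wrong on both factors. The $|n|$ does not come from an integration by parts: the paper applies Corollary~\ref{cor:8.5.2} with $k=1$ to the holomorphic Cauchy kernel $f_x(\tau)=(\imag 2\pi)^{-1}(x-\tau)^{-1}$, which satisfies $|f_x(\tau)|=\Ordo((\im\,\tau)^{-1})$ uniformly in $x$; the factor $|n|^k=|n|$ emerges when one reads off the Fourier coefficient at height $s=1/|n|$ from the bound $|h_x(\tau)|=\Ordo\big((1+\mathbbm{n}^\ast_{\mathrm{alg}}(\tau))(\im\,\tau)^{-1}\big)$ of Proposition~\ref{prop:8.5.1}. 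The $\log^2$ does not come from ``depth $\sim\log|n|$ in the reflection tree'' --- the pointwise height $\mathbbm{n}^\ast_{\mathrm{alg}}(t+\imag/|n|)$ can be as large as $\sim|n|$ (Corollary~\ref{cor:n_estimate}). What saves the estimate is the \emph{average}: $\tfrac12\int_{-1}^{1}\mathbbm{n}^\ast_{\mathrm{alg}}(t+\imag s)\,\diff t\sim\pi^{-2}\log^2(1/s)$ by Proposition~\ref{prop:n_estimate_ave} (Bondarenko--Radchenko--Seip), and this integral is exactly what appears in the coefficient bound~\eqref{eq:get_coeff_cn02.1}. Your ``harmonic sum at each of two bookkeeping stages'' is not the right picture.

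For smoothness and membership in $C^\infty_1(\R\cup\{\infty\})$ the paper does not use the soft continuity argument you sketch: it derives explicit integral formulas over $\Te_+$ (Propositions~\ref{prop:A_0_formula999} and~\ref{prop:formula_A_n_B_n999}, the latter involving the polynomials $S_n$ in $1/\lambda$), differentiates under the integral sign to get $C^\infty$-smoothness on $\R$, and then invokes the symmetry $B_n(x)=x^{-2}A_n(-1/x)$ of Proposition~\ref{prop:symmetry011} to obtain both the $C^\infty_1$ membership and the $1/(1+x^2)$ decay (Proposition~\ref{prop:improved_bound_A_n_B_n999}). Your soft route could in principle be made rigorous, but you would still need to establish continuity of the coefficient functionals in a topology strong enough to commute with differentiation in $x$, which is not immediate from uniqueness alone.
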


The proof of Corollary \ref{cor:hfs1.1} is supplied in Subsection
\ref{subsec:HFS_Cauchy}.

As stated, the coefficients $A_n(x)$ and $B_n(x)$ are smooth functions of
$x$, and we will study them in some detail later on.
See Figure \ref{fig:A07} for the graph of the function $t\mapsto A_n(t)$
with $n=7$. The symmetry properties of the functions $A_n,B_n$ are listed
below.

\begin{prop}
\label{prop:symmetry_hfs_first1}
The functions $A_0$ and $A_n,B_n$ for $n\in\Z_{\ne0}$
have the following symmetry properties:
\[
A_n(x)=\overline{A_{-n}(x)}=A_{-n}(-x),\qquad n\in\Z,\,\,x\in\R,
\]
and
\[
B_n(x)=\overline{B_{-n}(x)}=B_{-n}(-x),\qquad n\in\Z_{\ne0},\,\,x\in\R.
\]  
They also enjoy the properties that
\[
A_0(x)=\Jop_1 A_0(x)=x^{-2}A_0(-1/x),\quad B_n(x)=\Jop_1 A_n(x)=x^{-2}A_n(-1/x),
\]
for all $x\in\R$ and all $n\in\Z_{\ne0}$. 
\end{prop}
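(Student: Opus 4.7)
The plan is to exploit the uniqueness half of Theorem \ref{thm:hfs1} together with three natural symmetries of the point mass $\delta_x$: complex conjugation, the reflection $t\mapsto -t$, and the inversion $t\mapsto -1/t$. In each case one applies the symmetry termwise to the hyperbolic Fourier expansion of $\delta_x$, relabels the summation index, and matches coefficients with the expansion of the transformed distribution. Uniqueness then pins down the desired identity.

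\textbf{Conjugation and reflection.} Because $\delta_x$ is real-valued, conjugating its hyperbolic Fourier expansion and relabeling $n\mapsto -n$ in the two sums produces a second representation of $\delta_x$ whose $e_n$- and $\Jop_1^{\ast}e_n$-coefficients are $\overline{A_{-n}(x)}$ and $\overline{B_{-n}(x)}$; uniqueness yields $A_n(x)=\overline{A_{-n}(x)}$ and $B_n(x)=\overline{B_{-n}(x)}$, with $A_0$ real. For the reflection, the pullback under $\sigma\colon t\mapsto -t$ has unit Jacobian, hence $\sigma^{\ast}\delta_x=\delta_{-x}$ as distributions on $\R\cup\{\infty\}$; since $\sigma^{\ast}e_n=e_{-n}$ and $\sigma^{\ast}\Jop_1^{\ast}e_n=\Jop_1^{\ast}e_{-n}$, applying $\sigma^{\ast}$ to the expansion of $\delta_x$ and relabeling gives an expansion of $\delta_{-x}$ with coefficients $A_{-n}(x)$ and $B_{-n}(x)$. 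Comparing with the native expansion of $\delta_{-x}$ yields $A_{-n}(x)=A_n(-x)$ and $B_{-n}(x)=B_n(-x)$, completing the two displayed symmetries.

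\textbf{Inversion.} The key observation is the identity $\Jop_1^{\ast}\delta_x=x^{-2}\delta_{-1/x}$ in the sense of distributions on $\R\cup\{\infty\}$. Indeed, for any test function $\varphi\in C^\infty_1(\R\cup\{\infty\})$,
\[
\langle\varphi,\Jop_1^{\ast}\delta_x\rangle_\R=\langle\Jop_1\varphi,\delta_x\rangle_\R=\Jop_1\varphi(x)=x^{-2}\varphi(-1/x)=x^{-2}\langle\varphi,\delta_{-1/x}\rangle_\R.
\]
Since $\Jop_1^{\ast}$ is an involution that swaps $e_n$ and $\Jop_1^{\ast}e_n$, applying it termwise to the expansion of $\delta_x$ yields an expansion of $x^{-2}\delta_{-1/x}$ in which the roles of the two series are interchanged. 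Matching against the native expansion of $x^{-2}\delta_{-1/x}$ via uniqueness produces $A_0(x)=x^{-2}A_0(-1/x)$ and $B_n(x)=x^{-2}A_n(-1/x)$ for $n\ne 0$, which is exactly the asserted $\Jop_1$-symmetry; the companion relation $A_n(x)=x^{-2}B_n(-1/x)$ follows by substituting $x\mapsto-1/x$.

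\textbf{Main obstacle.} The only non-bookkeeping point is verifying that each of the three operations acts termwise on the series in the sense of distribution theory. Conjugation and reflection are immediate. For inversion one needs that $\Jop_1$ preserves the test function space $C^\infty_1(\R\cup\{\infty\})$—which is precisely its defining property—and that the termwise application of $\Jop_1^{\ast}$ commutes with the dual pairing used in Definition \ref{defn:distrconv}. The polynomial growth of $A_n(x),B_n(x)$ established in Corollary \ref{cor:hfs1.1}, combined with the rapid decay of $\langle\varphi,e_n\rangle_\R$ and $\langle\Jop_1\varphi,e_n\rangle_\R$ for $\varphi\in C^\infty_1(\R\cup\{\infty\})$, delivers the absolute convergence needed to justify the swap.
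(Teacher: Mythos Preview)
Your argument is correct and follows essentially the same route as the paper: exploit the natural symmetries of the point-evaluation kernel and invoke uniqueness of hyperbolic Fourier coefficients. The paper carries this out one level up, with the Cauchy kernel $f_x(\tau)=\frac{1}{\imag 2\pi}\frac{1}{x-\tau}$ in $\Hyp$ and the functional identities $\bar f_{-x}(-\bar\tau)=f_x(\tau)$ and $f_x(-1/\tau)=\frac{1}{\imag 2\pi x}+x^{-2}f_{-1/x}(\tau)$ (the harmonic-extension counterparts of your conjugation/reflection and inversion), then matches coefficients via Theorem~\ref{thm:uniq1}; in fact the conjugation identity $A_n=\overline{A_{-n}}$ is taken as the \emph{definition} of $A_n$ for negative $n$ there (see \eqref{eq:basicsymmetryA_nB_n}), so your derivation of it from the realness of $\delta_x$ is a small bonus.

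One caveat: your appeal to Corollary~\ref{cor:hfs1.1} for the coefficient growth is mildly circular, since the $(1+x^2)^{-1}$ decay recorded there is proved \emph{using} the very symmetry you are establishing (via Proposition~\ref{prop:improved_bound_A_n_B_n999}). You only need polynomial growth in $n$ for fixed $x$, and that follows directly from Theorem~\ref{thm:hfs1} (equivalently Corollary~\ref{cor:8.5.2}) applied to $\delta_x$, without any symmetry input; citing that instead removes the circularity.
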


Proposition \ref{prop:symmetry_hfs_first1} gets restated in Subsection
\ref{subsec:Cauchy_Poisson} in the form of Proposition \ref{prop:symmetry011}.
and the proof is supplied there.

\begin{figure}
\includegraphics[width=0.7\linewidth]
{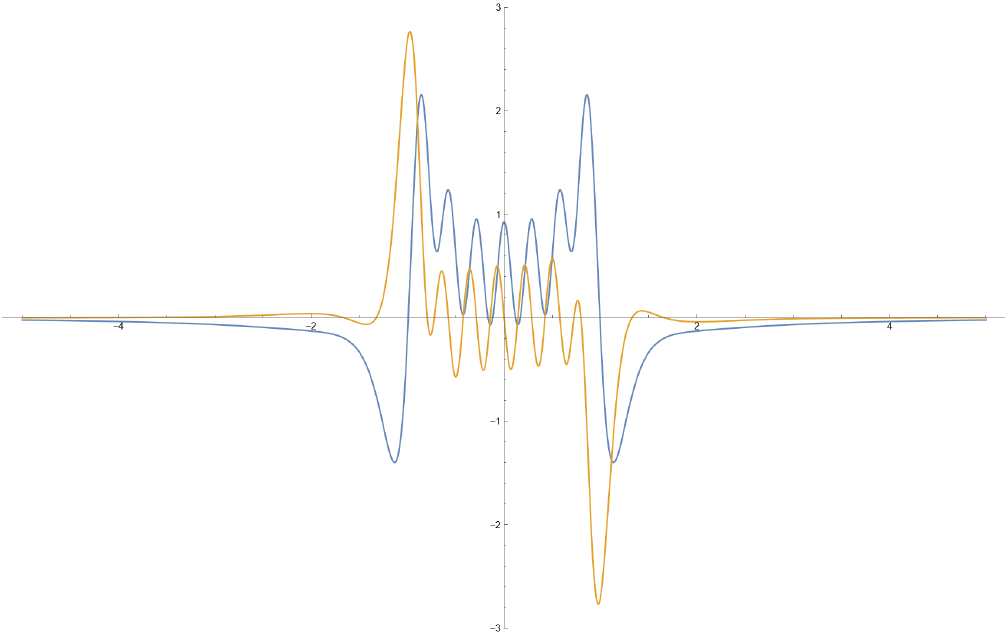}
\caption{Graph of the function $A_n(t)$ for $n=7$, real part in blue, 
imaginary part in yellow.}
\label{fig:A07}
\end{figure}

In particular, it follows that once we find the functions $A_n$ for
$n\in\Z_{\ge0}$, we automatically have the functions $A_0$ and $A_n,B_n$
for all $n\in\Z_{\ne0}$ as well. These functions form a biorthogonal system to
the basis \eqref{eq:HFSbasis1} associated with hyperbolic Fourier series.

\begin{cor}
\label{cor:biorthogonal_2.3.5}
The functions $A_0,A_n,B_n$ for $n\in\Z_{\ne0}$ have the following properties:
\[
\langle  A_0,\Jop_1^\ast e_m\rangle_\R=
\langle A_0,e_m\rangle_\R=\int_\R A_0(x)e_m(x)\diff x=\delta_{m,0},\qquad
m\in\Z,  
\]
while
\[
\langle B_n,\Jop_1^\ast e_m\rangle_\R=\langle A_n,e_m\rangle_\R
=\int_\R A_n(x)e_m(x)\diff x=\delta_{m,n},\qquad
m\in\Z,n\in\Z_{\ne0},  
\]
and
\[
\langle A_n,\Jop_1^\ast e_m\rangle_\R=
\langle B_n,e_m\rangle_\R=\int_\R B_n(x)e_m(x)\diff x=0,\qquad
m\in\Z,\,n\in\Z_{\ne0}.
\]  
Here, $\delta_{m,n}$ stands for the Kronecker delta, which equals $1$ when
$m=n$ and vanishes when $m\ne n$. 
\end{cor}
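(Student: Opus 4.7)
The plan is to extract the biorthogonality relations by testing the expansion of $\delta_x$ from Corollary \ref{cor:hfs1.1} against an arbitrary element $\varphi$ of $C^\infty_1(\R\cup\{\infty\})$, integrating the resulting pointwise identity in $x$ against $e_m$, and then invoking the uniqueness clause of Theorem \ref{thm:hfs1} to match coefficients. The remaining three equalities will then follow from the symmetries in Proposition \ref{prop:symmetry_hfs_first1} via a change of variables.

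Concretely, starting from Corollary \ref{cor:hfs1.1} and the relation $\langle\Jop_1\varphi,e_n\rangle_\R=\langle\varphi,\Jop_1^\ast e_n\rangle_\R$ from \eqref{eq:Jop2}, testing $\delta_x$ against $\varphi\in C^\infty_1(\R\cup\{\infty\})$ yields the pointwise, absolutely convergent identity
\[
\varphi(x)=A_0(x)\langle\varphi,1\rangle_\R+\sum_{n\ne0}A_n(x)\langle\varphi,e_n\rangle_\R+\sum_{n\ne0}B_n(x)\langle\varphi,\Jop_1^\ast e_n\rangle_\R,
\]
valid for every $x\in\R$. I would multiply through by $e_m(x)$, integrate over $\R$, and swap the sum with the integral. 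The interchange is justified by combining the bound $|A_n(x)|,|B_n(x)|=\Ordo(|n|\log^2(|n|+1)/(1+x^2))$ from Corollary \ref{cor:hfs1.1} with the rapid decay $|\langle\varphi,e_n\rangle_\R|,|\langle\varphi,\Jop_1^\ast e_n\rangle_\R|=\Ordo(|n|^{-k})$ valid for every $k\ge 1$; the latter holds because membership in $C^\infty_1(\R\cup\{\infty\})$ forces the $j$-th derivative of $\varphi$ to be $\Ordo(|x|^{-2-j})$ at infinity for each $j\ge 0$. The outcome is the identity
\[
\langle\varphi,e_m\rangle_\R=\alpha_0\langle\varphi,1\rangle_\R+\sum_{n\ne0}\alpha_n\langle\varphi,e_n\rangle_\R+\sum_{n\ne0}\beta_n\langle\varphi,\Jop_1^\ast e_n\rangle_\R,
\]
with $\alpha_0=\int_\R A_0 e_m\diff x$, $\alpha_n=\int_\R A_n e_m\diff x$ and $\beta_n=\int_\R B_n e_m\diff x$, all of polynomial growth in $n$. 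Since this equality holds for every test function $\varphi$, it exhibits a distributional hyperbolic Fourier expansion of $e_m$; comparing with the trivial expansion (the single basis term) and invoking the uniqueness part of Theorem \ref{thm:hfs1} forces $\alpha_0=\delta_{m,0}$, $\alpha_n=\delta_{m,n}$ and $\beta_n=0$ for $n\ne 0$. These are three of the six required identities.

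To obtain the remaining three, I would use the symmetries $A_0=\Jop_1 A_0$ and $B_n=\Jop_1 A_n$ from Proposition \ref{prop:symmetry_hfs_first1}, together with $A_n=\Jop_1 B_n$ (which is a consequence of $\Jop_1$ being an involution). The substitution $y=-1/x$, $\diff y=x^{-2}\diff x$, gives, for instance,
\[
\int_\R A_0(x)\Jop_1^\ast e_m(x)\diff x=\int_\R x^{-2}A_0(-1/x)\e^{-\imag\pi m/x}\diff x=\int_\R A_0(y)\e^{\imag\pi m y}\diff y=\delta_{m,0},
\]
and analogous manipulations produce $\int_\R B_n\Jop_1^\ast e_m\diff x=\int_\R A_n e_m\diff x=\delta_{m,n}$ and $\int_\R A_n\Jop_1^\ast e_m\diff x=\int_\R B_n e_m\diff x=0$ for $n\ne 0$. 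The main technical hurdle throughout is the Fubini-type interchange in the middle step, which turns on a careful pairing of the mild polynomial growth of $A_n,B_n$ against the rapid decay of the Fourier-type pairings of test functions in $C^\infty_1(\R\cup\{\infty\})$.
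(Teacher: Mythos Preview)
Your proposal is correct and follows essentially the same route as the paper's own proof: both integrate the Dirac expansion of Corollary~\ref{cor:hfs1.1} against $e_m$, recognize the result as a hyperbolic Fourier series for $e_m$ itself, and invoke the uniqueness clause of Theorem~\ref{thm:hfs1} to identify the coefficients; the remaining identities then follow from the symmetries $\Jop_1A_0=A_0$ and $\Jop_1A_n=B_n$. The only cosmetic difference is that the paper writes the integration step directly as $e_m(t)=\int_\R e_m(x)\,\delta_x(t)\,\diff x$ in the sense of distributions, whereas you pass through a general test function $\varphi$ first and then specialize---your version is slightly longer but makes the Fubini justification more explicit.
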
  

The proof of Corollary \ref{cor:biorthogonal_2.3.5} is supplied in Subsection
\ref{subsec:HFS_Cauchy}.
Although the functions $A_0$ and $A_n,B_n$ for $n\in\Z_{\ne0}$ are rather
mysterious, it is possible to evaluate them at the origin in terms of
the counting functions $\mathrm{r}_4(n,\Z)$ and $\mathrm{r}_4(n,\Z+\frac12)$.
Here, $\Z+\frac12$ stands for the set of all half-integers that are not
integers, and, given an $x\in\R_{\ge0}$ and a subset $A\subset\R$,
\[
\mathrm{r}_4(x,A)=\#\big\{(a_1,a_2,a_3,a_4)\in A^4:
\,a_1^2+a_2^2+a_3^2+a_4^2=x\big\}
\]
where $\#$ counts the number of occurrences. 

\begin{prop}
We have that
\[
A_0(0)=\frac{4}{\pi^2}\log2
\]
while for $n\in\Z_{\ne0}$, it holds that
\[
A_n(0)=\frac{\mathrm{r}_4(|n|,\Z+\tfrac12)}{2\pi^2|n|},
\quad
A_n(0)+B_n(0)=\frac{\mathrm{r}_4(|n|,\Z)}{2\pi^2|n|}.
\]  
\label{prop:2.3.6}
\end{prop}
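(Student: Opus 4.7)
\noindent\emph{Proof plan.}

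The plan is to identify $A_0(0)$, $A_n(0)$, and $A_n(0)+B_n(0)$ as hyperbolic Fourier coefficients of the point mass $\delta_0$ — which they are, by Corollary \ref{cor:hfs1.1} — and to match them against explicit coefficient identities for the Jacobi theta fourth powers $\theta_2^4$ and $\theta_3^4$. The uniqueness of hyperbolic Fourier series with polynomially bounded coefficients (Theorem \ref{thm:hfs1}) is what lets one read off coefficients from any alternative representation of $\delta_0$.

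For $A_n(0)=\mathrm{r}_4(|n|,\Z+\tfrac12)/(2\pi^2|n|)$, the key reformulation is that, after differentiating the formal positive-frequency series $F(t):=\sum_{n\ge 1}A_n(0)\,e^{\imag\pi nt}$ once in $t$, the claim becomes the clean identity
\[
F'(t)=\tfrac{\imag}{2\pi}\,\theta_2(t)^4,
\]
since $\theta_2(t)^4=\sum_{n\ge 1}\mathrm{r}_4(n,\Z+\tfrac12)\,e^{\imag\pi nt}$. I would verify this using the Cauchy/Poisson representation of $A_n$ built in Subsection \ref{subsec:HFS_Cauchy} (itself a consequence of the Schwarz-reflection iteration of Section \ref{sec:Schwarz}): specializing that representation to $x=0$ should reduce the problem to a contour integral on a geodesic of $\Hyp$ on which the involution $\tau\mapsto-1/\tau$ swaps the two halves of the hyperbolic expansion, and recognizing the integrand in terms of theta constants then produces $\theta_2^4$.

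The identity $A_n(0)+B_n(0)=\mathrm{r}_4(|n|,\Z)/(2\pi^2|n|)$ should follow from the same mechanism with $\theta_3^4$ replacing $\theta_2^4$: the symmetry $B_n(x)=x^{-2}A_n(-1/x)$ of Proposition \ref{prop:symmetry_hfs_first1} gives $B_n(0)=\lim_{|x|\to\infty}x^2 A_n(x)$, so $A_n(0)+B_n(0)$ combines the behaviour of $A_n$ at the two cusps $0$ and $\infty$ of the compactified real line. The weight-two form $\theta_3^4$, with its self-duality $\theta_3^4(-1/\tau)=-\tau^2\theta_3^4(\tau)$, is the natural object matching this symmetric sum; the Jacobi identity $\theta_3^4=\theta_2^4+\theta_4^4$ furnishes a consistency check, forcing $B_n(0)=\bigl(\mathrm{r}_4(|n|,\Z)-\mathrm{r}_4(|n|,\Z+\tfrac12)\bigr)/(2\pi^2|n|)$ to equal the $n$-th Fourier coefficient of $\theta_4^4$ divided by $2\pi^2|n|$.

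The constant $A_0(0)=\tfrac{4}{\pi^2}\log 2$ is the exceptional case: the $n\to 0$ analogue of $\mathrm{r}_4(n,\cdot)/(2\pi^2n)$ diverges, so a regularization is needed. I would compute $A_0(0)$ directly from the closed form of $A_0$ obtained in Subsection \ref{subsec:HFS_Cauchy}, using its $\Jop_1$-invariance $A_0(x)=x^{-2}A_0(-1/x)$ together with the normalization $\int_\R A_0(x)\,\diff x=1$ from Corollary \ref{cor:biorthogonal_2.3.5}; the factor $\log 2$ is expected to arise as a special value of a theta-function integral at the self-dual point $\tau=\imag$. The main obstacle in the whole argument is rigorously justifying these termwise manipulations: the hyperbolic Fourier series of $\delta_0$ converges only in the sense of distribution theory, and the boundary traces of $\theta_2^4$ and $\theta_3^4$ on $\R$ are also only distributions, so the matching must be carried out with Abel summation — or, equivalently, the harmonic extension to the upper half-plane of Section \ref{sec:Schwarz} — doing the heavy lifting.
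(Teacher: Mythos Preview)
The paper does not actually prove this proposition: immediately after stating it, the authors write ``The proof of Proposition~\ref{prop:2.3.6} is deferred to a separate publication \cite{Hed-Jacobi}'', and that reference is listed in the bibliography as \emph{in preparation}. So there is nothing in the present paper to compare your proposal against.

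On its own merits, your plan is headed in the right direction but leaves the central step unwritten. The reformulation is correct: since $a_n(f_0)=A_n(0)$ and $b_n(f_0)=B_n(0)$ for $n\ge1$, the problem is exactly to find the holomorphic hyperbolic Fourier expansion of the Cauchy kernel $f_0(\tau)=\tfrac{\imag}{2\pi\tau}$, and the claimed identities amount to $F'=\tfrac{\imag}{2\pi}\vartheta_{10}^4$ and $(F+G)'=\tfrac{\imag}{2\pi}\vartheta_{00}^4$ for the two generating functions. The concrete route you hint at is presumably to antidifferentiate the transformation laws $\vartheta_{00}^4(-1/\tau)=-\tau^2\vartheta_{00}^4(\tau)$ and $\vartheta_{10}^4(-1/\tau)=-\tau^2\vartheta_{01}^4(\tau)$ along a path from a base point (say $\imag$) to $\tau$: the primitives then pick up an explicit additive $1/\tau$ under $\tau\mapsto-1/\tau$, from which the hyperbolic Fourier expansion of $1/\tau$ can be read off, and the Jacobi identity $\vartheta_{00}^4=\vartheta_{10}^4+\vartheta_{01}^4$ links the two calculations. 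But your sentence ``recognizing the integrand in terms of theta constants then produces $\theta_2^4$'' is a placeholder, not an argument; the identity $\lambda'=\imag\pi\,\lambda(1-\lambda)\,\vartheta_{00}^4$ (equivalently $\lambda'/\lambda=\imag\pi\,\vartheta_{01}^4$) is the bridge between the contour formulas of Propositions~\ref{prop:A_0_formula999} and~\ref{prop:formula_A_n_B_n999} and the theta world, and it does not appear in your sketch. Likewise, for $A_0(0)$ you do not indicate how $\log 2$ will emerge from the integral in Proposition~\ref{prop:A_0_formula999}; the natural mechanism is that $|\lambda'|/|\lambda|^2=|\partial_\eta(1/\lambda)|$, and evaluating the resulting boundary contributions at the cusps $\pm1$ of $\Te_+$ via the asymptotics \eqref{eq:lambdaproperty3}--\eqref{eq:lambdaproperty4} produces the factor $16$, hence $\log 2$. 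In short: plausible outline, but the load-bearing identities are still missing.
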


The proof of Proposition \ref{prop:2.3.6} is deferred to a separate
publication \cite{Hed-Jacobi}. We believe the topic deserves to be developed
for its own sake. 

We shall need to take Theorem \ref{thm:hfs1} (which says that distributions
have unique hyperbolic Fourier series expansions)
beyond the realm of distribution theory to a more general context
(ultradistribution theory). To avoid excessive 
technicalities we shall present the expansions in terms of
harmonic extensions to the upper half-plane instead of
introducing narrower classes of test functions. This allows us a direct approach
to hyperbolic Fourier series expansions, and the appropriate classes of test 
functions may be considered later on when necessary.

\subsection{Harmonic extension and distribution theory}
\label{subsec:harmext-gen}
Let $P(\tau,t)$ denote the \emph{Poisson kernel}
\begin{equation}
P(\tau,t):=\pi^{-1}\frac{\im\tau}{|\tau-t|^2},\qquad \tau\in\Hyp,\,\,\,t\in\R,
\label{eq:Poissonkernel1.1}
\end{equation}
where $\Hyp$ denotes the upper half-plane
\[
\Hyp:=\{\tau\in\C:\,\im\tau>0\}.
\]
We observe that for $\tau\in\Hyp$, the function $P(\tau,\cdot)$ belongs to
the space of test functions $C^\infty_1(\R\cup\{\infty\})$, and it depends 
harmonically on $\tau$. It follows that given a distribution $f$ on the
extended  real line $\R\cup\{\infty\}$, the formula
\begin{equation}
\tilde f(\tau):=\langle P(\tau,\cdot),f\rangle_\R,\qquad\tau\in\Hyp,
\label{eq:Poisson0}
\end{equation}
defines a harmonic function, called the \emph{Poisson extension},
with polynomial growth bound
\begin{equation}
|\tilde f(\tau)|=\Ordo\bigg(\frac{1+|\tau|^2}{\im\tau}\bigg)^k
\label{eq:polgrowth0}
\end{equation}
with an implicit constant that is uniform in $\Hyp$, for some finite $k$.
By considering the distributional limit of $\tilde f(\tau)$
as $\tau$ approaches the real line, we recover $f$ from $\tilde f$. 
Given a harmonic function $h:\Hyp\to\C$ with the same growth bound
as $\tilde f$, we get distributional boundary values on $\R\cup\{\infty\}$,
and then the harmonic function $h$ equals the Poisson extension of 
its boundary distribution. In other words, we have the following.

\begin{prop} {}
The distributions $f$ on the extended real line are in a one-to-one 
correspondence with the harmonic functions $\tilde f$ subject to the 
polynomial growth bound \eqref{eq:polgrowth0}.
\label{prop:Schwartz}
\end{prop}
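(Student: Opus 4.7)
The plan is to establish both directions of the correspondence and then verify that they are mutual inverses.

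For the forward direction, start with a distribution $f$, i.e., a continuous linear functional on $C^\infty_1(\R\cup\{\infty\})$. The first point is that $P(\tau,\cdot)\in C^\infty_1(\R\cup\{\infty\})$ for each $\tau\in\Hyp$: a direct computation gives $\Jop_1 P(\tau,t)=\pi^{-1}(\im\tau)/|1+t\tau|^2$, which is smooth in $t$ including at $t=0$ since $1+t\tau$ has no real zeros when $\tau\in\Hyp$. Harmonicity of $\tilde f$ in $\tau$ then follows from the harmonicity of $P(\cdot,t)$ combined with differentiation under the pairing, justified by continuity of the relevant seminorms. The polynomial growth bound \eqref{eq:polgrowth0} is obtained by estimating the Fr\'echet seminorms of $P(\tau,\cdot)$ as an element of $C^\infty_1(\R\cup\{\infty\})$: each such seminorm is of order $((1+|\tau|^2)/\im\tau)^N$ for an appropriate $N$, and continuity of $f$ bounds $|\tilde f(\tau)|$ by finitely many of these.

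For the converse, given a harmonic $h$ on $\Hyp$ satisfying \eqref{eq:polgrowth0}, I would use the Cayley transform $\psi(\tau):=(\tau-\imag)/(\tau+\imag)$, which maps $\Hyp$ conformally onto $\D$ and $\R\cup\{\infty\}$ bijectively onto $\T$. Setting $H:=h\circ\psi^{-1}$, the bound \eqref{eq:polgrowth0} translates into $|H(z)|=\Ordo((1-|z|)^{-N})$ on $\D$, using that $1-|\psi(\tau)|\asymp\im\tau/(1+|\tau|^2)$. By the classical disk characterization, harmonic functions on $\D$ of polynomial boundary growth are precisely Poisson integrals of distributions on $\T$, so there is a unique distribution $\mu$ on $\T$ with $H$ as its Poisson extension. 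Transferring $\mu$ back through $\psi$, read at the level of smooth $1$-forms on the circle model of $\R\cup\{\infty\}$, yields a distribution $f$ on $\R\cup\{\infty\}$; the identity $\tilde f=h$ then follows from the conformal invariance of the Poisson kernel.

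The main obstacle is the careful bookkeeping of the Cayley transform at the level of test functions. One must verify that $C^\infty_1(\R\cup\{\infty\})$ is canonically identified with $C^\infty(\T)$ under $\psi$ once both are regarded as spaces of smooth $1$-forms, that this identification is a topological isomorphism of Fr\'echet spaces, and that the Poisson extension \eqref{eq:Poisson0} matches the disk Poisson extension through $\psi$ up to the Jacobian of the conformal map. Once these points are in place, the two directions become mutual inverses: the boundary values of $\tilde f$ recover $f$ as a distributional limit $\tilde f(\cdot+\imag\epsilon)\to f$ in $(C^\infty_1(\R\cup\{\infty\}))^\ast$ as $\epsilon\to 0^+$ (itself a consequence of the seminorm estimates for $P(\tau,\cdot)$ above), and the Poisson extension of the distribution built from $h$ recovers $h$ by the uniqueness in the disk case.
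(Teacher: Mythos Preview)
Your proposal is correct and follows essentially the same route as the paper: the paper does not supply a self-contained proof but instead cites Straube's result for the unit disk and remarks that the Cayley transform carries it over to the upper half-plane, which is precisely the reduction you carry out in your converse direction. Your outline in fact fills in considerably more detail than the paper provides, including the explicit verification that $P(\tau,\cdot)\in C^\infty_1(\R\cup\{\infty\})$ and the identification of test-function spaces under the Cayley map.
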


We believe this result goes back to the work of Laurent Schwartz.
However, lacking a direct reference to Schwartz, we found instead
the reference Straube \cite{Straube}. If we use the Cayley transform to move
the upper half-plane to the unit disk, Straube's result applied to the disk
gives the assertion. We mention the paper by Korenblum \cite{Kor-Acta1}
which shows that in the case of the disk the distributional interpretation
of the boundary values was well understood in the holomorphic setting.
These results are to be applied for the unit disk,
and the Cayley transform then moves between the disk and the upper half-plane. 
In the sequel, we shall equate the harmonic extension $\tilde f$ with the 
distribution $f$ on the extended real line, and hence suppress the tilde 
notation.

\begin{rem}
The connection between smoothness of test functions and the
growth on the dual side of holomorphic or harmonic functions
can be very detailed. See, e.g., \cite{BorHed} for a study of
quasianalytic smoothness in terms of almost holomorphic extensions.
\end{rem}

\subsection{Harmonic extension of hyperbolic Fourier series}
\label{subsec:harmext-hfs}
If we have a hyperbolic Fourier series expansion
\[
f(t)=a_0+\sum_{n\in\Z_{\ne0}}\big(a_n\,e^{\imag\pi nt}+b_n\,\e^{-\imag\pi n/t}\big)
=a_0+\sum_{n\in\Z_{\ne0}}\big(a_n\,e_n(t)+b_n\,\Jop_1^\ast e_n(t)\big)
\]
with at most polynomial growth of coefficients \eqref{eq:growthcoeff0.00}, 
then the series converges in the sense of distribution theory to
the distribution $f$ on the extended real line.
The harmonic extension to the upper half-plane $\Hyp$ of this equality
then reads
\begin{equation}
f(\tau)=a_0+\sum_{n\in\Z_{\ne0}}\big(a_n\,e_n(\tau)+
b_n\,(\Jop_1^\ast e_n)(\tau)\big),\qquad\tau\in\Hyp,
\label{eq:harmext1.1}
\end{equation}
where we omit the tildes of the Poisson extensions given by 
\eqref{eq:Poisson0}. In particular, we have that
\begin{equation}
e_n(\tau)=\e^{\imag\pi n\tau} \,\,\,\text{and}\,\,\, (\Jop_1^\ast e_n)(\tau)=
\e^{-\imag\pi n/\tau},\qquad n\in\Z_{\ge0},
\label{eq:harmext2}
\end{equation}
while  
\begin{equation}
e_n(\tau)=\e^{\imag\pi n\bar\tau} \,\,\,\text{and}\,\,\, (\Jop_1^\ast e_n)(\tau)=
\e^{-\imag\pi n/\bar\tau},\qquad n\in\Z_{<0}.
\label{eq:harmext3}
\end{equation}
The representation \eqref{eq:harmext1.1} then naturally splits as 
\begin{equation}
f(\tau)=a_0+f_+(\tau)+f_-(\tau),
\label{eq:harmext4}
\end{equation}
where 
\begin{equation}
f_+(\tau):=\sum_{n\in\Z_{>0}}\big(a_n\,\e^{\imag\pi n\tau}+
b_n\,\e^{-\imag\pi n/\tau}\big),\qquad\tau\in\Hyp,
\label{eq:harmext5}
\end{equation}
is holomorphic, whereas  
\begin{equation}
f_-(\tau):=\sum_{n\in\Z_{<0}}\big(a_n\,\e^{\imag\pi n\bar\tau}+
b_n\,\e^{-\imag\pi n/\bar\tau}\big),\qquad\tau\in\Hyp.
\label{eq:harmext6}
\end{equation}
is conjugate-holomorphic. The imposed growth control on the coefficients
\eqref{eq:growthcoeff0.00} guarantees that each of $f_+,f_-$ 
meets the growth condition \eqref{eq:polgrowth0}.
We call the expansion \eqref{eq:harmext1.1}, or, which is the same,
\eqref{eq:harmext4} combined with \eqref{eq:harmext5} and 
\eqref{eq:harmext6}, the \emph{harmonic hyperbolic Fourier series
expansion} of $f$. The expansion \eqref{eq:harmext5} is referred to as a
\emph{holomorphic hyperbolic Fourier series} without constant term.
Likewise, \eqref{eq:harmext6} is a \emph{conjugate-holomorphic 
hyperbolic Fourier series} without constant term. We are free to add a
constant term to the holomorphic hyperbolic Fourier and still call it that.
The same goes for the conjugate-holomorphic hyperbolic Fourier series.

\subsection{Hyperbolic Fourier series expansion of a
harmonic function}
We begin with the version of Theorem \ref{thm:hfs1} which applies to harmonic
functions in the sense of Proposition \ref{prop:Schwartz}.

\begin{thm}
\label{thm:hfs2}
Suppose $f:\Hyp\to\C$ is harmonic, subject to the growth bound
\begin{equation*}
\exists k:\,\,\,|f(\tau)|=\Ordo\bigg(\frac{1+|\tau|^2}{\im\tau}\bigg)^k
\end{equation*}
with an implicit constant that is uniform in $\Hyp$.
Then $f$ may be expanded in a harmonic hyperbolic Fourier series
\[
f(\tau)=a_0+\sum_{n\in\Z_{>0}}\big(a_n\,\e^{\imag\pi n\tau}+b_n\,
\e^{-\imag\pi n/\tau}\big)
+\sum_{n\in\Z_{<0}}\big(a_n\,\e^{\imag\pi n\bar\tau}+b_n\,
\e^{-\imag\pi n/\bar\tau}\big),\qquad\tau\in\Hyp,
\]
with coefficients that grow at most polynomially, i.e., with 
\eqref{eq:growthcoeff0.00} . 
These coefficients are then uniquely determined 
by $f$: $a_0=a_0(f)$, $a_n=a_n(f)$, $b_n=b_n(f)$.
\end{thm}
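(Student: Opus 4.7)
The plan is to deduce Theorem \ref{thm:hfs2} directly from the already-established Theorem \ref{thm:hfs1}, using the Schwartz-type correspondence recorded as Proposition \ref{prop:Schwartz} as the bridge between harmonic functions with the polynomial growth bound and distributions on the compactified real line. The dictionary for the basis functions has already been set up in Subsection \ref{subsec:harmext-hfs}, so only a small amount of bookkeeping is needed.

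First, by Proposition \ref{prop:Schwartz}, the harmonic function $f$ on $\Hyp$ satisfying the stated growth bound is the Poisson extension of a uniquely determined distribution $f_{\mathrm{bd}}$ on $\R\cup\{\infty\}$. Apply Theorem \ref{thm:hfs1} to $f_{\mathrm{bd}}$ to obtain
\[
f_{\mathrm{bd}}(t)=a_0+\sum_{n\in\Z_{\ne0}}\bigl(a_n\,e_n(t)+b_n\,\Jop_1^\ast e_n(t)\bigr),
\]
with coefficients satisfying the polynomial bound \eqref{eq:growthcoeff0.00}, convergence holding in the sense of distribution theory on the extended real line.

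Second, take the Poisson extension of both sides. For each fixed $\tau\in\Hyp$, the test function $P(\tau,\cdot)$ lies in $C^\infty_1(\R\cup\{\infty\})$, so Definition \ref{defn:distrconv} gives absolute convergence of the series obtained by pairing with $P(\tau,\cdot)$. The identifications \eqref{eq:harmext2}--\eqref{eq:harmext3} show that the Poisson extensions of $e_n$ and $\Jop_1^\ast e_n$ are exactly the exponentials appearing in the statement, holomorphic for $n>0$ and conjugate-holomorphic for $n<0$. Interchanging the pairing with the summation therefore recovers the claimed harmonic hyperbolic Fourier series representation of $f(\tau)$ on $\Hyp$.

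Third, for uniqueness, suppose $f$ admitted two such harmonic hyperbolic Fourier expansions with polynomially growing coefficients. The split \eqref{eq:harmext4}--\eqref{eq:harmext6} shows that each expansion meets the growth condition \eqref{eq:polgrowth0}, so by Proposition \ref{prop:Schwartz} each descends to a distributional hyperbolic Fourier expansion of the common boundary distribution $f_{\mathrm{bd}}$. The uniqueness clause of Theorem \ref{thm:hfs1} then forces the two sets of coefficients to coincide, which completes the argument.

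The only point that requires any vigilance — rather than a genuine obstacle, since Subsection \ref{subsec:harmext-hfs} has already done the work — is the interchange of infinite sum and Poisson extension. This is harmless because, for $\tau\in\Hyp$, both $\e^{\imag\pi n\tau}$ (for $n>0$) and $\e^{-\imag\pi n/\tau}$ (for $n>0$) decay exponentially in $n$, using $\im\tau>0$ and $\im(-1/\tau)>0$, with analogous statements on the conjugate-holomorphic side; combined with the polynomial coefficient bound, this yields absolute and locally uniform convergence in $\Hyp$ and justifies the exchange.
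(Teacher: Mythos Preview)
Your reduction is formally valid, but it is circular within the paper's logical structure. In this paper Theorem~\ref{thm:hfs1} is \emph{not} established independently: its proof is deferred to Subsection~\ref{subsec:hfs_distr1}, where Remark~\ref{rem:equivalent_hfs} simply records that Theorems~\ref{thm:hfs1} and~\ref{thm:hfs2} are equivalent (via exactly the Poisson-extension dictionary you invoke) and declares that therefore no separate proof of Theorem~\ref{thm:hfs1} will be given. In other words, the paper proves Theorem~\ref{thm:hfs2} directly and obtains Theorem~\ref{thm:hfs1} as a corollary; your proposal runs the equivalence the wrong way and so proves nothing new.

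The paper's actual argument for Theorem~\ref{thm:hfs2} (carried out in the sharper form of Corollary~\ref{cor:8.5.2}) is substantially different and constructive. One first splits $f=f(\imag)+f_++f_-$ into holomorphic and conjugate-holomorphic parts with the same growth (Proposition~\ref{prop:beta3}). For $f_+$, one solves a Dirichlet problem on the fundamental domain $\calD_\Theta$ with datum $f_+$ on the semicircle $\Te_+$ and periodic lateral conditions, then propagates the solution to all of $\Hyp$ by alternating Schwarz reflection in $\Te_+$ with $2$-periodization (the harmonic extension algorithm of Subsection~\ref{subsec:Solutionscheme-gen}, leading to Proposition~\ref{prop:8.5.1}). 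The resulting $2$-periodic harmonic function $h$ has Schwarz transform $\shin\,h=f_+$, and its ordinary harmonic Fourier coefficients become the holomorphic hyperbolic Fourier coefficients of $f_+$; the growth control on $h$ (via the height function $\mathbbm{n}^\ast_{\mathrm{alg}}$ and Proposition~\ref{prop:n_estimate_ave}) yields the polynomial coefficient bound. Uniqueness is obtained from Theorem~\ref{thm:uniq1}, which is proved independently in Section~\ref{sec:uniq-beyond}. The conjugate-holomorphic part $f_-$ is handled symmetrically.
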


The proof of Theorem \ref{thm:hfs2} is supplied in Subsection
\ref{subsec:hfs_distr1}.

\begin{rem}
The series can be seen to converge in the norm 
\[
\|f\|_{k}:=\sup_{\tau\in\Hyp}\bigg(\frac{\im\tau}{1+|\tau|^2}\bigg)^{k}|f(\tau)|
\]
for some finite $k$ (not necessarily the same as the various parameters
$k$ in the theorem). Moreover, since convergence in this norm implies
convergence of the boundary distribution, the assertions of 
Theorems \ref{thm:hfs1} and \ref{thm:hfs2} are equivalent.
\end{rem}

A Taylor series
\[
g(\zeta)=\sum_{n=0}^{+\infty}c_n\zeta^n
\]
converges in the unit disk 
\[
\D:=\{\zeta\in\C:\,\,|\zeta|<1\}
\]
if and only if the coefficients grow at most subexponentially:
\begin{equation}
\forall\epsilon>0:\,\,|c_n|=\Ordo(\e^{\epsilon n})\quad\text{as}\,\,\,n\to+\infty.
\label{eq:fs1.01}
\end{equation}
Writing $\zeta=\e^{\imag\pi \tau}$, it follows that the Fourier series
\[
g(\e^{\imag\pi\tau})=\sum_{n=0}^{+\infty}c_n\e^{\imag\pi n\tau}
\]
converges on $\Hyp$ if and only if \eqref{eq:fs1.01} holds. Analogously,
if we instead write
$\zeta=\e^{-\imag\pi/\tau}$, we again see that 
\[
g(\e^{-\imag\pi/\tau})=\sum_{n=0}^{+\infty}c_n\e^{-\imag\pi n/\tau}
\]
converges on $\Hyp$ if and only if \eqref{eq:fs1.01} holds. 
From these considerations we easily obtain the following proposition.

\begin{prop}
Suppose that the coefficients $a_n,b_n$ are subject to the 
subexponential growth bound
\begin{equation}
\forall\epsilon>0:\,\,|a_n|,|b_n|=\Ordo(\e^{\epsilon |n|})\quad\text{as}\,\,\,
|n|\to+\infty.
\end{equation}
Then the harmonic hyperbolic Fourier series 
\[
f(\tau)=a_0+\sum_{n\in\Z_{>0}}\big(a_n\,\e^{\imag\pi n\tau}+b_n\,
\e^{-\imag\pi n/\tau}\big)
+\sum_{n\in\Z_{<0}}\big(a_n\,\e^{\imag\pi n\bar\tau}+b_n\,
\e^{-\imag\pi n/\bar\tau}\big),\qquad\tau\in\Hyp,
\]
converges absolutely and uniformly on compact subsets to a harmonic
function $f:\Hyp\to\C$.
\end{prop}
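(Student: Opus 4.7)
The plan is to reduce the statement to the observation, already isolated in the paragraph preceding the proposition, that Taylor series with subexponentially growing coefficients converge on the unit disk $\D$, combined with the fact that uniform limits on compact subsets of harmonic functions are harmonic. Each term of the series has the form $c\,\zeta^{|n|}$ with $\zeta$ one of the four maps
\[
\tau\mapsto\e^{\imag\pi\tau},\quad \tau\mapsto\e^{-\imag\pi/\tau},\quad \tau\mapsto\e^{\imag\pi\bar\tau},\quad \tau\mapsto\e^{-\imag\pi/\bar\tau},
\]
so it will suffice to show that on any compact $K\subset\Hyp$ each of these maps sends $K$ into some disk $\{|\zeta|\le r_K\}$ with $r_K<1$, and then to use subexponential growth of the coefficients to dominate the series by a geometric one.

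The key computation is direct. Writing $\tau=x+\imag y\in\Hyp$ with $y>0$, one has $|\e^{\imag\pi\tau}|=\e^{-\pi y}$ and, using $\im(1/\tau)=-y/|\tau|^2$, also $|\e^{-\imag\pi/\tau}|=\e^{-\pi y/|\tau|^2}$. The two conjugate-variable versions have identical moduli, since replacing $\tau$ by $\bar\tau$ does not alter $|\,\cdot\,|$. On a compact $K\subset\Hyp$, the quantity $\im\tau$ is bounded below by a positive constant and $|\tau|$ is bounded above, so both $y$ and $y/|\tau|^2$ are bounded below by positive constants on $K$. This yields the required uniform bound $r_K<1$.

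With $r_K$ in hand, choose any $\epsilon\in(0,-\log r_K)$ and set $r:=r_K\,\e^{\epsilon}<1$. By the subexponential growth hypothesis there is a constant $C_\epsilon$ with $|a_n|,|b_n|\le C_\epsilon\e^{\epsilon|n|}$ for all $n$, so on $K$ every term of the series is majorized by $C_\epsilon r^{|n|}$. Comparison with the geometric series then gives absolute and uniform convergence on $K$. Since each partial sum is harmonic on $\Hyp$ (a sum of a holomorphic piece in $\tau$ and a conjugate-holomorphic piece), the limit $f$ is harmonic by the standard theorem on uniform limits of harmonic functions. There is essentially no serious obstacle; the only mildly subtle point is observing that compactness in $\Hyp$ automatically forces $K$ to stay away from both $\R$ and $\infty$ in the compactified picture, which is what makes the two separate exponential decay rates simultaneously effective.
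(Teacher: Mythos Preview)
Your proof is correct and follows essentially the same approach as the paper, which does not give a detailed argument but simply states that the proposition follows ``from these considerations'' after observing that Taylor series with subexponentially growing coefficients converge in $\D$ and that both $\e^{\imag\pi\tau}$ and $\e^{-\imag\pi/\tau}$ land in $\D$ for $\tau\in\Hyp$. You have spelled out precisely the details the paper leaves implicit.
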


The natural question pops up whether the harmonic hyperbolic Fourier
series represent some small subclass of the harmonic functions. 
Pleasantly, it is not so.

\begin{thm}
\label{thm:HFS-general.1.1}
Suppose that $f:\Hyp\to\C$ is a harmonic function. 
Then there exist coefficients $a_n,b_n$ with the growth bound
\begin{equation}
\forall\epsilon>0:\,\,|a_n|,|b_n|=\Ordo(\e^{\epsilon |n|})\quad\text{as}\,\,\,
|n|\to+\infty,
\end{equation}
such that $f$ enjoys the harmonic hyperbolic Fourier series expansion
\[
f(\tau)=a_0+\sum_{n\in\Z_{>0}}\big(a_n\,\e^{\imag\pi n\tau}+b_n\,
\e^{-\imag\pi n/\tau}\big)
+\sum_{n\in\Z_{<0}}\big(a_n\,\e^{\imag\pi n\bar\tau}+b_n\,
\e^{-\imag\pi n/\bar\tau}\big),\qquad\tau\in\Hyp.
\]
\end{thm}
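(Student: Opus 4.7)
The plan is to split $f$ into holomorphic and conjugate-holomorphic parts and then construct the holomorphic expansion by an iterated Schwarz-reflection algorithm.

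\textbf{Reduction to the holomorphic case.} Every harmonic function on $\Hyp$ may be written as $f=a_0+f_++f_-$, as in \eqref{eq:harmext4}, with $f_+$ holomorphic and $f_-$ conjugate-holomorphic; by conjugating the argument, the treatment of $f_-$ reduces to that of $f_+$. It is therefore enough to show that for every holomorphic $F:\Hyp\to\C$ (normalized so that $F(\tau)\to 0$ at the cusp $+\imag\infty$) there exist coefficients $a_n,b_n=\Ordo(\e^{\epsilon n})$, for every $\epsilon>0$, such that
\[
F(\tau)=\sum_{n\ge 1}a_n\,\e^{\imag\pi n\tau}+\sum_{n\ge 1}b_n\,\e^{-\imag\pi n/\tau},\qquad\tau\in\Hyp.
\]

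\textbf{Reformulation.} Writing $J\tau:=-1/\tau$, the two sums on the right are exactly the general forms of $g_1(\tau)$ and $g_2(-1/\tau)$, where $g_1,g_2:\Hyp\to\C$ are $2$-periodic holomorphic functions vanishing at $+\imag\infty$; for such a $g$, the substitution $\zeta=\e^{\imag\pi\tau}$ turns $g$ into a holomorphic function on $\D$ with $g(0)=0$, whose Taylor coefficients automatically grow subexponentially. The theorem thus reduces to the surjectivity of the map $(g_1,g_2)\mapsto g_1+g_2\circ J$ onto the space of holomorphic functions on $\Hyp$ vanishing at $+\imag\infty$.

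\textbf{Construction.} To exhibit such a pair $(g_1,g_2)$, I apply the Schwarz-reflection algorithm announced in Section \ref{sec:Schwarz}. The group $\Gamma$ generated by $T:\tau\mapsto\tau+2$ and $J$ tessellates $\Hyp$ by copies of a fundamental domain $\Omega$, whose boundary consists of vertical rays and unit semicircular arcs. On $\Omega$, set $g_2\equiv 0$ and $g_1:=F$, adjusted near the cusp $+\imag\infty$ so that the required vanishing holds. Then propagate $g_1$ to $T^k\Omega$ by $T$-periodicity, and use the forced relation $g_2\circ J=F-g_1$ to extend $g_2$ across each circular arc of $\partial\Omega$ into the adjacent region. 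Schwarz reflection for holomorphic functions across an analytic arc preserves holomorphy, so the extension remains holomorphic. Iterating this reflection process sweeps out the entire $\Gamma$-orbit of $\Omega$, producing globally defined holomorphic, $2$-periodic functions $g_1,g_2$ on $\Hyp$ satisfying $F=g_1+g_2\circ J$.

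\textbf{Main obstacle.} The principal technical difficulty is verifying that the infinite sequence of reflections across successively smaller circular arcs produces genuinely single-valued, holomorphic functions on all of $\Hyp$, with the correct (removable) behaviour at the rational cusps of $\Gamma$ on $\R$ where the circles of reflection accumulate. This requires a careful bookkeeping of the reflection geometry and of the boundary values that are transported at each step. Once this is accomplished, the subexponential bound on the Fourier coefficients comes for free from the fact that $g_1,g_2$ are holomorphic on $\Hyp$ and vanish at $+\imag\infty$; in particular, no a priori growth estimate on $F$ is required, in contrast to Theorem \ref{thm:hfs2}.
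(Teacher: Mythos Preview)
Your reduction to the holomorphic case and the reformulation in terms of a pair $(g_1,g_2)$ of $2$-periodic holomorphic functions with $F=g_1+g_2\circ J$ are correct and match the paper's strategy. The gap is in the construction. You propose to set $g_1:=F$ on the fundamental domain $\Omega=\calD_\Theta$ and then propagate $g_1$ by $T$-periodicity; but $F$ is not assumed $2$-periodic, so on the vertical line $\re\tau=1$ the value $F(1+\imag y)$ coming from $\Omega$ generally differs from the value $F(-1+\imag y)$ coming from the neighbouring translate $T^2\Omega$. The periodic extension of $g_1$ therefore has a jump along every vertical line $\re\tau\in 2\Z+1$ and is not holomorphic on $\Omega_0$: the algorithm fails at the very first step, before any circular reflection is attempted. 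Your ``main obstacle'' (consistency under accumulating reflections near rational cusps) is indeed a genuine issue in the correct argument, but it is not the one blocking your construction. The appeal to ``Schwarz reflection for holomorphic functions'' is also misplaced: what you actually invoke is the algebraic relation $g_2\circ J=F-g_1$, which is a definition rather than an analytic continuation principle, and it cannot repair the discontinuity of $g_1$.

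The paper circumvents exactly this difficulty by working with a \emph{harmonic} auxiliary function rather than a holomorphic one. In Step~I of the algorithm in Subsection~\ref{subsec:Solutionscheme-gen} one solves a Dirichlet problem on $\calD_\Theta$ for a harmonic $h_0$ with $h_0=f$ on the semicircle $\Te_+$ and \emph{periodic boundary conditions} along the two vertical sides; it is precisely these matching conditions that make the periodic extension of $h_0$ to $\Omega_0$ harmonic across the vertical lines. Schwarz reflection is then applied to the harmonic function $g_0:=h_0-f$, which vanishes on $\Te_+$, via the harmonic reflection law $g_0(\tau)=-g_0(1/\bar\tau)$. Iterating periodization and reflection produces a global $2$-periodic harmonic $h_\infty$ on $\Hyp$ (Corollary~\ref{cor:Hyperext1}), whose harmonic Fourier coefficients are subexponential by Proposition~\ref{prop:function->coeff2.0}; the Schwarz transform $\shin\,h_\infty$ then converts this into the holomorphic hyperbolic Fourier series for $f$ (Corollary~\ref{cor:HFSthm1}). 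The essential idea missing from your proposal is the passage through a harmonic Dirichlet problem with periodic lateral boundary data, which is what makes the periodic extension well defined.
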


The proof of Theorem \ref{thm:HFS-general.1.1} is supplied in Subsection
\ref{subsec:Solutionscheme-gen}.

We note that when we allow for faster growing coefficients, 
uniqueness is generally speaking lost.

A general reference to elliptic function theory is Chandrasekharan's 
book \cite{cha}.

\begin{example}
Let $\vartheta_{10}(\tau)$ and $\vartheta_{00}(\tau)$ denote the
Jacobi theta functions (with first variable $z=0$, omitted)
\begin{equation}
\vartheta_{10}(\tau):=\sum_{n\in\Z}\e^{\imag\pi(n+\frac12)^2\tau},
\qquad
\vartheta_{00}(\tau):=\sum_{n\in\Z}\e^{\imag\pi n^2\tau},
\label{eq:Jacobitheta00}
\end{equation}
and put
\begin{equation}
\lambda(\tau):=\frac{\vartheta_{10}(\tau)^4}{\vartheta_{00}(\tau)^4},
\qquad \tau\in\Hyp.
\label{eq:lambdaf1}
\end{equation}
This function is called the \emph{modular lambda function}, and since
$\vartheta_{00}$ is zero-free in $\Hyp$, it is holomorphic in $\Hyp$.
It is $2$-periodic and hence it has a Fourier expansion
\begin{equation}
\lambda(\tau)=
\sum_{n=1}^{+\infty}\hat\lambda(n)\,\e^{\imag\pi n\tau},\qquad
\tau\in\Hyp,
\label{eq:lambdaf2}
\end{equation}
where the coefficients turn out to be integers, the first being 
$\hat\lambda(1)=16$,
$\hat\lambda(2)=-128$, and $\hat\lambda(3)=704$. Moreover, it is 
known that the coefficients have the asymptotics
\[
\hat\lambda(n)=(1+\ordo(1))\frac{(-1)^{n-1}}{32}n^{-\frac34}
\exp(2\pi\sqrt{n})\quad\text{as}\,\,\, n\to+\infty.
\]
The lambda function has the important functional properties
\begin{equation}
\lambda(\tau+1)=\frac{\lambda(\tau)}{\lambda(\tau)-1},\quad
\lambda(-{1}/{\tau})=1-\lambda(\tau).
\label{eq:lambdaproperties1.1}
\end{equation}
In particular, the latter property entails that
\[
-1+\lambda(\tau)+\lambda(-1/\tau)=-1+\sum_{n=1}^{+\infty}
\hat\lambda(n)\big(\e^{\imag n\tau}+\e^{-\imag\pi n/\tau}\big)=0,
\qquad \tau\in\Hyp,
\]
a nontrivial representation of the $0$ function in a holomorphic 
hyperbolic Fourier series with coefficients $a_0=-1$ and
$a_n=b_n=\hat\lambda(n)$ for $n\in\Z_{>0}$. This means that
uniqueness fails in the harmonic hyperbolic series expansion 
if we allow the coefficients to grow as quickly as 
$\hat\lambda(n)$.
\label{ex:nonuniqueness}
\end{example}

\begin{thm} {\rm (uniqueness)}
Suppose that the coefficients $a_n,b_n$ are subject to the 
subexponential growth bound
\begin{equation}
\forall\epsilon>0:\,\,|a_n|,|b_n|=\Ordo(\e^{\epsilon |n|})\quad\text{as}\,\,\,
|n|\to+\infty,
\end{equation}
and that associated harmonic hyperbolic Fourier series is trivial: 
\[
a_0+\sum_{n\in\Z_{>0}}\big(a_n\,\e^{\imag\pi n\tau}+b_n\,
\e^{-\imag\pi n/\tau}\big)
+\sum_{n\in\Z_{<0}}\big(a_n\,\e^{\imag\pi n\bar\tau}+b_n\,
\e^{-\imag\pi n/\bar\tau}\big)=0,\qquad\tau\in\Hyp.
\]
Suppose in addition that 
\[
|a_n|=\ordo\big(|n|^{-3/4}\exp\big(2\pi\sqrt{|n|}\big)\big)\quad \text{as}
\,\,\,|n|\to+\infty.
\]
Then $a_0=0$ and $a_n=b_n=0$ for all $n\in\Z_{\ne0}$. 
\label{thm:uniq1}
\end{thm}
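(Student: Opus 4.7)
The plan is first to reduce the vanishing of the full harmonic hyperbolic Fourier series to a uniqueness question for a single holomorphic hyperbolic Fourier series that equals a constant. I would write the assumed identity as $a_0+f_+(\tau)+f_-(\tau)=0$ on $\Hyp$, where $f_+$ denotes the holomorphic positive-index piece and $f_-$ the antiholomorphic negative-index piece as in \eqref{eq:harmext5} and \eqref{eq:harmext6}. Applying $\partial/\partial\tau$ annihilates both $a_0$ and $f_-$, leaving $\partial_\tau f_+\equiv 0$; since $f_+$ is already holomorphic, this forces $f_+\equiv\alpha_+$ to be constant. The symmetric argument with $\partial/\partial\bar\tau$ yields $f_-\equiv\alpha_-$, with $a_0+\alpha_++\alpha_-=0$. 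The theorem then reduces to proving that the holomorphic identity
\[
\phi(\tau)+\phi_2(-1/\tau)=\alpha,\qquad\phi(\tau)=\sum_{n\ge1}a_n\e^{\imag\pi n\tau},\quad \phi_2(\tau)=\sum_{n\ge1}b_n\e^{\imag\pi n\tau},
\]
with $|a_n|=\ordo(n^{-3/4}\e^{2\pi\sqrt n})$ and subexponential $b_n$, forces $\alpha=0$ and $a_n=b_n=0$ for every $n\ge 1$; the analogous statement for the antiholomorphic coefficients $a_n,b_n$ with $n<0$ follows by applying the same argument to $\overline{f_-(\tau)}$.

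The second step sets up the comparison with the modular $\lambda$-function. The involution $\tau\mapsto -1/\tau$ swaps the two cusps $\imag\infty$ and $0$: taking $\tau=\imag y$ with $y\to+\infty$ makes $\phi\to 0$ and thereby forces $\phi_2$ to attain the boundary value $\alpha$ at the cusp $0$, while $y\to 0^+$ symmetrically forces $\phi$ to attain $\alpha$ there. This is precisely the structure underlying the identity $\lambda(\tau)+\lambda(-1/\tau)=1$ highlighted in Example~\ref{ex:nonuniqueness}. I would then argue by contradiction: assume a nontrivial $(\alpha,\phi,\phi_2)$ exists with coefficients strictly below the threshold, extract the leading asymptotic behavior of $\phi(\imag y)$ as $y\to 0^+$ via a saddle-point analysis of $\sum a_n\e^{-\pi ny}$, and derive a contradiction from the functional equation tying the two cusps together.

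The hard part will be the sharpness of the threshold $n^{-3/4}\e^{2\pi\sqrt n}$. Example~\ref{ex:nonuniqueness} shows that when $|a_n|$ is allowed to be of this exact order, uniqueness genuinely fails; the $\lambda$-function only stays bounded near $\tau=0$ because $\hat\lambda(n)\sim(-1)^{n-1}(32)^{-1}n^{-3/4}\e^{2\pi\sqrt n}$ has oscillating signs producing enormous cancellation. Quantitatively, the saddle of the exponent $-\pi n y+2\pi\sqrt n$ sits at $n\sim y^{-2}$ with value $\pi/y$, so without such sign cancellation coefficients at the threshold would force $|\phi(\imag y)|$ to grow like $\e^{\pi/y}$ as $y\to 0^+$; the strict little-$o$ hypothesis on $|a_n|$ is precisely what removes the room for such $\lambda$-type cancellations, ensuring instead that $\phi$ tends to a finite boundary value compatible with the convergent Fourier expansion of $\phi_2$ at $-1/\tau$. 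Making this saddle-point-plus-cancellation heuristic quantitatively rigorous, and then propagating the conclusion from the imaginary axis to all of $\Hyp$ by the analyticity of $\phi,\phi_2$ together with a Phragm\'en--Lindel\"of-type argument, is the principal technical challenge.
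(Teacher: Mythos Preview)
Your reduction to the holomorphic identity $\phi(\tau)+\phi_2(-1/\tau)=\alpha$ with $\phi,\phi_2$ both $2$-periodic and vanishing at $+\imag\infty$ is correct and matches the paper's first step. The gap lies in what you do next.

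You work only with the two cusps $+\imag\infty$ and $0$, but these are \emph{not} where the coefficient hypothesis on $a_n$ bites. At the cusp $0$ you already know from the functional equation that $\phi(\tau)\to\alpha$ (since $\phi_2(-1/\tau)\to 0$); the saddle-point estimate $|\phi(\imag y)|=\ordo(\e^{\pi/y})$ is consistent with this and produces no contradiction. The space of $2$-periodic holomorphic functions on $\Hyp$ with $\phi(+\imag\infty)=0$, $\phi(\tau)\to\alpha$ at $\tau=0$, and coefficients $|a_n|=\ordo(n^{-3/4}\e^{2\pi\sqrt n})$ is infinite-dimensional, so no amount of Phragm\'en--Lindel\"of on these two cusps alone will finish the argument.

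The missing structural ingredient is that the $2$-periodicity of $\phi_2$ forces $\phi$ to be invariant under $\Strans\Ttrans^2\Strans:\tau\mapsto\tau/(1-2\tau)$ in addition to $\Ttrans^2$. Together these generate the principal congruence subgroup $\Gamma(2)$, so $\phi$ descends to a holomorphic function on $\Hyp/\Gamma(2)$, which after compactification is the Riemann sphere with three marked points corresponding to the cusps $+\imag\infty$, $0$, and $\pm1$. The modular $\lambda$ function uniformizes this: $\phi=\Phi\circ\lambda$ for some $\Phi$ holomorphic on $\C\setminus\{0,1\}$. At $\lambda=0$ (cusp $+\imag\infty$) and $\lambda=1$ (cusp $0$) you already have removable singularities for free. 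The coefficient bound on $a_n$, via the little-``o'' case of Proposition~\ref{prop:coeff->function}, gives $|\phi(\tau)|=\ordo(\e^{\pi/\im\tau})$ uniformly as $\im\tau\to0^+$, and near the cusp $\pm1$ one has $|\lambda(\tau)|\asymp\e^{\pi/\im\tau}$, so $\Phi(w)=\ordo(|w|)$ at infinity. Liouville then forces $\Phi$ constant, hence $\phi\equiv0$ and $\alpha=0$; the paper packages this step as an application of Lemma~1.1 of \cite{bh2}. Your saddle-point heuristic is aimed at the wrong cusp: the threshold $n^{-3/4}\e^{2\pi\sqrt n}$ is sharp precisely because it corresponds to a simple pole of $\Phi$ at $\lambda=\infty$, i.e.\ to the behaviour of $\phi$ near $\tau=\pm1$, not near $\tau=0$.
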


The proof of Theorem \ref{thm:uniq1} is supplied in Subsection
\ref{subsec:uniq-beyond}. 

\begin{rem}
\label{rmk:uniq1}
(a)
Theorem \ref{thm:uniq1} would remain true if we make the growth 
assumption on the coefficients $b_n$ in place of the $a_n$. 
In addition we may replace the two-sided assumption on $a_n$ (in the sense
that the assumption concerns the limits as $n\to+\infty$ and as $n\to-\infty$) 
by a one-sided assumption on $a_n$, and a likewise one-sided assumption on 
$b_n$, as long as the two sides are in opposite directions.

(b) Note that the theorem is sharp in the sense that if the little
"o" is replaced by big "O", the assertion fails, by
Example \ref{ex:nonuniqueness}. 
\end{rem}

\subsection{Beyond uniqueness of the harmonic hyperbolic
Fourier series expansion}
We now describe a more general situation where the harmonic hyperbolic
Fourier series fails to be unique due to a finite-dimensional 
space of \emph{exceptional coefficients}.

\begin{defn}
The sequence of coefficients $\{a_0,a_n,b_n\}_{n>0}$ is 
\emph{exceptional of degree}  $\le k$ if there exists a polynomial $P$ of
degree $\le k$ with $P(0)=0$, $P(1)=-a_0$, such that
\[
\sum_{n=1}^{+\infty}a_n\,\e^{\imag\pi n\tau}=P(\lambda(\tau)),\quad
\sum_{n=1}^{+\infty}b_n\,\e^{\imag\pi n\tau}=-a_0-P(1-\lambda(\tau)),
\]
for all $\tau\in\Hyp$. We write $\mathfrak{E}^{\mathrm{hol}}_k$ for
the linear space of all exceptional coefficients of degree $\le k$,
which has dimension $k$.
\label{def:exceptional:1-sided}
\end{defn}

We extend the definition to two-sided sequences.

\begin{defn}
The two-sided sequence of coefficients $\{a_0,a_n,b_n\}_{n\ne0}$ is 
\emph{exceptional of degree}  $\le k$ if for some parameter value 
$c_0\in\C$, the two one-sided sequences 
$\{a_0-c_0,a_n,b_n\}_{n>0}$ and $\{c_0,a_{-n},b_{-n}\}_{n>0}$ 
are both exceptional of degree $\le k$. 
We write $\mathfrak{E}^{\mathrm{harm}}_k$ for
the linear space of all two-sided exceptional coefficients of degree 
$\le k$, which has dimension $2k$.
\label{def:exceptional:2-sided}
\end{defn}

When we let the coefficients grow faster than the range permitted 
in Theorem \ref{thm:uniq1}, we do get nonuniqueness, but at least
for a while we can still control the loss of uniqueness and describe it
in terms of the exceptional coefficient classes.

\begin{thm} {\rm (beyond uniqueness)}
Suppose that the coefficients $a_n,b_n$ are subject to the 
subexponential growth bound
\begin{equation}
\forall\epsilon>0:\,\,|a_n|,|b_n|=\Ordo(\e^{\epsilon |n|})\quad\text{as}\,\,\,
|n|\to+\infty,
\end{equation}
and that associated harmonic hyperbolic Fourier series is trivial: 
\[
a_0+\sum_{n\in\Z_{>0}}\big(a_n\,\e^{\imag\pi n\tau}+b_n\,
\e^{-\imag\pi n/\tau}\big)
+\sum_{n\in\Z_{<0}}\big(a_n\,\e^{\imag\pi n\bar\tau}+b_n\,
\e^{-\imag\pi n/\bar\tau}\big)=0,\qquad\tau\in\Hyp.
\]
Suppose, in addition, that for some integer $k=1,2,3,\ldots$,
\[
|a_n|=\ordo\big(|n|^{-3/4}\exp\big(2\pi\sqrt{k|n|}\big)\big)
\quad \text{as}\,\,\,
|n|\to+\infty.
\]
Then $\{a_0,a_n,b_n\}_{n\ne0}$ belongs to the space of two-sided
exceptional sequences  $\mathfrak{E}^{\mathrm{harm}}_{k-1}$.
\label{thm:uniq2.1}
\end{thm}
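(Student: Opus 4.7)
The plan is to reduce the two-sided identity to a one-sided holomorphic one and then apply modular function theory on the quotient $\Hyp/\Gamma(2) \cong \C \setminus \{0, 1\}$ supplied by the modular lambda function. Writing the given harmonic function as $f = f_+ + f_-$ with $f_+$ holomorphic and $f_-$ antiholomorphic, the identity $f \equiv 0$ combined with $\partial_\tau f_+ = 0 = \partial_{\bar\tau} f_-$ forces both $f_+$ and $f_-$ to be constants summing to zero: $f_+ \equiv c_0$, $f_- \equiv -c_0$ for some $c_0 \in \C$. This is precisely the splitting in Definition~\ref{def:exceptional:2-sided}, and the antiholomorphic claim mirrors the holomorphic one by complex conjugation. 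It therefore suffices to prove the one-sided analogue: under the given growth, $\{a_0 - c_0, a_n, b_n\}_{n>0} \in \mathfrak{E}^{\mathrm{hol}}_{k-1}$.

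For the one-sided problem, set $\tilde a_0 := a_0 - c_0$, $g(\tau) := \sum_{n=1}^\infty a_n \e^{\imag\pi n\tau}$, and $h(\tau) := \sum_{n=1}^\infty b_n \e^{\imag\pi n\tau}$; both $g, h$ are holomorphic and $2$-periodic on $\Hyp$, vanishing at the cusp $\imag\infty$, and the triviality becomes the functional equation $g(\tau) + h(-1/\tau) \equiv -\tilde a_0$. Substituting $\tau \mapsto \tau + 2$ and using $2$-periodicity of $g$ yields $h(-1/(\tau+2)) = h(-1/\tau)$, which rewrites as $h(w/(1-2w)) = h(w)$. Together with $2$-periodicity this provides invariance of $h$ under the standard generators of $\Gamma(2)$; a symmetric argument handles $g$. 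Since $\lambda : \Hyp \to \C \setminus \{0,1\}$ is a universal $\Gamma(2)$-covering, one factors $g = G \circ \lambda$ and $h = H \circ \lambda$ for single-valued holomorphic functions $G, H : \C \setminus \{0,1\} \to \C$. The identity $\lambda(-1/\tau) = 1 - \lambda(\tau)$ then turns the functional equation into $G(\mu) + H(1-\mu) \equiv -\tilde a_0$.

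It remains to show that $G$ is a polynomial of degree at most $k-1$. Near $\mu = 0$ (i.e.\ $\tau \to \imag\infty$), $g(\tau) = \Ordo(\e^{\imag\pi\tau})$ and $\lambda(\tau) \sim 16\e^{\imag\pi\tau}$, so $G(\mu) = \Ordo(\mu)$ and Riemann removability extends $G$ holomorphically to $\mu = 0$ with $G(0) = 0$. The same reasoning applied to $h$ gives $H(0) = 0$, and the functional equation then yields $G(1) = -\tilde a_0$, so $G$ extends to an entire function on $\C$. The third $\Gamma(2)$-cusp $\tau = 1$ corresponds to $\mu \to \infty$. Using the Hardy--Ramanujan-type asymptotics $\widehat{(\lambda^j)}(n) \sim c_j n^{-3/4}\exp(2\pi\sqrt{jn})$ for $j \in \Z_{>0}$, the growth hypothesis $|a_n| = \ordo(n^{-3/4}\exp(2\pi\sqrt{kn}))$ translates into the pointwise bound $G(\mu) = \ordo(|\mu|^k)$ as $\mu \to \infty$. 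An entire function of polynomial growth strictly less than $k$ is, by Cauchy's inequalities, a polynomial of degree $\leq k-1$, so $G = P$ with $P(0) = 0$ and $P(1) = -\tilde a_0$, exactly the form required by $\mathfrak{E}^{\mathrm{hol}}_{k-1}$; the formula $H(\mu) = -\tilde a_0 - P(1-\mu)$ then gives the required representation of the $b_n$-series.

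The principal obstacle is the Tauberian step in the final paragraph: converting the subtle coefficient growth $|a_n| = \ordo(n^{-3/4}\exp(2\pi\sqrt{kn}))$ into the pointwise bound $G(\mu) = \ordo(|\mu|^k)$ near infinity. This requires precise circle-method or saddle-point asymptotics for the Fourier coefficients of every power $\lambda^j$ with $j \leq k$, together with an argument that the dominant asymptotic scale of a linear combination is governed by the highest nonzero term, exploiting that the exponents $\sqrt{jn}$ are pairwise distinct for distinct $j$.
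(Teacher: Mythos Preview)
Your overall architecture coincides with the paper's: split the harmonic identity into a holomorphic and a conjugate-holomorphic piece, each forced to be constant; derive that $g$ and $h$ are invariant under $\Ttrans^2$ and $\Strans\Ttrans^2\Strans$ (equivalently, under $\Gamma(2)$); factor through $\lambda$; then control the growth of $G$ at the three punctures $0,1,\infty$ to conclude it is a polynomial of degree $\le k-1$. The paper executes the last step by invoking Lemma~1.1 of \cite{bh2}, whose hypotheses it verifies via equation~\eqref{eq:basicest1.04}.

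Where you diverge is in the handling of that last growth step, and here you are making your life harder than necessary. The step you call ``Tauberian'' is in fact \emph{Abelian}: it runs from coefficient bounds to a pointwise bound on the sum, not the other way around. The paper's Proposition~\ref{prop:coeff->function} (proved via the elementary summation Lemma~\ref{lem:sumcontrol1}) shows directly that
\[
|a_n|=\ordo\big(|n|^{-3/4}\exp(2\pi\sqrt{k|n|})\big)
\quad\Longrightarrow\quad
|g(\tau)|=\ordo\big(\exp(\pi k/\im\tau)\big)
\quad\text{as }\im\tau\to0^+,
\]
by nothing more than locating the maximum of $n\mapsto -\pi n\,\im\tau+2\pi\sqrt{kn}$ and estimating the tail. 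No circle method, no Hardy--Ramanujan asymptotics for the coefficients of $\lambda^j$, and no comparison of competing exponential scales are needed. Once you have this pointwise bound on $g$, the conversion to $G(\mu)=\ordo(|\mu|^k)$ is just a change of variables: for $\tau$ in the fundamental domain approaching the cusp at $\pm1$ (where $\lambda\to\infty$), the asymptotics \eqref{eq:lambdaproperty3}--\eqref{eq:lambdaproperty4} and \eqref{eq:inverse_lambda1}--\eqref{eq:inverse_lambda2} give $\log|\lambda(\tau)|\asymp \pi/\im\tau$, so $\exp(\pi k/\im\tau)\asymp|\lambda(\tau)|^k$. Then $G$ is entire with $|G(\mu)|=\ordo(|\mu|^k)$, and Liouville (Cauchy estimates) finishes exactly as you say. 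Your proposed detour through the Fourier coefficients of $\lambda^j$ would work in principle but is substantially more involved and entirely avoidable.
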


The proof of Theorem \ref{thm:uniq2.1} is supplied in Subsection
\ref{subsec:uniq-beyond}. 

\begin{rem}
As in Remark \ref{rmk:uniq1}(a), Theorem \ref{thm:uniq2.1} would 
remain true if  we make the growth assumption on the coefficients 
$b_n$ in  place of the $a_n$. 
In addition we may replace the two-sided assumption on 
$a_n$ (in the sense that the assumption concerns the limits as 
$n\to+\infty$ and as $n\to-\infty$) by a one-sided assumption on 
$a_n$, and a likewise one-sided assumption on 
$b_n$, as long as the two sides are in opposite directions.
\end{rem}

\subsection{Growth of coefficients and the hyperbolic Fourier
series}

Let $\Mfun:\Hyp\to\R_{>0}$ denote the function
\begin{equation}
\Mfun(\tau):=\frac{\max\{1,|\tau|^2\}}{\im\tau}.
\label{eq:defM}
\end{equation}
This function enjoys the following symmetry properties:
\begin{equation}
\Mfun(-1/\tau)=\Mfun(1/\bar\tau)=\Mfun(\tau),\qquad\tau\in\Hyp.
\label{eq:Mprop}
\end{equation}

We first show that if we control the coefficients, we obtain growth
control on the holomorphic hyperbolic Fourier series.

\begin{prop}
Suppose that for some fixed $\beta$ with $0<\beta<+\infty$, we have that
the sequence $\{a_n,b_n\}_{n>0}$ has the growth control
\[
|a_n|,|b_n|=\Ordo\big(n^{-3/4}\exp(2\pi\sqrt{\beta n})\big)\quad\text{as}
\,\,\,n\to+\infty.  
\]
Then, if $f:\Hyp\to\C$ is the associated holomorphic hyperbolic Fourier
series
\[
f(\tau)=\sum_{n=1}^{+\infty}\big(a_n \e^{\imag\pi n\tau}+b_n 
\e^{-\imag\pi n/\tau}\big),
\]
it follows that
\[
|f(\tau)|=\Ordo\big(\exp(\beta\pi\Mfun(\tau))\big)
\]
holds uniformly in $\tau\in\Hyp$. 
\label{prop:coeff->function0.0}
\end{prop}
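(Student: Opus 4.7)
The plan is to estimate the two pieces of the series separately using the triangle inequality, exploit the algebraic identity that converts the hypothesized subexponential bound on coefficients into an exact square, and then combine using the defining property of $\Mfun$.

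First I would write, with $y=\im\tau>0$,
\[
\bigg|\sum_{n\geq 1}a_n\,\e^{\imag\pi n\tau}\bigg|\leq C\sum_{n\geq 1}n^{-3/4}\e^{2\pi\sqrt{\beta n}-\pi ny}.
\]
The key identity is the completed-square rewrite
\[
2\pi\sqrt{\beta n}-\pi ny=\frac{\pi\beta}{y}-\pi\big(\sqrt{ny}-\sqrt{\beta/y}\,\big)^{2},
\]
which follows at once by expanding the square. This pulls out the desired exponential factor and reduces matters to showing that
\[
S(y):=\sum_{n\geq 1}n^{-3/4}\exp\!\big(-\pi\big(\sqrt{ny}-\sqrt{\beta/y}\,\big)^{2}\big)
\]
is bounded uniformly in $y\in\R_{>0}$ by a constant $C_\beta$.

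To prove the uniform bound on $S(y)$, I would compare to an integral via $u=\sqrt{ny}$, obtaining an expression proportional to $y^{-1/4}\int_{\sqrt{y}}^{\infty}u^{-1/2}\e^{-\pi(u-\sqrt{\beta/y})^{2}}\diff u$, and split cases. For $y$ large, $\sqrt{\beta/y}$ is close to $0$ while the domain of integration starts at $\sqrt{y}$, so the Gaussian enforces exponential decay and the integral is trivially bounded. For $y$ small, the Gaussian is concentrated around $u\approx\sqrt{\beta/y}$, which is large; on this window $u^{-1/2}$ is comparable to $(\beta/y)^{-1/4}$, so the integral contributes $\sim (\beta/y)^{-1/4}$, and the prefactor $y^{-1/4}$ combines to give a constant depending only on $\beta$. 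A standard sum-versus-integral comparison (controlling the monotone error on each side of the saddle $n^{*}=\beta/y^{2}$) upgrades the integral bound to a bound on $S(y)$.

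For the second half of the series I would apply exactly the same argument, observing that $|\e^{-\imag\pi n/\tau}|=\e^{-\pi n\,\im\tau/|\tau|^{2}}$, so that replacing $y$ by $y':=\im\tau/|\tau|^{2}$ yields
\[
\bigg|\sum_{n\geq 1}b_n\,\e^{-\imag\pi n/\tau}\bigg|\leq C_\beta\exp\!\bigg(\frac{\pi\beta\,|\tau|^{2}}{\im\tau}\bigg).
\]
Finally, combining the two estimates,
\[
|f(\tau)|\leq C_\beta\bigg(\exp\!\Big(\frac{\pi\beta}{\im\tau}\Big)+\exp\!\Big(\frac{\pi\beta|\tau|^{2}}{\im\tau}\Big)\bigg)\leq 2C_\beta\,\e^{\pi\beta\Mfun(\tau)},
\]
because $\Mfun(\tau)=\max(1,|\tau|^{2})/\im\tau$ majorizes both $1/\im\tau$ and $|\tau|^{2}/\im\tau$. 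The main obstacle is the saddle-point-type uniform bound on $S(y)$: the cancellation between the polynomial factor $n^{-3/4}$ and the Gaussian width $\sim y^{-3/2}$ is exactly what makes the estimate sharp, and it is precisely this matching that explains why the exponent $2\pi\sqrt{\beta n}$ in the hypothesis gives rise to the exponent $\pi\beta\Mfun(\tau)$ (not a larger one) in the conclusion.
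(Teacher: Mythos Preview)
Your proposal is correct and follows essentially the same route as the paper: reduce via the triangle inequality to bounding $\sum_{n\ge1}n^{-3/4}\exp(2\pi\sqrt{\beta n}-\pi ny)$, show this is $\Ordo(\e^{\pi\beta/y})$ by a saddle-point argument, then handle the $b_n$ sum by replacing $y$ with $\im\tau/|\tau|^2$ and combine using $\Mfun(\tau)=\max\{1,|\tau|^2\}/\im\tau$. The only stylistic difference is that the paper packages the key estimate as a general lemma (Lemma~\ref{lem:sumcontrol1}, valid for any power $n^{-\alpha/2}$, proved by locating the critical point of $\varphi(t)=-\pi yt^2+2\pi\sqrt{\beta}\,t-\alpha\log t$ and Taylor-expanding), whereas your completed-square identity $2\pi\sqrt{\beta n}-\pi ny=\pi\beta/y-\pi(\sqrt{ny}-\sqrt{\beta/y})^2$ extracts the main term $\e^{\pi\beta/y}$ algebraically and leaves only the Gaussian-weighted remainder to bound; this is a cleaner execution of the same Laplace-method idea for the specific exponent $3/4$ at hand.
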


Proposition \ref{prop:coeff->function0.0} is obtained from a straightforward
but rather tedious calculation, based on Lemma \ref{lem:sumcontrol1} which is
formulated and proved in Subsection \ref{subsec:coeffgrowthharmonic} below.
The instance of harmonic Fourier series
is stated in Proposition \ref{prop:coeff->function} below, and Proposition
\ref{prop:coeff->function0.0} then follows by adding up the two corresponding
estimates. The rather trivial details are left to the reader.

What is less straightforward is that given a holomorphic function of the
given growth, we may control the coefficients in the same way with only
a slight loss.

\begin{thm}
Suppose $f:\Hyp\to\C$ holomorphic with
\[
|f(\tau)|=\Ordo\big(\exp(\beta\pi\,\Mfun(\tau))\big)
\]
with an implicit constant that is uniform on $\Hyp$. Here, $\beta$ is a fixed
parameter with $0<\beta<+\infty$. Then $f$ may be written as a holomorphic
hyperbolic Fourier series
\[
f(\tau)=a_0+
\sum_{n=1}^{+\infty}\big(a_n \e^{\imag\pi n\tau}+b_n \e^{-\imag\pi n/\tau}\big),
\qquad\tau\in\Hyp,  
\]
where if $\beta$ is not an integer, the coefficients satisfy the growth
condition 
\[
|a_n|,|b_n|=\Ordo\big((\log^2n)\exp(2\pi\sqrt{\beta n})\big)\quad\text{as}
\,\,\,n\to+\infty.  
\]
On the other hand, if $\beta$ is an integer, we instead have the slightly weaker
estimate
\[
|a_n|,|b_n|=\Ordo\big(n^{1/2}\exp(2\pi\sqrt{\beta n})\big)\quad\text{as}
\,\,\,n\to+\infty.  
\]
\label{thm:function->coeff}
\end{thm}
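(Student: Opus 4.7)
The plan is to first upgrade the existence result of Theorem \ref{thm:HFS-general.1.1} (which yields a decomposition $f(\tau) = a_0 + f_1(\tau) + f_2(-1/\tau)$ with only subexponentially growing coefficients) to a decomposition with a sharper growth control on each of the $2$-periodic holomorphic pieces $f_1, f_2 : \Hyp \to \C$, and then read off their Fourier coefficients by saddle-point optimization. By $2$-periodicity, $f_j(\tau) = c_0^{(j)} + \sum_{n\geq 1} c_n^{(j)} \e^{\imag\pi n\tau}$, and the identifications $a_n = c_n^{(1)}$, $b_n = c_n^{(2)}$ will then give the bounds claimed in the theorem.

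For the first stage, I would invoke the Schwarz reflection algorithm of Section \ref{sec:Schwarz}. The observation driving the algorithm is that on $F_1 = \{|\tau| \geq 1\}\cap\Hyp$, the contribution of $f_2(-1/\tau)$ is well-behaved, since $-1/\tau$ lies in $F_2 = \{|\tau| \leq 1\}\cap\Hyp$ and stays a fixed distance from the real axis relative to the $\Mfun$-growth; conversely on $F_2$ it is $f_1$ that is well-behaved. The algorithm alternates between the two regions, iteratively constructing approximations to $f_1, f_2$ on successively larger subregions of $\Hyp$ via reflection across the circular arc $\{|\tau|=1\}\cap\Hyp$. Tracking the growth through each reflection step should produce bounds of the form
\[
|f_j(\tau)| \leq C\, E(\im\tau) \exp\bigl(\beta\pi / \im\tau\bigr),
\qquad \tau \in \Hyp,\ j = 1,2,
\]
where the slack factor is $E(y) = (\log(1/y))^2 + 1$ in the generic case of non-integer $\beta$, and $E(y) = 1/y + 1$ in the resonant case when $\beta \in \Z$.

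For the second stage, the Fourier coefficients extract via the standard formula
\[
c_n^{(j)} = \frac{1}{2}\int_0^2 f_j(x + \imag y)\, \e^{-\imag\pi n(x+\imag y)}\, \diff x,
\qquad y > 0,
\]
which, combined with the bound from the first stage, gives $|c_n^{(j)}| \leq C\, E(y) \exp(\beta\pi/y + \pi n y)$. Minimizing over $y > 0$ locates the saddle point at $y^* = \sqrt{\beta/n}$, where $\beta\pi/y + \pi n y = 2\pi\sqrt{\beta n}$, while $E(y^*)$ contributes the polynomial slack: $E(y^*) \sim \tfrac14 (\log n)^2$ in the non-integer case and $E(y^*) \sim \sqrt{n/\beta}$ in the integer case, exactly matching the claimed factors.

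The principal difficulty lies in the first stage: carrying out the Schwarz reflection algorithm so that the exponential growth constant $\beta$ is preserved exactly and the slack factor $E$ is tracked precisely across reflection steps. The dichotomy between integer and non-integer $\beta$ reflects a resonance phenomenon already visible in Example \ref{ex:nonuniqueness} and Theorem \ref{thm:uniq2.1}: when $\beta \in \Z$, the growth of $f$ matches the critical rate at which a power of the modular lambda function becomes a genuine obstruction to uniqueness, and the iterative reflection then picks up a polynomial slack rather than the logarithmic slack of the generic case.
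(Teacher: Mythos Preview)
Your overall architecture is correct and matches the paper: build the holomorphic hyperbolic Fourier series via the Schwarz reflection algorithm of Section~\ref{sec:Schwarz}, obtain growth control on the resulting $2$-periodic piece, then extract Fourier coefficients and optimize at $s=\sqrt{\beta/n}$. The saddle-point computation in your second stage is exactly right.

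However, there are two substantive misidentifications in your first stage.

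\medskip

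\textbf{The $\log^2$ factor is not pointwise.} You claim a pointwise bound $|f_j(\tau)|\le C\,(\log(1/\im\tau))^2\exp(\beta\pi/\im\tau)$ in the non-integer case. The reflection algorithm does not give this. Each application of Lemma~\ref{prop:pointwiseest_iter} contributes one additive copy of $\|f_{\mathrm{sym}}\|_{(\im\tau)}$, so the pointwise bound produced is
\[
|h(\tau)|=\Ordo\big((1+\mathbbm{n}^\ast_{\mathrm{alg}}(\tau))\exp(\beta\pi/\im\tau)\big),
\]
and the height $\mathbbm{n}^\ast_{\mathrm{alg}}(\tau)$ can be as large as $\tfrac12+\tfrac{1}{2\im\tau}$ (Remark~\ref{rem:n*estimate}). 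The $\log^2$ appears only \emph{after} integrating over a horizontal segment in the coefficient formula \eqref{eq:get_coeff_cn}, using the averaged height estimate of Proposition~\ref{prop:n_estimate_ave}:
\[
\tfrac12\int_{-1}^{1}\mathbbm{n}(t+\imag s)\,\diff t=\pi^{-2}\log^2(1/s)+\Ordo(\log(1/s)).
\]
So the $\log^2$ is an artifact of the average, not of the pointwise bound.

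\medskip

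\textbf{The integer/non-integer split has a different source.} You attribute the dichotomy to resonance with powers of $\lambda(\tau)$ as in Theorem~\ref{thm:uniq2.1}. That is the uniqueness side; on the existence side the split arises earlier, in Step~I of the algorithm. The modular $\lambda$ function conformally maps $\calD_\Theta$ to a half-plane and converts the growth $\exp(\beta\pi/\im\tau)$ along $\Te_+$ into polynomial growth $|t|^\beta$ along the boundary line (see~\eqref{eq:lambda_critical_line}). One then needs Proposition~\ref{prop:powergrowth} to solve the half-plane Dirichlet problem with $\Ordo((1+|t|)^\beta)$ boundary data; that proposition loses a $\log$ precisely when $\beta\in\Z_{>0}$, because of functions like $\im(\tau^k\log\tau)$ whose boundary values are $\Ordo(|t|^k)$ but whose interior growth is $\Ordo(|\tau|^k\log|\tau|)$. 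Translated back through $\lambda$, this extra $\log$ becomes the factor $(1+1/\im\tau)$ in Proposition~\ref{prop:8.7.1}, which after saddle-point optimization gives the $n^{1/2}$ slack. Without identifying this mechanism, you cannot derive the dichotomy from the reflection iteration alone.
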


The proof of Theorem \ref{thm:function->coeff} is supplied in Subsection
\ref{subsec:exponentially_growing}.

This is all well for holomorphic functions, but what about harmonic functions?
By splitting the given harmonic hyperbolic Fourier series according to
\eqref{eq:harmext4}, we estimate each term in \eqref{eq:harmext5} and
\eqref{eq:harmext6} in accordance with Proposition \ref{prop:coeff->function},
which gives us the harmonic hyperbolic Fourier series version of
Proposition \ref{prop:coeff->function}. 
On the other hand, if we have a harmonic function with the same growth
bound as satisfied by the holomorphic function of Theorem
\ref{thm:function->coeff}, we would like to expand it as a harmonic hyperbolic
Fourier series with control on the coefficients. We can achieve this by
obtaining a splitting of the harmonic function in a holomorphic and a
conjugate-holomorphic piece.

\begin{prop}
Suppose $f:\Hyp\to\C$ harmonic with
\[
|f(\tau)|=\Ordo\big(\exp(\beta\pi\,\Mfun(\tau))\big)
\]
with an implicit constant that is uniform on $\Hyp$. Here, $\beta$ is a fixed
parameter with $0<\beta<+\infty$. Then $f$ splits as
\[
f(\tau)=f(\imag)+f_+(\tau)+f_-(\tau),
\]
where  $f_+(\tau)$ is holomorphic in $\tau$, while
$f_-(\tau)$ is conjugate-holo\-morphic, with $f_+(\imag)=f_-(\imag)=0$,
each having the same growth bound:
\[
|f_+(\tau)|,|f_-(\tau)|=\Ordo\big(\exp(\beta\pi\,\Mfun(\tau))\big)
\]
with an implicit constant that is uniform in $\Hyp$.  
\label{prop:Mbetasplit}
\end{prop}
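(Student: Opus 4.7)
My plan is to build $f_\pm$ by explicit path integration from the base point $\imag$. Since $f$ is harmonic, $\partial_\tau\partial_{\bar\tau}f=\tfrac14\Delta f=0$, so $\partial_\tau f$ is holomorphic and $\partial_{\bar\tau}f$ is anti-holomorphic on $\Hyp$. I would set
\[
f_+(\tau):=\int_{\imag}^{\tau}\partial_{\zeta}f(\zeta)\,\diff\zeta,\qquad
f_-(\tau):=\int_{\imag}^{\tau}\partial_{\bar\zeta}f(\zeta)\,\diff\bar\zeta,
\]
where the integrals are path-independent on the simply connected domain $\Hyp$ (the $1$-form $\partial_{\bar\zeta}f\,\diff\bar\zeta$ is closed by anti-holomorphy of $\partial_{\bar\zeta}f$, and analogously for $\partial_\zeta f\,\diff\zeta$). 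Then $f_+$ is holomorphic, $f_-$ conjugate-holomorphic, both vanish at $\imag$, and
\[
f_+(\tau)+f_-(\tau)=\int_{\imag}^{\tau}\bigl(\partial_{\zeta}f\,\diff\zeta+\partial_{\bar\zeta}f\,\diff\bar\zeta\bigr)=f(\tau)-f(\imag),
\]
which realizes the splitting $f=f(\imag)+f_++f_-$.

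For the growth control I would proceed in two steps. First, Cauchy's integral formula on the disk $D(\tau,\delta\,\im\tau)\subset\Hyp$, for small $\delta>0$, combined with the elementary estimate $\sup_{D(\tau,\delta\,\im\tau)}\Mfun\le(1+C\delta)\Mfun(\tau)$ (a direct check from $\Mfun(\zeta)=\max\{1,|\zeta|^2\}/\im\zeta$), yields
\[
|\partial_\tau f(\tau)|,\;|\partial_{\bar\tau}f(\tau)|\le\frac{C'}{\delta\,\im\tau}\exp\bigl((1+C\delta)\beta\pi\Mfun(\tau)\bigr).
\]
Second, I would integrate $\partial_\zeta f$ along the hyperbolic geodesic $\gamma_\tau$ from $\imag$ to $\tau$. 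The key geometric fact, which I would verify by parameterizing $\gamma_\tau$ as a Euclidean semicircle orthogonal to $\R$ (or a vertical segment when $\re\tau=0$), is that $\Mfun$ is convex along $\gamma_\tau$ and attains its maximum at the endpoints, so that $\sup_{\gamma_\tau}\Mfun=\Mfun(\tau)$ (using $\Mfun(\imag)=1\le\Mfun(\tau)$). Combined with the hyperbolic length $\int_{\gamma_\tau}|\diff\zeta|/\im\zeta=d_{\Hyp}(\imag,\tau)=\Ordo(\log\Mfun(\tau))$, this gives
\[
|f_\pm(\tau)|\le\int_{\gamma_\tau}|\partial f(\zeta)|\,|\diff\zeta|\le C\bigl(1+\log\Mfun(\tau)\bigr)\exp\bigl((1+C\delta)\beta\pi\Mfun(\tau)\bigr).
\]

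The main obstacle will be sharpening this to the exact constant $\beta$ in the exponential. The estimate above only yields $|f_\pm(\tau)|=\Ordo(\exp(\beta'\pi\Mfun(\tau)))$ for every $\beta'>\beta$, whereas the proposition demands $\beta'=\beta$. I would close this gap by a Phragm\'en--Lindel\"of argument on the simply connected sublevel sets $\Omega_t:=\{\tau\in\Hyp:\Mfun(\tau)<t\}$: the subharmonic function $\log|f_\pm|-\beta\pi\Mfun$ obeys the maximum principle on $\Omega_t$, and its boundary values on $\{\Mfun=t\}$ are already controlled by the preliminary estimate; letting $t\to+\infty$ should upgrade the polynomial-times-exponential bound to a pure exponential bound with the sharp constant. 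Verifying that the $\Omega_t$ are Jordan domains for $t>1$ (which follows from the description of $\{\Mfun\le t\}$ as a disk tangent to $\R$ intersected with the unit disk's exterior, glued to a horizontal strip), and handling the slightly delicate asymptotic control on $\partial\Omega_t$, is where I expect the technical work to be concentrated.
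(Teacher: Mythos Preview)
Your splitting via $f_\pm(\tau)=\int_{\imag}^{\tau}\partial f$ is exactly what the paper does, and your observation that the sublevel sets $\{\Mfun<t\}$ are geodesically convex (as intersections of two horodisks, one at $0$ and one at $\infty$) is correct. The gap is in the final sharpening step.

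The Phragm\'en--Lindel\"of argument you propose does not work as stated: a direct computation gives $\Delta(1/y)=2/y^{3}>0$ for $|\tau|<1$ and $\Delta(|\tau|^{2}/y)=2/y+2x^{2}/y^{3}>0$ for $|\tau|>1$, so $\Mfun$ is \emph{subharmonic} on $\Hyp$, not superharmonic. Hence $\log|f_\pm|-\beta\pi\Mfun$ is the sum of a subharmonic and a superharmonic function and has no reason to obey a maximum principle; for $f_+$ constant it is even strictly superharmonic. Without subharmonicity there is no mechanism to push the boundary control on $\partial\Omega_t$ inward.

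The paper avoids this entirely by getting the sharp exponent already at the gradient stage. Instead of fixing $\delta$, one lets the radius $\epsilon\in(0,\im\tau)$ in the mean-value estimate depend on $\tau$: with $A=\beta\pi\max\{1,|\tau|^{2}\}$ and $y=\im\tau$, the choice $\epsilon=y^{2}/(A+y)$ makes the excess $A/(y-\epsilon)-A/y$ exactly $1$, yielding
\[
|\partial_\tau f(\tau)|,\ |\partial_{\bar\tau} f(\tau)|\le \frac{C\e}{(\im\tau)^{2}}\bigl(\beta\pi\max\{1,|\tau|^{2}\}+\im\tau\bigr)\exp\bigl(\beta\pi\Mfun(\tau)\bigr),
\]
i.e.\ the correct exponential with only a polynomial prefactor. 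The second ingredient is the path: rather than a hyperbolic geodesic, the paper uses the symmetry $\Mfun(1/\bar\tau)=\Mfun(\tau)$ to reduce to $|\tau|\le1$, and then integrates along the arc of $\Te_+$ from $\imag$ down to the point with $\re\eta=\re\tau$, followed by the vertical segment to $\tau$. Along both pieces $\Mfun(\eta)=1/\im\eta$, so the dominant term of the integrand is $(\beta\pi/t^{2})\e^{\beta\pi/t}=-\partial_t\e^{\beta\pi/t}$ (with $t=\im\eta$ or $t=\sin\theta$), and integration by parts collapses the polynomial prefactor to give the pure $\Ordo(\exp(\beta\pi\Mfun(\tau)))$ bound. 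Your geodesic path does not make $\Mfun$ a function of a single coordinate, so this exact antiderivative is unavailable and you would be left with a residual factor $\Mfun(\tau)\log\Mfun(\tau)$ that the proposition does not allow.
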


The proof of Proposition \ref{prop:Mbetasplit} can be found in
Subsection \ref{subsec:splitting}, and is restated in the form of
Proposition \ref{prop:beta1}.

\subsection{Power skewed holomorphic hyperbolic Fourier series}
\label{subsec:powerskewed}
If we begin with a holomorphic hyperbolic Fourier series
\[
f(\tau)=a_0+\sum_{n=1}^{+\infty}\big(a_n\,\e^{\imag\pi n\tau}+
b_n\,\e^{-\imag\pi n/\tau}\big),
\]
such that
\begin{equation}
\label{eq:subexponential1}
\forall\epsilon>0:\,\,|a_n|,|b_n|=\Ordo(\e^{\epsilon n})\quad\text{as}\,\,\,
n\to+\infty,
\end{equation}
so that the series converges in $\Hyp$, we may form the derivative:
\[
f'(\tau)=\sum_{n=1}^{+\infty}\big(\imag\pi n\,a_n\,\e^{\imag\pi n\tau}+
\imag\pi n\tau^{-2}\,b_n\,\e^{-\imag\pi n/\tau}\big),
\]
which has a similar form to the original holomorphic hyperbolic Fourier
series. The main difference is that the constant term disappeared and
that we got the factor $\tau^{-2}$ in the second expression involving the
coefficients $b_n$. This suggests the following more general type of
series, which we may call \emph{power skewed holomorphic hyperbolic
Fourier series}:
\begin{equation}
f(\tau)=a_0+\sum_{n=1}^{+\infty}\big(a_n\,\e^{\imag\pi n\tau}+
b_n\,(\tau/\imag)^{-\beta}\,\e^{-\imag\pi n/\tau}\big),
\label{eq:pskew1}
\end{equation}
where $\beta$ is a real parameter (called the \emph{skew power}).
The instance $\beta=0$ is the holomorphic hyperbolic Fourier series
we have discussed extensively already, whereas $\beta=2$ corresponds
to taking the derivative of holomorphic hyperbolic Fourier series (except 
that we allow the constant term to hang in there).
For $\tau\in\Hyp$, the ratio $\tau/\imag$ has positive real part, so we use
the principal branch of the logarithm to define the power 
\[
(\tau/\imag)^{-\beta}=\exp\big(-\beta\log(\tau/\imag)\big),\qquad\tau\in\Hyp.
\]
The power skewed holomorphic hyperbolic Fourier series 
\eqref{eq:pskew1} appears naturally in the context of Fourier interpolation
of radial functions in $\R^d$, with parameter $\beta=d/2$. More about
this later on. 
We shall now see how to relate power skewed holomorphic hyperbolic
Fourier series to the ordinary holomorphic hyperbolic Fourier series.
This is achieved by invoking the Jacobi theta function $\vartheta_{00}$
given by \eqref{eq:Jacobitheta00}, that is,
\begin{equation}
\vartheta_{00}(\tau):=\sum_{n\in\Z}\e^{\imag\pi n^2\tau},\qquad\tau\in\Hyp.
\label{eq:Jacobitheta01}
\end{equation}
The important fact here is that it is zero-free and enjoys the functional
identity
\begin{equation}
\vartheta_{00}(\tau)=(\tau/\imag)^{-1/2}\vartheta_{00}(-1/\tau),\qquad
\tau\in\Hyp.
\end{equation}
Since $\vartheta_{00}(\imag y)>1$ for $y\in\R_{>0}$, and 
$\vartheta_{00}(\tau)\ne0$ for all $\tau\in\Hyp$, we may choose a suitable 
holomorphic logarithm $\log\vartheta_{00}(\tau)$ which assumes positive 
values on $\imag\R_{>0}$. This logarithm is then also $2$-periodic:
$\log\vartheta_{00}(\tau+2)=\log\vartheta_{00}(\tau)$.
Using this logarithm, we define the power
\begin{equation}
\big(\vartheta_{00}(\tau)\big)^{\gamma}=\exp\big(\gamma\log\vartheta_{00}
(\tau)\big),\qquad\tau\in\Hyp,
\end{equation}
for any $\gamma\in\C$. The following proposition explains the algebraic
relationship between power skewed hyperbolic Fourier series and ordinary
hyperbolic Fourier series.

\begin{prop}  
\label{prop:pskewed1}
Suppose $f,g:\Hyp\to\C$ are holomorphic and connected via the formula
\[
g(\tau)=\vartheta_{00}(\tau)^{-2\beta}f(\tau),
\]
for some $\beta\in\R$. Then, if $g$ has a hyperbolic Fourier series 
expansion
\[
g(\tau)=\tilde a_0+g_1(\tau)+g_2(-1/\tau),
\]
where 
\[
g_1(\tau)=\sum_{n=1}^{+\infty}\tilde a_n\,\e^{\imag\pi n\tau},\quad
g_2(\tau)=\sum_{n=1}^{+\infty}\tilde b_n\,\e^{\imag\pi n\tau},
\]
it follows that $f$ has a power skewed expansion
\[
f(\tau)=a_0+f_1(\tau)+(\tau/\imag)^{-\beta}f_2(-1/\tau)
\]
where
\[
f_1(\tau)=\sum_{n=1}^{+\infty} a_n\,\e^{\imag\pi n\tau},\quad
f_2(\tau)=\sum_{n=1}^{+\infty} b_n\,\e^{\imag\pi n\tau},
\]
and vice versa. Here, the coefficient sequences $a_n,b_n$ and
$\tilde a_n,\tilde b_n$ meet the growth bound \eqref{eq:subexponential1}
simultaneously. Moreover, the relationship between the two expansions is
given by $\tilde a_0=a_0$ and
\[
f_1(\tau)=\big(\vartheta_{00}(\tau)^{2\beta}-1\big)\tilde a_0
+\vartheta_{00}(\tau)^{2\beta}g_1(\tau),\quad
f_2(\tau)=\vartheta_{00}(\tau)^{2\beta}g_2(\tau).
\]
\end{prop}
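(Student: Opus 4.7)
The plan is to derive the expansion of $f$ directly from the multiplication $f = \vartheta_{00}^{2\beta} g$, handling the three pieces of $g$ separately and invoking the modular transformation law for $\vartheta_{00}$ at the appropriate moment. First I would write
\[
f(\tau) = \vartheta_{00}(\tau)^{2\beta}\tilde a_0 + \vartheta_{00}(\tau)^{2\beta} g_1(\tau) + \vartheta_{00}(\tau)^{2\beta} g_2(-1/\tau),
\]
and treat the three summands in turn.

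For the last summand, I invoke the functional identity $\vartheta_{00}(\tau)=(\tau/\imag)^{-1/2}\vartheta_{00}(-1/\tau)$, raised to the power $2\beta$ using the logarithm branch specified in the paper, to obtain $\vartheta_{00}(\tau)^{2\beta}=(\tau/\imag)^{-\beta}\vartheta_{00}(-1/\tau)^{2\beta}$. The branch compatibility holds at $\tau=\imag$ and hence throughout $\Hyp$ by analytic continuation. Substituting, the third piece becomes $(\tau/\imag)^{-\beta}f_2(-1/\tau)$ with $f_2(\sigma):=\vartheta_{00}(\sigma)^{2\beta}g_2(\sigma)$, in accordance with the proposed formula. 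For the first two pieces, I would use the fact that $\vartheta_{00}(\tau)=1+\Ordo(\e^{\imag\pi\tau})$ as $\im\tau\to+\infty$, so that $\vartheta_{00}^{2\beta}(\tau)-1$ is a $2$-periodic holomorphic function on $\Hyp$ vanishing at $+\imag\infty$, admitting a one-sided Fourier expansion in $\e^{\imag\pi n\tau}$ with $n\ge 1$. Combining, their sum reads $\tilde a_0+f_1(\tau)$ with $f_1(\tau):=\tilde a_0(\vartheta_{00}(\tau)^{2\beta}-1)+\vartheta_{00}(\tau)^{2\beta}g_1(\tau)$, a one-sided Fourier series in positive frequencies, again as claimed; identification $a_0=\tilde a_0$ completes the assembly.

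It remains to check that the subexponential growth bound \eqref{eq:subexponential1} is preserved under this passage. The Fourier coefficients $c_n$ of $\vartheta_{00}^{\pm 2\beta}$ (as a series in $\e^{\imag\pi\tau}$) satisfy $|c_n|=\Ordo(\e^{\epsilon n})$ for every $\epsilon>0$; this follows from the boundedness of $|\vartheta_{00}(\tau)|^{\pm 2\beta}$ on every half-plane $\im\tau\ge c>0$, a consequence of the zero-freeness of $\vartheta_{00}$ and its modular transformation law, combined with the standard Cauchy estimates for Fourier coefficients. Since convolution of two sequences of subexponential growth yields a sequence of subexponential growth, the coefficients $a_n,b_n$ satisfy \eqref{eq:subexponential1} whenever the $\tilde a_n,\tilde b_n$ do. The converse direction (from a power skewed expansion of $f$ to a hyperbolic Fourier expansion of $g$) is entirely symmetric, with $\vartheta_{00}^{-2\beta}$ playing the role of $\vartheta_{00}^{2\beta}$. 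The only genuinely delicate step is the branch bookkeeping when raising the functional equation to the power $2\beta$; this is the main place where care is needed, and it is resolved by checking compatibility of the chosen logarithm branches at the single point $\tau=\imag$ and invoking analytic continuation.
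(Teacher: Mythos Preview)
Your proposal is correct and follows essentially the same route as the paper: the paper's proof is a one-liner stating that the result is ``an immediate consequence of the functional relationship'' $\vartheta_{00}(\tau)^{2\beta}=(\tau/\imag)^{-\beta}\vartheta_{00}(-1/\tau)^{2\beta}$, which is precisely the key step you invoke for the third summand. You have in fact supplied more detail than the paper does, particularly the verification that the subexponential growth bound is preserved under multiplication by $\vartheta_{00}^{\pm 2\beta}$ and the branch-compatibility check at $\tau=\imag$; the paper establishes the functional identity for the power $\vartheta_{00}^{2\beta}$ in the paragraphs preceding the proof but otherwise leaves these points implicit.
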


The proof of Proposition \ref{prop:pskewed1} is supplied in Subsection
\ref{subsec:powerskewed2}.

\begin{rem}
In terms of the Fourier series expansion
\[
\vartheta_{00}(\tau)^{2\beta}=\sum_{k=0}^{+\infty}\widehat{\vartheta_{00}^{2\beta}}(k)
\,\e^{\imag\pi k\tau},  
\]
we obtain the direct relationship between the coefficients in Proposition
\ref{prop:pskewed1}: for $n=1,2,3,\ldots$, we have
\[
a_n=\sum_{k=0}^{n}\tilde a_k\,\widehat{\vartheta_{00}^{2\beta}}(n-k),\quad
b_n=\sum_{k=1}^{n}\tilde b_k\,\widehat{\vartheta_{00}^{2\beta}}(n-k).  
\]
\end{rem}


\begin{cor}
Fix a parameter $\beta\in\R$, and suppose $f:\Hyp\to\C$ is holomorphic.
Then there exist complex coefficients $a_n,b_n$ with the growth bound
\begin{equation}
\forall\epsilon>0:\,\,|a_n|,|b_n|=\Ordo(\e^{\epsilon |n|})\quad\text{as}\,\,\,
|n|\to+\infty,
\end{equation}
such that $f$ has a $\beta$-power skewed holomorphic hyperbolic Fourier
series expansion
\[
f(\tau)=a_0+\sum_{n=1}^{+\infty}
\big(a_n \,\e^{\imag\pi n\tau}+b_n\,(\tau/\imag)^{-\beta}
\e^{-\imag\pi n/\tau}\big),\qquad\tau\in\Hyp.
\]
\end{cor}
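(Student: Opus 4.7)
Given a holomorphic $f : \Hyp \to \C$ and a real parameter $\beta$, my strategy will be to clear the power-skew factor by introducing the auxiliary function
\[
g(\tau) := \vartheta_{00}(\tau)^{-2\beta} f(\tau), \qquad \tau \in \Hyp,
\]
which is holomorphic on $\Hyp$ because $\vartheta_{00}$ is zero-free and a holomorphic branch of $\log\vartheta_{00}$ has been fixed. The task then reduces to producing an ordinary (non-skewed) \emph{holomorphic} hyperbolic Fourier series expansion of $g$ with subexponentially growing coefficients; once such an expansion is in hand, Proposition \ref{prop:pskewed1} transfers it back to $f$ with the growth bound preserved.

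To expand $g$, I will view it as a harmonic function on $\Hyp$ and apply Theorem \ref{thm:HFS-general.1.1}, obtaining a two-sided harmonic hyperbolic Fourier series
\[
g(\tau) = c_0 + \sum_{n > 0}\big(c_n \e^{\imag\pi n\tau} + d_n\e^{-\imag\pi n/\tau}\big) + \sum_{n < 0}\big(c_n \e^{\imag\pi n\bar\tau} + d_n \e^{-\imag\pi n/\bar\tau}\big),
\]
whose coefficients obey the subexponential bound. The second sum defines a conjugate-holomorphic function $S_-(\tau)$ on $\Hyp$, the subexponential growth guaranteeing uniform convergence on compact subsets. Since $g$ is holomorphic by construction and the first sum is holomorphic, $S_-$ equals the difference of two holomorphic functions and is therefore \emph{simultaneously} holomorphic and conjugate-holomorphic on the connected domain $\Hyp$; hence $S_-$ reduces to a constant. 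Absorbing this constant into $c_0$, I arrive at the one-sided expansion
\[
g(\tau) = \tilde a_0 + g_1(\tau) + g_2(-1/\tau),
\]
with $g_1(\tau) = \sum_{n \ge 1} \tilde a_n \e^{\imag\pi n\tau}$ and $g_2(\tau) = \sum_{n \ge 1} \tilde b_n \e^{\imag\pi n\tau}$ (where $\tilde a_n = c_n$ and $\tilde b_n = d_n$ for $n \ge 1$), still obeying the subexponential growth bound.

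Invoking Proposition \ref{prop:pskewed1} in the direction from $g$ to $f$ then yields the desired expansion
\[
f(\tau) = a_0 + \sum_{n=1}^{+\infty}\big(a_n \e^{\imag\pi n\tau} + b_n (\tau/\imag)^{-\beta}\e^{-\imag\pi n/\tau}\big), \qquad \tau \in \Hyp,
\]
with new coefficients $a_n, b_n$ that simultaneously satisfy the subexponential growth bound, as is built into that proposition's conclusion. The only subtle ingredient is the rigidity argument used to collapse the conjugate-holomorphic tail of the expansion of $g$ to a constant; beyond that, the proof is a clean concatenation of Theorem \ref{thm:HFS-general.1.1} with the explicit algebraic dictionary of Proposition \ref{prop:pskewed1}, and I do not anticipate any substantial obstacle.
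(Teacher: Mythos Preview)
Your proof is correct and follows essentially the same route as the paper: apply Theorem \ref{thm:HFS-general.1.1} to $g=\vartheta_{00}^{-2\beta}f$ and then invoke the algebraic correspondence of Proposition \ref{prop:pskewed1}. The only minor difference is that the paper cites ``the holomorphic version of Theorem \ref{thm:HFS-general.1.1}'' directly (established via Corollary \ref{cor:HFSthm1}), whereas you recover that holomorphic version by applying the full harmonic statement and then collapsing the conjugate-holomorphic tail to a constant via the rigidity argument; both routes are equivalent.
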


\begin{proof}
The assertion is an immediate consequence of the holomorphic version of 
Theorem \ref{thm:HFS-general.1.1} combined with the algebraic 
correspondence of Proposition \ref{prop:pskewed1}.
\end{proof}

\begin{rem}
Naturally, we would like to know when the coefficients $a_n, b_n$
can be chosen to grow at most polynomially, for a given holomorphic 
function $f$. As in the context of Theorem \ref{thm:hfs2}, the appropriate 
growth condition on $f$ 
is
\begin{equation*}
\exists k:\,\,\,|f(\tau)|=\Ordo\bigg(\frac{1+|\tau|^2}{\im\tau}\bigg)^k
\end{equation*}
where the implicit constant is uniform in $\Hyp$. This would require us to
delve a little deeper into the methods that give Theorem 
\ref{thm:HFS-general.1.1}.
\end{rem}

\subsection{Exponentially skewed hyperbolic Fourier series}
\label{subsec:exponentiallyskewed}
Given two real parameters $\omega_1,\omega_2\in\R$, we may consider the 
associated \emph{exponentially skewed} holomorphic hyperbolic Fourier series
\begin{equation}
f(\tau)=f_{\omega_1,\omega_2}(\tau)
=a_0\,\e^{\imag\pi\omega_1\tau}+
\sum_{n=1}^{+\infty}\big(a_n\,\e^{\imag\pi(n+\omega_1)\tau}+
b_n\,\e^{-\imag\pi (n+\omega_2)/\tau}\big), \qquad\tau\in\Hyp,
\label{eq:expskew1}
\end{equation}
where the coefficients are assumed to grow at most subexponentially (this means
that they satisfy condition \eqref{eq:subexponential1}). It is possible to view
this as a shift in the ``frequency domain'' compared with the ordinary
holomorphic hyperbolic Fourier series. Moreover, the shifts
$(\omega_1,\omega_2)\mapsto (\omega_1+k_1,\omega_2+k_2)$ for
$k_1,k_2\in\Z_{\ge0}$ give rise to 
\begin{equation}
f_{\omega_1+k_1,\omega_2+k_2}(\tau)=
\sum_{n=k_1}^{+\infty}a_{n-k_1}\,\e^{\imag\pi(n+\omega_1)\tau}+
\sum_{n=k_2}^{+\infty}b_{n+1-k_2}\,\e^{-\imag\pi (n+\omega_2)/\tau}, \qquad\tau\in\Hyp,
\label{eq:expskew2}
\end{equation}
which is of the same type as \eqref{eq:expskew1}. The only difference is
that the coefficients get shifted to the right, which means that
some of the first coefficients get replaced by $0$. This suggests that we
should fix a fundamental domain $\calQ$ of $\R^2/\Z^2$ and consider
$(\omega_1,\Omega_2)\in\calQ$. A natural first choice is the square
\[
\calQ:=\{(\omega_1,\omega_2):\,0\le\omega_1<1,\,\,-1<\omega_2\le0\}
\]
for then
\[
\im \big((n+\omega_1)\tau\big)\ge0,\qquad \tau\in\Hyp,\,\,n\in\Z_{\ge0}, 
\]
and
\[
\im\big((n+\omega_2)/\tau\big)\ge0,\qquad \tau\in\Hyp,\,\,n\in\Z_{>0}, 
\]
both hold, which entails that each individual term in the series
\eqref{eq:expskew1} is uniformly bounded in $\Hyp$.

We would like, if at all possible, to relate the exponentially skewed
series \eqref{eq:expskew1} with the ordinary holomorphic hyperbolic Fourier
series that we have encountered so far.
What saves the day here is the modular lambda function $\lambda(\tau)$, 
given by \eqref{eq:lambdaf1}, with the expansion \eqref{eq:lambdaf2}.
Since $\hat\lambda(1)=16$, we may write \eqref{eq:lambdaf2} in the form
\[
\lambda(\tau)=16\,\e^{\imag\pi\tau}\bigg(1+\frac{1}{16}\sum_{n=1}^{+\infty}
\hat\lambda(n+1)\,\e^{\imag\pi n\tau}\bigg),\qquad\tau\in\Hyp.  
\]
For a parameter $\alpha\in\C$, we may now define the power
\begin{equation}
\lambda(\tau)^\alpha=16^\alpha\,\e^{\imag\pi\alpha\tau}
\bigg(1+\frac{1}{16}\sum_{n=1}^{+\infty}
\hat\lambda(n+1)\,\e^{\imag\pi n\tau}\bigg)^\alpha,\qquad\tau\in\Hyp, 
\label{eq:lambdapower}
\end{equation}
as a holomorphic function in $\Hyp$. The constant is $16^\alpha=\e^{\alpha\log16}$,
with the standard natural logarithm, and the power is given by the binomial
expansion
\begin{equation}
(1+z)^\alpha=1+\binom{\alpha}{1}z+\binom{\alpha}{2}z^2+\cdots,
\label{eq:binomialexp}
\end{equation}
for $|z|<1$. Since the expression raised to the $\alpha$ power in
\eqref{eq:lambdapower} is zero-free in $\Hyp$, the power extends
holomorphically to all of $\Hyp$, and it follows that the function
\begin{equation}
\e^{-\imag\pi\alpha\tau}\lambda(\tau)^\alpha=16^\alpha\,
\bigg(1+\frac{1}{16}\sum_{n=1}^{+\infty}
\hat\lambda(n+1)\,\e^{\imag\pi n\tau}\bigg)^\alpha,\qquad\tau\in\Hyp,
\label{eq:lambdapower2}
\end{equation}
is a well-defined function which may be expressed as a convergent power
series in the variable $\e^{\imag\pi\tau}$. 
Mutatis mutandis, the same applies if we replace $\tau\mapsto-1/\tau$:
\begin{equation}
\e^{\imag\pi\alpha/\tau}\lambda(-1/\tau)^\alpha=16^\alpha\,
\bigg(1+\frac{1}{16}\sum_{n=1}^{+\infty}
\hat\lambda(n+1)\,\e^{-\imag\pi n/\tau}\bigg)^\alpha,\qquad\tau\in\Hyp,
\label{eq:lambdapower3}
\end{equation}
is a well-defined functions which may be expressed as a convergent power
series in the variable $\e^{-\imag\pi/\tau}$.
We need one more property of the modular lambda function. It does not assume
the value $1$, which means that $1-\lambda(\tau)$ is zero-free.
As such, we may define the associated power
\[
(1-\lambda(\tau))^\alpha
\]
by the same binomial expansion \eqref{eq:binomialexp} for large positive values
of $\im\tau$, and extend it holomorphically to all of $\Hyp$, to obtain a
convergent power series in the variable $\e^{\imag\pi\tau}$. 
Likewise, the function
\[
(1-\lambda(-1/\tau))^\alpha
\]
is a well-defined convergent power series in the variable $\e^{-\imag\pi/\tau}$. 
The following proposition explains the algebraic
relationship between exponentially skewed hyperbolic Fourier series and ordinary
hyperbolic Fourier series.

\begin{prop}  
\label{prop:expskewed1}
Suppose $f,g:\Hyp\to\C$ are holomorphic and connected via the relation
\[
g(\tau)=\lambda(\tau)^{-\omega_1}\lambda(-1/\tau)^{-\omega_2}f(\tau),
\]
for some $\beta\in\R$. Then, if $g$ has a hyperbolic Fourier series 
expansion
\[
g(\tau)=\tilde a_0+g_1(\tau)+g_2(-1/\tau),
\]
where 
\[
g_1(\tau)=\sum_{n=1}^{+\infty}\tilde a_n\,\e^{\imag\pi n\tau},\quad
g_2(\tau)=\sum_{n=1}^{+\infty}\tilde b_n\,\e^{\imag\pi n\tau},
\]
it follows that $f$ has an exponentially skewed hyperbolic Fourier expansion
\[
f(\tau)=\e^{\imag\pi\omega_1\tau}(a_0+f_1(\tau))+
\e^{-\imag\pi\omega_2/\tau}f_2(-1/\tau)
\]
where
\[
f_1(\tau)=\sum_{n=1}^{+\infty} a_n\,\e^{\imag\pi n\tau},\quad
f_2(\tau)=\sum_{n=1}^{+\infty} b_n\,\e^{\imag\pi n\tau},
\]
and vice versa. Here, the coefficient sequences $a_n,b_n$ and
$\tilde a_n,\tilde b_n$ meet the growth bound \eqref{eq:subexponential1}
simultaneously. Moreover, the relationship between the two expansions is
given by
\[
a_0+f_1(\tau)=\big(\e^{-\imag\pi\omega_1\tau}\lambda(\tau)^{\omega_1}\big)
(1-\lambda(\tau))^{\omega_2}(\tilde a_0+g_1(\tau))
\]
and
\[
f_2(\tau)=\big(\e^{-\imag\pi\omega_2\tau}\lambda(\tau)^{\omega_2}\big)
(1-\lambda(\tau))^{\omega_1}g_2(\tau).
\]
In particular, the constants are related via $a_0=16^{\omega_1}\tilde a_0$. 
\end{prop}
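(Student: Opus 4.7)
The plan is to invert the hypothesis to $f(\tau)=\lambda(\tau)^{\omega_1}\lambda(-1/\tau)^{\omega_2}g(\tau)$ and to exploit the identity $\lambda(-1/\tau)=1-\lambda(\tau)$ from \eqref{eq:lambdaproperties1.1} in order to rewrite the combined modular weight $\lambda(\tau)^{\omega_1}\lambda(-1/\tau)^{\omega_2}$ alternately as a convergent power series in $q:=\e^{\imag\pi\tau}$ (for the piece multiplying $\tilde a_0+g_1(\tau)$) and as a convergent power series in $\tilde q:=\e^{-\imag\pi/\tau}$ (for the piece multiplying $g_2(-1/\tau)$).

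From the material preceding the proposition I would first record the building blocks
\[
\lambda(\tau)^{\omega_1}=16^{\omega_1}q^{\omega_1}\,\Phi_{\omega_1}(q),\qquad (1-\lambda(\tau))^{\omega_2}=\Psi_{\omega_2}(q),
\]
where $\Phi_{\omega_1},\Psi_{\omega_2}$ are holomorphic on the unit disk with value $1$ at the origin, together with the analogous $\tilde q$-expansions for $\lambda(-1/\tau)^{\omega_2}$ and for $\lambda(\tau)^{\omega_1}=(1-\lambda(-1/\tau))^{\omega_1}$ (the latter coming from replacing $\tau$ by $-1/\tau$ in the functional identity). All four branches are well defined globally on $\Hyp$ because $\lambda$ is a covering of $\Hyp$ onto $\C\setminus\{0,1\}$ and $\Hyp$ is simply connected; they are pinned down by their asymptotic normalizations at $\imag\infty$ (or at $0$, for the $\tilde q$-expansions).

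The main step is to split $f=A+B$ according to $g=\tilde a_0+g_1(\tau)+g_2(-1/\tau)$. For $A(\tau):=\lambda(\tau)^{\omega_1}\lambda(-1/\tau)^{\omega_2}(\tilde a_0+g_1(\tau))$, I would substitute $\lambda(-1/\tau)^{\omega_2}=\Psi_{\omega_2}(q)$ and pull $\e^{\imag\pi\omega_1\tau}$ out of $\lambda(\tau)^{\omega_1}$ to obtain
\[
A(\tau)=\e^{\imag\pi\omega_1\tau}\,\big[16^{\omega_1}\Phi_{\omega_1}(q)\Psi_{\omega_2}(q)(\tilde a_0+g_1(\tau))\big].
\]
The bracketed factor is a convergent power series in $q$ with constant term $16^{\omega_1}\tilde a_0$, and calling it $a_0+f_1(\tau)$ yields $a_0=16^{\omega_1}\tilde a_0$ and the first coefficient identity. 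Symmetrically, $B(\tau):=\lambda(\tau)^{\omega_1}\lambda(-1/\tau)^{\omega_2}g_2(-1/\tau)$ is handled using $\lambda(\tau)^{\omega_1}=\Psi_{\omega_1}(\tilde q)$ and the $\tilde q$-factoring of $\lambda(-1/\tau)^{\omega_2}$, producing
\[
B(\tau)=\e^{-\imag\pi\omega_2/\tau}\,\big[16^{\omega_2}\Phi_{\omega_2}(\tilde q)\Psi_{\omega_1}(\tilde q)g_2(-1/\tau)\big],
\]
whose bracketed factor is a convergent power series in $\tilde q$ vanishing at $\tilde q=0$ (because $g_2(-1/\tau)$ does); setting it equal to $f_2(-1/\tau)$ yields the second coefficient identity.

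Preservation of the subexponential growth \eqref{eq:subexponential1} under the transition $(\tilde a_n,\tilde b_n)\leftrightarrow(a_n,b_n)$ follows because Cauchy products of power series whose coefficients satisfy \eqref{eq:subexponential1} satisfy it again. The converse direction of the proposition is obtained by running the same argument for the inverse algebraic relation $g=\lambda(\tau)^{-\omega_1}\lambda(-1/\tau)^{-\omega_2}f$. The main delicate point is the bookkeeping for the real powers $\lambda^{\omega_1},\lambda^{\omega_2},(1-\lambda)^{\omega_1},(1-\lambda)^{\omega_2}$: one must ensure that the two series representations (in $q$ and in $\tilde q$) of the combined weight agree on the correct branch, and this is forced by the normalizations at $\imag\infty$ and $0$ together with the simple connectedness of $\Hyp$.
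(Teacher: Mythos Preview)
Your proposal is correct and follows essentially the same approach as the paper: both arguments hinge on writing $\e^{-\imag\pi\omega\tau}\lambda(\tau)^{\omega}$ and $(1-\lambda(\tau))^{\omega}$ as convergent power series in the nome $q=\e^{\imag\pi\tau}$ (and the analogous statements with $\tau$ replaced by $-1/\tau$), then splitting $f$ according to the decomposition of $g$. Your write-up is in fact more explicit than the paper's terse proof, spelling out the $A+B$ splitting, the Cauchy-product argument for the subexponential growth bound, and the branch bookkeeping, all of which the paper leaves to the reader.
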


The proof of Proposition \ref{prop:expskewed1} is supplied in Subsection 
\ref{subsec:expskewed3}.

\begin{rem}
If we write
\[
\Omega_{\omega_1,\omega_2}(\tau)=
\big(\e^{-\imag\pi\omega_1\tau}\lambda(\tau)^{\omega_1}\big)
(1-\lambda(\tau))^{\omega_2}=\sum_{n=0}^{+\infty}
\widehat{\Omega}_{\omega_1,\omega_2}(n)\,\e^{\imag\pi n\tau},
\]
and $\Omega_{\omega_2,\omega_1}$ when the roles of $\omega_1,\omega_2$ are
reversed, the coefficient relations in Proposition \ref{prop:expskewed1} 
may be written in the following form, for $n=1,2,3,\ldots$:
\[
a_n=\sum_{k=0}^{n}\tilde a_k\,\widehat{\Omega}_{\omega_1,\omega_2}(n-k),
\quad  
b_n=\sum_{k=1}^{n}\tilde b_k\,\widehat{\Omega}_{\omega_2,\omega_1}(n-k).  
\]
\end{rem}

\begin{cor}
Fix two parameter $\omega_1,\omega_2\in\R$, and suppose 
$f:\Hyp\to\C$ is holomorphic.
Then there exist complex coefficients $a_n,b_n$ with the growth bound
\begin{equation}
\forall\epsilon>0:\,\,|a_n|,|b_n|=\Ordo(\e^{\epsilon |n|})\quad\text{as}\,\,\,
|n|\to+\infty,
\end{equation}
such that $f$ has the exponentially skewed holomorphic hyperbolic Fourier
series expansion
\[
f(\tau)=a_0\,\e^{\imag\pi\omega_1\tau}+\sum_{n=1}^{+\infty}
\big(a_n \,\e^{\imag\pi (n+\omega_1)\tau}+b_n\,
\e^{-\imag\pi (n+\omega_2)/\tau}\big),\qquad\tau\in\Hyp.
\]
\end{cor}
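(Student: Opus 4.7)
The plan is to reduce the claim to the unskewed holomorphic hyperbolic Fourier series expansion by means of the algebraic dictionary supplied by Proposition \ref{prop:expskewed1}. The scheme is: first twist $f$ by suitable powers of the modular lambda function in order to absorb the exponential shifts $\omega_1,\omega_2$ into an ordinary (holomorphic) function; then expand the twisted function as an ordinary holomorphic hyperbolic Fourier series; finally untwist.

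Concretely, I would set
\[
g(\tau):=\lambda(\tau)^{-\omega_1}\,\lambda(-1/\tau)^{-\omega_2}\,f(\tau),\qquad\tau\in\Hyp.
\]
The function $\lambda$ is zero-free on $\Hyp$ (being a ratio of fourth powers of $\vartheta_{10}$ and $\vartheta_{00}$, with $\vartheta_{00}$ zero-free), and by the functional identity \eqref{eq:lambdaproperties1.1} we have $\lambda(-1/\tau)=1-\lambda(\tau)$, which is zero-free too because $\lambda$ omits the value $1$ on $\Hyp$. Hence, using the definitions recorded in \eqref{eq:lambdapower} and \eqref{eq:lambdapower3}, the powers $\lambda(\tau)^{-\omega_1}$ and $\lambda(-1/\tau)^{-\omega_2}$ are well-defined holomorphic functions on $\Hyp$, and $g$ is holomorphic on $\Hyp$. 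By the holomorphic version of Theorem \ref{thm:HFS-general.1.1} applied to $g$, there exist subexponentially growing coefficients $\tilde a_0,\tilde a_n,\tilde b_n$ so that
\[
g(\tau)=\tilde a_0+\sum_{n=1}^{+\infty}\tilde a_n\,\e^{\imag\pi n\tau}+\sum_{n=1}^{+\infty}\tilde b_n\,\e^{-\imag\pi n/\tau},\qquad\tau\in\Hyp.
\]
Feeding this expansion into Proposition \ref{prop:expskewed1} yields an exponentially skewed expansion of $f=\lambda(\tau)^{\omega_1}\lambda(-1/\tau)^{\omega_2}g(\tau)$ with coefficients $a_n,b_n$ that obey the same subexponential growth bound as $\tilde a_n,\tilde b_n$. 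Expanding the two Fourier series $f_1,f_2$ in the resulting formula reproduces exactly the series claimed in the statement.

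The only mildly delicate point is the appeal to the \emph{holomorphic version} of Theorem \ref{thm:HFS-general.1.1}: when the input function is already holomorphic, the Schwarz-reflection construction of Section \ref{sec:Schwarz} that produces the harmonic hyperbolic Fourier series must deliver vanishing coefficients on the conjugate-holomorphic side. This is exactly the same ingredient used in the proof of the power-skewed corollary immediately above, so no new ideas are needed here.
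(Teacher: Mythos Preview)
Your argument is correct and follows exactly the paper's own proof: the paper simply states that the result is an immediate consequence of the holomorphic version of Theorem \ref{thm:HFS-general.1.1} combined with the algebraic correspondence of Proposition \ref{prop:expskewed1}, which is precisely the twist--expand--untwist scheme you describe.
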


\begin{proof}
The assertion is an immediate consequence of the holomorphic version of 
Theorem \ref{thm:HFS-general.1.1} combined with the algebraic 
correspondence of Proposition \ref{prop:expskewed1}.
\end{proof}

\begin{rem}
Naturally, we would like to know when the coefficients $a_n,b_n$ can be
chosen to grow at most polynomially, for a given given holomorphic function 
$f$ with (cf. Theorem \ref{thm:hfs2}) the growth bound
\begin{equation}
\exists k:\,\,\,|f(\tau)|=\Ordo\bigg(\frac{1+|\tau|^2}{\im\tau}\bigg)^k
\label{eq:polgrowth0.11}
\end{equation}
where the implicit constant is uniform in $\Hyp$. After all, this growth bound
is associated with having boundary values in the sense of distribution theory.
This will require us to
delve a little deeper than the above Proposition \ref{prop:expskewed1}.
An immediate inspection suggests that it is natural to impose
the conditions $\omega_1>0$, $\omega_2>-1$ in this context, for otherwise,
even if the functions $f_1,f_2$ meet \eqref{eq:polgrowth0.11} individually,
the combined function
\[
f(\tau)=\e^{\imag\pi\omega_1\tau}(a_0+f_1(\tau))+
\e^{-\imag\pi\omega_2/\tau}f_2(-1/\tau)
\]
need not. 
\end{rem}

\section{Applications to the Klein-Gordon equation}
\label{sec:Klein-Gordon}

\subsection{Oscillatory processes and the Klein-Gordon equation}
Little is known regarding the interpolation of oscillatory processes.
We can visualize a typical problem as the study of pressure fluctuations inside
a natural gas storage facility in terms of a finite number of strategically 
located pressure gauges.
Unfortunately, only some numerical methods based on considering solutions 
with extreme value of entropy are known (see, e.g., \cite{lio}, \cite{ger}).
We will consider a specific configuration of such gauges for solutions to
a particular linear partial differential equation which models oscillatory
behavior.
Since oscillatory processes are associated with hyperbolic partial
differential equations,
it makes sense to begin with the most basic of such equations, which are
the wave equation and the associated eigenvalue problem.
The eigenvalue problem for the wave operator (for eigenvalues $\neq0$)
is called the \emph{Klein-Gordon equation}. In one spatial and one
temporal dimension, the Klein-Gordon equation may be expressed in
suitable alternative coordinates as
\begin{equation}
u''_{xy} + u = 0,\qquad (x,y)\in\R^2.
\label{eq:KleinGordon1.1}
\end{equation}
If we try for pure complex exponential solutions
\begin{equation}
u(x,y)=\e^{\imag\lambda x+\imag\mu y},
\label{eq:puresol0}
\end{equation}
where $\lambda,\mu\in\C$, then 
\[
u''_{xy}+u=(1-\lambda\mu)\,\e^{\imag\lambda x+\imag\mu y}=0
\]
holds if and only if $\lambda\mu=1$. The complex exponential $u$ given by  
\eqref{eq:puresol0} is bounded in $\R^ 2$ if and only if
$(\lambda,\mu)\in\R^2$, so that the
bounded pure complex exponential solutions are parametrized by
$(\lambda,\mu)\in\R^ 2$ with $\lambda\mu=1$. Replacing $\lambda$ by
$t$ and $\mu$ by $1/t$, such solutions are given by
\begin{equation}
u(x,y)=\e^{\imag t x+\imag y/t},
\label{eq:puresol1}
\end{equation}
for $t\in\R_{\ne0}=\R\setminus\{0\}$. Forming linear combinations of such
pure solutions, we have again a bounded smooth solution.
An $L^1$-\emph{mixed bounded exponential solution} is a generalized 
linear combination of the form
\begin{equation}
\label{f0int}
u(x,y)=U[{\varphi}] (x, y) := \int\nolimits_{\R}
\e^{\imag  x t + \imag y / t} \varphi (t) \diff t ,
\end{equation}
where $\varphi \in L^{1}(\R)$.
Such $L^1$-mixed bounded exponential solution are not so exotic after all,
as can be seen from the following argument. First, we note that according to
\cite{hed}, we can think of \eqref{f0int} as saying that $u=U[\varphi]$ is
the Fourier transform of a finite complex-valued Borel measure on $\R^2$
supported  on a certain hyperbola
(corresponding to $\{(\lambda,\mu)\in\R^2:\,\lambda\mu=1\}$ in the above
context) which in addition is absolutely continuous with respect to arc
length measure. 
On the other hand, if $u\in L^\infty(\R^2)$ is given,
its inverse Fourier transform, thought of as a distribution, is a
pseudomeasure on $\R^2$.  
Moreover, if $u$ solves the Klein-Gordon equation \eqref{eq:KleinGordon1.1}
in the sense of distribution theory, then that pseudomeasure would be supported
on the same hyperbola in $\R^2$. Expressed differently, $u$ would be the Fourier
transform of a pseudomeasure whose support is the given hyperbola.
So the additional requirement in \eqref{f0int} is that the pseudomeasure
be an actual finite Borel measure which is absolutely continuous with respect
to arc length measure on the hyperbola. 
A further remark worth mentioning is that on a given compact subset, such 
$L^1$-mixed bounded exponential solutions can approximate any continuous 
solution of the Klein-Gordon equation.

\subsection{The discretized Goursat problem for the
Klein-Gordon equation}
The lines $x=0$ and $y=0$ are characteristics for the equation $u_{xy}+u=0$,
and the Goursat problem would ask for a solution with given data e.g.
along the semi-axes $\{(x,y)\in\R^2:x=0,\,y\le0\}$ and
$\{(x,y)\in\R^2:x\ge0,\,y=0\}$. This problem 
is concerned with the space-like quarter-plane $\{(x,y)\in\R^2:x\ge0,y\le0\}$,
and it was investigated by Riemann. Riemann's solution is often unbounded
in the plane. If we need a bounded solution, some kind of compatibility
property needs to be met for the data along the two characteristics.
For the $L^1$-mixed bounded exponential solutions, we can be more precise.
In terms of notation, we write
\[
\mathrm{J}_{1,0}(x,y):=\sum_{k=0}^{+\infty}\frac{(-1)^k}{k!(k+1)!}x^{k+1}y^k
\]
for a two-variable version of the familiar Bessel function. The following
result was mentioned in \cite{bhm}.

\begin{prop}
If $u(x,y)=U[\varphi](x,y)$ is given by \eqref{f0int} for some
$\varphi\in L^1(\R)$, then
  \begin{equation}
u(0,y)=u(0,0)-\int_0^{+\infty}\mathrm{J}_{1,0}(-y,t)u(t,0)\,\diff t,\qquad y\le0.
\label{eq:J1}
\end{equation}
\end{prop}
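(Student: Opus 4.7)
The plan is to reduce the claim to the scalar identity
$$\int_0^{+\infty}\mathrm{J}_1(-y,t)\,\e^{\imag st}\,\diff t=1-\e^{\imag y/s},\qquad s\in\R_{\ne0},\ y\le 0,$$
and then pair it against $\varphi$. Substituting the definition gives $u(0,0)=\int_\R\varphi(s)\,\diff s$, $u(t,0)=\int_\R\e^{\imag st}\varphi(s)\,\diff s$, and $u(0,y)=\int_\R\e^{\imag y/s}\varphi(s)\,\diff s$, so if the order of integration in $\int_0^{+\infty}\mathrm{J}_1(-y,t)u(t,0)\,\diff t$ can be exchanged, the claim follows from the displayed scalar identity above.

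Because $\mathrm{J}_1(-y,t)=\Ordo(t^{-3/4})$ as $t\to+\infty$ and $u(\cdot,0)$ is merely bounded for $\varphi\in L^{1}(\R)$, Fubini is not immediately available. I would insert a damping factor $\e^{-\epsilon t}$ and consider
$$f_\epsilon(s):=\int_0^{+\infty}\mathrm{J}_1(-y,t)\,\e^{(\imag s-\epsilon)t}\,\diff t,\qquad \epsilon>0,$$
using the defining series $\mathrm{J}_1(-y,t)=\sum_{k\ge 0}\frac{(-1)^k(-y)^{k+1}}{k!(k+1)!}t^k$ and the Laplace integral $\int_0^{+\infty}t^k\e^{-pt}\diff t=k!/p^{k+1}$ with $p=\epsilon-\imag s$. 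Term-by-term integration, justified by the entireness of $\mathrm{J}_1(-y,\cdot)$ and uniform tail control, yields
$$f_\epsilon(s)=\sum_{k=0}^{+\infty}\frac{(-1)^k(-y)^{k+1}}{(k+1)!(\epsilon-\imag s)^{k+1}}=1-\exp\!\bigg(\frac{y}{\epsilon-\imag s}\bigg).$$
Since $\re\bigl(y/(\epsilon-\imag s)\bigr)=y\epsilon/(\epsilon^2+s^2)\le 0$ when $y\le 0$, one has $|f_\epsilon(s)|\le 2$ uniformly, and $f_\epsilon(s)\to 1-\e^{\imag y/s}$ pointwise for $s\ne0$ as $\epsilon\to 0^+$. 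Fubini is now legitimate for the fully absolutely convergent double integral $\int_\R\varphi(s)f_\epsilon(s)\,\diff s=\int_0^{+\infty}\e^{-\epsilon t}\mathrm{J}_1(-y,t)u(t,0)\,\diff t$, and dominated convergence (with dominant $2|\varphi|$) gives
$$\lim_{\epsilon\to 0^{+}}\int_0^{+\infty}\e^{-\epsilon t}\mathrm{J}_1(-y,t)u(t,0)\,\diff t=\int_\R\varphi(s)\bigl(1-\e^{\imag y/s}\bigr)\,\diff s=u(0,0)-u(0,y).$$

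The main obstacle is to identify this Abel-regularized limit with the improper integral appearing in \eqref{eq:J1}. For this I would use the substitution $\tau=2\sqrt{-yt}$ to write the kernel as $\mathrm{J}_1(-y,t)=\sqrt{-y/t}\,J_1(2\sqrt{-yt})$ with $J_1$ the classical Bessel function, whose asymptotics $J_1(\tau)=\sqrt{2/(\pi\tau)}\cos(\tau-3\pi/4)+\Ordo(\tau^{-3/2})$ put the integral in the oscillatory form $\int\tau^{-1/2}\cos(\tau-3\pi/4)\,u(\tau^2/(-4y),0)\,\diff\tau$ plus an absolutely convergent remainder. One integration by parts against the oscillatory factor, combined with the Riemann--Lebesgue vanishing of $u(\cdot,0)$ at infinity, gives a convergent improper integral that agrees with the Abel limit by an Abelian theorem. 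The scalar computation of $f_\epsilon$ and the dominated convergence step are routine; the stationary-phase / integration-by-parts argument needed to legitimize the improper integral is where the real work lies.
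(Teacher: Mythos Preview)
The paper does not supply a proof of this proposition; it merely states the result and attributes it to \cite{bhm}. So there is no ``paper's own proof'' to compare against, and your Abel-regularization strategy must be judged on its own.

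The core of your argument is sound. The term-by-term Laplace evaluation giving $f_\epsilon(s)=1-\exp\bigl(y/(\epsilon-\imag s)\bigr)$ is correct, the bound $|f_\epsilon(s)|\le 2$ for $y\le0$ is correct, and dominated convergence in $s$ legitimately yields the Abel limit $u(0,0)-u(0,y)$.

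The gap is in your final step, and it is not just where ``the real work lies'' but where your stated plan would actually fail. You propose to show convergence of the improper integral by integrating by parts in the variable $\tau=2\sqrt{-yt}$, putting the derivative on $\tau^{-1/2}u(\tau^2/(-4y),0)$. That requires differentiating $u(t,0)$, and for generic $\varphi\in L^1(\R)$ the Fourier transform $u(\cdot,0)$ is merely continuous with Riemann--Lebesgue decay; it need not be differentiable or of bounded variation. Neither integration by parts nor Dirichlet's test applies directly.

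The repair is to move the integration by parts to the scalar level. Apply Fubini at finite truncation to write $\int_0^R \mathrm{J}_1(-y,t)u(t,0)\,\diff t=\int_\R g_R(s)\varphi(s)\,\diff s$ with $g_R(s)=\int_0^R\mathrm{J}_1(-y,t)\e^{\imag st}\,\diff t$. Now $\e^{\imag st}$ is smooth, so integration by parts in $t$ (using that $\partial_t\mathrm{J}_1(-y,t)=\Ordo(t^{-5/4})$ is absolutely integrable) shows $g_R(s)\to 1-\e^{\imag y/s}$ for each $s\ne0$. The uniform bound $\sup_{R,s}|g_R(s)|<\infty$ then follows from a van der Corput estimate on $\int_0^T J_1(\tau)\e^{\imag a\tau^2}\,\diff\tau$ (after your substitution), where the stationary point $\tau_c\sim 1/(2|a|)$ contributes $\Ordo(1)$ uniformly in $a$. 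With this in hand, dominated convergence in $s$ (dominant $C|\varphi|$) gives the improper-integral version of the identity directly, and the separate Abelian theorem becomes unnecessary.
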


In terms of the Goursat problem, this means that the boundary data on
the semi-axis $\{(x,y)\in\R^2:x\ge0,y=0\}$ necessarily induces boundary
data on other semi-axis $\{(x,y):x=0,y\le0\}$ as well
(and, in fact, vice versa).
But we would prefer to work with independent data. One way to achieve this
is to thin out the given data on the two semi-axes by discretizing the axes.

Given that we look for a bounded solution of the form \eqref{f0int}, it makes
sense to reduce the given Goursat boundary data along the two characteristics
to the point that the data is no longer overdetermined. The question is how
to thin out the data.
If we consider the two-sided
\emph{discretized Goursat problem of critical density},
which instead prescribes the values along the discrete subset of the
characteristics consisting of  equidistant points $(\pi m, 0)$ and
$(0, \pi n)$ where $m,n \in \Z$, it turns out that we have
uniqueness (we write that it is two-sided because we consider the full
lattice-cross, which is the union of the discretized boundaries of the two 
spacelike quarter-planes  $\{(x,y)\in\R^2:x\ge0,y\le0\}$ and
$\{(x,y)\in\R^2:x\le0,y\ge0\}$).
In other words, for $L^1$-mixed bounded exponential solutions
$u=U_\varphi$, the values $u(\pi m, 0)$ and $u(0, \pi n)$ determine $u$ uniquely,
provided that $m,n$ range over the integers $\Z$.
This is the main result of \cite{hed}, which may be formulated in the
following manner.

\begin{thm}
 \label{intth0}
{\rm(Hedenmalm, Montes-Rodr\'\i{}guez)}
Suppose $ \varphi  \in L^{1}(\R)$ and that
\begin{equation}
     \int\limits_{\R}  {\rm{e}}^{{{{\rm{i}} \pi n t}}}
    \varphi (t) {\rm{d}} t  =\int\limits_{\R}  {\rm{e}}^{{{{-\rm{i}}
     \pi n / t}}} \varphi (t) {\rm{d}} t=0 \, , \quad  n\in \Z.
\end{equation}
\noindent Then $ \varphi  = 0$.
\end{thm}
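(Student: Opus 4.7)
The plan is to recast the two orthogonality conditions as vanishing periodizations, and then exploit a Perron--Frobenius-type transfer operator attached to the Gauss-like involution $t\mapsto-1/t$. Since $\{\e^{\imag\pi nt}\}_{n\in\Z}$ separates the Fourier coefficients of any $2$-periodic $L^1$-function, the first family of vanishing integrals is equivalent to
\[
P\varphi(t):=\sum_{k\in\Z}\varphi(t+2k)=0\quad\text{a.e.},
\]
the sum converging in $L^1([-1,1])$ by Fubini. The substitution $s=-1/t$ converts the second family into $\int_\R \e^{\imag\pi ns}(\Jop_1\varphi)(s)\,\diff s=0$ for all $n\in\Z$, where $\Jop_1\varphi$ is given by \eqref{eq:Jop1}, so likewise $P(\Jop_1\varphi)=0$ a.e.

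Next I would set up a self-consistent recursion. The involution $t\mapsto -1/t$ interchanges $I:=(-1,1)$ and $I^c:=\R\setminus[-1,1]$. The relation $P\varphi=0$ recovers $\varphi|_I$ from $\varphi|_{I^c}$ via $\varphi|_I(t)=-\sum_{k\ne0}\varphi(t+2k)$, and the relation $P(\Jop_1\varphi)=0$, combined with the involutive nature of $\Jop_1$, likewise recovers $\varphi|_{I^c}$ from $\varphi|_I$. Composing the two reconstructions produces a bounded operator $\Tope:L^1(I)\to L^1(I)$ of Perron--Frobenius type, associated with the Gauss-like map sending $t\in I$ to $-1/t$ reduced modulo $2\Z$ back into $I$; the two hypotheses then translate into the single fixed-point identity $\Tope h=h$, where $h:=\varphi|_I$.

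Finally I would invoke ergodic theory of continued-fraction-type maps. The operator $\Tope$ is an $L^1$-contraction whose underlying dynamical system is conservative and exact, so every $L^1$-fixed point of $\Tope$ is a scalar multiple of a distinguished absolutely continuous invariant density $\rho\in L^1(I)$. The $n=0$ instance of the first hypothesis gives $\int_\R\varphi\,\diff t=0$, and the analogous identity from the second hypothesis provides a second independent linear constraint on $h$; together these force the coefficient of $\rho$ to vanish, so $h=0$. The involution $\Jop_1$ then delivers $\varphi|_{I^c}=0$, hence $\varphi\equiv 0$ on $\R$.

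The main obstacle is the ergodic input in the last step: identifying $\Tope$ with a well-studied Gauss--Kuzmin-type transfer operator, verifying conservativity and exactness on $L^1(I)$, and checking that the normalization constraints inherited from the two hypotheses are sufficient to kill the surviving invariant density. The periodization reduction and the construction of $\Tope$ are essentially formal; the triviality of its $L^1$-fixed space is the heart of the matter.
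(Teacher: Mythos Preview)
Your periodization-plus-transfer-operator approach is exactly the route taken in the original paper \cite{hed} and in the proof of the generalization Proposition~\ref{prop:uniq1.05} here. The reductions you describe---turning the two vanishing families into $P\varphi=0$ and $P(\Jop_1\varphi)=0$, and then composing to obtain a fixed-point equation $\varphi|_I=\Tope\,\varphi|_I$ with $\Tope=\Tope_0^2$ the square of the Perron--Frobenius operator for the even Gauss map $t\mapsto -1/t\pmod 2$ on $[-1,1]$---are correct and match equations \eqref{eq:firstcond1}--\eqref{eq:secondcond3} (specialized to $\omega_1=\omega_2=0$).

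The gap is in your final ergodic step. You posit a distinguished invariant density $\rho\in L^1(I)$ and plan to use the $n=0$ constraints to kill its coefficient. But the even Gauss map on $[-1,1]$ is \emph{infinite} ergodic: its natural invariant measure has density comparable to $(1-t^2)^{-1}$, which is not integrable near $\pm1$, so there is no nonzero $\rho\in L^1(I)$ with $\Tope_0\rho=\rho$. Consequently the $L^1$ fixed-point space of $\Tope_0^2$ is already trivial, and your normalization step is both unnecessary and based on a false premise. The genuine ergodic input is that for this conservative, exact, infinite-measure system, $\Tope_0^{2n}f\to0$ in $L^1$ on compact subintervals of $(-1,1)$ for every $f\in L^1(I)$ (see \cite{hed}, \cite{hed1}); this immediately forces any $L^1$ fixed point to vanish. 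The paper's argument also passes through $|\varphi|$ via the triangle inequality (showing $|\varphi|\le\Tope_0^2|\varphi|$ and then equality by integration), which cleanly reduces to the nonexistence of an absolutely continuous invariant probability measure---a formulation you may find more transparent than the direct fixed-point statement.
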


The corresponding result which applies to the space-like quarter-planes reads
as follows (see \cite{hed1}, \cite{hed2}).

\begin{thm}
\label{intth0.01}
{\rm(Hedenmalm, Montes-Rodr\'\i{}guez)}
Suppose $ \varphi\in L^{1}(\R)$ and that
\begin{equation}
     \int\limits_{\R}  {\rm{e}}^{{{{\rm{i}} \pi n t}}}
    \varphi (t) {\rm{d}} t  =\int\limits_{\R}  {\rm{e}}^{{{{-\rm{i}}
     \pi n / t}}} \varphi (t) {\rm{d}} t=0 \, , \quad  n\in \Z_{\ge0}.
\end{equation}
\noindent Then $\varphi\in H^1_+(\R)$.
\end{thm}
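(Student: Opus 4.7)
The plan is to deduce Theorem \ref{intth0.01} from the two-sided Theorem \ref{intth0} by a duality argument based on a weak-$\ast$ density statement. The preliminary observation is that each of $e_n(t)=\e^{\imag\pi nt}$ and $(\Jop_1^\ast e_n)(t)=\e^{-\imag\pi n/t}$ with $n\in\Z_{\ge0}$ is the boundary trace of a bounded holomorphic function on the upper half-plane $\Hyp$: for the first family this is obvious, and for the second it uses that the involution $\tau\mapsto -1/\tau$ preserves $\Hyp$, so that $|\e^{-\imag\pi n/\tau}|=\e^{-\pi n\,\im(-1/\tau)}\le 1$ there. Hence the two vanishing hypotheses on $\varphi$ say exactly that $\varphi$ annihilates, in the $L^1$--$L^\infty$ pairing on $\R$, the linear span $E$ of $\{e_n,\Jop_1^\ast e_n\}_{n\ge 0}$, a subspace of $H^\infty(\Hyp)$ (interpreted via boundary traces inside $L^\infty(\R)$).

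Granted the weak-$\ast$ density of $E$ in $H^\infty(\Hyp)$, the conclusion follows smoothly. Indeed, for each $z\in-\Hyp$ the Cauchy kernel $h_z(t):=1/(t-z)$ lies in $H^\infty(\Hyp)$ as a function of $t$, so $\varphi$ annihilates $h_z$ as well, which means that the Cauchy transform
\[
F(z)=\frac{1}{2\pi\imag}\int_{\R}\frac{\varphi(t)}{t-z}\diff t
\]
vanishes identically on $-\Hyp$. The classical Plemelj jump relations then express $\varphi$ as the $L^1$ boundary trace of the holomorphic function $F|_{\Hyp}$, and the standard Poisson estimate $\|F(\cdot+\imag y)\|_{L^1(\R)}\le\|\varphi\|_{L^1(\R)}$ shows that $F|_\Hyp\in H^1(\Hyp)$, which is precisely the assertion $\varphi\in H^1_+(\R)$. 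The reverse implication is trivial: any $\varphi\in H^1_+(\R)$ satisfies the stated vanishing conditions by closing contours in $\Hyp$, so the theorem is sharp.

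The hard part will be the one-sided weak-$\ast$ density of $E$ in $H^\infty(\Hyp)$. This is a genuine refinement of the two-sided density in all of $L^\infty(\R)$ established in \cite{hed}: one must now exclude the anti-analytic directions without the aid of the negative-index functions $e_{-n}$ and $\Jop_1^\ast e_{-n}$ for $n\ge 1$. The approach I would follow is the dynamical strategy of \cite{hed1,hed2}: reformulate the density statement as the triviality of invariant subspaces for an explicit Perron--Frobenius-type transfer operator associated with the Gauss-like map arising from the group of M\"obius transformations generated by $\tau\mapsto\tau+2$ and $\tau\mapsto-1/\tau$, but now restricted to the analytic (Hardy) side of the decomposition. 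The crux is to rule out nontrivial invariant subspaces on this analytic side; this is where the asymmetry between $n\ge0$ and $n<0$ decisively enters, and where the quarter-plane case genuinely differs from the full lattice-cross setting of Theorem \ref{intth0}.
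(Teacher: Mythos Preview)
The paper does not actually prove Theorem~\ref{intth0.01} in the text; it is stated with attribution and referred to \cite{hed1,hed2}, with only the remark that ``the method of proof \ldots\ is careful analysis of the transfer operator associated with the Gauss-type map $t\mapsto-1/t$ modulo $1$ on the interval $[-1,1]$.'' So there is no in-paper argument to compare against beyond that one sentence.

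Your duality reformulation is correct: the theorem is equivalent, via the Hahn--Banach pairing $L^1\text{--}L^\infty$, to the weak-$\ast$ density of $E=\mathrm{span}\{e_n,\Jop_1^\ast e_n:n\ge0\}$ in $H^\infty(\Hyp)$, and this is exactly how \cite{hed2} frames the problem (its title is literally ``$H^\infty$ approximation''). One small wrinkle: the estimate $\|F(\cdot+\imag y)\|_{L^1}\le\|\varphi\|_{L^1}$ is a Poisson bound, not a Cauchy bound; it holds here because once $F$ vanishes on $-\Hyp$ you get $F|_\Hyp=P[\varphi]$ from the identity $P(\tau,t)=\frac{1}{2\pi\imag}\big(\frac{1}{t-\tau}-\frac{1}{t-\bar\tau}\big)$. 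That step is implicit in what you wrote but should be stated.

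The genuine gap is that your proposal is a reduction, not a proof. Your opening sentence promises to ``deduce Theorem~\ref{intth0.01} from the two-sided Theorem~\ref{intth0},'' but this is never done, and by your own final paragraph it cannot be: the one-sided density in $H^\infty(\Hyp)$ is strictly harder than the two-sided density in $L^\infty(\R)$, and the two-sided result gives no direct leverage on it. What remains is exactly the transfer-operator analysis of \cite{hed1,hed2} that both you and the paper cite but neither carries out. So your write-up ends at the same point the paper does---a correct identification of the hard step together with a pointer to the literature---but the framing as a deduction from Theorem~\ref{intth0} should be dropped, since that route does not go through.
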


Here, $H^1_+(\R)$ stands for the Hardy space of functions in $L^1(\R)$ whose
Poisson harmonic extensions (cf. equation \eqref{eq:Poisson0})
to the upper half-plane $\Hyp$ are holomorphic. We will use the notation
$H^1_-(\R)$ for for the analogous Hardy space where the harmonic
extension is instead conjugate-holomorphic.

The method of proof of Theorems \ref{intth0} and \ref{intth0.01} is careful
analysis of the transfer operator associated with the Gauss-type map
$t\mapsto-1/t$ modulo $1$ on the interval $[-1,1]$. 
In terms of the $L^1$-mixed bounded exponential solutions to the Klein-Gordon 
equation, these results entail the following.

\begin{cor}
\label{cor:uniq1.01}
Let $u=U[\varphi]$ be of the form \eqref{f0int} for some $\varphi\in L^1(\R)$.

\noindent{\rm(a)}
If $u(\pi n,0)=u(0,-\pi n)=0$ for each $n\in\Z_{\ge0}$, then
$\varphi\in H^1_+(\R)$, and hence $u(x,y)=0$ in the quarter-plane 
$\{(x,y)\in\R^2:\,x\ge0,y\le0\}$. 

\noindent{\rm(b)}
If instead $u(-\pi n,0)=u(0,\pi n)=0$ for each $n\in\Z_{\ge0}$, then
$\varphi\in H^1_-(\R)$, and hence $u(x,y)=0$ in the quarter-plane 
$\{(x,y)\in\R^2:\,x\le0,y\ge0\}$. 

\noindent{\rm(c)}
If $u(\pi n,0)=u(0,-\pi n)=0$ for each $n\in\Z$, then
$\varphi=0$, and hence $u(x,y)=0$ on all of $\R^2$.
\end{cor}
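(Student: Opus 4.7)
The starting observation is that both axial sections of $u$ read off directly from \eqref{f0int}:
\[
u(\pi n, 0) = \int_{\R} \e^{\imag\pi n t}\varphi(t)\diff t, \qquad
u(0, -\pi n) = \int_{\R} \e^{-\imag\pi n/t}\varphi(t)\diff t,
\]
so the vanishing hypotheses translate verbatim into those of Theorems \ref{intth0} and \ref{intth0.01}. This immediately gives part (c): the two-sided conditions in (c) are precisely the hypotheses of Theorem \ref{intth0}, so $\varphi=0$ in $L^1(\R)$, and hence $u=U[\varphi]\equiv 0$ on $\R^2$.

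For (a), Theorem \ref{intth0.01} yields $\varphi\in H^1_+(\R)$; for (b) the analogous conclusion $\varphi\in H^1_-(\R)$ is obtained by applying Theorem \ref{intth0.01} to $\overline{\varphi}$, since complex conjugation swaps the conditions of (b) into those of (a). The remaining content in each of (a) and (b) is the vanishing of $u$ on the associated space-like quarter-plane. For (a), I would fix $y\le 0$ and consider the auxiliary function
\[
\psi_y(t):=\e^{\imag y/t}\varphi(t),\qquad t\in\R\setminus\{0\}.
\]
Because $0\notin\Hyp$, the factor $\tau\mapsto \e^{\imag y/\tau}$ is holomorphic on $\Hyp$, and for $\tau\in\Hyp$ one has $\im(1/\tau)<0$, so for $y\le 0$ the estimate $|\e^{\imag y/\tau}|=\e^{-y\,\im(1/\tau)}\le 1$ holds throughout $\Hyp$. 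Combined with $\varphi\in H^1_+(\R)$, this shows $\psi_y\in H^1_+(\R)$, with $L^1$-norms on horizontal lines controlled by those of $\varphi$. A standard Paley--Wiener contour-shift into $\Hyp$ then gives
\[
\widehat{\psi_y}(x):=\int_{\R}\e^{\imag x t}\psi_y(t)\diff t=0\qquad\text{for }x>0,
\]
and by continuity of $\widehat{\psi_y}$ on $\R$, also at $x=0$. Since $u(x,y)=\widehat{\psi_y}(x)$, this yields $u(x,y)=0$ for all $x\ge 0$, $y\le 0$, as required. Part (b) follows by the mirror argument, with $\Hyp$ replaced by the lower half-plane and $H^1_+$ by $H^1_-$.

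The only conceptual subtlety I anticipate is justifying that $\psi_y$ genuinely lies in $H^1_+(\R)$: the function $\e^{\imag y/\tau}$ has an essential singularity at $\tau=0$, but this point sits on the boundary of $\Hyp$, not inside it, so holomorphicity on the open upper half-plane is unaffected. Together with the uniform bound $|\e^{\imag y/\tau}|\le 1$ this lets the Hardy-space control pass from $\varphi$ to $\psi_y$ without loss. Once this is in place, everything else is routine and the corollary is a clean translation of Theorems \ref{intth0} and \ref{intth0.01} into the Klein--Gordon language.
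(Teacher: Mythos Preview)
Your argument is correct and matches the paper's treatment, which consists only of the brief remark following the corollary: the vanishing conditions are exactly the hypotheses of Theorems~\ref{intth0} and~\ref{intth0.01}, and the quarter-plane vanishing then follows from the Hardy-space membership via the Paley--Wiener/contour-shift argument you spell out. The only cosmetic difference is that the paper deduces (c) from (a) and (b) via $H^1_+(\R)\cap H^1_-(\R)=\{0\}$, whereas you invoke Theorem~\ref{intth0} directly; both routes are equally valid.
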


\begin{rem}
In Corollary \ref{cor:uniq1.01}, parts (a) and (b) are equivalent, in the sense
that (b) follows from (a), and (a) follows from (b).
Moreover, (c) is immediate from (a) and (b), since the assumptions imply that
$\varphi\in H^1_+(\R)\cap H^1_-(\R)=\{0\}$. 
\end{rem}  

Since according to Corollary \ref{cor:uniq1.01}(c), a solution $u=U[\varphi]$
(with $\varphi\in L^1(\R)$) to the
Klein-Gordon equation is uniquely determined by its values along the
\emph{lattice-cross}
\[
\big\{(\pi m,\pi n):\, m,n\in\Z,\,\,\, mn=0\big\}.
\]
it becomes natural to ask whether we can freely prescribe the values there.
Pleasantly, this is answered in the affirmative by Theorem
\ref{thm:interpolation0.01} below,
which in turn builds on the following result. Note that 
the functions $A_0$ and $A_n,B_n$ are all in $L^1(\R)$ for $n\in\Z_{\ne0}$. 
After all, in view of Corollary \ref{cor:hfs1.1}, they are test functions in
$C^\infty_1(\R\cup\{\infty\})$.

\begin{thm}
\label{thm:PBeurling}  
For $n\in\Z$, we put
\[
u_{(n,0)}(x,y):=U[A_n](x,y)=\int_\R \e^{\imag xt+\imag y/t}A_n(t)\,\diff t,
\qquad(x,y)\in\R^2,  
\]
and declare, in addition, that, for $n\in\Z_{\ne0}$,
\[
u_{(0,n)}(x,y):=U[B_{-n}](x,y)=U[A_n](y,x)=
\int_\R \e^{\imag yt+\imag x/t}A_n(t)\,\diff t,
\qquad(x,y)\in\R^2.  
\]
Then if we use Kronecker delta notation, we have
\[
u_{(n,0)}(\pi m,0)=\delta_{m,n},\quad u_{(n,0)}(0,\pi m)=\delta_{0,m}\delta_{m,0},
\]
for each $m\in\Z$. Likewise, we find that for $n\in\Z_{\ne0}$ and $m\in\Z$,
\[
u_{(0,n)}(\pi m,0)=0,\quad u_{(0,n)}(0,\pi m)=\delta_{m,n}.
\]  
\end{thm}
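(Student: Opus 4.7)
The plan is to reduce all four Kronecker-delta identities to the biorthogonality relations already recorded in Corollary \ref{cor:biorthogonal_2.3.5}, after bridging the two families $u_{(n,0)}$ and $u_{(0,n)}$ by an involutive change of variables supplied by Proposition \ref{prop:symmetry_hfs_first1}.

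First I would check that the integrals defining $u_{(n,0)}$ and $u_{(0,n)}$ are absolutely convergent on all of $\R^2$. By Corollary \ref{cor:hfs1.1}, $A_n$ and $B_n$ lie in $C^\infty_1(\R\cup\{\infty\})$ and in fact satisfy the uniform decay $\Ordo((1+x^2)^{-1})$, with an $n$-dependent constant that is irrelevant here. Hence $A_n, B_n \in L^1(\R)$ and the oscillatory integrals $U[A_n]$, $U[B_{-n}]$ are well defined pointwise (and represent bounded continuous solutions of the Klein-Gordon equation).

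Next I would verify the identity $U[B_{-n}](x,y) = U[A_n](y,x)$ that is built into the statement. From Proposition \ref{prop:symmetry_hfs_first1},
\[
B_{-n}(t) \;=\; \Jop_1 A_{-n}(t) \;=\; t^{-2}A_{-n}(-1/t) \;=\; t^{-2}A_n(1/t),
\]
where the last step uses $A_{-n}(-x)=A_n(x)$. Substituting $s=1/t$, so that $dt/t^2 = -ds$ and the orientation of the two half-lines gets reversed, one obtains
\[
U[B_{-n}](x,y) \;=\; \int_{\R} \e^{\imag xt+\imag y/t}\,t^{-2}A_n(1/t)\,\diff t
\;=\; \int_{\R}\e^{\imag ys+\imag x/s}A_n(s)\,\diff s \;=\; U[A_n](y,x).
\]

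Finally I would evaluate the four families of lattice-cross values by unwinding the definition of $U[\cdot]$ and appealing to Corollary \ref{cor:biorthogonal_2.3.5}. Specifically,
\[
u_{(n,0)}(\pi m,0) \;=\; \int_{\R}\e^{\imag\pi m t}A_n(t)\,\diff t \;=\; \langle A_n,e_m\rangle_\R,
\]
which equals $\delta_{m,n}$ for every $n\in\Z$ by the biorthogonality. Similarly, since $\e^{\imag\pi m/t} = \Jop_1^\ast e_{-m}(t)$,
\[
u_{(n,0)}(0,\pi m) \;=\; \int_{\R}\Jop_1^\ast e_{-m}(t)A_n(t)\,\diff t \;=\; \langle A_n,\Jop_1^\ast e_{-m}\rangle_\R,
\]
which vanishes when $n\neq 0$ and equals $\delta_{-m,0}=\delta_{m,0}$ when $n=0$; this is exactly $\delta_{0,m}\delta_{m,0}$ in the notation of the statement. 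For the second family, with $n\in\Z_{\ne0}$, the symmetry identity of the previous paragraph gives
\[
u_{(0,n)}(\pi m,0) \;=\; U[A_n](0,\pi m) \;=\; \langle A_n,\Jop_1^\ast e_{-m}\rangle_\R \;=\; 0,
\]
and
\[
u_{(0,n)}(0,\pi m) \;=\; U[A_n](\pi m,0) \;=\; \langle A_n,e_m\rangle_\R \;=\; \delta_{m,n}.
\]

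There is no genuine obstacle here: the only mildly delicate step is the change of variables yielding $U[B_{-n}](x,y)=U[A_n](y,x)$, which must use both halves of Proposition \ref{prop:symmetry_hfs_first1} (the conjugation symmetry to rewrite $A_{-n}$ in terms of $A_n$, and the $\Jop_1$-symmetry relating $A_n$ to $B_n$). Once that identity is in hand, the theorem is a transparent restatement of the biorthogonality of the system $\{A_0,A_n,B_n\}$ with respect to $\{e_m,\Jop_1^\ast e_m\}$ already established.
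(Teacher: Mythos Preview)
Your proof is correct and follows exactly the route the paper takes: the paper's proof is the single sentence ``The assertion is an immediate consequence of the biorthogonality properties stated in Corollary~\ref{cor:biorthogonal_2.3.5}.'' You have simply unpacked this, and in addition carried out explicitly the change of variables behind the swap identity $U[B_{-n}](x,y)=U[A_n](y,x)$ that the paper records in the statement without separate justification.
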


The proof of Theorem \ref{thm:PBeurling} is supplied in Subsection
\ref{subsec:HFS_Cauchy}.

\begin{rem}
The functions $u_{(n,0)}$ and $u_{(0,n)}$ are the analogous functions in the
context of the Klein-Gordon equation \eqref{eq:KleinGordon1.1} to the
classical cardinal sine functions for bandlimited signals, modelled by the
Paley-Wiener space of entire functions (which we may think of as the Fourier
transform applied to the Hilbert space $L^2([-1,1])$).
Although defined in terms of the complex-valued functions $A_n$ and $B_n$,
the functions $u_{(n,0)}(x,y)$ and $u_{(0,n)}(x,y)$ are necessarily real-valued.
This is related with the fact that the Klein-Gordon equation
\eqref{eq:KleinGordon1.1} which these functions solve has real coefficients.
Another consequence of Theorem \ref{thm:PBeurling} and Corollary 
\ref{cor:uniq1.01} is that $A_n,B_n\in H^1_-(\R)$ for $n>0$ while $A_n,B_n\in 
H^1_+(\R)$ for $n<0$.
\end{rem}

\begin{figure}
\centering
\begin{tikzpicture}[scale=3]
\fill[lightgray] (0, 0.1) rectangle (1.4, 1.5);
            \draw[-latex] (-1.5,1.5) -- (1.6,1.5)node[below]{$x$};
\draw[-latex] (0,0) -- (0,1.7)node[right]{$y$};
\path(0,1.5) --
node[below, pos=1]{$(2\pi,0)$} (1,1.5)
    (0,1.5)node[below]{$(0,0)$};
\draw(0,1.5)--(-.5,1.5)node[below]{}(.5,1.5)--(.5,1.5)
node[below]{$(\pi,0)$};
\draw(0,1.0)--(0,1.0)node[below]{}(0,1.0)--(0,1.0)
node[below]{$(0,-\pi)$};
\draw(0,0.5)--(0,0.5)node[below]{}(0,0.5)--(0,0.5)
node[below]{$(0,-2\pi)$};
\draw(1,0.0)--(1,0.0)node[below]{}(0.6,0.7)--(0.6,0.7)
node[right]{}
;
\filldraw [black] (0,1.5) circle (0.5pt);
\filldraw [black] (0.5,1.5) circle (0.5pt);
\filldraw [black] (1.0,1.5) circle (0.5pt);
\filldraw [black] (0.0,1.0) circle (0.5pt);
\filldraw [black] (0.0,0.5) circle (0.5pt);
\end{tikzpicture}
\caption{The discretized lattice-cross for a spacelike quarter-plane.}
\label{fig:lattice-cross}
\end{figure}
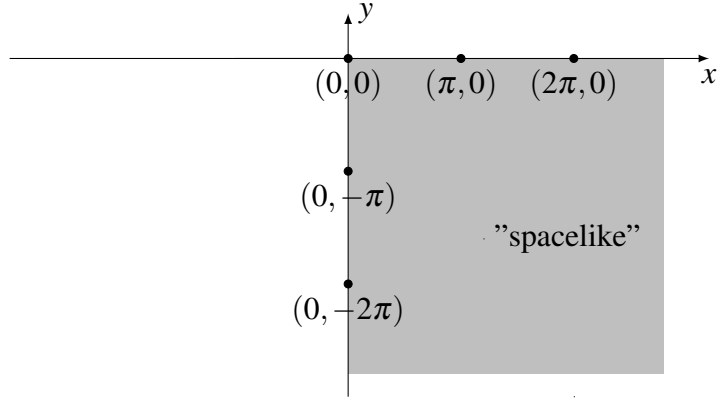

\begin{thm}
\label{thm:interpolation0.01}
Suppose the sequence of complex numbers $\alpha_0$ and $\alpha_n,\beta_n$
is given for $n\in\Z_{\ne0}$, with
\[
|\alpha_0|+\sum_{n\in\Z_{\ne0}}\big(|\alpha_n|+|\beta_n|\big)(\log^3(|n|+1))
<+\infty.
\]
Then if $\varphi$ is given by
\[
\varphi(t)=\alpha_0A_0(t)
+\sum_{n\in\Z_{\ne0}}\big(\alpha_nA_n(t)+\beta_n B_{-n}(t)\big),
\qquad t\in\R,  
\]
we have that $\varphi\in L^1(\R)$, and, in addition, the function
\[
u(x,y)=U[\varphi](x,y)=\alpha_0\, u_{(0,0)}(x,y)+
\sum_{n\in\Z_{\ne0}}\big(\alpha_n\, u_{(n,0)}(x,y)+\beta_n\, u_{(0,n)}(x,y)\big)  
\]
solves the interpolation problem
\[
u(\pi m,0)=\alpha_m,\quad u(0,\pi n)=\beta_n,
\]
for all $m\in\Z$ and $n\in\Z_{\ne0}$. 
\end{thm}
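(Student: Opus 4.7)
The argument has three movements. First, I would verify that the series defining $\varphi$ converges absolutely in $L^1(\R)$. Corollary \ref{cor:hfs1.1} provides the pointwise bounds $A_0(x)=\Ordo((1+x^2)^{-1})$ and $A_n(x),B_n(x)=\Ordo(|n|\log^2(|n|+1)(1+x^2)^{-1})$, uniformly in $n$ and $x$. Integration against $\diff x$ over $\R$ then produces $L^1$-norm estimates for $A_n$ and $B_{-n}$, and, combined with the weighted summability hypothesis, force the series
\[
\alpha_0 A_0(t)+\sum_{n\in\Z_{\ne0}}\bigl(\alpha_n A_n(t)+\beta_n B_{-n}(t)\bigr)
\]
to converge absolutely in $L^1(\R)$. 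In particular $\varphi\in L^1(\R)$.

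Next, since $|\e^{\imag xt+\imag y/t}|=1$ on $\R^3$, the operator $U$ is a contraction from $L^1(\R)$ into the space of bounded continuous functions on $\R^2$. Absolute $L^1$-convergence of the series for $\varphi$ therefore transfers through $U$ to uniform convergence on $\R^2$ of
\[
u(x,y)=U[\varphi](x,y)=\alpha_0\,u_{(0,0)}(x,y)+\sum_{n\in\Z_{\ne0}}\bigl(\alpha_n\,u_{(n,0)}(x,y)+\beta_n\,u_{(0,n)}(x,y)\bigr),
\]
which in particular legitimizes termwise evaluation at any fixed point of $\R^2$.

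Finally, I would evaluate at the lattice-cross using Theorem \ref{thm:PBeurling}. At the point $(\pi m,0)$, the identities $u_{(n,0)}(\pi m,0)=\delta_{m,n}$ (valid for every $n\in\Z$) and $u_{(0,n)}(\pi m,0)=0$ for $n\in\Z_{\ne0}$ collapse the series to $u(\pi m,0)=\alpha_m$. At the point $(0,\pi n)$ with $n\in\Z_{\ne0}$, the identities $u_{(k,0)}(0,\pi n)=0$ for every $k\in\Z$ and $u_{(0,k)}(0,\pi n)=\delta_{k,n}$ collapse the series to $u(0,\pi n)=\beta_n$, as required.

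The only substantive obstacle is the $L^1$ bookkeeping in Step 1. The brute-force pointwise bound of Corollary \ref{cor:hfs1.1} gives only $\|A_n\|_{L^1}=\Ordo(|n|\log^2(|n|+1))$, which is too weak to be paired with the weight $\log^3(|n|+1)$ in the hypothesis. To close the gap one has to exploit the localization of $A_n$ and $B_n$ (Dirichlet-kernel-style: large peaks on sets of measure $\sim 1/|n|$) to obtain a sharper $L^1$-sized estimate. Once this is in place, everything downstream is routine, as it is a direct consequence of the biorthogonality already encoded in Corollary \ref{cor:biorthogonal_2.3.5} and Theorem \ref{thm:PBeurling}.
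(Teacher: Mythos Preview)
Your structural outline is exactly right, and you have correctly isolated the single nontrivial point: the crude pointwise bound from Corollary~\ref{cor:hfs1.1} yields only $\|A_n\|_{L^1(\R)}=\Ordo(|n|\log^2|n|)$, which cannot be summed against the weight $\log^3(|n|+1)$. Everything downstream (continuity of $U$ from $L^1$ to bounded functions, termwise evaluation, and the biorthogonality from Theorem~\ref{thm:PBeurling}) is indeed routine once the $L^1$-estimate is in hand.

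However, your proposed fix---a Dirichlet-kernel-style localization argument showing that $A_n$ has peaks on sets of measure $\sim 1/|n|$---is left as a heuristic, and it is not clear how you would make it rigorous from the information available. The paper closes the gap by an entirely different and much cleaner route: \emph{duality}. The key observation (Theorem~\ref{thm:A_n_L1_est}) is that for any $g\in L^\infty(\R)$ the pairing $\langle A_n,g\rangle_\R$ is precisely the $n$-th hyperbolic Fourier coefficient $a_n(g)$ of $g$ (this is the content of the argument in the proof of Corollary~\ref{cor:biorthogonal_2.3.5}). Corollary~\ref{cor:8.6.2}, applied to bounded harmonic extensions, gives $a_n(g)=\Ordo(\log^3|n|)$ uniformly over the unit ball of $L^\infty(\R)$ (Remark~\ref{rem:8.6.3}). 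Taking $g=g_n$ with $|g_n|\le 1$ and $A_n g_n=|A_n|$ then yields
\[
\|A_n\|_{L^1(\R)}=\langle A_n,g_n\rangle_\R=a_n(g_n)=\Ordo(\log^3|n|),
\]
which is exactly the estimate needed to match the weight in the hypothesis. No pointwise analysis of the shape of $A_n$ is required; the $L^1$-bound falls out of the coefficient estimate for bounded data, which in turn rests on the harmonic extension algorithm and the average-height estimate of Proposition~\ref{prop:n_estimate_ave}.
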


The proof of Theorem \ref{thm:interpolation0.01} is supplied in Subsection
\ref{subsec:HFS_Cauchy}.

\begin{rem}
According to part (a) or (b) of Corollary \ref{cor:uniq1.01}, we may consider
the interpolation problem along the discretized boundary of a spacelike
quarterplane, and obtain a unique solution on the corresponding quarterplane.
This interpolation problem may be solved in a completely analogous fashion
to that of Theorem \ref{thm:interpolation0.01}. We omit the necessary details.
\end{rem}  

\begin{figure}
\includegraphics[width=0.5\linewidth]{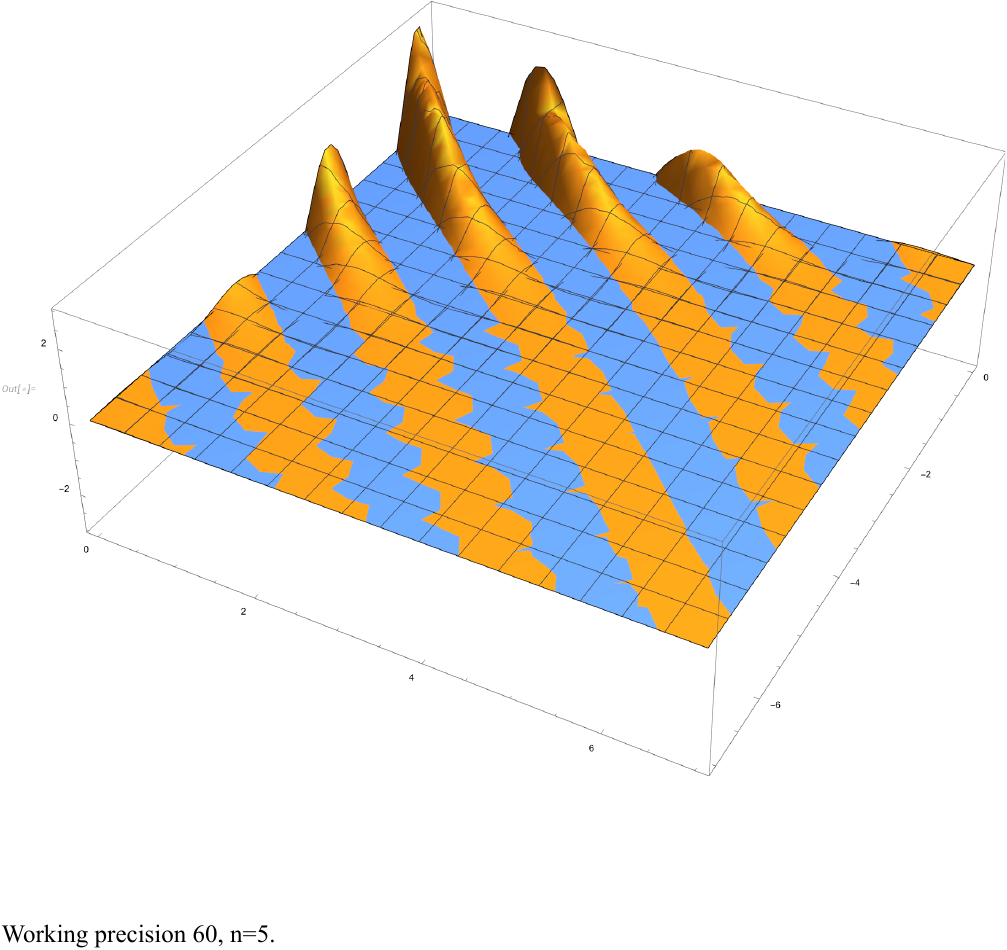}
\caption{Graph of the positive part of the function $u_{(n,0)}(x,y)$ for $n=5$ 
  in the quarter-plane where $x\ge0$ and $y\le0$. The computation is based
on Corollary \ref{cor:PBeurlingformula} below.}
\label{fig:GraphXY}
\end{figure}

\subsection{Alternative discretization variant: normal
derivatives}

The vanishing of two-sided Goursat boundary data along the two
axes amounts to having
\begin{equation}
\label{eq:Goursatbrydata1}
u(x,0)=u(0,y)=0,\qquad (x,y)\in\R^2.
\end{equation}
What happens if we replace one of the conditions, say the condition along the
$y$-axis, by a condition on the derivative or even a higher derivative?
For instance, if we consider the vanishing boundary data
\begin{equation}
\label{eq:Goursatbrydata2}  
u(x,0)=(\partial_x^k u)(0,y)=0, \qquad (x,y)\in\R^2,
\end{equation}
it follows immediately from the Klein-Gordon equation that, at least if $u$
is smooth, we have that
\[
0=\partial_y^k\big((\partial_x^k u)(0,y)\big)
=(\partial_y^k\partial_x^k u)(0,y)=(-1)^k u(0,y),\qquad y\in\R.
\]
In other words, the condition \eqref{eq:Goursatbrydata2} appears to be
slightly stronger than \eqref{eq:Goursatbrydata1}. 
As written, the two conditions apparently have inequivalent
discretizations. We are already familiar
with the discretization of \eqref{eq:Goursatbrydata1}. The corresponding
discretization of \eqref{eq:Goursatbrydata2} is
\begin{equation}
\label{eq:Goursatbrydata2.disk}  
u(\pi m,0)=(\partial_x^k u)(0,\pi n)=0, \qquad m,n\in\Z.
\end{equation}
If we as before take as our point of departure the formula \eqref{f0int} for
$u=U[\varphi]$, we see that
\[
\partial_x^k u(0,y)=\imag^k\int_\R \e^{\imag y/t}\varphi(t)\,t^k\,\diff t,
\]
provided that the integral makes sense. The condition on $\varphi$ that
\begin{equation}
\int_\R |\varphi(t)|(1+|t|^{k})\diff t<+\infty
\label{eq:extraintegrability}
\end{equation}
will guarantee that $u(x,y)$ and $\partial_x^k u(x,y)$ are bounded and 
continuous functions on $\R^2$.

\begin{prop}
Suppose $\varphi\in L^1(\R)$ has the extra integrability
\eqref{eq:extraintegrability}
for some $k\in\Z_{\ge0}$. Then if $u=U[\varphi]$ is of the form 
\eqref{f0int}, and 
\[
\int_\R \e^{\imag\pi m t}\varphi(t)\,\diff t=\int_\R\e^{-\imag \pi n/t}
\varphi(t)\,t^k\,\diff t=0,\qquad m,n\in\Z,
\]
holds, it follows that $\varphi=0$.
\label{prop:extraintegrability}
\end{prop}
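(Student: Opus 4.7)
The plan is to reduce the proposition to the two-sided uniqueness statement, Corollary~\ref{cor:uniq1.01}(c). Setting $u:=U[\varphi]$, the first vanishing hypothesis reads $u(\pi m,0)=\int_\R\e^{\imag\pi m t}\varphi(t)\diff t=0$ for every $m\in\Z$, so it suffices to establish the complementary vanishing $u(0,\pi n)=\int_\R \e^{\imag\pi n/t}\varphi(t)\diff t=0$ for every $n\in\Z$; Corollary~\ref{cor:uniq1.01}(c) then yields $\varphi=0$.

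The second vanishing hypothesis, after the harmless substitution $n\mapsto -n$, reads $\partial_x^k u(0,\pi n)=\imag^k\int_\R \e^{\imag\pi n/t}t^k\varphi(t)\diff t=0$ for all $n\in\Z$, the interchange of differentiation and integration being justified by the extra integrability \eqref{eq:extraintegrability}. Setting $v_j(y):=\partial_x^j u(0,y)$, the Klein--Gordon equation $\partial_x\partial_y u+u=0$ gives the recurrence $v_j'(y)=-v_{j-1}(y)$ for $j\ge 1$. Iterating, we obtain the Taylor-with-integral-remainder identity
\[
v_k(y)=\sum_{j=0}^{k-1}(-1)^j\frac{y^j}{j!}v_{k-j}(0)+(-1)^k\int_0^y \frac{(y-s)^{k-1}}{(k-1)!}\,v_0(s)\,\diff s.
\]
The vanishing $v_k(\pi n)=0$ for all $n\in\Z$ therefore says that the $k$-fold primitive of $v_0=u(0,\cdot)$, evaluated at the lattice $\pi\Z$, agrees with a polynomial in $n$ of degree at most $k-1$.

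\textbf{Main obstacle.} The crux is to upgrade this lattice-interpolation constraint on the $k$-fold primitive of $v_0$ to the pointwise vanishing $v_0(\pi n)=0$. Via Poisson summation, the first hypothesis amounts to the vanishing of the $2$-periodization $\sum_{j\in\Z}\varphi(t+2j)\equiv 0$ a.e., while after the substitution $s=-1/t$ the second hypothesis becomes the twisted periodization identity $\sum_{j\in\Z}(s+2j)^{-k-2}\varphi(-1/(s+2j))\equiv 0$ a.e. The desired conclusion $u(0,\pi n)=0$ is precisely the vanishing of $\sum_{j\in\Z}(s+2j)^{-2}\varphi(-1/(s+2j))$, so bridging the gap between the weight exponent $-k-2$ and the target exponent $-2$ is the technical heart of the argument. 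Expanding $(s+2j)^{k}$ binomially and using the first hypothesis to eliminate the leading piece, one reduces the matter to the auxiliary vanishings $\sum_{j\in\Z}j^i\,(s+2j)^{-k-2}\varphi(-1/(s+2j))\equiv 0$ for $i=1,\ldots,k$; these are established by a weighted refinement of the transfer-operator analysis from~\cite{hed}, in which the Gauss-type cocycle underlying Theorem~\ref{intth0} is twisted by a polynomial weight depending on $k$. Once the combined vanishings give $u(0,\pi n)=0$ for all $n\in\Z$, Corollary~\ref{cor:uniq1.01}(c) completes the proof.
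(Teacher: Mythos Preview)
Your proposal has a genuine gap at the step you yourself label the ``main obstacle.'' You correctly observe that the two hypotheses give the periodization identities $\sum_j\varphi(t+2j)=0$ and $\sum_j(s+2j)^{-k-2}\varphi(-1/(s+2j))=0$, and that the target is the vanishing of $\sum_j(s+2j)^{-2}\varphi(-1/(s+2j))$. Your binomial reduction to the auxiliary identities $\sum_j j^i(s+2j)^{-k-2}\varphi(-1/(s+2j))=0$ for $i=1,\dots,k$ is algebraically correct (though note that the $i=0$ term is killed by the \emph{second} hypothesis, not the first). But the assertion that these auxiliary vanishings follow from ``a weighted refinement of the transfer-operator analysis'' is not a proof: you give no operator, no invariance, no contraction estimate. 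There is no reason to expect these $j$-weighted sums to vanish individually; they are not periodizations of any single function, and neither hypothesis provides a mechanism for controlling them separately. In effect you are trying to deduce $k$ independent identities from two, and the only way they can all hold is if $\varphi=0$ already---so the reduction is circular.

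The paper's approach avoids this detour entirely and is in fact simpler for $k>0$ than for $k=0$. Rather than reducing to the $k=0$ uniqueness theorem, it combines the two periodizations directly to obtain $\varphi=(-1)^k\Tope_k^2\varphi$ a.e., where $\Tope_k f(t)=\sum_{j\ne0}(2j-t)^{-2-k}f(1/(2j-t))$. The change of variables shows $\int_{[-1,1]}\Tope_{\langle k\rangle}|f|\,\diff t=\int_{[-1,1]}|f(t)|\,|t|^k\diff t$, so iterating gives
\[
\int_{[-1,1]}|\varphi|\,\diff t=\int_{[-1,1]}|\Tope_k^2\varphi|\,\diff t\le\int_{[-1,1]}|\varphi(t)|\,|t|^k\,\diff t\le\int_{[-1,1]}|\varphi|\,\diff t.
\]
For $k>0$ the factor $|t|^k<1$ on $(-1,1)$ forces the last inequality to be strict unless $\varphi=0$ a.e.\ on $[-1,1]$; one then propagates to $\R\setminus[-1,1]$ via the second periodization. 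No ergodic theory or reduction to Theorem~\ref{intth0} is needed. The point you missed is that the extra weight $|t|^k$ makes the transfer operator \emph{strictly} contracting in $L^1([-1,1])$, which gives the conclusion in one stroke.
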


The proof of Proposition \ref{prop:extraintegrability} is supplied in
Subsection \ref{subsec:normalderivatives}.

\begin{cor}
Suppose $\varphi\in L^1(\R)$ has the extra integrability
\eqref{eq:extraintegrability}
for some $k\in\Z_{\ge0}$, and let $u=U[\varphi]$ be given by 
\eqref{f0int}. Suppose in addition that
\[
u(\pi m,0)=\partial_x^k u(0,\pi n)=0, \qquad m,n\in\Z,
\]
holds. It then follows that $\varphi=0$  and hence that $u=0$.
\end{cor}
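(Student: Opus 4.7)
The corollary should follow as an essentially immediate consequence of Proposition \ref{prop:extraintegrability}, since the two pointwise vanishing conditions on the lattice-cross translate directly to the two integral vanishing conditions in the proposition. The plan is therefore to unpack the definition of $u=U[\varphi]$ and match hypotheses.

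First, I would verify that the extra integrability hypothesis \eqref{eq:extraintegrability} ensures we may differentiate under the integral sign $k$ times with respect to $x$, so that $\partial_x^k u$ is given by a uniformly convergent integral. Explicitly, from
\[
u(x,y)=\int_\R\e^{\imag xt+\imag y/t}\varphi(t)\,\diff t,
\]
one obtains $\partial_x^k u(x,y)=\imag^k\int_\R t^k\e^{\imag xt+\imag y/t}\varphi(t)\,\diff t$, and specializing gives
\[
u(\pi m,0)=\int_\R\e^{\imag\pi mt}\varphi(t)\,\diff t,\quad
\partial_x^k u(0,\pi n)=\imag^k\int_\R t^k\e^{\imag\pi n/t}\varphi(t)\,\diff t.
\]

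Next, I would observe that the range of indices $m,n\in\Z$ is symmetric under negation. The hypothesis $\partial_x^k u(0,\pi n)=0$ for all $n\in\Z$ is therefore equivalent to $\int_\R t^k\e^{-\imag\pi n/t}\varphi(t)\,\diff t=0$ for all $n\in\Z$ (simply by replacing $n$ with $-n$). Combining with the first identity, we see that the hypotheses of the corollary translate verbatim to the hypotheses of Proposition \ref{prop:extraintegrability}.

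Applying Proposition \ref{prop:extraintegrability} then yields $\varphi=0$, and consequently $u=U[\varphi]=0$, completing the proof. There is no real obstacle here; the only small point to verify is the legitimacy of differentiating under the integral, which is handled by \eqref{eq:extraintegrability} together with dominated convergence. The substantive content of the result sits inside Proposition \ref{prop:extraintegrability}, not in this corollary.
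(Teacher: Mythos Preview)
Your proposal is correct and matches the paper's approach: the corollary is stated without a separate proof, being an immediate consequence of Proposition \ref{prop:extraintegrability} once one unpacks $u(\pi m,0)$ and $\partial_x^k u(0,\pi n)$ as integrals and uses the symmetry $n\mapsto -n$. The justification for differentiating under the integral sign via \eqref{eq:extraintegrability} is exactly the point the paper makes in the paragraph preceding Proposition \ref{prop:extraintegrability}.
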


This connects with the notion of
power skewed hyperbolic Fourier series. The interpolation problem
\begin{equation}
u(\pi m,0)=\alpha_m,\quad \partial_y^k u(0,\pi n)=\beta_n,
\label{eq:skewedinterpolation1.01}
\end{equation}
for $m,n\in\Z$ and $n\in\Z_{\ne0}$ amounts to looking for a suitable $\varphi$
such that
\[
u(\pi m,0)=\int_\R \e^{\imag\pi m t}\varphi(t)\diff t=\alpha_n,\qquad m\in\Z,
\]
while at the same time
\[
(\partial^k_x u)(0,\pi n)=\imag^k\int_\R
\e^{\imag\pi n/ t}\varphi(t)\,t^k\,\diff t=\beta_n,\qquad n\in\Z.
\]
It is of course entirely possible that the coefficients $\alpha_n,\beta_n$
in the interpolation problem cannot be chosen completely freely, but it
is very likely to be the case up to a finite-dimensional defect. 
Following the same scheme as in Theorem \ref{thm:interpolation0.01},
which is based on Theorem \ref{thm:PBeurling}, we should try to find,
if at all possible, a biorthogonal system to the system of functions
\[
\e^{\imag\pi mt},\quad t^{k}\e^{-\imag\pi n/t},
\]
for $m,n\in\Z$. 
This automatically hints at the study of power skewed hyperbolic
Fourier series, as defined in Subsection \ref{subsec:powerskewed}
with $\beta=-k$. 

\subsection{Alternative discretization: translation
 along the axes}

An alternative app\-roach to the discretization of the two axes is that
we consider, for $u=U[\varphi]$, a discretization of the two-sided Goursat
boundary data \eqref{eq:Goursatbrydata1} with equal density of the form
\begin{equation}
\label{eq:Goursatbrydata3.disk}  
u(\pi (m+\omega_1),0)=u(0,\pi (n+\omega_2))=0, \qquad m,n\in\Z,
\end{equation}
where $\omega_1,\omega_2\in\R$ are frequency shifts. 

\begin{prop}
For $\varphi\in L^1(\R)$, let $u=U[\varphi]$ be given by \eqref{f0int}.
If \eqref{eq:Goursatbrydata3.disk} holds for some $\omega_1,\omega_2\in\R$,   
then $\varphi=0$ and hence $u=0$. 
\label{prop:uniq1.05}
\end{prop}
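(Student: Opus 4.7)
The plan is to extend the transfer-operator argument behind Theorem~\ref{intth0} of \cite{hed} to incorporate the frequency shifts $(\omega_1,\omega_2)$, leveraging the exponentially skewed hyperbolic Fourier series framework of Subsection~\ref{subsec:exponentiallyskewed}.

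First I would recast the two vanishing conditions as weighted $2$-periodization identities. Setting $\psi(t):=e^{\imag\pi\omega_1 t}\varphi(t)\in L^1(\R)$, the condition $u(\pi(m+\omega_1),0)=0$ for every $m\in\Z$ amounts to $\int_\R e^{\imag\pi m t}\psi(t)\,dt=0$ for every $m\in\Z$, which by Poisson summation is equivalent to
\[
\sum_{k\in\Z}e^{2\pi\imag\omega_1 k}\varphi(t+2k)=0\quad\text{for a.e.\ }t\in\R.
\]
In analogous fashion, after the substitution $s=-1/t$ and the reindexing $n\mapsto-n$, the condition $u(0,\pi(n+\omega_2))=0$ for every $n\in\Z$ becomes
\[
\sum_{k\in\Z}e^{-2\pi\imag\omega_2 k}\phi(s+2k)=0\quad\text{for a.e.\ }s\in\R,
\]
where $\phi(s):=\varphi(-1/s)/s^2\in L^1(\R)$.

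Next I would interpret the combined system via a twisted transfer operator. The first identity expresses $\varphi|_{[-1,1]}$ as a weighted combination of the translates $\varphi(\cdot+2k)|_{[-1,1]}$ for $k\ne0$, which via the involution $u\mapsto-1/u$ of $\R\setminus\{0\}$ correspond to values of $\phi$ on $(-1,1)\setminus\{0\}$. The second identity does the analogous thing for $\phi|_{[-1,1]}$, this time in terms of $\varphi$ restricted to $(-1,1)\setminus\{0\}$. Composing the two relations yields a bounded linear operator $T_{\omega_1,\omega_2}\colon L^1([-1,1])\to L^1([-1,1])$ --- the Gauss-type transfer operator of \cite{hed} twisted by the unimodular characters $e^{2\pi\imag\omega_1 k}$ and $e^{-2\pi\imag\omega_2 k}$ --- and the hypothesis forces $f:=\varphi|_{[-1,1]}$ to be a fixed point of $T_{\omega_1,\omega_2}$.

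The crux is to show that $\mathrm{id}-T_{\omega_1,\omega_2}$ is injective on $L^1([-1,1])$. For the untwisted operator $T_{0,0}$, this is the core spectral content of \cite{hed}. For the twisted case I would invoke the algebraic correspondence of Proposition~\ref{prop:expskewed1}: multiplication by $\lambda(\tau)^{-\omega_1}\lambda(-1/\tau)^{-\omega_2}$ intertwines exponentially skewed and ordinary hyperbolic Fourier expansions on $\Hyp$, and passing this intertwining down to the real boundary should conjugate $T_{\omega_1,\omega_2}$ into $T_{0,0}$ up to a multiplicative weight, so that triviality of the kernel transfers. Once $f=0$, the periodization identity from the first step propagates to $\varphi=0$ on all of $\R$, and therefore $u=U[\varphi]=0$. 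The main obstacle is precisely this last conjugation: the boundary values of $\lambda(\tau)^{-\omega_1}\lambda(-1/\tau)^{-\omega_2}$ are highly oscillatory near the cusps, so the reduction is most naturally carried out at the level of harmonic extensions with controlled growth, in the spirit of Subsection~\ref{subsec:powerskewed}, rather than directly on $L^1([-1,1])$.
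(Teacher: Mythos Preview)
Your setup is exactly right and matches the paper: the periodization identities for $e^{\imag\pi\omega_1 t}\varphi(t)$ and $t^{-2}e^{-\imag\pi\omega_2 t}\varphi(-1/t)$, and the composition into a twisted transfer operator on $L^1([-1,1])$ with $\varphi|_{[-1,1]}$ as fixed point. The divergence is in how to kill that fixed point.

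Your proposed route --- conjugating $T_{\omega_1,\omega_2}$ into $T_{0,0}$ via multiplication by $\lambda(\tau)^{-\omega_1}\lambda(-1/\tau)^{-\omega_2}$ --- is where the gap lies, and you correctly flag it as the main obstacle. Proposition~\ref{prop:expskewed1} is a correspondence between \emph{formal coefficient sequences} of holomorphic functions on $\Hyp$; it does not descend to a bounded similarity of operators on $L^1([-1,1])$. The would-be conjugating multiplier has no reasonable boundary trace on $\R$ (the powers of $\lambda$ blow up or oscillate wildly at every rational cusp), so there is no $L^\infty$ weight to conjugate by, and lifting to harmonic extensions does not obviously help since you then leave the $L^1$ framework in which the ergodic/spectral input from \cite{hed} is formulated.

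The paper bypasses this entirely with a one-line observation you are missing: the twist is by \emph{unimodular} characters, so pointwise
\[
|\Tope_\omega f(t)|\le \Tope_0|f|(t)
\]
by the triangle inequality, and hence $|\varphi|\le \Tope_0^2|\varphi|$ a.e.\ on $[-1,1]$. But $\Tope_0$ preserves the $L^1([-1,1])$-norm (Proposition~3.4.1 of \cite{hed1}), so this inequality is forced to be an equality, meaning $|\varphi|$ itself is a nonnegative fixed point of the \emph{untwisted} $\Tope_0^2$. Now the ergodic input from \cite{hed} (or the decay of iterates in Proposition~3.13.3 of \cite{hed1}) applies directly and gives $\varphi=0$ on $[-1,1]$; propagation to all of $\R$ is then exactly as you describe. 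No conjugation, no $\lambda$-powers, no harmonic extensions --- just domination by the positive operator.
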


The proof of Proposition \ref{prop:uniq1.05} is supplied in Subsection
\ref{subsec:translatedlatticecross}.

The  interpolation problem associated with \eqref{eq:Goursatbrydata3.disk}
reads
\begin{equation}
u(\pi (m+\omega_1),0)=\alpha_m,\quad u(0,\pi (n+\omega_2))=\beta_n,
\label{eq:skewedinterpolation1.02}
\end{equation}
and in view of Proposition \ref{prop:uniq1.05}, the solution $u=U[\varphi]$
with $\varphi\in L^1(\R)$, is necessarily unique, provided that it exists.
Since
\[
u(\pi(m+\omega_1),0)=\int_\R \e^{\imag\pi(m+\omega_1)t}\varphi(t)\,\diff t
\]
and
\[
u(0,\pi(n+\omega_2))=\int_\R \e^{-\imag\pi(n+\omega_2)/t}\varphi(t)\,\diff t
\]
we see the relevance of exponentially skewed hyperbolic Fourier series
(see Subsection \ref{subsec:exponentiallyskewed})
to the study of the interpolation problem.


\section{Preparatory work}

\subsection{The splitting of a harmonic function}
\label{subsec:splitting}
It is well-known that a harmonic function in a simply connected domain
is the sum of a holomorphic function and a conjugate-holomorphic function.
This decomposition is unique up to an additive constant. 
Let us do this for a harmonic function $h:\Hyp\to\C$ with the growth bound
\begin{equation}
\label{eq:growthboundh1}
|h(\tau)|\le C\,\exp(\beta\pi\, \Mfun(\tau)),\qquad \tau\in\Hyp,
\end{equation}
for a positive constant $C$, where we recall that $\Mfun(\tau)$ is
given by \eqref{eq:defM}. The parameter $\beta$ is assumed positive.
The gradient of $h$ can be estimated using the Poisson kernel representation
\[
h(\tau+\epsilon\eta)=\int_{-\pi}^{\pi}
\frac{1-|\eta|^2}{|\eta-\e^{\imag t}|^2}\,h(\tau+\epsilon\,\e^{\imag t})
\,\frac{\diff t}{2\pi},
\]
valid for $\tau\in\Hyp$ and $\epsilon$ with $0<\epsilon<\im\tau$.  
We obtain that
\[
\partial_\tau h(\tau)=\frac{1}{\epsilon}\int_{-\pi}^{\pi}\e^{-\imag t}
h(\tau+\epsilon\,\e^{\imag t})\frac{\diff t}{2\pi},\quad
\bar\partial_\tau h(\tau)=\frac{1}{\epsilon}\int_{-\pi}^{\pi}\e^{\imag t}
h(\tau+\epsilon\,\e^{\imag t})\frac{\diff t}{2\pi},
\]
and since
\begin{equation}
\Mfun(\tau+\epsilon\,\e^{\imag t})\le
\frac{\max\{1,|\tau|^2-\epsilon^2\}}{\im\tau-\epsilon}
\le\frac{\max\{1,|\tau|^2\}}{\im\tau-\epsilon},
\qquad t\in\R,
\end{equation}
it is an immediate consequence of \eqref{eq:growthboundh1} that
\begin{equation}
\max\big\{|\partial_\tau h(\tau)|,|\bar\partial_\tau h(\tau)|\big\}
\le \frac{C}{\epsilon}\,
\exp\bigg(\beta\pi\frac{\max\{1,|\tau|^2\}}{\im\tau-\epsilon}
\bigg)
\end{equation}
holds. By suitably optimizing over $\epsilon$ with $0<\epsilon<\im\,\tau$,
it can be shown that
\begin{equation}
\label{eq:gradh1}  
\max\big\{|\partial_\tau h(\tau)|,|\bar\partial_\tau h(\tau)|\big\}
\le \frac{C\,\e}{(\im\tau)^2}\big(\beta\pi\max\{1,|\tau|^2\}+\im\tau\big)\,
\exp\big(\beta\Mfun(\tau)\big)
\end{equation}
holds. The function $\tau\mapsto \partial_\tau h(\tau)$ is holomorphic, whereas
$\tau\mapsto\bar\partial_\tau h(\tau)$ is conjugate-holomorphic.
If we put
\[
h_+(\tau):=\int_{\imag}^\tau \partial_\eta h(\eta)\,\diff\eta,
\quad
h_-(\tau):=\int_{\imag}^\tau \bar\partial_\eta h(\eta)\,\diff\bar\eta,
\]  
we obtain a holomorphic function $h_+$ and a conjugate-holomorphic function
$h_-$ with $h_+(\imag)=h_-(\imag)=0$ such that
\[
h(\tau)=h_+(\tau)+h_-(\tau)+h(\imag).
\]
Using the estimate \eqref{eq:gradh1}, we find that
\begin{multline}
\label{eq:gradh+1}
|h_+(\tau)|\le \int_{\gamma(\imag,\tau)}|\partial_\eta h(\eta)|\,|\diff\eta|
\\
\le C\,\e\int_{\gamma(\imag,\tau)}
\frac{1}{(\im\eta)^2}\big(\beta\pi\max\{1,|\eta|^2\}+\im\,\eta\big)\,
\exp\big(\beta\pi\Mfun(\eta)\big)\,|\diff\eta|
\end{multline}
where $\gamma(\imag,\tau)$ is a suitable path in $\Hyp$ from
$\imag$ to $\tau$.
Moreover, if we apply the change-of-variables $\eta=1/\bar\xi$ (reflection in
the unit circle) on the right-hand side integral in \eqref{eq:gradh+1}, 
it follows, after some simplification, that
\begin{equation}
\label{eq:gradh+2}
|h_+(\tau)|
\le\int_{\gamma'(\imag,1/\bar\tau)}
\frac{C\,\e}{(\im\xi)^2}\big(\beta\pi\max\{1,|\xi|^2\}+\im\,\xi\big)\,
\exp\big(\beta\pi\Mfun(\xi)\big)\,|\diff\xi|
\end{equation}
where $\gamma'(\imag,1/\bar\tau)$ is any path in $\Hyp$ which connects $\imag$
to $1/\bar\tau$. This estimate is precisely the same as in \eqref{eq:gradh+1},
only the endpoint $\tau\in\Hyp$ is replaced by $1/\bar\tau\in\Hyp$. 
This means that it is enough to estimate $h_+(\tau)$ using \eqref{eq:gradh+1}
for $|\tau|\le1$, as the remaining case $|\tau|>1$ can be handled using
\eqref{eq:gradh+2}, since then the endpoint has $|1/\bar\tau|<1$. 
So, we consider $\tau\in\Hyp$ with $|\tau|\le1$, and by symmetry, we may assume
that $\re\,\tau >0$ as well. We choose the path $\gamma(\imag,\tau)$
which passes along the unit circle to the point with the same real part
as $\tau$, and then straight down to $\tau$ along the line segment parallel
to the imaginary axis. As we implement suitable parametrizations,
the integral on the right-hand side of \eqref{eq:gradh+2}
obtains the estimate
\begin{multline}
\int_{\gamma(\imag,\tau)}
\frac{1}{(\im\eta)^2}\big(\beta\pi\,\max\{1,|\eta|^2\}+\im\,\eta\big)\,
\exp\big(\beta\pi\Mfun(\eta)\big)\,|\diff\eta|
\\
\le
\int_{\alpha}^{\pi/2}\bigg(\frac{\beta\pi}{\sin^2t}+\frac{1}{\sin t}\bigg)
\exp\bigg(\frac{\beta\pi}{\sin t}\bigg)\diff t+
\int_{\im\tau}^{\sqrt{1-(\re\tau)^2}}
\bigg(\frac{\beta\pi}{t^2}+\frac{1}{t}\bigg)
\exp\bigg(\frac{\beta\pi}{t}\bigg)\,\diff t  
\end{multline}
where $\cos\alpha=\re\tau$. The first integral on the right-hand side
is handled by by splitting the interval, from $\alpha$ to $\pi/4$, and then
from $\pi/4$ to $\pi/2$. The second part is estimated using a trivial
estimate, while the first part is handled using integration by
parts. The second integral is analogously analyzed using integration by parts.
Omitting the necessary details, we find that
\begin{multline}
\int_{\gamma(\imag,\tau)}
\frac{1}{(\im\eta)^2}\big(\beta\pi\max\{1,|\eta|^2\}+\im\,\eta\big)\,
\exp\big(\beta\pi\pi\,\Mfun(\eta)\big)\,|\diff\eta|
\\
\le
2(1+\beta\pi)\exp(\beta\pi\sqrt{2})
+\bigg(\sqrt{2}+\frac{3}{\beta\pi}\bigg)\exp\bigg(\frac{\beta\pi}{\im\,\tau}
\bigg).  
\end{multline}
We now conclude from \eqref{eq:gradh+1} and \eqref{eq:gradh+2} that 
\begin{equation}
|h_+(\tau)|
\le
2C\e\,(1+\beta)\exp(\beta\pi\sqrt{2})
+C\e\,\bigg(\sqrt{2}+\frac{3}{\beta\pi}\bigg)
\exp\big(\beta\pi\,\Mfun(\tau)\big).  
\end{equation}
The same kind of estimates applied to the conjugate-holomorphic function $h_-$
as well. We formulate this splitting less precisely in the form of a
proposition.

\begin{prop}
\label{prop:beta1}
Fix a $\beta$ with $0<\beta<+\infty$. Suppose $h:\Hyp\to\C$ is harmonic with
\[
|h(\tau)|=\Ordo\big(\exp\big(\beta\pi\,\Mfun(\tau)\big)\big)
\]  
uniformly in $\Hyp$. Then there exist unique functions $h_+,h_-:\Hyp\to\C$
such that $h_+$ is holomorphic, $h_-$ is conjugate-holomorphic,
with $h_+(\imag)=h_-(\imag)=0$, and the splitting
\[
h(\tau)=h_+(\tau)+h_-(\tau)+h(\imag),\qquad\tau\in\Hyp,
\]
holds. These functions have the same growth estimate:
\[
|h_+(\tau)|,|h_-(\tau)|=\Ordo\big(\exp\big(\beta\pi\,\Mfun(\tau)\big)\big),
\]
uniformly in $\Hyp$. 
\end{prop}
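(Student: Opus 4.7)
The plan is to construct $h_+$ and $h_-$ by integrating the Wirtinger derivatives of $h$ from the basepoint $\imag$, and then verify that the growth hypothesis on $h$ survives the integration. The holomorphic/conjugate-holomorphic splitting itself is classical on any simply connected domain, so the substance of the proposition is entirely quantitative: controlling $h_\pm$ by the same exponential bound as $h$.

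First I would bound the derivatives $\partial_\tau h$ and $\bar\partial_\tau h$ pointwise. Applying the mean value representation on a disk $D(\tau,\epsilon)\subset\Hyp$ (which forces $0<\epsilon<\im\tau$), one obtains
\[
\partial_\tau h(\tau)=\frac{1}{\epsilon}\int_{-\pi}^{\pi}\e^{-\imag t}h(\tau+\epsilon\e^{\imag t})\frac{\diff t}{2\pi},
\]
and analogously for $\bar\partial_\tau h$. Substituting the hypothesis and using $\Mfun(\tau+\epsilon\e^{\imag t})\le \max\{1,|\tau|^2\}/(\im\tau-\epsilon)$, then optimizing in $\epsilon$ (taking $\epsilon$ comparable to, but strictly less than, $\im\tau$), yields a bound of the form
\[
\max\{|\partial_\tau h(\tau)|,|\bar\partial_\tau h(\tau)|\}=\Ordo\!\left(\frac{\beta\pi\max\{1,|\tau|^2\}+\im\tau}{(\im\tau)^2}\exp(\beta\pi\Mfun(\tau))\right).
\]

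Next I would set $h_+(\tau):=\int_\imag^\tau\partial_\eta h(\eta)\diff\eta$ and $h_-(\tau):=\int_\imag^\tau\bar\partial_\eta h(\eta)\diff\bar\eta$; these are path-independent by simple connectedness, $h_+$ is holomorphic, $h_-$ conjugate-holomorphic, $h_\pm(\imag)=0$, and $h=h_++h_-+h(\imag)$. The central difficulty is choosing the path so that the pointwise estimate above, when integrated along it, remains $\Ordo(\exp(\beta\pi\Mfun(\tau)))$ with only an absolute constant prefactor. For $|\tau|\le1$ I would connect $\imag$ to $\e^{\imag\alpha}$ (with $\cos\alpha=\re\tau$) along the unit circle and then descend vertically to $\tau$; on both legs $\Mfun$ is monotone toward its value at $\tau$, and the dominant integrands $(\beta\pi/\sin^2 t)\exp(\beta\pi/\sin t)$ on the arc and $(\beta\pi/s^2)\exp(\beta\pi/s)$ on the vertical segment are exact antiderivatives, telescoping directly to $\exp(\beta\pi\Mfun(\tau))$. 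The lower-order prefactor terms are absorbed via integration by parts.

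The remaining case $|\tau|>1$ reduces to the previous one by the reflection $\eta\mapsto 1/\bar\eta$ in the unit circle: this is an involution of $\Hyp$ preserving both $\Mfun$ (by the identity $\Mfun(1/\bar\tau)=\Mfun(\tau)$) and the hyperbolic element $|\diff\eta|/(\im\eta)^2$, so the same integral estimate applies with endpoint $1/\bar\tau$, which now satisfies $|1/\bar\tau|<1$. Uniqueness of the decomposition is routine, since a simultaneously holomorphic and conjugate-holomorphic function on $\Hyp$ must be constant, and the normalization $h_\pm(\imag)=0$ forces the constant to vanish. The main obstacle throughout is the careful bookkeeping of polynomial prefactors along the path so that no extra exponential loss creeps in; the integration-by-parts step that packages these prefactors inside the exponential $\exp(\beta\pi\Mfun)$ is the key analytic move.
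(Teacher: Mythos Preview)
Your proposal is correct and follows essentially the same route as the paper: derivative estimates via the Poisson representation on a small disk with optimization over the radius, definition of $h_\pm$ by path integrals from $\imag$, reduction of the case $|\tau|>1$ to $|\tau|\le1$ via the reflection $\eta\mapsto1/\bar\eta$ (which preserves $\Mfun$), and the arc-plus-vertical path choice with the exact-antiderivative/integration-by-parts observation handling the dominant and subdominant terms. The paper's argument is identical in structure and in the specific path and reflection you propose.
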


The instance of $\beta=0$ in Proposition \ref{prop:beta1} is a little
different.

\begin{prop}
\label{prop:beta2}
Suppose $h:\Hyp\to\C$ is harmonic and bounded, i.e., 
\[
|h(\tau)|=\Ordo(1)
\]  
holds uniformly in $\Hyp$. Then there exist unique functions 
$h_+,h_-:\Hyp\to\C$
such that $h_+$ is holomorphic, $h_-$ is conjugate-holomorphic,
with $h_+(\imag)=h_-(\imag)=0$, and the splitting
\[
h(\tau)=h_+(\tau)+h_-(\tau)+h(\imag),\qquad\tau\in\Hyp,
\]
holds. These functions enjoy the logarithmic growth estimate:
\[
|h_+(\tau)|,|h_-(\tau)|=\Ordo\bigg(\log\frac{1+|\tau|^2}{\im\tau}\bigg),
\]
uniformly in $\Hyp$. 
\end{prop}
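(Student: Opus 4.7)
The plan is to proceed exactly as in the proof of Proposition~\ref{prop:beta1}, but with $\beta=0$, tracking how the bounds simplify. First I would show uniqueness: if $h=h_+^{(1)}+h_-^{(1)}+h(\imag)=h_+^{(2)}+h_-^{(2)}+h(\imag)$ are two such splittings, then $h_+^{(1)}-h_+^{(2)}=-(h_-^{(1)}-h_-^{(2)})$ is simultaneously holomorphic and conjugate-holomorphic on $\Hyp$, hence constant, and the normalization at $\imag$ forces that constant to vanish.

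For existence, I would again start from the Poisson-kernel representation on the disk of radius $\epsilon<\im\tau$ centered at $\tau$ to bound the Wirtinger derivatives. With $|h|\le C$ instead of the exponential bound, this gives
\[
\max\bigl\{|\partial_\tau h(\tau)|,|\bar\partial_\tau h(\tau)|\bigr\}\le \frac{C}{\epsilon},
\]
so letting $\epsilon\nearrow\im\tau$ (say $\epsilon=\tfrac12\im\tau$) produces
\[
\max\bigl\{|\partial_\tau h(\tau)|,|\bar\partial_\tau h(\tau)|\bigr\}\le \frac{2C}{\im\tau}.
\]
Then I define
\[
h_+(\tau):=\int_\imag^\tau\partial_\eta h(\eta)\,\diff\eta,\qquad
h_-(\tau):=\int_\imag^\tau\bar\partial_\eta h(\eta)\,\diff\bar\eta,
\]
which are holomorphic and conjugate-holomorphic respectively, vanish at $\imag$, and satisfy $h=h_++h_-+h(\imag)$ by construction.

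The remaining task is to estimate $h_+$ (and symmetrically $h_-$). For $\tau\in\Hyp$ with $|\tau|\le 1$, and, by reflectional symmetry, with $\re\tau\ge 0$, I would use the same path $\gamma(\imag,\tau)$ as in the previous proof: first along the unit arc from $\imag$ to the point $\re\tau+\imag\sqrt{1-(\re\tau)^2}$, parametrized as $\e^{\imag t}$ with $t$ running from $\pi/2$ down to $\alpha=\arccos(\re\tau)$, then down the vertical segment to $\tau$. The bound on $|\partial_\eta h|$ then yields
\[
|h_+(\tau)|\le 2C\int_\alpha^{\pi/2}\frac{\diff t}{\sin t}
+2C\int_{\im\tau}^{\sqrt{1-(\re\tau)^2}}\frac{\diff s}{s}
= 2C\bigl[-\log\tan(\alpha/2)\bigr]+2C\log\frac{\sqrt{1-(\re\tau)^2}}{\im\tau},
\]
which is $\Ordo(\log(1/\im\tau))$, hence $\Ordo(\log((1+|\tau|^2)/\im\tau))$ in this regime. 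For $|\tau|>1$ I would apply the change of variable $\eta=1/\bar\xi$ exactly as in \eqref{eq:gradh+2}, reducing the estimate to the $|\xi|\le 1$ case with $\xi=1/\bar\tau$. Since $\im(1/\bar\tau)=\im\tau/|\tau|^2$, the resulting logarithmic bound reads $\Ordo(\log(|\tau|^2/\im\tau))$, which is again $\Ordo(\log((1+|\tau|^2)/\im\tau))$.

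The only subtlety — and it is very mild — is bookkeeping of constants in the two logarithmic integrals to make sure the combined estimate takes the clean form $\log((1+|\tau|^2)/\im\tau)$ on all of $\Hyp$; the exponential factor $\exp(\beta\pi\Mfun(\tau))$ that dominated the analogous computation for Proposition~\ref{prop:beta1} is simply absent here, which is why the end result is logarithmic rather than exponential. The argument for $h_-$ is identical word for word with $\partial_\eta$ replaced by $\bar\partial_\eta$ and $\diff\eta$ by $\diff\bar\eta$.
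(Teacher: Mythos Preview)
Your proof is correct and self-contained. The paper does not actually supply a proof of this proposition: immediately after the statement it simply remarks that the result is well-known, noting that for bounded $h$ the holomorphic part $h_+$ lies in BMOA of the upper half-plane, and leaves it at that. Your approach---specializing the path-integral method from the proof of Proposition~\ref{prop:beta1} to $\beta=0$---is therefore more elementary and more explicit than what the paper offers. The BMOA remark in the paper points to a deeper structural fact (John--Nirenberg, Fefferman duality) that gives much finer information than the logarithmic bound, but it requires importing substantial machinery; your argument stays within the gradient-estimate framework already developed in the paper and yields exactly the logarithmic estimate needed, which is all that the later applications (Subsection~\ref{subsec:hfs_for_bounded}) actually use. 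One small point worth making explicit in your write-up: the invariance of the integrand $\tfrac{|\diff\eta|}{\im\eta}$ under $\eta\mapsto 1/\bar\eta$ is what makes the reduction from $|\tau|>1$ to $|\tau|\le 1$ clean in the $\beta=0$ case, and this is precisely the hyperbolic arclength element on $\Hyp$.
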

 
This statement is well-known, and indeed much more is known, since for bounded
$h$ the function $h_+$ is in BMOA of the upper half-plane (the bounded
mean oscillation space on $\R$ whose Poisson extensions are 
holomorphic).  

Another instance of a growth condition appeared earlier in the context of
harmonic extension of distributions on the extended real line
$\R\cup\{\infty\}$.
A direct proof can be constructed along the lines of the proof of 
Proposition \ref{prop:beta1}.
  
\begin{prop}
\label{prop:beta3}
Fix a real $\alpha$ with $0<\alpha<+\infty$.
Suppose $h:\Hyp\to\C$ is harmonic and that 
\[
|h(\tau)|=\Ordo\bigg(\frac{1+|\tau|^2}{\im\,\tau}\bigg)^\alpha
\]  
holds uniformly in $\Hyp$, Then there exist unique functions 
$h_+,h_-:\Hyp\to\C$
such that $h_+$ is holomorphic, $h_-$ is conjugate-holomorphic,
with $h_+(\imag)=h_-(\imag)=0$, and the splitting
\[
h(\tau)=h_+(\tau)+h_-(\tau)+h(\imag),\qquad\tau\in\Hyp,
\]
holds. These functions enjoy the same growth estimate:
\[
|h_+(\tau)|,|h_-(\tau)|=\Ordo\bigg(\frac{1+|\tau|^2}{\im\tau}\bigg)^\alpha,
\]
uniformly in $\Hyp$. 
\end{prop}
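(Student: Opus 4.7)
The plan is to mimic the proof of Proposition \ref{prop:beta1}, adapting the estimates to the polynomial growth bound instead of the exponential one. Uniqueness is routine: if $h = h_+ + h_- + h(\imag)$ with $h_+$ holomorphic, $h_-$ conjugate-holomorphic and both vanishing at $\imag$, then any two such decompositions differ by an entire constant which must be zero by the normalization at $\imag$. So the real content is existence together with the growth bound.

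For existence, I would first pass to the gradient. The Poisson representation on the disk $D(\tau,\epsilon)\subset\Hyp$ (valid for $0<\epsilon<\im\tau$) yields
\[
\partial_\tau h(\tau) = \frac{1}{\epsilon}\int_{-\pi}^{\pi} \e^{-\imag t} h(\tau+\epsilon\e^{\imag t})\,\frac{\diff t}{2\pi},
\]
and the analogous formula for $\bar\partial_\tau h(\tau)$. Using the easy bound $\Mfun(\tau+\epsilon\e^{\imag t})\le\max\{1,|\tau|^2\}/(\im\tau-\epsilon)$ (here I use $\Mfun$ for shorthand, knowing $\Mfun(\tau)$ is comparable to $(1+|\tau|^2)/\im\tau$), the hypothesis gives
\[
\max\{|\partial_\tau h(\tau)|,|\bar\partial_\tau h(\tau)|\} = \Ordo\bigg(\frac{1}{\epsilon}\bigg(\frac{\max\{1,|\tau|^2\}}{\im\tau-\epsilon}\bigg)^\alpha\bigg).
\]
Optimizing with $\epsilon = \tfrac{1}{2}\im\tau$ produces the clean estimate
\[
\max\{|\partial_\tau h(\tau)|,|\bar\partial_\tau h(\tau)|\} = \Ordo\bigg(\frac{1}{\im\tau}\bigg(\frac{1+|\tau|^2}{\im\tau}\bigg)^\alpha\bigg).
\]

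Next I would define $h_+(\tau):=\int_\imag^\tau \partial_\eta h(\eta)\,\diff\eta$ and $h_-(\tau):=\int_\imag^\tau \bar\partial_\eta h(\eta)\,\diff\bar\eta$ along any convenient path $\gamma$ in $\Hyp$. These are holomorphic, respectively conjugate-holomorphic, and the identity $h = h_+ + h_- + h(\imag)$ is automatic. The estimate is needed in two regions. For $|\tau|\le 1$ with $\re\tau\ge 0$ (the other quadrant is symmetric), I take $\gamma$ along the unit circle from $\imag$ down to the point on the circle with $\re = \re\tau$, then vertically down to $\tau$; on both segments a direct change of variables and integration by parts reduces the integral to one of the form $\int (\im\tau)^{-\alpha-1}\diff(\im\tau)$ near the endpoint, which evaluates as $\Ordo((\im\tau)^{-\alpha}) = \Ordo(\Mfun(\tau)^\alpha)$ since $|\tau|\le 1$. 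For $|\tau|>1$ I exploit the fact that the substitution $\eta=1/\bar\xi$ (reflection through the unit circle) preserves $\Hyp$ and leaves $\Mfun$ invariant, so the integral along any path $\gamma(\imag,\tau)$ is equivalent in size to an integral along a path $\gamma'(\imag,1/\bar\tau)$; since $|1/\bar\tau|<1$, this reduces to the first case applied at $1/\bar\tau$, and $\Mfun(1/\bar\tau)=\Mfun(\tau)$ keeps the bound correct.

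The main technical obstacle is making sure the integration-by-parts step matches the exponent $\alpha$ exactly, rather than producing an $\alpha+1$ loss. The point is that the integrand $(\im\tau)^{-\alpha-1}$ is a perfect derivative of $(\im\tau)^{-\alpha}$ up to a constant $\alpha$, so no integration by parts in the usual sense is needed---one just integrates explicitly, which is what makes the polynomial case cleaner than the exponential one in Proposition \ref{prop:beta1}. A subsidiary worry is showing independence of the integration path; since $\partial_\eta h$ is holomorphic and $\bar\partial_\eta h$ is conjugate-holomorphic in the simply connected domain $\Hyp$, this is immediate. Combining all pieces delivers $|h_\pm(\tau)| = \Ordo\bigl((1+|\tau|^2)/\im\tau\bigr)^\alpha$ uniformly on $\Hyp$, which is precisely the claimed bound.
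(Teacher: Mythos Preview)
Your proposal is correct and is precisely the approach the paper indicates: it states only that ``a direct proof can be constructed along the lines of the proof of Proposition \ref{prop:beta1}'', and you have carried out exactly that adaptation, replacing the exponential weight by the polynomial one and observing that the resulting integrand $(\im\eta)^{-\alpha-1}$ integrates explicitly to give the desired $(\im\tau)^{-\alpha}$ bound without loss. The reflection trick $\eta\mapsto 1/\bar\eta$ to handle $|\tau|>1$ is used in the same way in the paper's proof of Proposition \ref{prop:beta1}.
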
 
  
\subsection{Coefficient growth and function growth in
harmonic Fourier series} 
\label{subsec:coeffgrowthharmonic}
If we have a distribution $h$ on the circle $\R/2\Z$, i.e., a
$2$-periodic distribution, it has a Fourier series expansion 
\begin{equation}
\label{eq:Fourierseries1.00}
h(t)=\sum_{n\in\Z}c_n\,e_n(t),\quad \text{where}\quad e_n(t)=\e^{\imag\pi nt},
\end{equation}
and the coefficients grow at most polynomially: $|c_n|=\Ordo(|n|^k)$ as 
$|n|\to+\infty$ for some finite $k$. The convergence of the series is
not understood pointwise on $\R$, but only in the sense of distribution theory
with convergence against a test function (cf. Definition \ref{defn:distrconv}).
We extend $h$ to a harmonic function in $\Hyp$ via the Poisson kernel as in  
\eqref{eq:Poisson0} and call the extension $h=\tilde h$ as well. 
Then $h$ meets the growth bound \eqref{eq:polgrowth0}, and since $h$ is 
$2$-periodic, we can do better. Indeed, by applying the Poisson extension 
term by term, we find that
\begin{equation}
h(\tau)=\sum_{n\in\Z}c_n\,e_n(\tau),\qquad \tau\in\Hyp,
\label{eq:Fourierseries1.01}
\end{equation}
where the functions $e_n(\tau)$ are given by \eqref{eq:harmext2} and 
\eqref{eq:harmext3}, depending on whether $n\ge0$ or $n<0$.
This gives the estimate
\begin{equation}
|h(\tau)|\le\sum_{n\in\Z}|c_n|\,|e_n(\tau)|=\Ordo\bigg(\sum_{n\in\Z}
(1+|n|)^k\,\e^{-\pi |n|\im\tau}\bigg)=\Ordo\bigg(1+\frac{1}{(\im\tau)^{k+1}}\bigg)
\end{equation}
uniformly in $\tau\in\Hyp$. We write this down as a proposition. 

\begin{prop}
Fix $k$ with $0<k<+\infty$.
If the coefficients $c_n\in\C$ have $|c_n|=\Ordo(|n|^k)$ as $|n|\to+\infty$,
it follows that the harmonically extended Fourier series
\eqref{eq:Fourierseries1.01} has the growth bound
\begin{equation}
|h(\tau)|\le\sum_{n\in\Z}|c_n|\,|e_n(\tau)|
=\Ordo\bigg(1+\frac{1}{(\im\tau)^{k+1}}\bigg)
\end{equation}
uniformly in $\tau\in\Hyp$.
\end{prop}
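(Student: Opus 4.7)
The proposition merely formalises the informal chain of estimates carried out in the paragraph immediately preceding its statement, so the plan is to make that calculation precise and uniform in $\tau\in\Hyp$. The argument splits into three very small steps.

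First, I apply the triangle inequality to the termwise Poisson extension \eqref{eq:Fourierseries1.01} to obtain $|h(\tau)|\le\sum_{n\in\Z}|c_n|\,|e_n(\tau)|$. Next I identify $|e_n(\tau)|$: from \eqref{eq:harmext2} and \eqref{eq:harmext3} one has $|e_n(\tau)|=\e^{-\pi|n|\,\im\tau}$, uniformly in $n\in\Z$ and $\tau\in\Hyp$, regardless of the sign of $n$ (the switch to $\bar\tau$ for $n<0$ is harmless because the modulus only sees the imaginary part). Combining this with the hypothesis $|c_n|\le C(1+|n|)^k$ reduces everything to proving the uniform bound
\[
S_k(y):=\sum_{n\in\Z}(1+|n|)^k\,\e^{-\pi|n|y}=\Ordo\!\bigg(1+\frac{1}{y^{k+1}}\bigg),
\qquad y=\im\tau>0.
\]

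The remaining step is to verify this estimate for $S_k(y)$. Since the $n=0$ term contributes $1$ and negative indices contribute the same as positive ones, it suffices to control $\sum_{n\ge1}(1+n)^k\e^{-\pi n y}$. I propose integral comparison: the function $x\mapsto(1+x)^k\e^{-\pi x y}$ is, for each fixed $y>0$, first increasing and then decreasing, so for $n$ beyond its maximum the inequality $(1+n)^k\e^{-\pi n y}\le\int_{n-1}^{n}(2+x)^k\e^{-\pi x y}\diff x$ gives a bound for the tail, while the finitely many initial terms are dominated by a constant multiple of the same integral. Thus
\[
\sum_{n\ge1}(1+n)^k\,\e^{-\pi n y}\;\le\;C_k\int_0^{+\infty}(2+x)^k\,\e^{-\pi x y}\diff x.
\]
Substituting $u=\pi x y$ this equals $C_k(\pi y)^{-(k+1)}\int_0^\infty(2\pi y+u)^k\e^{-u}\diff u$, and expanding the binomial shows that this is $\Ordo(y^{-(k+1)})$ as $y\to0^{+}$ and $\Ordo(1)$ as $y\to+\infty$. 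Consequently $S_k(y)=\Ordo(1+y^{-(k+1)})$, completing the proof.

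The only real obstacle is the elementary bound on $S_k(y)$, and even that is entirely routine once the integral comparison and substitution $u=\pi x y$ are made; no conceptual difficulty arises beyond bookkeeping of constants. Absolute convergence of the Fourier series for every fixed $\tau\in\Hyp$ is a byproduct, justifying the termwise triangle inequality used at the outset.
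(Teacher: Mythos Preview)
Your proof is correct and follows exactly the approach sketched in the paper: the paper itself offers only the one-line chain of estimates immediately preceding the proposition (triangle inequality, $|e_n(\tau)|=\e^{-\pi|n|\im\tau}$, then the sum estimate), and you have simply made that last step rigorous via integral comparison. There is no alternative argument in the paper to compare against; your write-up is a faithful expansion of what is stated there.
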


This proposition has a converse.

\begin{prop}
\label{prop:function->coeff0}
Fix $k$ with $0<k<+\infty$. Suppose $h:\Hyp\to\C$ is harmonic and that
\begin{equation}
|h(\tau)|=\Ordo\bigg(1+\frac{1}{(\im\tau)^{k}}\bigg)
\end{equation}
holds uniformly in $\tau\in\Hyp$. Then if $h$ is $2$-periodic, i.e.,
$h(\tau+2)=h(\tau)$ holds, it follows that $h$ is uniquely
represented by a harmonic Fourier series of the form
\eqref{eq:Fourierseries1.01},
where the coefficients have the growth $|c_n|=\Ordo(|n|^k)$ as $|n|\to+\infty$.
\end{prop}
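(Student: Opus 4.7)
The plan is to pull back to the unit disk via the uniformization $\zeta = \e^{\imag\pi\tau}$, which by $2$-periodicity of $h$ gives a well-defined harmonic function $\tilde h(\zeta)$ on the punctured disk $\D \setminus \{0\}$. Since $|\zeta| = \e^{-\pi\im\tau}$, the limit $\zeta \to 0$ corresponds to $\im\tau \to +\infty$, while $|\zeta| \to 1^-$ corresponds to $\im\tau \to 0^+$. The growth bound $|h(\tau)| = \Ordo(1 + (\im\tau)^{-k})$ thus translates to $\tilde h$ being \emph{bounded} in a neighborhood of $\zeta = 0$, and satisfying $|\tilde h(\zeta)| = \Ordo(1 + (1-|\zeta|)^{-k})$ near the outer boundary $|\zeta|=1$ (using $-\log|\zeta| \sim 1 - |\zeta|$ as $|\zeta| \to 1^-$).

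Next I would invoke the classical removable singularity theorem for bounded harmonic functions to extend $\tilde h$ to a harmonic function on all of $\D$. Writing $\tilde h = \tilde h_+ + \tilde h_-$ as its decomposition into a holomorphic part $\tilde h_+$ and a conjugate-holomorphic part $\tilde h_-$ with $\tilde h_-(0) = 0$, the Taylor (respectively anti-Taylor) expansions yield
\[
\tilde h(\zeta) = \sum_{n=0}^{\infty} c_n \zeta^n + \sum_{n=1}^{\infty} c_{-n}\, \bar\zeta^n,
\]
with locally uniform convergence on $\D$. Pulling back via $\zeta = \e^{\imag\pi\tau}$: for $n \ge 0$, $\zeta^n = \e^{\imag\pi n \tau} = e_n(\tau)$, while for $n < 0$, $\bar\zeta^{|n|} = \e^{-\imag\pi|n|\bar\tau} = \e^{\imag\pi n \bar\tau} = e_n(\tau)$. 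This is precisely the harmonic Fourier series \eqref{eq:Fourierseries1.01}, and uniqueness of the coefficients $c_n$ is immediate from uniqueness of the holomorphic and anti-holomorphic Taylor expansions on $\D$.

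For the coefficient bound, I would extract $c_n$ from the ordinary Fourier integral on circles $|\zeta| = r$ with $0 < r < 1$, which gives
\[
|c_n|\, r^{|n|} \le \sup_{|\zeta| = r} |\tilde h(\zeta)| \le C\bigl(1 + (-\log r/\pi)^{-k}\bigr)
\]
for some constant $C$. Choosing $r = 1 - 1/|n|$ for large $|n|$, the factor $r^{-|n|}$ stays bounded (it tends to $\e$) while $(-\log r)^{-k} \sim (1-r)^{-k} = |n|^k$, yielding $|c_n| = \Ordo(|n|^k)$ as $|n| \to +\infty$. The only delicate step in the whole scheme is the removable-singularity part: it depends crucially on the fact that $2$-periodicity forces the growth estimate to degenerate into mere boundedness at the cusp $\zeta = 0$, something that would fail for a general harmonic function satisfying the same pointwise growth bound on $\Hyp$.
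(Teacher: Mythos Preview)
Your proof is correct. The paper takes an essentially equivalent but more hands-on route: it stays in the upper half-plane and, for each fixed $y>0$, expands the slice function $h(\cdot+\imag y)$ in an ordinary Fourier series with coefficients $c_n(y)=\frac12\int_{-1}^{1}\e^{-\imag\pi nt}h(t+\imag y)\,\diff t$, then verifies directly the consistency relation $c_n(s+y)=\e^{-\pi|n|s}c_n(y)$, which yields $c_n=\lim_{y\to0^+}c_n(y)$ and $c_n(y)=\e^{-\pi|n|y}c_n$; the final optimization with $y=1/|n|$ is the same as your choice $r=1-1/|n|$ under $r=\e^{-\pi y}$. Your appeal to the removable-singularity theorem for bounded harmonic functions packages this consistency computation into a single classical result, and the passage to the disk makes the harmonic Fourier series emerge as a genuine Taylor plus anti-Taylor expansion rather than something assembled from slice data. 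Both arguments deliver the same bound with the same effort; yours is slightly more conceptual, the paper's slightly more explicit about the coefficient formula (which it reuses later, e.g.\ in \eqref{eq:get_coeff_cn}).
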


\begin{proof}
Given that $f$ is $2$-periodic, we can use standard Fourier expansion on the
slice function $h(\cdot+\imag y)$ for fixed $y>0$ to the effect that
\begin{equation}
h(\tau+\imag y)=\sum_{n\in\Z}c_n(y)\,e_n(\tau),
\label{eq:harmonicFourier1}
\end{equation}
holds first for $\tau\in\R$ with the coefficients $c_n(y)$ in $\ell^2$,
and then in a second step for $\tau\in\Hyp$, by the generalized maximum
principle.
The coefficients are then given by the formula
\[
c_n(y)=\frac12\int_{-1}^{1}\e^{-\imag\pi n t}h(t+\imag y)\,\diff t,
\]
so that in view of the growth bound on $h$,
\begin{equation}
\label{eq:coeffbound2}
|c_n(y)|=\Ordo(1+y^{-k})
\end{equation}
holds uniformly in $y>0$. It remains to analyze the dependence of the
coefficients $c_n(y)$ on the parameter $y>0$. If we apply
\eqref{eq:harmonicFourier1} with $\tau=t+\imag s$ for $t\in\R$ and some
$s>0$, it is immediate that, since for $n\ge0$,
\[
e_n(t+\imag s)=\e^{\imag\pi n(t+\imag s)}
=\e^{-\pi n s}\e^{\imag\pi n t}=\e^{-\pi n s}e_n(t),
\]  
whereas for $n<0$,
\[
e_n(t+\imag s)=\e^{\imag\pi n(t-\imag s}
=\e^{\pi n s}\e^{\imag\pi n t}=\e^{\pi n s}e_n(t),
\] 
We derive that
\[
h(t+\imag s+\imag y)=\sum_{n\in\Z}c_n(s+y)\,e_n(t)=\sum_{n\in\Z}c_n(y)\,
e_n(t+\imag s)=\sum_{n\in\Z}c_n(y)\,\e^{-\pi |n|s}\,e_n(t),
\]
and equating coefficients, that
\[
c_n(s+y)=\e^{-\pi|n|s}c_n(y). 
\]
As this holds for all combinations of $s,y>0$, we conclude that the limit
\[
c_n:=\lim_{y\to0^+}c_n(y)
\]
exists, and in the process also that the $c_n(y)$ may be recovered from $c_n$:
\[
c_n(y)=\e^{-\pi |n|y}c_n.
\]
In view of \eqref{eq:coeffbound2}, we find that
\[
|c_n|=\Ordo\big((1+y^{-k})\,\e^{\pi|n|y}\big)
\]
and we are free to optimize over $y>0$. For $n\ne0$, we choose $y=1/|n|$,
which gives the desired estimate.  
\end{proof}

We need to consider also wilder growth of the coefficients or of
the periodic harmonic function. 

\begin{prop}
\label{prop:coeff->function}  
Suppose, for $\alpha\in\R$ and $0<\beta<+\infty$, that the coefficients
$c_n$ have the growth control
\[  
|c_n|=\Ordo\big(|n|^{-\frac34}\exp\big(2\pi\sqrt{\beta|n|}\big)\big)
\quad\text{as}\quad|n|\to+\infty.
\]  
Then the associated harmonic Fourier series \eqref{eq:Fourierseries1.01}
enjoys the growth bound
\[
|h(\tau)|\le\sum_{n\in\Z}|c_n|\,\e^{-\pi|n|\im\tau}=\Ordo(\exp(\pi\beta/\im\tau))
\]
uniformly in $\Hyp$. Moreover, under the corresponding little ``o'' condition
on the coefficients, the conclusion holds with little ``o'' in the limit as
$\im\,\tau\to0^+$.
\end{prop}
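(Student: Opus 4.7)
The first inequality $|h(\tau)|\le\sum_n|c_n|\,|e_n(\tau)|$ is immediate from \eqref{eq:Fourierseries1.01} together with the identities \eqref{eq:harmext2}--\eqref{eq:harmext3}, which give $|e_n(\tau)|=\e^{-\pi|n|\im\tau}$ for every $n\in\Z$. So the task reduces to showing
\[
S(y):=\sum_{n=1}^{+\infty}n^{-3/4}\exp\!\big(2\pi\sqrt{\beta n}-\pi ny\big)
=\Ordo\big(\exp(\pi\beta/y)\big),\qquad y=\im\tau>0,
\]
since by the two-sided coefficient bound the contributions from $n>0$ and $n<0$ are of the same shape.

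The core step is a completion of the square in the variable $u=\sqrt{n}$. Setting $a:=\sqrt{\beta/y}$, one checks
\[
2\pi\sqrt{\beta n}-\pi ny=\pi\beta/y-\pi y\big(\sqrt{n}-a\big)^2,
\]
so that
\[
S(y)=\e^{\pi\beta/y}\sum_{n=1}^{+\infty}n^{-3/4}\exp\!\big(-\pi y(\sqrt{n}-a)^2\big).
\]
Thus it remains to show that the residual sum $R(y):=\sum_{n\ge1}n^{-3/4}\exp(-\pi y(\sqrt{n}-a)^2)$ is $\Ordo(\exp(\pi\beta/y))$; in fact I will show it is dominated by a (small) negative power of $y$, which is trivially absorbed. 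For $y$ bounded below (say $y\ge1$) the sum $R(y)$ converges uniformly and is clearly $\Ordo(1)$. For $0<y<1$, I compare $R(y)$ to the integral obtained by the change of variables $u=\sqrt{x}$:
\[
\int_0^{+\infty}x^{-3/4}\exp\!\big(-\pi y(\sqrt{x}-a)^2\big)\diff x
=2\int_0^{+\infty}u^{-1/2}\exp\!\big(-\pi y(u-a)^2\big)\diff u.
\]
As $y\to0^+$ the parameter $a\to+\infty$; the Gaussian of width $1/\sqrt{\pi y}$ is concentrated near $u=a$, where $u^{-1/2}\asymp a^{-1/2}=(y/\beta)^{1/4}$, and its total mass is $1/\sqrt{y}$. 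A routine comparison (splitting the range $u<a/2$, where the Gaussian is exponentially small, from $u\ge a/2$, where $u^{-1/2}\le(2/a)^{1/2}$) gives an integral of size $\Ordo(y^{-1/4})$. The standard discrete-versus-integral comparison (using monotonicity of $u^{-1/2}$ and of the Gaussian on either side of $u=a$, coupled with the fact that $\sqrt{n+1}-\sqrt{n}\le1/(2\sqrt{n})$) upgrades this to $R(y)=\Ordo(y^{-1/4})$. Since $y^{-1/4}=\Ordo(\exp(\pi\beta/y))$ trivially as $y\to0^+$, this establishes $S(y)=\Ordo(\exp(\pi\beta/y))$.

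For the little-\say{$\ordo$} strengthening, suppose $|c_n|=\ordo(|n|^{-3/4}\exp(2\pi\sqrt{\beta|n|}))$. Given $\epsilon>0$, choose $N$ so large that $|c_n|\le\epsilon|n|^{-3/4}\exp(2\pi\sqrt{\beta|n|})$ for $|n|>N$. Splitting the series at $|n|=N$ and applying the already-established big-\say{$\Ordo$} bound to the tail gives
\[
\e^{-\pi\beta/y}\sum_{n\in\Z}|c_n|\,\e^{-\pi|n|y}\le\e^{-\pi\beta/y}\!\!\sum_{|n|\le N}|c_n|+C\epsilon,
\]
and the finite head contribution is $\ordo(1)$ as $y\to0^+$ because it is bounded while $\e^{-\pi\beta/y}\to0$. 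Letting first $y\to0^+$ and then $\epsilon\to0$ gives the desired little-\say{$\ordo$} statement.

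The main obstacle is the sharp control of the residual sum $R(y)$ as $y\to0^+$: both the $n^{-3/4}$ weight and the non-uniform spacing of $\sqrt{n}$ on the integer lattice need to be handled correctly so that no spurious factor larger than a negative power of $y$ is introduced. Everything else is bookkeeping.
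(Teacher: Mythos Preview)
Your completion-of-the-square strategy is the right idea and is essentially what the paper does (the paper packages the same Laplace-method computation as Lemma~\ref{lem:sumcontrol1} with parameter $\alpha=\tfrac32$). However, there is a concrete algebraic slip that propagates into a genuine gap.

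\medskip

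\textbf{The value of $a$ is wrong.} Completing the square in $u=\sqrt{n}$ gives
\[
2\pi\sqrt{\beta n}-\pi ny
=2\pi\sqrt{\beta}\,u-\pi y\,u^{2}
=-\pi y\Big(u-\frac{\sqrt{\beta}}{y}\Big)^{2}+\frac{\pi\beta}{y},
\]
so the correct center is $a=\sqrt{\beta}/y$, not $a=\sqrt{\beta/y}$. This matters: with the correct $a$, the Gaussian width $\asymp y^{-1/2}$ is much smaller than the center $a\asymp y^{-1}$, and $u^{-1/2}\asymp a^{-1/2}=y^{1/2}\beta^{-1/4}$ on the effective support, so
\[
2\int_{0}^{+\infty}u^{-1/2}\exp\big(-\pi y(u-a)^{2}\big)\,\diff u
\ \asymp\ 2\,a^{-1/2}\cdot y^{-1/2}
\ =\ 2\beta^{-1/4}\ =\ \Ordo(1).
\]
That is exactly $R(y)=\Ordo(1)$, which is what is needed since $S(y)=\e^{\pi\beta/y}R(y)$. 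Your mis-computed $a=\sqrt{\beta/y}$ gave $a^{-1/2}\asymp y^{1/4}$ and hence the spurious $R(y)=\Ordo(y^{-1/4})$.

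\medskip

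\textbf{The ``absorption'' step is invalid.} Even accepting $R(y)=\Ordo(y^{-1/4})$, the implication you draw is wrong: from $S(y)=\e^{\pi\beta/y}R(y)$ and $R(y)=\Ordo(y^{-1/4})$ you only get $S(y)=\Ordo\big(y^{-1/4}\e^{\pi\beta/y}\big)$, and $y^{-1/4}\e^{\pi\beta/y}$ is \emph{not} $\Ordo(\e^{\pi\beta/y})$ as $y\to0^{+}$. The observation ``$y^{-1/4}=\Ordo(\e^{\pi\beta/y})$'' is true but irrelevant; multiplying two $\Ordo(\e^{\pi\beta/y})$ quantities does not give an $\Ordo(\e^{\pi\beta/y})$ quantity. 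The target for $R(y)$ is $\Ordo(1)$, not $\Ordo(\e^{\pi\beta/y})$.

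\medskip

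Once you correct $a$, your Gaussian-concentration argument (split at $u=a/2$, compare sum to integral using unimodality on $[\sqrt{N},\infty)$ for $N$ slightly larger than $1$ when $\beta$ is small) yields $R(y)=\Ordo(1)$ directly, and your proof goes through. This is the same computation the paper carries out, only the paper keeps track of the logarithmic term $-\alpha\log t$ in $\varphi(t)$ rather than factoring out $n^{-3/4}$ first; the two organizations are equivalent. Your little-``o'' argument at the end is fine as written.
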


The proposition is immediate from the $\alpha=\frac32$ instance of the
following lemma.

\begin{lem}
For positive $\alpha,\beta, y$, and an integer
$N>\alpha^2/(4\pi^2\beta)$, we have that
\begin{equation}
\sum_{n\ge N}n^{-\alpha/2}\exp\big(2\pi\sqrt{\beta n}-\pi n y\big)
=\Ordo\big(\ybeta^{\alpha-\frac32}\,\e^{\pi\beta /y}\big)
\end{equation}
as $y\to0^+$. 
\label{lem:sumcontrol1}
\end{lem}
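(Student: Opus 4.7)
The plan is to treat this as a standard Laplace/saddle-point estimate. Writing $f(n):=2\pi\sqrt{\beta n}-\pi n y$, one computes $f'(n)=\pi\sqrt{\beta/n}-\pi y$, so the exponent is maximized at $n_{*}:=\beta/y^{2}$ with value $f(n_{*})=\pi\beta/y$, which already explains the factor $\e^{\pi\beta/y}$ in the target bound. The hypothesis $N>\alpha^{2}/(4\pi^{2}\beta)$ is precisely the condition that $n\mapsto n^{-\alpha/2}\,\e^{2\pi\sqrt{\beta n}}$ is strictly increasing on $[N,\infty)$; combined with the decreasing factor $\e^{-\pi n y}$, this ensures that the real-valued summand $g(x):=x^{-\alpha/2}\exp(2\pi\sqrt{\beta x}-\pi x y)$ is unimodal on $[N,\infty)$. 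By the standard monotonicity bound for positive unimodal functions,
\[
\sum_{n\ge N}g(n)\;\le\;2\max_{x\ge N}g(x)\;+\;\int_{N}^{\infty}g(x)\,\diff x,
\]
and since $\max g\le C\,g(n_{*})=C\,\beta^{-\alpha/2}\,y^{\alpha}\,\e^{\pi\beta/y}=\Ordo\bigl(y^{\alpha-3/2}\,\e^{\pi\beta/y}\bigr)$ as $y\to0^{+}$, it suffices to estimate the integral.

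For the integral, I substitute $u=\sqrt{x}$ and then complete the square to rewrite
\[
\int_{N}^{\infty}g(x)\,\diff x\;=\;2\,\e^{\pi\beta/y}\int_{\sqrt{N}}^{\infty}u^{1-\alpha}\exp\bigl(-\pi y\,(u-\sqrt{\beta}/y)^{2}\bigr)\,\diff u.
\]
The translation $v=u-\sqrt{\beta}/y$ centres the Gaussian at $v=0$ with width of order $y^{-1/2}$, while the offset $\sqrt{\beta}/y$ is much larger as $y\to0^{+}$. Splitting the range of integration at $|v|=\sqrt{\beta}/(2y)$: on the central part one has $v+\sqrt{\beta}/y\asymp\sqrt{\beta}/y$, so the algebraic factor is essentially constant and the contribution is
\[
\Ordo\bigl((\sqrt{\beta}/y)^{1-\alpha}\bigr)\int_{-\infty}^{\infty}\e^{-\pi y v^{2}}\,\diff v\;=\;\Ordo\bigl(y^{\alpha-1}\cdot y^{-1/2}\bigr)\;=\;\Ordo\bigl(y^{\alpha-3/2}\bigr).
\]
On the outer part $|v|\ge\sqrt{\beta}/(2y)$, the Gaussian factor is bounded by $\e^{-\pi\beta/(4y)}$, which absorbs any polynomial growth from the algebraic factor and is negligible relative to the target.

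The main technical obstacle is keeping track of the algebraic factor $(v+\sqrt{\beta}/y)^{1-\alpha}$ when $\alpha>1$: this factor is singular at $v=-\sqrt{\beta}/y$, but the effective support of the Gaussian is of order $y^{-1/2}\ll\sqrt{\beta}/y$ for small $y$, so the singularity lies well outside the region that contributes. Once this bookkeeping is carried out, collecting the contributions from the central Gaussian piece, the exponentially suppressed tail, and the peak term delivers the claimed bound $\Ordo\bigl(y^{\alpha-3/2}\,\e^{\pi\beta/y}\bigr)$.
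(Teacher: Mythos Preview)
Your approach is the same Laplace/saddle-point method as the paper: reduce the sum to an integral plus the peak value via unimodality, then estimate the integral by a Gaussian. Your version of the integral estimate is actually a bit cleaner than the paper's, because you complete the square \emph{exactly} in the exponent (leaving the algebraic factor $u^{1-\alpha}$ separate) rather than bounding the full $\varphi(t)=-\pi y t^{2}+2\pi\sqrt{\beta}\,t-\alpha\log t$ by a quadratic via $\varphi''\le -2\eta_0 y$ as the paper does. The central/tail split at $|v|=\sqrt{\beta}/(2y)$ works and your bookkeeping of the tail and of the possible singularity of $(v+\sqrt{\beta}/y)^{1-\alpha}$ is correct.

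There is, however, one genuine logical slip. You justify the unimodality of $g(x)=x^{-\alpha/2}\exp(2\pi\sqrt{\beta x}-\pi xy)$ on $[N,\infty)$ by saying that $x\mapsto x^{-\alpha/2}\e^{2\pi\sqrt{\beta x}}$ is increasing there and $\e^{-\pi xy}$ is decreasing. But a product of an increasing and a decreasing positive function is \emph{not} unimodal in general (for instance $\e^{x}\cdot \e^{-x}(2+\cos x)=2+\cos x$, where the second factor is strictly decreasing). The conclusion you want is nevertheless true for small $y$: differentiating $\log g$ and setting $s=\sqrt{x}$ gives $\pi y\,s^{2}-\pi\sqrt{\beta}\,s+\alpha/2=0$, whose two roots are
\[
t_{0}(y)=\tfrac{\sqrt{\beta}}{2y}\Bigl(1-\sqrt{1-\tfrac{2\alpha y}{\pi\beta}}\Bigr)\to\tfrac{\alpha}{2\pi\sqrt{\beta}},\qquad
t_{1}(y)=\tfrac{\sqrt{\beta}}{2y}\Bigl(1+\sqrt{1-\tfrac{2\alpha y}{\pi\beta}}\Bigr)\sim\tfrac{\sqrt{\beta}}{y},
\]
so that $(\log g)'>0$ on $(t_{0}^{2},t_{1}^{2})$ and $<0$ outside. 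The hypothesis $N>\alpha^{2}/(4\pi^{2}\beta)$ gives $\sqrt{N}>\alpha/(2\pi\sqrt{\beta})$, hence $\sqrt{N}>t_{0}(y)$ once $y$ is small, and then $g$ is increasing on $[N,t_{1}^{2}]$ and decreasing thereafter. This is exactly the computation the paper carries out; once you replace your ``increasing times decreasing'' sentence by this direct calculation, your proof is complete.
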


\begin{proof}
We consider the function
\[
\varphi(t):=-\pi \ybeta t^2+2\pi\sqrt{\beta} t-\alpha\log t,
\]
for positive real parameters $\alpha,\beta,\ybeta$, and observe that 
\begin{equation}
n^{-\alpha/2}\exp(2\pi\sqrt{\beta n}-\pi n\ybeta)=\exp(\varphi(\sqrt{n})).
\label{eq:exponentiate1}
\end{equation}
As the derivate equals
\[
\varphi'(t)=-2\pi \ybeta t+2\pi\sqrt{\beta}-\frac{\alpha}{t},
\]
we obtain two critical points,
\[
t_0(\ybeta)=\frac{\sqrt{\beta}}{2\ybeta}
\bigg(1-\sqrt{1-\frac{2\alpha\ybeta}{\pi\beta}}\bigg)
=\frac{\alpha}{2\pi\sqrt{\beta}}+\Ordo(\ybeta)
\]
and
\[
t_1(\ybeta)=\frac{\sqrt{\beta}}{2\ybeta}
\bigg(1+\sqrt{1-\frac{2\alpha\ybeta}{\pi\beta}}\bigg)
=\frac{\sqrt{\beta}}{\ybeta}-\frac{\alpha}{2\pi\sqrt{\beta}}
+\Ordo(\ybeta)
\]
as $\ybeta\to0^+$. For small $\ybeta>0$, the first critical point
$t_0(\ybeta)$ is smaller than $\sqrt{N}$,
so it lies outside of the given interval, and we may disregard it.
On the interval $[\sqrt{N},t_1(\ybeta)]$, the function
$\varphi$ is strictly increasing, whereas on $[t_1(\ybeta),+\infty[$,
$\varphi$ is instead strictly decreasing. The value
$\varphi(t_1(\ybeta))$ is then the unique
absolute maximum on $[\sqrt{N},+\infty[$, and we calculate that
\begin{equation}
\varphi(t_1(\ybeta))=\frac{\pi\beta}{\ybeta}
-\alpha\log \frac{\sqrt{\beta}}{\ybeta}
+\Ordo(\ybeta)
\label{eq:maxphi1}
\end{equation}
as $\ybeta\to0^+$. The second derivative equals
\[
\varphi''(t)=-2\pi \ybeta+\frac{\alpha}{t^2}.
\]
Let $\gamma_0$ be the positive real parameter given by
\[
\gamma_0^2=\frac{\alpha}{2(\pi-\eta_0)},
\]  
where $0<\eta_0<\pi$,
and observe that
\begin{equation}
\varphi''(t)=-2\pi \ybeta+\frac{\alpha}{t^2}\le
\bigg(-2\pi+\frac{\alpha}{\gamma_0^2}\bigg)\ybeta=-2\eta_0 \ybeta,
\qquad t\in[\gamma_0 \ybeta^{-\frac12},+\infty[. 
\end{equation}
By Taylor's formula, we have that
\begin{equation}
\varphi(t)=\varphi(t_1(\ybeta))+\int_{t_1(\ybeta)}^{t}(t-s)\varphi''(s)\diff s
=\varphi(t_1(\ybeta))+\frac12(t-t_1(\ybeta))^2\varphi''(\xi),
\end{equation}
for some point $\xi$ between $t_1(\ybeta)$ and $t$, and consequently,
we obtain
\begin{multline}
\varphi(t)=\varphi(t_1(\ybeta))+\frac12(t-t_1(\ybeta))^2\varphi''(\xi)\le
\varphi(t_1(\ybeta))-\eta_0 \ybeta(t-t_1(\ybeta))^2
\\
=\frac{\pi\beta}{\ybeta}-\alpha\log \frac{\sqrt{\beta}}{\ybeta}
-\eta_0 \ybeta(t-t_1(\ybeta))^2
+\Ordo(\ybeta),\qquad t\in[\gamma_0 \ybeta^{-\frac12},+\infty[, 
\label{eq:longintest1}
\end{multline}
since $t_1(\ybeta)\ge\gamma_0 \ybeta^{-\frac12}$ holds as $\ybeta\to0^+$.
Note that in \eqref{eq:longintest1}, the $\Ordo(\ybeta)$ term is independent
of $t$. 
As for the remaining interval, we use monotonicity:
\begin{multline}
\varphi(t)\le \varphi(\gamma_0 \ybeta^{-\frac12})
=-\pi\gamma_0^2-\alpha\log\gamma_0
+\frac{\alpha}{2}\log\ybeta+2\pi\sqrt{\beta}\gamma_0 \ybeta^{-\frac12},\qquad
t\in[\sqrt{N},\gamma_0\ybeta^{-\frac12}].
\\
\label{eq:shortintest1}
\end{multline}
Using the monotonicity properties of $\varphi$, we find that
\begin{equation}
\sum_{N\le n<t_1(\ybeta)^2}\exp(\varphi(\sqrt{n}))\le\int_{N}^{t_1(\ybeta)^2}
\exp(\varphi(\sqrt{s}))\diff s+\exp(\varphi(t_1(\ybeta)))
\end{equation}
and
\begin{equation}
\sum_{n\ge t_1(\ybeta)^2}\exp(\varphi(\sqrt{n}))\le\int_{t_1(\ybeta)^2}^{+\infty}
\exp(\varphi(\sqrt{s}))\diff s+\exp(\varphi(t_1(\ybeta))).
\end{equation}
By adding up these estimates, it follows that
\begin{multline}
\sum_{n\ge N}\exp(\varphi(\sqrt{n}))\le\int_{N}^{+\infty}
\exp(\varphi(\sqrt{s}))\diff s+2\exp(\varphi(t_1(\ybeta)))
\\
=2\int_{\sqrt{N}}^{+\infty}
\exp(\varphi(t))t\diff t+2\exp(\varphi(t_1(\ybeta))).
\label{eq:sumestimate1}
\end{multline}
Next, from the estimate \eqref{eq:shortintest1}, it follows that
\begin{equation}
2\int_{\sqrt{N}}^{\gamma_0 \ybeta^{-\frac12}}\exp(\varphi(t))\,t\diff t
\le\gamma_0^2\ybeta^{-1}
\exp(\varphi(\gamma_0 y^{-\frac12}))=
\gamma_0^{2-\alpha}\e^{-\pi\gamma_0^2}\ybeta^{-1+\frac{\alpha}{2}}
\exp(2\pi\sqrt{\beta}\gamma_0 \ybeta^{-\frac12}). 
\label{eq:shortintest2}
\end{equation}
On the other hand, from \eqref{eq:longintest1}, we find that
\begin{multline}
2\int_{\gamma_0 \ybeta^{-\frac12}}^{+\infty}
\exp(\varphi(t))t\diff t
\le (1+\Ordo(\ybeta))\,\beta^{-\alpha/2}\ybeta^{\alpha}\e^{\pi\beta/\ybeta}
\int_{\gamma_0 \ybeta^{-1/2}}^{+\infty}\exp(-\eta_0\ybeta(t-t_1(\ybeta))^2)2t\diff t
\\
\le (1+\Ordo(\ybeta))\,\beta^{-\alpha/2}\ybeta^{\alpha}\e^{\pi\beta/\ybeta}
\int_{0}^{+\infty}\exp(-\eta_0\ybeta(t-t_1(\ybeta))^2)2t\diff t
\\
\le (1+\Ordo(\ybeta))\,\beta^{-\alpha/2}\ybeta^{\alpha}\e^{\pi\beta/\ybeta}
\big(\eta_0^{-1}\ybeta^{-1}\e^{-\eta_0 y t_1(\ybeta)^2}+
2\pi^{\frac12}\eta_0^{-\frac12}\ybeta^{-\frac12}t_1(\ybeta)\big)
\\
\le2\sqrt{\pi}\eta_0^{-\frac12}\beta^{(1-\alpha)/2}y^{\alpha-\frac32}(1+\Ordo(\ybeta))
\big(1+\Ordo(\eta_0^{-\frac12}\ybeta^{\frac12})\big)
\e^{\pi\beta/\ybeta}
\label{eq:shortintest2.001}
\end{multline}
as $\ybeta\to0^+$. We may now fix $\eta_0:=\pi/2$ and hence
$\gamma_0^2=\frac{\alpha}{\pi}$. This permits us to not worry about the
dependence on $\eta_0$ and $\gamma_0$ in the above estimates.
Putting things together, we find that the inequality \eqref{eq:sumestimate1}
leads to 
\begin{multline}
\sum_{n\ge N}\exp(\varphi(\sqrt{n}))\le\int_{N}^{+\infty}
\exp(\varphi(\sqrt{s}))\diff s+2\exp(\varphi(t_1(\ybeta)))
\\
=\Ordo\Big(\ybeta^{-1+\frac{\alpha}{2}}
\exp\big(2\sqrt{\pi\alpha\beta}\ybeta^{-\frac12}\big)\Big)
\\
+\sqrt{2}\beta^{(1-\alpha)/2}\ybeta^{\alpha-\frac32}
(1+\Ordo(\ybeta^{\frac12}))\e^{\pi\beta/\ybeta}+2\ybeta^{\alpha}\e^{\pi\beta/\ybeta}
(1+\Ordo(\ybeta))
\\
=\sqrt{2}\beta^{(1-\alpha)/2}
(1+\Ordo(\ybeta^{\frac12}))\ybeta^{\alpha-\frac32}\e^{\pi\beta/\ybeta}
\label{eq:sumestimate2}
\end{multline}
as $\ybeta\to0^+$.
In view of the identity \eqref{eq:exponentiate1}, the claimed assertion now
follows.
\end{proof}

The converse to Proposition \ref{prop:coeff->function} reads as follows.

\begin{prop}
\label{prop:function->coeff1.0}  
Suppose, for $0<\beta<+\infty$, $h:\Hyp\to\C$ is a harmonic function
which is $2$-periodic, i.e., $h(\tau+2)=h(\tau)$ holds, and enjoys the estimate
\begin{equation}
|h(\tau)|=\Ordo\big(\exp(\pi\beta/\im\tau)\big)
\end{equation}
uniformly in $\tau\in\Hyp$. It then follows that $h$ is uniquely
represented by a harmonic Fourier series of the form
\eqref{eq:Fourierseries1.01}, where the coefficients have the growth bound
\[
|c_n|=\Ordo\big(\exp\big(2\pi\sqrt{\beta|n|}\big)\big)
\]
as $|n|\to+\infty$.
\end{prop}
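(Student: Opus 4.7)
I would model the proof on the strategy used in the proof of Proposition \ref{prop:function->coeff0}, adapting the key estimate at the end.

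\textbf{Step 1: Fourier expansion of each horizontal slice.} Fix $y_0 > 0$. Since $h$ is $2$-periodic in the real direction and bounded by $C\exp(\pi\beta/y_0)$ on the line $\im\tau = y_0$, the slice function $t\mapsto h(t+\imag y_0)$ lies in $L^2(\R/2\Z)$ and admits a Fourier expansion
\begin{equation}
h(t+\imag y_0) = \sum_{n\in\Z} c_n(y_0)\,\e^{\imag\pi nt},\qquad t\in\R,
\label{eq:plan1}
\end{equation}
with $c_n(y_0) = \frac12\int_{-1}^{1}\e^{-\imag\pi nt}h(t+\imag y_0)\,\diff t$. Lifting \eqref{eq:plan1} to the upper half-plane via harmonic extension and invoking the generalized maximum principle (applied to the difference between $h$ and the harmonic extension of the right-hand side, on the strip $\{\im\tau > y_0\}$) shows that \eqref{eq:Fourierseries1.01} holds for all $\tau$ with $\im\tau > y_0$, with $c_n(y_0)$ playing the role of the coefficient attached to $e_n(\tau - \imag y_0)$.

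\textbf{Step 2: Dependence of $c_n(y)$ on $y$.} Exactly as in the proof of Proposition \ref{prop:function->coeff0}, applying the expansion at two heights $y_0$ and $y_0+s$, $s>0$, and equating Fourier coefficients yields the functional relation
\[
c_n(y_0+s) = \e^{-\pi|n|s}\,c_n(y_0).
\]
Hence $y\mapsto \e^{\pi|n|y}c_n(y)$ is constant on $\R_{>0}$; call its value $c_n$. Then $c_n(y)=\e^{-\pi|n|y}c_n$, so letting $y_0\to 0^+$ gives a single sequence of coefficients $\{c_n\}$ that represent $h$ in the sense of \eqref{eq:Fourierseries1.01}, and uniqueness is automatic because the slice-level coefficients are uniquely determined.

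\textbf{Step 3: Coefficient bound via optimization.} The integral formula for $c_n(y)$ together with the growth assumption yields
\[
|c_n(y)| \le \tfrac12\int_{-1}^{1}|h(t+\imag y)|\,\diff t = \Ordo\bigl(\exp(\pi\beta/y)\bigr)
\]
uniformly in $y>0$. Combining with $c_n=\e^{\pi|n|y}c_n(y)$ gives
\[
|c_n| = \Ordo\Bigl(\exp\bigl(\pi|n|y + \pi\beta/y\bigr)\Bigr)
\]
for every $y>0$. Minimizing the exponent over $y>0$ with $y=\sqrt{\beta/|n|}$ for $n\ne 0$ produces the exponent $2\pi\sqrt{\beta|n|}$, yielding the claimed bound $|c_n|=\Ordo(\exp(2\pi\sqrt{\beta|n|}))$ as $|n|\to+\infty$.

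\textbf{Main obstacle.} No step is genuinely deep; the only point that requires care is justifying that \eqref{eq:Fourierseries1.01} extends from a single slice to all of $\Hyp$. Because $h$ is allowed to grow like $\exp(\pi\beta/\im\tau)$ near the real axis, one cannot work directly at $y=0$, so the argument must proceed slice by slice (at level $y_0>0$) and only afterwards let $y_0 \to 0^+$; the clean separation of variables $c_n(y)=\e^{-\pi|n|y}c_n$ makes this transparent. Everything else is a straightforward repetition of the template from Proposition \ref{prop:function->coeff0}, with the polynomial optimization replaced by the calculus minimization $\min_{y>0}(\pi|n|y + \pi\beta/y)=2\pi\sqrt{\beta|n|}$.
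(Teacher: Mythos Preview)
Your proof is correct and follows essentially the same approach as the paper, which gives only a proof sketch that likewise refers back to the template of Proposition~\ref{prop:function->coeff0} and then optimizes the bound $|c_n|\le C\exp(\pi|n|s+\pi\beta/s)$ over $s>0$. You have simply filled in the details the paper leaves implicit.
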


\begin{proof}[Proof sketch]
Following along the same lines as the proof of Proposition
\ref{prop:function->coeff0}, we may use the following formula to recover
the Fourier coefficients:
\begin{equation}
c_n=\frac{\e^{\pi|n|s}}{2}\int_{-1}^{1}\e^{-\imag\pi nt}h(t+\imag s)\,\diff t,
\label{eq:get_coeff_cn}
\end{equation}
provided that $s>0$. 
The growth estimate of the function $f$ now gives that 
\[
|c_n|\le C\,\exp(\pi|n|s)\,\exp(\pi\beta/s),
\]
where $C$ is the implied constant.
Now optimizing over $s>0$ gives the indicated coefficient estimate.
\end{proof}

\subsection{Solving the Dirichlet problem in the half-plane with
possibly rapid growth}

We consider the Dirichlet problem of finding a harmonic function
$u:\Hyp\to\C$ which extends continuously to the closure $\Hyp\cup\R$,
with $u=\varphi$ on $\R$, where $\varphi:\R\to\C$ is a given continuous
function. If $\varphi$ has finite integral
\begin{equation}
\int_\R \frac{|\varphi(t)|}{1+t^2}\,\diff t<+\infty,
\label{eq:Poissonintegrability0.001}
\end{equation}
we can use the Poisson extension \eqref{eq:Poisson0} to define such a solution
\begin{equation}
\label{eq:harmext1.01}
u(\tau)=\int_\R P(\tau,t)\,\varphi(t)\,\diff t=\frac{1}{\pi}\int_{\R}
\frac{\im\tau}{|t-\tau|^2}\,\varphi(t)\,\diff t,\qquad \tau\in\Hyp.  
\end{equation}
This is the case in the following instance.

\begin{prop}
\label{prop:loggrowth}  
Suppose that $\varphi:\R\to\C$ is continuous with logarithmic power growth:
\[  
|\varphi(t)|=\Ordo(\log(2+|t|))^k\quad \text{as}\,\,\,|t|\to+\infty,
\]
for some integer $k$, $0<k<+\infty$. Then the Poisson extension $u$ given by
\eqref{eq:harmext1.01} has the same growth rate
\[  
|u(\tau)|=\Ordo(\log(2+|\tau|))^k,
\]
uniformly in $\Hyp$. 
\end{prop}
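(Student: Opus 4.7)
The plan is to reduce the Poisson integral to a normalized form via rescaling and then bound the logarithmic growth factor by separating it into a part depending only on $\tau$ and a part depending only on the normalized integration variable. First I would verify that $\varphi$ satisfies the integrability condition \eqref{eq:Poissonintegrability0.001}, which is immediate from the hypothesis since $\int_\R(\log(2+|t|))^k/(1+t^2)\diff t<+\infty$, so that the Poisson extension \eqref{eq:harmext1.01} is well-defined.

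Writing $\tau=x+\imag y$ with $y>0$, I would perform the change of variables $t=x+sy$ to obtain
\[
u(\tau)=\frac{1}{\pi}\int_\R\frac{\varphi(x+sy)}{1+s^2}\,\diff s,
\]
which at the level of absolute values gives
\[
|u(\tau)|\le\frac{C}{\pi}\int_\R\frac{(\log(2+|x+sy|))^k}{1+s^2}\,\diff s
\]
for some constant $C>0$ coming from the hypothesis. The key algebraic step is the elementary inequality
\[
2+|x+sy|\le 2+|x|+|s|y\le(2+|\tau|)(1+|s|),
\]
which yields the logarithmic bound $\log(2+|x+sy|)\le\log(2+|\tau|)+\log(1+|s|)$. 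A binomial expansion then gives
\[
(\log(2+|x+sy|))^k\le\sum_{j=0}^k\binom{k}{j}(\log(2+|\tau|))^{k-j}(\log(1+|s|))^j.
\]

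Substituting this into the integral and using that $I_j:=\int_\R(\log(1+|s|))^j/(1+s^2)\diff s<+\infty$ for every $j\ge0$ (since the integrand decays like $(\log|s|)^j/s^2$ at infinity), one obtains
\[
|u(\tau)|\le\frac{C}{\pi}\sum_{j=0}^k\binom{k}{j}I_j(\log(2+|\tau|))^{k-j}.
\]
Because $\log(2+|\tau|)\ge\log 2>0$ uniformly on $\Hyp$, each lower power $(\log(2+|\tau|))^{k-j}$ is dominated by a constant times $(\log(2+|\tau|))^k$, and the asserted uniform bound follows. The only non-routine step is identifying the right pointwise inequality on $\log(2+|x+sy|)$ that correctly separates the $\tau$-dependence from the $s$-dependence; once that is in place, the remaining manipulations are straightforward integration estimates.
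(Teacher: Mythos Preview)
Your proof is correct and takes a genuinely different route from the paper's. The paper argues by comparison with an explicit harmonic majorant: it sets $V(\tau)=\log(\tau+2\imag)$, notes that $\re V(\tau)^k=\log^k|\tau+2\imag|+\Ordo(\log^{k-1}|\tau+2\imag|)$, reduces to real-valued $\varphi$, and then uses monotonicity of the Poisson extension (if $\varphi(t)-A\,\re V(t)^k\le B$ on $\R$ for suitable constants $A,B>0$, the same inequality persists for the extensions in $\Hyp$, since $\re V^k$ is its own Poisson extension). Your approach is more elementary and purely computational: the rescaling $t=x+sy$ together with the pointwise bound $2+|x+sy|\le(2+|\tau|)(1+|s|)$ separates variables cleanly and avoids having to identify a holomorphic comparison function. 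The paper's method has the advantage of being a template that is reused verbatim in the next result (Proposition~\ref{prop:logloggrowth}) with $V(\tau)=\log\log(\tau+10\imag)$; your direct-integration argument would also handle that case, but would require a fresh chain of inequalities for the iterated logarithm.
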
  

\begin{proof}
The growth bound \eqref{eq:Poissonintegrability0.001} on $\varphi$ holds.
We consider the holomorphic function
\[
V(\tau)=\log(\tau+2\imag)=\log|\tau+2\imag|+\imag\arg(\tau+2\imag),\qquad
\tau\in\Hyp,
\]
where the argument is chosen in the interval from $0$ to $\pi$. The $k$-power
of $V$ has
\[
V(\tau)^k=\log^k|\tau+2\imag|+\Ordo(\log^{k-1}|\tau+2\imag|),\qquad
\tau\in\Hyp,
\]
so that the same applies to its real part:
\[
\re\, V(\tau)^k=\log^k |\tau+2\imag|+\Ordo(\log^{k-1}|\tau+2\imag|),\qquad
\tau\in\Hyp.
\]
Next, we observe that without loss of generality we may assume that $\varphi$
is real-valued. Indeed, if the assertion of the lemma holds for the real and
imaginary parts of $\varphi$ separately, then it holds also for $\varphi$.
In this case, the Poisson extension $u$ is harmonic and real-valued, and
\[
\varphi(t)-A\,\re\,V(t)^k\le B,\qquad t\in\R,
\]
holds for some positive constants $A,B$, by the growth assumption on $\varphi$.
This then carries over to the Poisson extension of the left-hand side:
\[
u(\tau)-A\,\re\,V(\tau)^k\le B,\qquad \tau\in\Hyp.
\]
This establishes the estimate from above on $u$. The analogous estimate from
below follows by the same argument applied to $-u$ in place of $u$ and
$-\varphi$ in place of $\varphi$. In view of this, the assertion now follows.
\end{proof}

The maximum principle can also help in cases of slower rates of growth. 

\begin{prop}
\label{prop:logloggrowth}  
Suppose that $\varphi:\R\to\C$ is continuous with double logarithmic growth:
\[  
|\varphi(t)|=\Ordo(\log\log(10+|t|))\quad \text{as}\,\,\,|t|\to+\infty.
\]
Then the Poisson extension $u$ given by \eqref{eq:harmext1.01} has the same
growth rate
\[  
|u(\tau)|=\Ordo(\log\log(10+|\tau|)),
\]
uniformly in $\Hyp$. 
\end{prop}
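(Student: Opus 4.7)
The plan is to mimic the proof of Proposition \ref{prop:loggrowth}, replacing the holomorphic majorant $V(\tau)=\log(\tau+2\imag)$ with one whose real part grows only doubly-logarithmically. Specifically, I would consider
\[
V(\tau):=\log\log(\tau+10\imag),\qquad \tau\in\Hyp.
\]
For $\tau=\sigma+\imag t\in\Hyp$ we have $|\tau+10\imag|^2=\sigma^2+(t+10)^2\ge 100$, so $\re\log(\tau+10\imag)=\log|\tau+10\imag|\ge\log 10>1$. Hence $\log(\tau+10\imag)$ lands in the right half-plane $\{w:\re w>1\}$, on which the outer logarithm is unambiguously defined via the principal branch (with $|\im V|<\pi/2$). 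Therefore $V$ is holomorphic in $\Hyp$ and $\re V$ is harmonic. A short calculation yields, uniformly on $\Hyp$ (and in particular on $\R$),
\[
\re V(\tau)=\log|\log(\tau+10\imag)|=\log\log(10+|\tau|)+\Ordo(1).
\]
Since $|\re V(\tau)|=\Ordo(\log(2+|\tau|))$, Proposition \ref{prop:loggrowth} applied to $\re V$ shows that $\re V$ coincides with the Poisson extension of its own boundary values.

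With this majorant in hand, I would follow the argument of Proposition \ref{prop:loggrowth} verbatim. First, one reduces to $\varphi$ real-valued by splitting into real and imaginary parts. The hypothesis $|\varphi(t)|=\Ordo(\log\log(10+|t|))$ combined with the asymptotic $\re V(t)=\log\log(10+|t|)+\Ordo(1)$ yields constants $A,B>0$ such that
\[
\varphi(t)-A\,\re V(t)\le B,\qquad t\in\R.
\]
Applying the Poisson extension to both sides (legitimate because $\re V$ is its own Poisson extension, and the left-hand side has bounded positive part) gives
\[
u(\tau)-A\,\re V(\tau)\le B,\qquad \tau\in\Hyp,
\]
whence $u(\tau)\le A\log\log(10+|\tau|)+\Ordo(1)$. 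Running the same argument with $-\varphi$ in place of $\varphi$ (using $-\re V$ is dominated by the same harmonic majorant up to sign) produces the matching lower bound.

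The main obstacle is verifying that $\re V$ is indeed its own Poisson extension; this is not automatic because the boundary function is unbounded, but it follows from the previously established Proposition \ref{prop:loggrowth} applied in the polynomial-log regime. Once that point is settled, the maximum-principle style comparison used in Proposition \ref{prop:loggrowth} transfers to the present setting without any new difficulty, and the double-logarithmic asymptotics of $\re V$ both on $\R$ and uniformly in $\Hyp$ follow from the fact that $\log|\tau+10\imag|$ is comparable to $\log(10+|\tau|)$ throughout $\Hyp$.
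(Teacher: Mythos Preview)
Your proposal is correct and follows essentially the same approach as the paper: the paper's proof sketch also takes $V(\tau)=\log\log(\tau+10\imag)$ as the holomorphic comparison function and refers back to the argument of Proposition~\ref{prop:loggrowth}, leaving the details to the reader. You have supplied exactly those details, including the well-definedness of $V$ and the verification that $\re V$ is its own Poisson extension, so your write-up is a faithful fleshing-out of the paper's indicated proof.
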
 

\begin{proof}[Proof sketch]
If we let $V$ be the holomorphic function
\[
V(\tau)=\log\log(\tau+10\imag),\qquad\tau\in\Hyp,
\]
we may use its real part as a comparison function, as in the proof of
Proposition \ref{prop:loggrowth}. The details of the argument
are left to the reader. 
\end{proof}  

Although the Poisson extension works very well under the integrability
condition \eqref{eq:Poissonintegrability0.001}, we are left to wonder about the
uniqueness. For instance, the function $\tau\mapsto\im\,\tau$ is harmonic in
$\Hyp$ and vanishes on the line $\R$, so we may add it to any given solution
$u$ to the Dirichlet problem and still solve the boundary value problem.
The following proposition explains the degree of nonuniquess if we do not
impose any growth condition on the solution $u$. The result is well-known,
but the proof is short and we include it for completeness.

\begin{prop}
Suppose that $u:\Hyp\to\C$ is harmonic and extends continuously to the closed
half-plane $\Hyp\cup\R$, and that $u(t)=0$ holds for each $t\in\R$. Then
there exist two entire functions $F_1,F_2:\C\to\C$ with real Taylor
coefficients at the point $0$, such that
\[
u(\tau)=\im\,F_1(\tau)+\imag\,\im\,F_2(\tau),\qquad \tau\in \Hyp.
\]  
\end{prop}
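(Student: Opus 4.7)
The plan is to split $u$ into real and imaginary parts and apply the Schwarz reflection principle for harmonic functions to each part separately.

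First I would write $u = u_1 + \imag u_2$ where $u_1 = \re u$ and $u_2 = \im u$ are real-valued. Each $u_j$ is harmonic on $\Hyp$, extends continuously to $\Hyp \cup \R$, and vanishes on $\R$. By the Schwarz reflection principle for harmonic functions, I can extend $u_j$ to an odd function $\tilde u_j$ on all of $\C$ by setting
\[
\tilde u_j(\tau) := \begin{cases} u_j(\tau), & \im \tau \ge 0, \\ -u_j(\bar\tau), & \im \tau < 0. \end{cases}
\]
The standard verification that $\tilde u_j$ is harmonic on $\C$ (including across the real axis) is by checking the mean value property at points of $\R$ using the symmetry, or equivalently by noting that the Poisson integral in a small disk centered at a real point equals $\tilde u_j$ by symmetry and uniqueness of solutions to the Dirichlet problem on the disk.

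Next, since $\C$ is simply connected, each real-valued harmonic function $\tilde u_j$ on $\C$ admits a harmonic conjugate, and hence there is an entire function $G_j:\C \to \C$ with $\im G_j = \tilde u_j$. The function $G_j$ is unique up to addition of a real constant. However, $G_j$ need not yet have real Taylor coefficients at $0$, so I would symmetrize. Define
\[
F_j(\tau) := \tfrac{1}{2}\bigl(G_j(\tau) + \overline{G_j(\bar\tau)}\bigr), \qquad \tau \in \C.
\]
The map $\tau \mapsto \overline{G_j(\bar\tau)}$ is entire, so $F_j$ is entire. A direct calculation gives $\overline{F_j(\bar\tau)} = F_j(\tau)$, which forces the Taylor coefficients of $F_j$ at $0$ to be real. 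Moreover,
\[
\im F_j(\tau) = \tfrac{1}{2}\bigl(\im G_j(\tau) - \im G_j(\bar\tau)\bigr) = \tfrac{1}{2}\bigl(\tilde u_j(\tau) - \tilde u_j(\bar\tau)\bigr) = \tilde u_j(\tau),
\]
where the last equality uses the oddness $\tilde u_j(\bar\tau) = -\tilde u_j(\tau)$. Restricting to $\tau \in \Hyp$ yields $u_j(\tau) = \im F_j(\tau)$, and consequently
\[
u(\tau) = u_1(\tau) + \imag\, u_2(\tau) = \im F_1(\tau) + \imag\, \im F_2(\tau), \qquad \tau \in \Hyp,
\]
as desired.

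The only mildly subtle point is verifying that the reflected function $\tilde u_j$ is genuinely harmonic across $\R$; I would handle it with the Poisson integral argument sketched above. Once that is in place, the existence of a harmonic conjugate on the simply connected domain $\C$ and the symmetrization trick are standard, so no further obstacle arises.
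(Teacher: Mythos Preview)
Your proof is correct and follows essentially the same approach as the paper: split into real and imaginary parts, apply Schwarz reflection to each, and find entire functions whose imaginary parts recover the reflected harmonic functions. The only difference is in how you secure real Taylor coefficients: you symmetrize via $F_j(\tau)=\tfrac12\bigl(G_j(\tau)+\overline{G_j(\bar\tau)}\bigr)$, whereas the paper simply observes that since $\tilde u_j=0$ on $\R$, any entire $G_j$ with $\im G_j=\tilde u_j$ is already real-valued on $\R$ and hence has real Taylor coefficients at $0$---so no symmetrization is needed.
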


\begin{proof}
By the Schwarz reflection principle \cite{rud} (Theorem 11.17), 
if we extend $u$ to the lower half-plane
by setting
\[
u(\bar\tau):=-u(\tau),\qquad \tau\in\Hyp,
\]
we obtain a harmonic function in the whole plane $\C$ which vanishes on $\R$.
Considering the real and imaginary parts of $u$ separately, we find entire
functions $F_1,F_2$ such that $\re \,u=\im\, F_1$ and $\im\,u=\im\, F_2$.
These two entire functions take real values along $\R$, i.e., 
they have real Taylor coefficients at $0$.   
\end{proof}  

As for the solvability of the of the Dirichlet problem for general continuous
$\varphi:\R\to\C$, it has been considered and the answer is affirmative
(see the work of Finkelstein and Scheinberg \cite{FinkSchein}). We have found
an alternative short proof which we also supply.

\begin{prop}
\label{prop:Dirichlet-general}  
Suppose $\varphi:\R\to\C$ is continuous. Then there exists a harmonic function
$u:\Hyp\to\C$ which extends continuously to the closed half-plane $\Hyp\cup\R$,
such that $u(t)=\varphi(t)$ holds for all $t\in\R$.
\end{prop}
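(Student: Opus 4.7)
The plan is to handle the growth of $\varphi$ by a cut-off and correction scheme: split $\varphi$ into compactly supported pieces with supports escaping to infinity, apply the classical Poisson integral to each piece, and subtract off ``harmless'' harmonic corrections (imaginary parts of polynomials with real Taylor coefficients at $0$) so that the modified pieces sum convergently without altering the boundary data. WLOG $\varphi$ is real-valued (otherwise split into real and imaginary parts). Fix a partition of unity $\{\eta_n\}_{n\ge 0}$ on $\R$ with $\eta_0$ supported in $[-2,2]$ and, for $n\ge1$, $\eta_n$ supported in the annular region $A_n=\{t\in\R:2^{n-1}\le|t|\le2^{n+1}\}$, and set $\varphi_n=\eta_n\varphi$. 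Each $\varphi_n$ is continuous, real, and compactly supported, with $M_n=\|\varphi_n\|_\infty<+\infty$, and $\varphi=\sum_n\varphi_n$ with only finitely many nonzero terms at each $t\in\R$.

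For each $n$, let $v_n$ be the Poisson extension of $\varphi_n$, which is harmonic in $\Hyp$ and continuous on $\Hyp\cup\R$ with boundary values $\varphi_n$. Since $\varphi_n$ is real, I would write $v_n=\im G_n$ where $G_n(\tau):=\pi^{-1}\int_\R\varphi_n(t)(t-\tau)^{-1}\diff t$ is holomorphic on $\C\setminus\mathrm{supp}\,\varphi_n$. For $n\ge 2$, $\mathrm{supp}\,\varphi_n$ is disjoint from the disk $\{|\tau|<2^{n-1}\}$, so $G_n$ admits a Taylor expansion $G_n(\tau)=\sum_{k\ge0}a_{n,k}\tau^k$ with real coefficients $a_{n,k}=\pi^{-1}\int\varphi_n(t)t^{-k-1}\diff t$ satisfying the routine bound $|a_{n,k}|\lesssim M_n\cdot 2^{-(n-1)k}$. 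Choose $K_n\in\N$ large enough that $M_n\cdot 2^{-K_n}\le 2^{-n}$, and define the correction polynomial $E_n(\tau):=\sum_{k=0}^{K_n}a_{n,k}\tau^k$. Because $E_n$ has real coefficients, $\im E_n$ vanishes identically on $\R$, so $u_n:=v_n-\im E_n$ is harmonic in $\Hyp$, continuous on $\Hyp\cup\R$, and still has boundary values $\varphi_n$ on $\R$. For $n=0,1$ I would simply take $u_n=v_n$, which is already bounded on $\Hyp\cup\R$.

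To conclude, I would verify uniform convergence of $u:=\sum_n u_n$ on compact subsets of $\Hyp\cup\R$. For a given compact $K\subset\{|\tau|\le R\}\cap\overline\Hyp$, and any $n\ge 2$ with $R\le 2^{n-2}$, the Taylor remainder estimate gives, for $\tau\in K$,
\[
|u_n(\tau)|\le|G_n(\tau)-E_n(\tau)|\le\sum_{k>K_n}|a_{n,k}|R^k\lesssim M_n\sum_{k>K_n}2^{-k}\lesssim M_n\cdot 2^{-K_n}\le 2^{-n},
\]
which is summable. The finitely many terms with small $n$ are bounded continuous functions on $\Hyp\cup\R$. Hence $u$ is continuous on $\Hyp\cup\R$ and harmonic on $\Hyp$ (as a uniform limit of harmonic functions on compact subsets). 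For $t\in\R$, the identity $u(t)=\sum_n u_n(t)=\sum_n\varphi_n(t)=\varphi(t)$ holds, with only finitely many nonzero summands.

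The main obstacle is the calibration step: we must enlarge the degree $K_n$ of the correction polynomial fast enough to absorb the possibly rapid growth of $M_n$, while verifying that $\im E_n$ does not perturb the boundary values (which is automatic from reality of the $a_{n,k}$, hinging on $\varphi_n$ being real; this is why I reduce to the real case at the start). Everything else is a geometric series estimate, made possible because after the cut-off, the Cauchy-type integral $G_n$ is holomorphic in a disk about $0$ whose radius grows like $2^n$, giving effective Taylor control independent of the size of $\varphi$ outside any fixed compact subset of $\R$.
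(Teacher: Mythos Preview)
Your argument is correct. The paper takes a different route: it picks an even entire function $\Psi$ with real Taylor coefficients at $0$ growing fast enough that $\int_\R|\varphi(t)|\,e^{-\re\Psi(t)}\,\diff t<\infty$, and then writes down a single integral formula
\[
u(\tau)=\frac{1}{\pi}\int_\R\im\bigg(\frac{\exp(\Psi(\tau)-\Psi(t))}{t-\tau}\bigg)\,\varphi(t)\,\diff t,
\]
checking that this is harmonic in $\Hyp$ with boundary values $\varphi$. In spirit the two methods are cousins: both correct the Cauchy/Poisson kernel by something of the form $\im F$ with $F$ holomorphic and real on $\R$, which therefore vanishes on the boundary (exactly the freedom identified in the proposition preceding this one). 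The paper absorbs all the growth of $\varphi$ into one entire weight $e^{\Psi}$, whereas you distribute it over infinitely many polynomial corrections $E_n$ calibrated to a dyadic partition of unity. Your approach is more hands-on and makes the local uniform convergence explicit; the paper's approach yields a clean closed-form kernel. One small wording slip: for $n\ge2$ the pieces $u_n=v_n-\im E_n$ are not bounded on all of $\Hyp\cup\R$ (the polynomial part grows at infinity), only continuous; but you only use boundedness on the fixed compact $K$, which follows from continuity, so the argument stands.
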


\begin{proof}
It is an exercise to find an even entire function $\Psi$ with real Taylor
coefficients at the point $0$, which grows at least as quickly along the real
line as any given real-valued continuous function. In particular,
it is possible to find such a $\Psi$ with
\[
\int _\R |\varphi(t)|\,\exp(-\re\,\Psi(t))\,\diff t<+\infty.
\]
Then it can be shown that the formula
\[
u(\tau)=\frac{1}{\pi}\int_\R\im
\bigg(\frac{\exp(\Psi(\tau)-\Psi(t))}{t-\tau}\bigg)\,\varphi(t)\,\diff t,
\qquad\tau\in\Hyp,
\]
defines a harmonic function $u$ whose boundary values equal $\varphi$.
\end{proof}
  
This settles the matter of the Dirichlet problem with arbitrary continuous
boundary values, at the cost of allowing arbitrary growth of the solution $u$.
But what about an intermediate situation, when we have growth control on
$\varphi$, can we get analogous growth control of the solution $u$?

\begin{prop} Fix an $\alpha\in\R_{>0}$,
and suppose $\varphi:\R\to\C$ is continuous with the growth bound
\[
|\varphi(t)|=\Ordo\big((1+|t|)^\alpha\big)\quad\text{as}\,\,\,|t|\to+\infty.
\]
Then if $\alpha\notin\Z$, there exists a harmonic function
$u=u_\alpha:\Hyp\to\C$ which extends continuously to the closed half-plane
$\Hyp\cup\R$ with $u=\varphi$ on the line $\R$, such that
\[
|u(\tau)|=\Ordo\big((1+|\tau|)^{\alpha}\big)
\]
holds uniformly in $\tau\in\Hyp$. Moreover, if $\alpha=k\in\Z_{>0}$,
there still is such a solution $u$, but the growth bound is slightly worse:
\[
|u(\tau)|=\Ordo\big((1+|\tau|)^{k}\log(2+|\tau|)\big),
\]
uniformly in $\tau\in\Hyp$. 
\label{prop:powergrowth}
\end{prop}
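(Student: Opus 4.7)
My plan is to construct $u$ via a Nevanlinna-type modified Poisson kernel designed to converge on polynomially growing data, and then to estimate the growth of the resulting harmonic function by a three-zone analysis of the integration variable. Choose the integer $N$ with $N=\lceil\alpha\rceil$ when $\alpha\notin\Z$ and $N=\alpha+1$ when $\alpha\in\Z_{>0}$, and consider
\[
K_N(\tau,t):=\frac{1}{\pi}\im\!\left[\frac{(\tau+\imag)^N}{(t+\imag)^N(t-\tau)}\right],\qquad\tau\in\Hyp,\,t\in\R.
\]
This is harmonic in $\tau$, and the Laurent-remainder identity
\[
\frac{(\tau+\imag)^N}{(t+\imag)^N(t-\tau)}=\frac{1}{t-\tau}-\sum_{j=0}^{N-1}\frac{(\tau+\imag)^j}{(t+\imag)^{j+1}}
\]
exhibits $K_N$ as the standard Poisson kernel minus an explicit harmonic polynomial of degree $\leq N-1$ in $\tau$, so $|K_N(\tau,t)|=\Ordo(|\tau|^N|t|^{-N-1})$ at $|t|\to\infty$ and the integral $\int_\R K_N(\tau,t)\varphi(t)\diff t$ converges absolutely. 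A direct residue computation with $\varphi(t)=t^k$ ($k<N$) shows that this integral produces $2\,\re\,\tau^k$ rather than $t^k$ on the boundary, reflecting a harmonic-polynomial defect; to absorb that defect I apply Proposition \ref{prop:Dirichlet-general} (or a direct truncation-and-limit argument) to the residual boundary data and add the resulting correction to obtain the desired $u$ with $u|_\R=\varphi$. Since the correction can be arranged to contribute at most $\Ordo((1+|\tau|)^{N-1})$, it does not harm the targeted growth bound.

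The growth estimate comes from splitting $\R$ into $Z_1:=\{|t|\leq|\tau|/2\}$, $Z_2:=\{|\tau|/2<|t|<2|\tau|\}$ and $Z_3:=\{|t|\geq2|\tau|\}$. In $Z_3$, $|t-\tau|\sim|t|$, so the contribution is bounded by $C(1+|\tau|)^N\int_{2|\tau|}^\infty|t|^{\alpha-N-1}\diff t=\Ordo((1+|\tau|)^\alpha)$. In $Z_1$, $|t-\tau|\geq|\tau|/2$, yielding a contribution of order $(1+|\tau|)^{N-1}\int|t|^{\alpha-N}(1+|t|)^{-N}\diff t$; this is $\Ordo((1+|\tau|)^\alpha)$ precisely when $\alpha-N>-1$, which is the non-integer case with $N=\lceil\alpha\rceil$. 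In the integer case $\alpha=k$ with $N=k+1$ we have $\alpha-N=-1$, so the $Z_1$ integral becomes logarithmic and is the source of the extra $\log(2+|\tau|)$ factor in the stated bound. The middle zone $Z_2$ is the delicate one: a crude modulus majorization of $K_N$ would produce a spurious $\log(|\tau|/\im\tau)$ factor, but substituting $t=|\tau|s$ and writing $\tau=|\tau|e^{\imag\theta}$ reduces the $Z_2$ contribution to $|\tau|^\alpha$ times the bounded integral
\[
\int_{1/2}^{2}\im\!\left[\frac{e^{\imag N\theta}}{s^N(s-e^{\imag\theta})}\right]s^\alpha\,\diff s,
\]
in which the apparent $1/\sin(\theta/2)$ singularity at $s=1$ cancels inside the imaginary part after a local Taylor expansion.

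The main obstacle will be making this middle-zone cancellation rigorous in a form uniform over all of $\Hyp$, including the regime $\im\tau\ll|\tau|$; the bound must survive the limit $\im\tau\to 0$, so the argument cannot rely on $\im\tau$ being bounded from below, and a careful treatment of the near-singularity $s\approx 1$ in the reduced integral is required in place of the naive modulus bound. A secondary technical issue is the rigorous treatment of the polynomial correction in the construction of $u$, which in the polynomial-growth regime cannot be written as a convergent sum of moment integrals $\int\varphi(t)(t+\imag)^{-j-1}\diff t$; the cleanest workaround is to truncate $\varphi$ in $t$, apply the standard Poisson extension to the truncated piece, and invoke Proposition \ref{prop:Dirichlet-general} to absorb the residual boundary error while keeping control over the growth of each piece.
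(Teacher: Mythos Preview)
The paper does not prove this proposition; it explicitly defers the proof to the separate publication \cite{HedWenn}, so there is no in-paper argument to compare against. Your modified Poisson (Nevanlinna-type) kernel approach is exactly the sort of ``alternative harmonic extension kernel'' the paper's remark after the proposition alludes to, and the three-zone growth analysis is the right framework.

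Two points where your plan can be sharpened considerably. First, the boundary defect is cleaner than you suggest: for real $x$ one has
\[
K_N(x,t)=-\tfrac{1}{\pi}\sum_{j=0}^{N-1}\im\frac{(x+\imag)^j}{(t+\imag)^{j+1}},
\]
which is a polynomial in $x$ of degree $\le N-1$ for each fixed $t$. Its coefficients $p_l(t)$ can be recovered by Lagrange interpolation from the values $K_N(x_m,t)$ at $N$ fixed real nodes, and since each $K_N(x_m,t)=\Ordo(|t|^{-N-1})$ uniformly, so does every $p_l(t)$. Hence the individual coefficient integrals $\int_\R p_l(t)\varphi(t)\,\diff t$ \emph{do} converge, the boundary defect $\psi(x)=u(x)-\varphi(x)$ is a genuine polynomial of degree $\le N-1$, and you can simply subtract its harmonic-polynomial extension. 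There is no need to invoke Proposition~\ref{prop:Dirichlet-general} or any truncation-and-limit procedure; your ``secondary technical issue'' dissolves.

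Second, your zone $Z_2$ worry is also less severe than indicated. Writing $\tau=|\tau|e^{\imag\theta}$, $t=|\tau|s$, one checks that for large $|\tau|$
\[
\big|K_N(\tau,t)\big|\approx\frac{1}{\pi|\tau|}\cdot
\frac{\big|\cos N\theta\,\sin\theta+\sin N\theta\,(s-\cos\theta)\big|}
{\big((s-\cos\theta)^2+\sin^2\theta\big)\,s^N},
\]
and the right-hand side is a Poisson-type kernel in $s$ whose $L^1$ norm over $[1/2,2]$ is bounded uniformly in $\theta\in(0,\pi)$. Thus already the straightforward bound
$\big|\int_{Z_2}K_N\varphi\big|\le\sup_{Z_2}|\varphi|\cdot\int_{Z_2}|K_N(\tau,t)|\,\diff t=\Ordo\big((1+|\tau|)^\alpha\big)$
succeeds; no oscillatory cancellation against $\varphi$ is needed, and in particular continuity of $\varphi$ (without any modulus of continuity) suffices. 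The spurious $\log(|\tau|/\im\tau)$ you mention arises only if one majorizes $|\im w|$ by $|w|$, which is unnecessarily wasteful here.
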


The proof of the proposition is deferred to the separate publication
\cite{HedWenn}. The reason is that it uses integration kernel methods and
in fact its inclusion would make the present work even longer than it is.

\begin{rem}
(a) For $0<\alpha<1$, the solution $u$ offered by Proposition
\ref{prop:powergrowth} coincides with the Poisson extension. However, for
$\alpha\ge1$ alternative harmonic extension kernels are needed.

(b) If we consider for $k\in\Z_{>0}$ the harmonic function
\[
u(\tau)=\im(\tau^k\log\tau),\qquad \tau\in\Hyp,
\]
where we use the principal branch of the logarithm, we find that the boundary
values are $u(t)=\pi t^k$ for $t<0$ while $u(t)=0$ for $t\ge0$. 
In particular, $u(t)=\Ordo(|t|^k)$ while the growth of $u(\tau)$ in the upper
half-plane $\Hyp$ involves a logarithm. This explains why the integer case
$\alpha=k\in\Z_{>0}$ of Proposition \ref{prop:powergrowth} is different from
the non-integer case.
\end{rem}

\section{Uniqueness issues for the Klein-Gordon equation}
\label{sec:KG-uniq}

\subsection{Translated lattice-crosses}
\label{subsec:translatedlatticecross}

The following lemma uses the idea of periodization in Fourier
analysis. It was key in, e.g., \cite{hed}, \cite{hed1}. 

\begin{lem}
\label{lem:periodization}
Let $\psi\in L^1(\R)$. Then
\[
\forall n\in\Z
:\,\,\,\int_\R \e^{\imag\pi nt}\psi(t)\,\diff t=0
\]
holds if and only if
\[
\sum_{j\in\Z}\psi(t+2j)=0\quad \text{a.e. on}\quad\R.
\]
\end{lem}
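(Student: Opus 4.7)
The plan is to use the standard periodization trick. Define
\[
\Psi(t):=\sum_{j\in\Z}\psi(t+2j),
\]
which should be viewed as a function on the circle $\R/2\Z$. The first step is to justify that $\Psi$ is well-defined almost everywhere and lies in $L^1([0,2])$. This is an immediate application of Tonelli: since $\psi\in L^1(\R)$,
\[
\int_0^2\sum_{j\in\Z}|\psi(t+2j)|\,\diff t=\int_\R|\psi(s)|\,\diff s<+\infty,
\]
so the sum converges absolutely for a.e. $t$ and yields an integrable $2$-periodic function.

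Next, I would compute the ordinary Fourier coefficients of $\Psi$ on $[0,2]$ and match them against the integrals in the lemma. Using Fubini to swap the sum and the integral and then the change of variable $s=t+2j$ (noting that $\e^{\imag\pi n\cdot 2j}=1$), I expect to obtain
\[
\frac12\int_0^2\e^{-\imag\pi nt}\Psi(t)\,\diff t
=\frac12\sum_{j\in\Z}\int_{2j}^{2j+2}\e^{-\imag\pi ns}\psi(s)\,\diff s
=\frac12\int_\R \e^{-\imag\pi ns}\psi(s)\,\diff s,
\]
for every $n\in\Z$. Since the hypothesis is symmetric in $n\leftrightarrow -n$, the vanishing of $\int_\R\e^{\imag\pi nt}\psi(t)\diff t$ for all $n\in\Z$ is equivalent to the vanishing of all Fourier coefficients of $\Psi$.

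The final step is to invoke the uniqueness of Fourier series in $L^1$ on the circle: an $L^1$ function on $\R/2\Z$ whose Fourier coefficients are all zero must vanish almost everywhere. This gives $\Psi=0$ a.e., which is exactly the conclusion of the lemma; and conversely, if $\Psi=0$ a.e., then obviously all the Fourier coefficients, hence all the integrals in the hypothesis, vanish. The only real point of care is the Fubini justification and the reindexing of the integrals, which is routine here because of absolute integrability; I don't anticipate any genuine obstacle.
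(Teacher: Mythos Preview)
Your proposal is correct and follows essentially the same approach as the paper: define the periodization $\Psi$, identify its Fourier coefficients with the integrals $\tfrac12\int_\R\e^{-\imag\pi nt}\psi(t)\,\diff t$, and invoke uniqueness of Fourier coefficients in $L^1$ of the circle for both directions. The only cosmetic differences are that the paper integrates over $[-1,1]$ rather than $[0,2]$ and does not spell out the Tonelli justification you (rightly) include.
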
  


\begin{proof}
The Fourier coefficients of the $2$-periodic periodization
\[
\Psi(t):=\sum_{j\in\Z}\psi(t+2j)
\]
are given by
\begin{equation}
\frac12\int_{[-1,1]}\e^{-\imag\pi m t}\Psi(t)\,\diff t=
\frac12\int_{[-1,1]}\e^{-\imag\pi m t}\sum_{j\in\Z}\psi(t+2j)\,\diff t=  
\frac12\int_\R \e^{-\imag\pi mt}\psi(t)\,\diff t
\label{eq:FouriercoeffPsi1}
\end{equation}
for $m\in\Z$. If the Fourier coefficients all vanish, then the
function $\Psi$ must vanish as well.
On the other hand, if $\Psi$ vanishes
almost everywhere, then the Fourier coefficients 
must vanish too.
\end{proof}

\begin{proof}[Proof of Proposition \ref{prop:uniq1.05}]
We begin with the setting of Proposition \ref{prop:uniq1.05}.
This means that $\varphi\in L^1(\R)$ is supposed to have the following two
properties, for some fixed $\omega_1,\omega_2\in\R$:
\begin{equation}
\label{eq:translatedlatticecond1}
\forall m\in\Z:\,\,\, \int_\R \e^{\imag\pi(m+\omega_1)t}\varphi(t)\,\diff t=0
\end{equation}
and
\begin{equation}
\label{eq:translatedlatticecond2}
\forall n\in\Z:\,\,\, \int_\R \e^{\imag\pi(n+\omega_2)/t}\varphi(t)\,\diff t
=\int_\R\e^{-\imag\pi (n+\omega_2)s}\varphi(-1/s)\,s^{-2}\diff s =0.
\end{equation}
We apply Lemma \ref{lem:periodization} to the functions
\[
\psi_1(t):=\e^{\imag\pi\omega_1 t}\varphi(t),\quad
\psi_2(t):=t^{-2}\e^{-\imag\pi\omega_2 t}\varphi(-1/t),  
\]
and find that 
the conditions \eqref{eq:translatedlatticecond1}
and \eqref{eq:translatedlatticecond2} amount to having
\begin{equation}
\sum_{j\in\Z}\psi_l(t+2j)=0,\qquad l=1,2,
\end{equation}
almost everywhere on $\R$, respectively. The condition for $l=1$ means that
\begin{equation}
\label{eq:firstcond1}
\varphi(t)=-\sum_{j\in\Z_{\neq0}}\e^{\imag2\pi j\omega_1}\varphi(t+2j)
\end{equation}
almost everywhere on $\R$, while the condition for $l=2$ is the same as
\begin{equation}
\varphi(t)=-\sum_{j\in\Z_{\ne0}}\frac{\e^{-\imag2\pi j\omega_2}}{(1-2jt)^2}
\,\varphi\Big(\frac{t}{1-2jt}\Big),  
\label{eq:secondcond1}
\end{equation}
again almost everywhere on $\R$. We may combine the conditions
\eqref{eq:firstcond1} and \eqref{eq:secondcond1} in two different ways,
by inserting \eqref{eq:firstcond1} into the right-hand side of
\eqref{eq:secondcond1} or, alternatively, by inserting \eqref{eq:secondcond1}
into the right-hand side of \eqref{eq:firstcond1}. We choose the second
alternative, and obtain
\begin{equation}
\varphi(t)=\sum_{j,k\in\Z_{\ne0}}
\frac{\e^{\imag2\pi (k\omega_1-j\omega_2)}}{(1-2j(t+2k))^2}
\,\varphi\Big(\frac{t+2k}{1-2j(t+2k)}\Big),  \quad\text{a.e.}\,\,\,t\in\R.
\label{eq:secondcond2}
\end{equation}
If we put, for a given $\omega\in\R$,
\begin{equation}
\Tope_\omega f(t):=\sum_{j\in\Z_{\ne0}}\frac{\e^{\imag2\pi j\omega}}{(2j-t)^2}
\,f\Big(\frac{1}{2j-t}\Big),  
\label{eq:Topeomega}
\end{equation}
the equation \eqref{eq:secondcond2} may be expressed in the form
\begin{equation}
\varphi(t)=\Tope_{\omega_2}\Tope_{-\omega_1}\varphi(t), \quad\text{a.e.}\,\,\,
t\in\R.
\label{eq:secondcond3}
\end{equation}
In view of the triangle inequality, we find that
\begin{multline}
\label{eq:triangleineq1}
|\Tope_\omega f(t)|=\bigg|\sum_{j\in\Z_{\ne0}}\frac{\e^{\imag2\pi j\omega}}{(2j-t)^2}
\,f\Big(\frac{1}{2j-t}\Big)\bigg|
\le\sum_{j\in\Z_{\ne0}}\frac{1}{(2j-t)^2}
\,|f|\Big(\frac{1}{2j-t}\Big)=\Tope_0 |f|(t),\qquad t\in\R.
\end{multline}
In particular, by iteration, if $\varphi$ solves \eqref{eq:secondcond3},
then, by taking absolute values, we find that
\begin{equation}
|\varphi(t)|=|\Tope_{\omega_2}\Tope_{-\omega_1}\varphi(t)|
\le\Tope_0(|\Tope_{-\omega_1}\varphi|)\le\Tope_0^2|\varphi|(t),
\quad\text{a.e.}\,\,\,t\in\R.
\label{eq:secondcond4}
\end{equation}
Here, we used the rather obvious property that $\Tope_0$ is positive,
in the sense that if $f,g:\R\to\R_{\ge0}$ and $f\le g$ holds holds almost 
everywhere, then $\Tope_0 f\le\Tope_0 g$ holds almost everywhere as well. 
Another important
relevant observation is that in view of the definition \eqref{eq:Topeomega}
of $\Tope_0$,  we have the following \emph{localization principle}:
the restriction of $f$ to $[-1,1]$ determines the restriction of $\Tope_0 f$
to $[-1,1]$.
Next, according to Proposition 3.4.1 in \cite{hed1}, 
\[
\int_{[-1,1]}\Tope_0^2|\varphi|(t)\,\diff t=
\int_{[-1,1]}\Tope_0|\varphi|(t)\,\diff t=\int_{[-1,1]}|\varphi(t)|\,\diff t,
\]
which means that \eqref{eq:secondcond4} is only possible if in fact
\begin{equation}
|\varphi(t)|=\Tope_0^2|\varphi|(t),
\quad\text{a.e.}\,\,\,t\in[-1,1].
\label{eq:secondcond5}
\end{equation}
This can be thought of as asking that $|\varphi(t)|\diff t$ be an absolutely
continuous invariant measure for the Gauss-type map $t\mapsto -1/t$ (mod 1)
on the interval $[-1,1]$, which is not possible unless
$\varphi(t)=0$ almost everywhere on $[-1,1]$. This was shown using the
Birkhoff ergodic theorem in the infinite ergodic setting in \cite{hed}.
A more direct proof follows from Proposition 3.13.3 in \cite{hed1},
which asserts that the iterates $\Tope_0^{2n}|\varphi|$ tend to $0$ in
the $L^1$ norm on subintervals $[-1+\epsilon,1-\epsilon]$, for fixed $\epsilon$
with $0<\epsilon<1$. Since by the localization principle applied to
\eqref{eq:secondcond5}, we must have that
\[
|\varphi(t)|=\Tope_0^{2n}|\varphi|(t),
\quad\text{a.e.}\,\,\,t\in[-1,1], 
\]
for $n=1,2,3,\ldots$, it is immediate that $\varphi(t)=0$ for almost every
$t\in[-1,1]$. It remains to show that $\varphi(t)=0$ holds for almost
every $t\in\R$. To this end, we can use the identity \eqref{eq:secondcond1},
since we already know that $\varphi(t)=0$ almost everywhere on $[-1,1]$.
For $t\in\R$ with $|t|>1$ and $j\in\Z_{\ne0}$, 
\[
\frac{t}{1-2jt}\in[-1,1],
\]
so that in view of \eqref{eq:secondcond1}, the information about $\varphi$ on
$[-1,1]$ carries over to the complement $\R\setminus[-1,1]$, and we
conclude that $\varphi=0$ as an element of $L^1(\R)$. 
\end{proof}

\subsection{Normal derivatives on the lattice-cross}
\label{subsec:normalderivatives}
We turn to the proof of Proposition \ref{prop:extraintegrability}.

\begin{proof}[Proof of Proposition \ref{prop:extraintegrability}]
If $k=0$, the assertion coincides with that of Theorem \ref{intth0},
so we may assume in the rest of the proof that $k>0$.  
It is given that $\varphi\in L^1(\R)$ has the extra integrability
\eqref{eq:extraintegrability}, and that
\[
\int_\R \e^{\imag\pi m t}\varphi(t)\,\diff t=\int_\R\e^{-\imag \pi n/t}
\varphi(t)\,t^k\,\diff t=0,\qquad m,n\in\Z,
\]
holds. After a change of variables in the latter integral, the conditions read
\begin{equation}
\int_\R \e^{\imag\pi m t}\varphi(t)\,\diff t=\int_\R\e^{\imag \pi n t}
\varphi(-1/t)\,t^{-2-k}\, \diff t=0,\qquad m,n\in\Z,
\label{eq:condition.101}
\end{equation}
We apply Lemma \ref{lem:periodization} to the functions
\[
\psi_1(t):=\varphi(t),\quad \psi_2(t):=t^{-2-k}\varphi(-1/t),
\]
which are in $L^1(\R)$ because of \eqref{eq:extraintegrability},
and find that the condition \eqref{eq:condition.101} amounts to having
\[
\sum_{j\in\Z}\psi_l(t+2j)=0,\qquad l=1,2,
\]
almost everywhere on $\R$, respectively.
The condition for $l=1$ is
\begin{equation}
\label{eq:firstcond1.ny1}
\varphi(t)=-\sum_{j\in\Z_{\neq0}}\varphi(t+2j)
\end{equation}
almost everywhere on $\R$, while the condition for $l=2$ may be expressed in
the form 
\begin{equation}
\varphi(t)=
-\sum_{j\in\Z_{\ne0}}(1-2jt)^{-2-k}\,\varphi\Big(\frac{t}{1-2jt}\Big),  
\label{eq:secondcond1.ny1}
\end{equation}
again almost everywhere on $\R$.
We may combine the conditions
\eqref{eq:firstcond1.ny1} and \eqref{eq:secondcond1.ny1} in two different ways,
by inserting \eqref{eq:firstcond1.ny1} into the right-hand side of
\eqref{eq:secondcond1.ny1} or, alternatively, by inserting
\eqref{eq:secondcond1.ny1} into the right-hand side of
\eqref{eq:firstcond1.ny1}. We choose the second alternative, and obtain
\begin{equation}
\varphi(t)=
\sum_{j,j'\in\Z_{\ne0}}
(1-2j(t+2j'))^{-2-k}
\,\varphi\Big(\frac{t+2j'}{1-2j(t+2j')}\Big),  \quad\text{a.e.}\,\,\,t\in\R.
\label{eq:secondcond2.ny1}
\end{equation}
If we consider the ``transfer operator''
\begin{equation}
\Tope_k f(t):=\sum_{j\in\Z_{\ne0}}(2j-t)^{-2-k}
\,f\Big(\frac{1}{2j-t}\Big),  
\label{eq:Topeomega.ny1}
\end{equation}
the equation \eqref{eq:secondcond2.ny1} may be expressed in the form
\begin{equation}
\varphi(t)=(-1)^k\,\Tope_k^2\varphi(t),
\quad\text{a.e.}\,\,\,
t\in\R.
\label{eq:secondcond3.ny1}
\end{equation}
In view of the triangle inequality, we find that
\begin{multline}
\label{eq:triangleineq1.ny1}
|\Tope_k f(t)|=\bigg|\sum_{j\in\Z_{\ne0}}(2j-t)^{-2-k}
\,f\Big(\frac{1}{2j-t}\Big)\bigg|
\le\sum_{j\in\Z_{\ne0}}|2j-t|^{-2-k}
\,|f|\Big(\frac{1}{2j-t}\Big)=\Tope_{\langle k\rangle} |f|(t),\qquad t\in\R,
\end{multline}
where $\Tope_{\langle k\rangle}$ denotes the operator
\begin{equation}
\Tope_{\langle k\rangle} f(t):=\sum_{j\in\Z_{\ne0}}|2j-t|^{-2-k}
\,f\Big(\frac{1}{2j-t}\Big).  
\label{eq:Topeomega.ny10}
\end{equation}

In particular, by iteration and by monotonicity, if $\varphi$ solves
\eqref{eq:firstcond1.ny1}
and \eqref{eq:secondcond1.ny1}, then, by taking absolute values, we find that
\begin{equation}
|\varphi(t)|=|\Tope_{k}^2\varphi(t)|
\le\Tope_{\langle k\rangle}(|\Tope_{k}\varphi|)\le\Tope_{\langle k\rangle}^2
|\varphi|(t),
\quad\text{a.e.}\,\,\,\,t\in\R.
\label{eq:secondcond4.ny1}
\end{equation}
The operators $\Tope_k$ and $\Tope_{\langle k\rangle}$ meet the following
\emph{localization principle}: the restriction of $f$ to $[-1,1]$ determines
the restriction of both $\Tope_k f$ and $\Tope_{\langle k\rangle}f$ to $[-1,1]$.
We find that for measurable $f$, a suitable change of variables yields
\begin{multline}
\label{eq:Tkoperator1}  
\int_{[-1,1]}|\Tope_k f(t)|\,\diff t\le \int_{[-1,1]}\Tope_{\langle k\rangle}
|f|(t)\,\diff t\\
=\sum_{j\in\Z_{\ne0}}\int_{[-1,1]}|2j-t|^{-2-k}
\,|f|\Big(\frac{1}{2j-t}\Big)\,\diff t
=\sum_{j\in\Z_{\ne0}}\int_{[1/(2j+1),1/(2j-1)]}
\,|f(t)|\,|t|^k\diff t
\\
=\int_{[-1,1]}|f(t)|\,|t|^k\diff t\le\int_{[-1,1]}|f(t)|\,
\diff t,
\end{multline}
provided that the right-hand side integral is finite. Next, by iteration
of \eqref{eq:Tkoperator1},
\begin{equation}
\label{eq:Tkoperator2}
\int_{[-1,1]}|\Tope_k^2\varphi (t)|\,\diff t
\le\int_{[-1,1]}|\Tope_k\varphi (t)|\,\diff t
\le \int_{[-1,1]}|\varphi (t)|\,|t|^k\diff t\le \int_{[-1,1]}
|\varphi (t)|\,\diff t.
\end{equation}
We proceed to show that
for $\varphi\in L^1(\R)$, the only solution to \eqref{eq:secondcond3.ny1}
is $\varphi=0$. By \eqref{eq:secondcond3.ny1} and \eqref{eq:Tkoperator2},
\begin{equation}
\label{eq:Tkoperator2.001}
\int_{[-1,1]}|\varphi(t)|\diff t=\int_{[-1,1]}|\Tope_k^2\varphi (t)|\,\diff t
\\
\le \int_{[-1,1]}|\varphi (t)|\,|t|^k\diff t\le \int_{[-1,1]}
|\varphi (t)|\,\diff t.
\end{equation}
The left-hand and right-hand sides of \eqref{eq:Tkoperator2.001} coincide,
so we must have equality throughout in \eqref{eq:Tkoperator2.001}.
But since $k>0$, it follows that $|t|^k<1$ for $|t|<1$, which entails that
the last inequality in \eqref{eq:Tkoperator2} is strict unless $\varphi=0$
almost everywhere on $[-1,1]$. Hence we conclude from \eqref{eq:Tkoperator2}
that $\varphi(t)=0$ holds almost everywhere on $[-1,1]$. It remains to show
that $\varphi$ vanishes almost everywhere on $\R\setminus[-1,1]$ as well.
To this end we can use \eqref{eq:secondcond1.ny1}, 
since the fact that
\[
\frac{t}{1-2jt}\in[-1,1]\quad \text{for}\quad t\in\R\setminus[-1,1],\,\,\,
j\in\Z_{\ne0}  
\]
holds shows that
\[
\varphi(t)=-\sum_{j\in\Z_{\ne0}}(1-2jt)^{-2-k}
\,\varphi\Big(\frac{t}{1-2jt}\Big)=0,
\qquad\text{a.e.}\,\,\, t\in\R\setminus[-1,1].
\]
The conclusion that $\varphi(t)=0$ almost everywhere on $\R$ is now immediate.
\end{proof}

\section{Uniqueness and beyond for hyperbolic
Fourier series}
\label{sec:uniq-beyond}

\subsection{Proofs of the theorems on uniqueness}
\label{subsec:uniq-beyond}

We turn to the uniqueness of the harmonic hyperbolic Fourier expansion.

\begin{proof}[Proof of Theorems \ref{thm:uniq1} and \ref{thm:uniq2.1}]
We first note that Theorem \ref{thm:uniq1} may be viewed as the special case
of $k=1$ in Theorem \ref{thm:uniq2.1}, so we proceed to obtain Theorem
\ref{thm:uniq2.1} for general $k=1,2,3,\ldots$, with the understanding that
$\mathfrak{E}^{\mathrm{harm}}_{k-1}=\{0\}$ for $k=1$.
We note that under the given growth control on the coefficients, the
series \eqref{eq:harmext1.1} converges absolutely and uniformly on compact
subsets of $\Hyp$. In terms of the functions $e_n$ given by
\eqref{eq:harmext2} (for $n\ge0$) and \eqref{eq:harmext3} (for $n<0$),
the vanishing condition reads
\begin{equation}
a_0+\sum_{n\ne0}\big(a_n\,e_n(\tau)+b_n\,e_n(-1/\tau)\big)=0,\qquad \tau\in\Hyp,
\label{eq:pointwiserep2}
\end{equation}
which we rewrite as
\begin{equation}
a_0+\sum_{n>0}\big(a_n\,e_n(\tau)+b_n\,e_n(-1/\tau)\big)
=-\sum_{n<0}\big(a_n\,e_n(\tau)+b_n\,e_n(-1/\tau)\big),
\qquad \tau\in\Hyp.
\label{eq:pointwiserep3}
\end{equation}
We observe that the left-hand side is holomorphic, whereas the right-hand
side is conjugate-holomorphic. This is possible only if each of the sides
is constant:
\begin{equation}
\sum_{n\ge0}\big(a_n\,e_n(\tau)+b_n\,e_n(-1/\tau)\big)=C,
\qquad \tau\in\Hyp.
\label{eq:pointwiserep4}
\end{equation}
Our next task is to combine this with the growth control on the
coefficients $a_n$ to show that the combined sequence of coefficients
$\{a_0,a_n,b_n\}_{n>0}$ is exceptional of degree $\le k$ in the sense of
Definition \ref{def:exceptional:1-sided}.
To this end, we introduce the functions
\[
F_+(\tau)=:a_0+
\sum_{n>0}a_n\,e_n(\tau),\qquad
G_+(\tau):=\sum_{n>0}b_n\,e_n(\tau),
\]
which are holomorphic $2$-periodic functions in $\Hyp$ with
$F_+(+\imag\infty)=a_0$ and $G_+(+\imag\infty)=0$.
In view of our growth assumption on the coefficients $a_n$, 
the little ``o'' statement of Proposition \ref{prop:coeff->function} gives that
\begin{equation}
\limsup_{\im \,\tau\to0^+}\,\e^{-\pi k/\im \tau}\,|F_+(\tau)|=0.
\label{eq:basicest1.04}
\end{equation}
Moreover, the equality \eqref{eq:pointwiserep4} asserts that
\begin{equation}
F_+(\tau)+G_+(-1/\tau)=C,\qquad \tau\in\Hyp,
\label{eq:pointwiserep5}
\end{equation}
which we write in the form
\begin{equation}
F_+(\tau)=C-G_+(-1/\tau),\qquad \tau\in\Hyp.
\label{eq:pointwiserep6}
\end{equation}
As the functions $F_+,G_+$ are $2$-periodic, that is,
$F_+(\tau+2)=F_+(\tau)$ and $G_+(\tau+2)=G_+(\tau)$, which we can express as
$F_+\circ\Ttrans^2=F_+$ and $G_+\circ\Ttrans^2=G_+$, where
$\Ttrans(\tau):=\tau+1$. If we write
$\Strans(\tau):=-1/\tau$, we find that according to \eqref{eq:pointwiserep6},
$F_+(\tau)$ is must be both $2$-periodic and invariant under the transformation
$\Strans\Ttrans^2\Strans:\,\tau\mapsto \tau/(1-2\tau)$. Then $F_+,G_+$ are
invariant under the group $\Gamma_{2\Theta}$ consisting of all M\"obius
automorphisms of $\Hyp$ generated by the elements $\Ttrans^2$ and
$\Strans\Ttrans^2\Strans$, so we may think of them as holomorphic functions on
the Riemann surface $\Hyp/\Gamma_{2\Theta}$. The fundamental domain of
$\Hyp/\Gamma_{2\Theta}$ is
\[
\calD_{2\Theta}:=\big\{\tau\in\Hyp:\,|\re\tau|<1,\,\,\,
|\tau+\tfrac12|>\tfrac12,\,\,\,|\tau-\tfrac12|>\tfrac12\big\}.
\]
To get the Riemann surface from the fundamental domain, we glue together the
two linear boundary segments where the real part is $\pm1$, and we also glue
the semicircles together. The result is that the Riemann surface
$\Hyp/\Gamma_{2\Theta}$ is a sphere with three punctures (corresponding to
the points $0$, $\pm1$, and $+\imag\infty$). Moreover, we know that $F_+,G_+$
extend holomorphically across the puncture at $+\imag\infty$, and that
$G_+$ vanishes at that point. In addition, the identity
\eqref{eq:pointwiserep6} asserts that both $F_+,G_+$ extend holomorphically
across the puncture at $0$ as well. This means that there only remains a single
possible singularity at the remaining puncture $\pm1$, and the given growth
control there determines whether we have a pole or essential singularity. 
In the given setting, there is at most a pole of order $\le k-1$ at the
puncture $\pm1$. This situation 
is actually outlined in detail in Lemma 1.1 of \cite{bh2}.  
With the parameter values $n_0=n_\infty=0$ and $n_1=k-1$, the conditions (1),
(2), and (3) of Lemma 1.1  \cite{bh2} are all met. This follows from
\eqref{eq:basicest1.04} and the fact that $F_+(\tau)$ has limit $0$
at $+\imag\infty$.
It is now a consequence of the assertion of Lemma 1.1 of \cite{bh2}
that $F_+(\tau)$ is of the form
\begin{equation}
F_+(\tau)=a_0+P(\lambda(\tau)),\qquad \tau\in\Hyp,
\label{eq:Fplus1.1}
\end{equation}
where $P$ stands for a polynomial of degree $\le k-1$ with $P(0)=0$. 
But then \eqref{eq:pointwiserep5} tells us that
$G_+$ takes the form
\begin{equation}
G_+(\tau)=C-F_+(-1/\tau)=C-a_0-P(\lambda(-1/\tau))=C-a_0-P(1-\lambda(\tau)),
\label{eq:Gplus1.1}
\end{equation}
where we used the functional properties \eqref{eq:lambdaproperties1.1}.
We note that by plugging in the value $\tau=+\imag\infty$, while using that
$G_+(+\imag\infty)=\lambda(+\imag\infty)=0$, we obtain from
\eqref{eq:Gplus1.1} that
\begin{equation}
P(1)=C-a_0.  
\end{equation}
We note that \eqref{eq:Fplus1.1} and \eqref{eq:Gplus1.1}
assert that \emph{the sequence $\{a_0-C,a_n,b_n\}_{n>0}$ is exceptional of degree
$\le k-1$}. 
From \eqref{eq:pointwiserep3} and \eqref{eq:pointwiserep4} we conclude that 
\begin{equation}
\sum_{n<0}\big(a_n\, e_n(\tau)+b_n\,e_n(-1/\tau)\big)=-C,\qquad \tau\in\Hyp.
\label{eq:pointwiserep7}
\end{equation}
Since for $n>0$, $e_{-n}(\tau)={e_{n}(-\bar\tau)}$ holds, it is natural
to introduce the holomorphic functions
\[
F_-(\tau):=\sum_{n>0}a_{-n}\,e_{n}(\tau),\qquad
G_-(\tau):=\sum_{n>0} b_{-n} \,e_{n}(\tau),
\]
and to observe that \eqref{eq:pointwiserep7} asserts that
\begin{equation}
F_-(\tau)+G_-(-1/\tau)=-C,\qquad \tau\in\Hyp.
\label{eq:pointwiserep8}
\end{equation}
We are now in an analogous situation as we were in with $F_+,G_+$, and
may conclude from a repetition of the argument that $F_-$ and $G_-$ take
the form
\begin{equation}
F_-(\tau)=Q(\lambda(\tau)),\quad G_-(\tau)=-C-Q(1-\lambda(\tau)),
\label{eq:pointwiserep8.01}
\end{equation}
where $Q$ is a polynomial of degree $\le k-1$, with $Q(0)=0$. Moreover,
from the second relation in \eqref{eq:pointwiserep8.01} we find that
$Q(1)=-\bar C$. It follows from the relation \eqref{eq:pointwiserep8.01} that
\emph{the sequence $\{C,a_{-n},b_{-n}\}_{n>0}$ is an exceptional sequence
of degree $\le k-1$}. We have now established that the two-sided sequence
$\{a_0,a_n,b_n\}_{n\ne0}$ is exceptional of degree $\le k$, as claimed.
\end{proof}

\subsection{Mapping properties of the modular $\lambda$
function}
\label{subsec:lambda-map}
The proof of Proposition \ref{prop:shin2} would get more streamlined
if we would use the mapping properties of the modular $\lambda$ function.
Let us first get acquainted with the following transformations on the
upper half-plane $\Hyp$:
\begin{equation}
\Ttrans(\tau):=\tau+1,\quad \Strans(\tau):=-1/\tau,\quad
\Strans^\ast:=1/\bar\tau.  
\label{eq:TSS*}  
\end{equation}
Of these, $\Strans$ and $\Ttrans$ are holomorphic automorphisms, while
$\Strans^\ast$ is the conjugate-holo\-morphic automorphism corresponding to
reflection in the circle $\Te$.
Starting from the fundamental domain $\calD_\Theta$ given by
\begin{equation}
\calD_\Theta:=\{\tau\in\Hyp:\,|\re\,\tau|<1,\,\,\,|\tau|>1\}, 
\label{eq:shin1.5}
\end{equation}
we form its ``double copy''
\begin{equation}
\calD_{2\Theta}=\calD_\Theta\cup\Strans^\ast(\calD_\Theta)\cup\Te_+=
\calD_\Theta\cup\Strans(\calD_\Theta)\cup\Te_+,
\end{equation}
which is a fundamental domain in its own right. It can be described by the
formula
\[
\calD_{2\Theta}=\big\{\tau\in\Hyp:\,|\re\tau|<1,\,\,\,
|\tau+\tfrac12|>\tfrac12,\,\,\,|\tau-\tfrac12|>\tfrac12\big\}.
\]
We introduce the notation
\[
\C_{\re<\frac12}:=\{z\in\C:\,\re\, z<\tfrac12\},\quad
\C_{\re>\frac12}:=\{z\in\C:\,\re\, z>\tfrac12\},
\]
and, for a given real number $\alpha$, we put
\begin{equation}
\calL_\alpha:=\{z\in\C:\,\re\, z=\alpha\},\quad
\calL_\alpha^+:=\{z\in\Hyp:\,\re\, z=\alpha\},
\label{eq:half-lines1}
\end{equation}
for the associated vertical line and half-line, respectively.

\begin{prop}
\label{prop:lambda-classical}  
The modular $\lambda$ function maps conformally the domain
$\calD_{2\Theta}$ onto the slit plane
$\C\setminus(]-\infty,0]\cup[1,+\infty[)$. In addition, it maps the
subdomain $\calD_\Theta$ onto the slit half-plane
$\C_{\re<\frac12}\setminus]-\infty,0]$, while it maps $\Strans(\calD_{\Theta})$ onto
the slit half-plane $\C_{\re>\frac12}\setminus [0,+\infty[$. The semicircle
$\Te_+$ is mapped onto the line $\calL_{\frac12}$ in such a way that as
$\tau\in\Te_+$ increases its argument, the point $\lambda(\tau)\in\calL_{\frac12}$
moves downward in the negative imaginary direction. The half-lines
$\calL_{-1}^+$ and $\calL_{1}^+$ are both mapped onto the slit
$]-\infty,0[$, while the edges $\pm1$ are mapped to infinity and the point
$+\imag\infty$ is mapped to $0$. Similarly, the semicircles
$\Strans(\calL_{-1}^+)$ and $\Strans(\calL_{1}^+)$ are both mapped onto the slit
$]1,+\infty[$. Finally, the imaginary half-line $\calL_0^+$ is mapped by $\lambda$ 
onto the interval $]0,1[$, where the endpoints $+\imag\,\infty$ and $0$ are 
mapped onto $0$ and $1$, respectively.
\end{prop}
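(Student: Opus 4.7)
The plan is to determine the image of $\lambda$ on $\partial \calD_\Theta$ arc by arc, extend $\lambda$ continuously to the closure of $\calD_\Theta$ in the extended upper half-plane, verify that the boundary map is a homeomorphism onto a Jordan curve in $\C \cup \{\infty\}$, and then invoke the argument principle to conclude that the interior map is a conformal isomorphism onto $\C_{\re<\frac{1}{2}} \setminus\, ]-\infty, 0]$. The assertion on $\calD_{2\Theta}$ is obtained from this via the involution $\Strans(\tau) = -1/\tau$ together with the reflection identity $\lambda(-1/\tau) = 1 - \lambda(\tau)$ from \eqref{eq:lambdaproperties1.1}. A second key structural input, besides the two identities in \eqref{eq:lambdaproperties1.1}, is the reality of the Fourier coefficients in \eqref{eq:lambdaf2}, which yields the reflection symmetry $\overline{\lambda(\tau)} = \lambda(-\bar\tau)$.

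The boundary analysis of $\calD_\Theta$ proceeds as follows. On the imaginary ray $\calL_0^+$, the symmetry above forces $\lambda(\imag y) \in \R$; the $q$-expansion yields $\lambda(\imag y) \to 0$ as $y \to +\infty$, and the inversion in \eqref{eq:lambdaproperties1.1} gives $\lambda(\imag y) \to 1$ as $y \to 0^+$. On the unit semicircle $\Te_+$, the identity $-1/\tau = -\bar\tau$ combined with $\lambda(-1/\tau) = 1 - \lambda(\tau)$ and $\lambda(-\bar\tau) = \overline{\lambda(\tau)}$ forces $1 - \lambda(\tau) = \overline{\lambda(\tau)}$, i.e., $\re\lambda(\tau) = \frac{1}{2}$; the midpoint $\tau = \imag$ gives the classical value $\lambda(\imag) = \frac{1}{2}$. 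On $\calL_1^+$, the translation identity in \eqref{eq:lambdaproperties1.1} yields $\lambda(1 + \imag y) = \lambda(\imag y)/(\lambda(\imag y) - 1) \in\, ]-\infty, 0[$ since $\lambda(\imag y) \in\, ]0, 1[$; by $2$-periodicity the same holds on $\calL_{-1}^+$. The cusps $\pm 1$ are $\Gamma(2)$-equivalent through $\tau \mapsto \tau/(1 - 2\tau)$ and both send $\lambda$ to $\infty$ along $\calL_{\frac{1}{2}}$; $+\imag\infty$ maps to $0$, and the cusp $0$, accessed along $\calL_0^+$, yields the boundary value $1$.

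To promote these boundary identifications to a conformal isomorphism, I would establish strict monotonicity of $\lambda$ along each smooth boundary arc, for instance by computing $\partial_y \lambda(\imag y) < 0$ from Jacobi product formulas for $\vartheta_{00}, \vartheta_{10}$ and transporting the monotonicity along $\partial \calD_\Theta$ via \eqref{eq:lambdaproperties1.1}. The boundary then maps homeomorphically onto a Jordan curve in $\C \cup \{\infty\}$, and the argument principle, applied to a family of truncated regions that avoid small neighborhoods of the three cusps and then pass to the limit, shows that $\lambda$ attains every value in $\C_{\re<\frac{1}{2}} \setminus\, ]-\infty, 0]$ exactly once. The extension to $\calD_{2\Theta}$ is then automatic from $\lambda \circ \Strans = 1 - \lambda$: the affine involution $\zeta \mapsto 1 - \zeta$ carries $\C_{\re<\frac{1}{2}} \setminus\, ]-\infty, 0]$ onto $\C_{\re>\frac{1}{2}} \setminus [1, +\infty[$, so $\Strans(\calD_\Theta)$ is parametrized correspondingly, and the two pieces glue along $\Te_+$ where both branches agree on $\calL_{\frac{1}{2}}$. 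The downward orientation on $\Te_+$ is read off from the sign of $\im\lambda(\e^{\imag\theta})$ just below $\theta = \pi/2$, using $\lambda(\imag) = \frac{1}{2}$ together with $\lambda(\pm 1) = \infty$.

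The principal obstacle is a clean proof of strict monotonicity along each boundary arc; without it, the boundary correspondence theorem cannot by itself guarantee interior injectivity. A conceptually cleaner route bypasses this entirely: since $\vartheta_{00}$ is zero-free in $\Hyp$ and the Wronskian $\vartheta_{10}'\vartheta_{00} - \vartheta_{10}\vartheta_{00}'$ is nonvanishing in $\Hyp$ (both via Jacobi product formulas), $\lambda$ is an unramified holomorphic covering $\Hyp \to \C \setminus \{0, 1\}$. Standard covering space theory identifies its deck transformation group inside $\mathrm{Aut}(\Hyp)$ as the subgroup generated by $\tau \mapsto \tau + 2$ and $\tau \mapsto \tau/(1 - 2\tau) = \Strans\Ttrans^2\Strans(\tau)$, both of which are deck transformations by \eqref{eq:lambdaproperties1.1}; this subgroup is $\Gamma(2)$. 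Since $\calD_{2\Theta}$ is a fundamental domain for $\Gamma(2)$, the global conformal equivalence $\lambda: \calD_{2\Theta} \to \C \setminus \{0, 1\}$ follows at once, and the boundary computations above recover the precise slit structure of the image.
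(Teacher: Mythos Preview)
The paper does not prove this proposition; immediately after the statement it remarks that the mapping properties are classical and points to \cite{bh2} for an elementary exposition. So there is no argument in the paper for you to match.

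Your outline is correct and follows the standard textbook route. The boundary identifications via the two identities in \eqref{eq:lambdaproperties1.1} together with the reflection symmetry $\overline{\lambda(\tau)}=\lambda(-\bar\tau)$ are all right, and the argument-principle approach with truncation near the cusps is exactly how this is done in, e.g., Chandrasekharan \cite{cha}. The monotonicity you flag as the main obstacle is typically handled by the classical formula $\lambda'(\tau)=\imag\pi\,\vartheta_{00}(\tau)^4\,\lambda(\tau)\big(1-\lambda(\tau)\big)$, which is nonvanishing on $\Hyp$ and whose sign on $\imag\R_{>0}$ is immediate.

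One caution on your covering-space alternative: the phrase ``standard covering space theory identifies the deck group as $\Gamma(2)$'' conceals a small circularity. Knowing that $\Ttrans^2$ and $\Strans\Ttrans^2\Strans$ are deck transformations yields only $\Gamma(2)\subseteq\mathrm{Deck}(\lambda)$; a free group of rank two has many proper subgroups that are themselves free of rank two, so the inclusion is not automatically an equality. To close this you need an additional ingredient, for instance a comparison of hyperbolic areas (the fundamental domain $\calD_{2\Theta}$ for $\Gamma(2)$ and the thrice-punctured sphere both have area $2\pi$), or a Riemann--Hurwitz count on the compactification. Either completion works, but the argument is not as self-contained as the first route.
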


The mapping properties of the modular $\lambda$ function presented in
Proposition \ref{prop:lambda-classical} are classical.
An elementary exposition may be found in \cite{bh2}.

We shall also need to approximate $\lambda(\tau)$ for
$\tau\in \calD_{2\Theta}$ near the cusps at $\pm1$ and $+\imag\infty$.
We recall the Fourier representation formula \eqref{eq:lambdaf2}
\begin{equation}
\lambda(\tau)=
\sum_{n=1}^{+\infty}\hat\lambda(n)\,\e^{\imag\pi n\tau},\qquad
\tau\in\Hyp,
\label{eq:lambdaf2.01}
\end{equation}
where the coefficients are integers, the first three being 
$\hat\lambda(1)=16$,
$\hat\lambda(2)=-128$, and $\hat\lambda(3)=704$. In particular, we have that
\begin{equation}
\lambda(\tau)=16\,\e^{\imag\pi\tau}-128\,\e^{\imag2\pi\tau}
+\Ordo(\e^{-3\pi\im\,\tau})
\label{eq:lambdaf2.02}
\end{equation}
holds uniformly provided that $\im\,\tau\ge1$. In view of the $2$-periodicity
and the functional properties \eqref{eq:lambdaproperties1.1}, it follows
that
\begin{equation}
\lambda(\tau)=1-\frac{1}{\lambda(-1/(\tau+1))}
=1-\frac{1}{\lambda(-1/(\tau-1))},\qquad\tau\in\Hyp.
\label{eq:lambdaproperty2}
\end{equation}
In view of \eqref{eq:lambdaf2.02}, the functional property
\eqref{eq:lambdaproperty2} gives that
\begin{multline}
\lambda(\tau)=1-\frac{1}{\lambda(-1/(\tau+1))}
\\
=1-\frac{1}{16\,\e^{-\imag\pi/(\tau+1)}-128\,\e^{-\imag2\pi/(\tau+1)}
  +\Ordo(\e^{-3\pi\im\,\tau/|\tau+1|^2})}
\\
=1-\frac{\e^{\imag\pi/(\tau+1)}}{16(1-8\,\e^{-\imag\pi/(\tau+1)}
  +\Ordo(\e^{-2\pi\im\,\tau/|\tau+1|^2}))}
\\=-\frac{1}{16}\,\e^{\imag\pi/(\tau+1)}+\frac12
+\Ordo(\e^{-\pi\im\,\tau/|\tau+1|^2}),
\label{eq:lambdaproperty3}
\end{multline}
uniformly provided that $\tau\in\Hyp$ lies in the horocyclic disk
$|\tau+1-\frac{\imag}{2}|\le\frac12$, which amounts to the condition
$\im\,\tau\ge|\tau+1|^2$. Moreover, by $2$-periodicity, the same applies near
the cusp at $1$:
\begin{equation}
\lambda(\tau)=\lambda(\tau-2)=1-\frac{1}{\lambda(-1/(\tau-1))}
=-\frac{1}{16}\,\e^{\imag\pi/(\tau-1)}+\frac12
+\Ordo(\e^{-\pi\im\,\tau/|\tau-1|^2}),
\label{eq:lambdaproperty4}
\end{equation}
uniformly provided that $\tau\in\Hyp$ lies in the horocyclic disk
$|\tau-1-\frac{\imag}{2}|\le\frac12$. 
A similar argument applies to the derivative $\lambda'$, and gives that
\begin{multline}
\lambda'(\tau)=\frac{\lambda'(-1/(\tau+1))}{(\tau+1)^2(\lambda(-1/(\tau+1)))^2}
=\frac{\imag\pi}{16(\tau+1)^2}\,\e^{\imag\pi/(\tau+1)}
+\Ordo(|\tau+1|^{-2}\e^{-\pi\im\,\tau/|\tau+1|^2}),
\label{eq:lambda'property3}
\end{multline}
again uniformly provided that $\tau\in\Hyp$ lies in the horocyclic disk
$|\tau+1-\frac{\imag}{2}|\le\frac12$, and this extends by $2$-periodicity
to the translated horocyclic disk $|\tau-1-\frac{\imag}{2}|\le\frac12$ as well.

Since the modular $\lambda$ function
maps conformally $\calD_{2\Theta}$ onto the slit plane
$\C\setminus(]-\infty,0]\cup[1,+\infty[)$, we can use the approximations
\eqref{eq:lambdaproperty3} and \eqref{eq:lambdaproperty4} to estimate the
inverse mapping
\[
\lambda^{-1}:\,\C\setminus(]-\infty,0]\cup[1,+\infty[)\to\calD_{2\Theta}.
\]
If $\zeta\in\C\setminus(]-\infty,0]\cup[1,+\infty[)$ has large modulus
$|\zeta|$, then we try to solve the equation $\lambda(\tau)=\zeta$ with
$\tau\in\calD_{2\Theta}$, and realize that only $\tau$ near the cusps at
$\pm1$ are possible. The inverse formulae are, for
$\zeta\in\C\setminus(]-\infty,0]\cup[1,+\infty[)$ with big $|\zeta|$,
\begin{equation}
\label{eq:inverse_lambda1}  
\lambda^{-1}(\zeta)=-1+\frac{\imag\pi}{\log(8-16\zeta)+\Ordo(|\zeta|^{-1})},
\qquad \im\,\zeta\ll-1,
\end{equation}
and
\begin{equation}
\lambda^{-1}(\zeta)=1+\frac{\imag\pi}{\log(8-16\zeta)+\Ordo(|\zeta|^{-1})},
\qquad \im\,\zeta\gg 1,
\label{eq:inverse_lambda2}
\end{equation}
respectively.

\section{The Schwarz transform and its consequences}
\label{sec:Schwarz}

\subsection{The Schwarz transform based on reflection
in the circle}
We introduce a mapping from the $2$-periodic harmonic functions in $\Hyp$
to the holomorphic functions in $\Hyp$, which we will call the
\emph{Schwarz transform} $\shin$ as it is based on reflection in the
semi-circle
\[
\Te_+:=\{\tau\in\Hyp:\,|\tau|=1\}. 
\]

\begin{defn}
Suppose $h:\,\Hyp\to\C$ is harmonic, so that the function can be split
uniquely as
\[
h(\tau)=h(\imag)+h_+(\tau)+h_-(\tau),
\]
where $h_+:\,\Hyp\to\C$ is holomorphic with $h_+(\imag)=0$ and
$h_-:\,\Hyp\to\C$ is conjugate-holomorphic with $h_-(\imag)=0$.
The \emph{Schwarz transform} of $h$, written $\shin h$, is the
holomorphic function
\[
\shin\, h(\tau):=h(\imag)+h_+(\tau)+h_-\Big(\frac{1}{\bar\tau}\Big),
\qquad\tau\in\Hyp.
\]  
\end{defn}

It is natural to ask when we can recover the harmonic function $h$ from
knowing its Schwarz transform. We may immediately recover partial information
about $h$. 

\begin{prop}
If $h:\Hyp\to\C$ is harmonic, then the Schwarz transform $\shin\, h$
is a well-defined holomorphic function on $\Hyp$, and
\[
\shin\,h(\tau)=h(\tau),\qquad\tau\in\Te_+.
\]
\label{prop:shin1}
\end{prop}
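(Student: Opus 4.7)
The plan is to verify the two assertions directly from the definition of $\shin$.

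First, I recall that any harmonic function $h\colon\Hyp\to\C$ on the simply connected domain $\Hyp$ admits a unique decomposition $h(\tau)=h(\imag)+h_+(\tau)+h_-(\tau)$ with $h_+$ holomorphic, $h_-$ conjugate-holomorphic, and $h_\pm(\imag)=0$; this is the splitting already invoked in Propositions \ref{prop:beta1}, \ref{prop:beta2}, and \ref{prop:beta3}, so it may be taken as given input.

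Second, I would check that $\shin h$ is holomorphic on $\Hyp$. The summands $h(\imag)$ and $h_+(\tau)$ are trivially holomorphic, so everything reduces to showing that $\tau\mapsto h_-(1/\bar\tau)$ is holomorphic on $\Hyp$. Because $h_-$ is conjugate-holomorphic on $\Hyp$, I may write $h_-(w)=k(\bar w)$ where $k$ is a holomorphic function on the lower half-plane $-\Hyp$. Then $h_-(1/\bar\tau)=k(1/\tau)$, and since $\tau\mapsto 1/\tau$ maps $\Hyp$ holomorphically into $-\Hyp$ (as $\im(1/\tau)=-\im\tau/|\tau|^2<0$ for $\tau\in\Hyp$), the composition is holomorphic in $\tau$.

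Third, for the boundary identity on $\Te_+$: if $\tau\in\Te_+$ then $|\tau|=1$, so $1/\bar\tau=\tau$, and therefore $h_-(1/\bar\tau)=h_-(\tau)$. Plugging this into the definition of $\shin h$ yields
\[
\shin h(\tau)=h(\imag)+h_+(\tau)+h_-(\tau)=h(\tau),\qquad \tau\in\Te_+,
\]
which completes the verification.

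The statement is essentially a formal consequence of how $\shin$ is designed, so there is no serious obstacle: the only point requiring care is unpacking what ``conjugate-holomorphic'' means for the composition $\tau\mapsto h_-(1/\bar\tau)$. The definition is tailored precisely so that Schwarz reflection in the semicircle $\Te_+$ converts the conjugate-holomorphic piece $h_-$ into a holomorphic one while leaving the boundary values on $\Te_+$ unchanged.
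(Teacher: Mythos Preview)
Your proof is correct and follows essentially the same approach as the paper's proof. The paper argues more tersely that $\tau\mapsto 1/\bar\tau$ is a conjugate-holomorphic automorphism of $\Hyp$ and that the composition of two conjugate-holomorphic functions is holomorphic; your version unpacks this by writing $h_-(w)=k(\bar w)$ with $k$ holomorphic on $-\Hyp$ and observing that $h_-(1/\bar\tau)=k(1/\tau)$, which is the same idea made explicit.
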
  

\begin{proof}
The mapping $\tau\mapsto1/\bar\tau$ is a conjugate-holomorphic automorphism of
$\Hyp$, which makes $\shin\,h$ well-defined and holomorphic, since the
composition of two conjugate-holomorphic functions is holomorphic. Moreover,
since $1/\bar\tau=\tau$ holds for $\tau\in\Te_+$, $\shin\,h$ and $h$
coincide on $\Te_+$.
\end{proof}

Clearly some additional information is needed,
as can be seen from some elementary examples. The following proposition gives
appropriate conditions for uniqueness. The formulation involves the domain
$\calD_\Theta$ 
which may be considered to be the fundamental domain for the so-called Theta
group. See Figure \ref{fig:Dtheta} for an illustration.

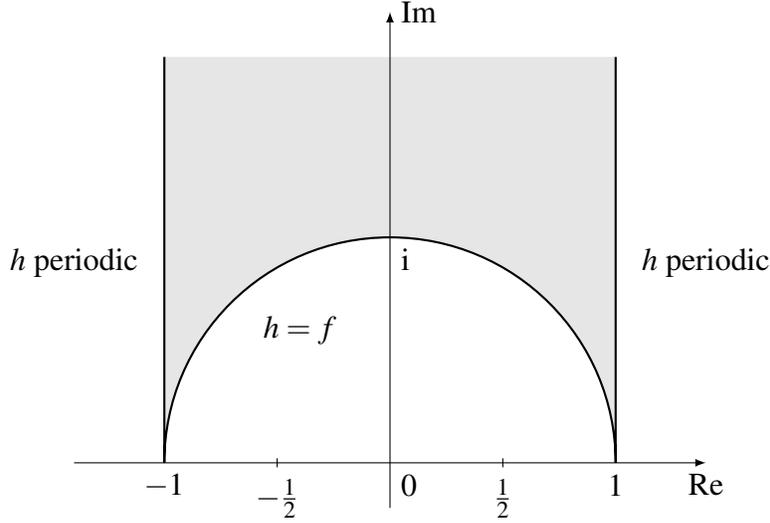
\begin{figure}
\begin{tikzpicture}[scale=3]
\draw[thick, fill=gray!20] (1,1.8) -- (0:1) arc (0:180:1)
 -- (-1,1.8);
\draw(-1.4,1) node[below]{$h$ periodic};
\draw(1.4,1) node[below]{$h$ periodic};
\draw(-0.4,.7) node[below]{$h=f$};
\draw[-latex] (-1.4,0) -- (1.4,0)node[below]{Re};
\draw[-latex] (0,-.2) -- (0,2.0)node[right]{Im};
\path(-1,0) --node[below, pos=0]{$-1$}
node[below right, pos=.5]{0}node[below, pos=1]{1} (1,0)
(0,1)node[below right]{$\imag$};
\draw(-.5,.02)--(-.5,-.02)
node[below]{$-\frac{1}{2}$}(.5,.02)--(.5,-.02)node[below]{$\frac{1}{2}$};
\end{tikzpicture}
\caption{The domain $\mathcal{D}_\Theta$ (in gray) and the corresponding
Dirichlet problem.}
\label{fig:Dtheta}
\end{figure}

\begin{prop}
Suppose $h:\Hyp\to\C$ is harmonic and $2$-periodic, so that
$h(\tau+2)=h(\tau)$ holds. If in addition
\[
|h(\tau)|=\ordo(\im\tau)\quad\text{as}
\,\,\,\,\im\tau\to+\infty,
\]
and if
\[
|h(\tau)|=\ordo\big(\exp(\pi/\im\tau)\big)\quad\text{as}
\,\,\,\,\calD_\Theta\ni\tau\to\pm1,
\]
the function $h$ is uniquely determined by its Schwarz transform $\shin\, h$.
\label{prop:shin2}
\end{prop}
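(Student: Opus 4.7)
The strategy is to reduce uniqueness to a Liouville-type statement on $\C$ by pushing the problem down via the modular $\lambda$-function and then using Schwarz reflection across the line $\calL_{\frac12}$.

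By linearity, suppose $h$ satisfies the hypotheses with $\shin h \equiv 0$; we must show $h \equiv 0$. Evaluating $\shin h$ at $\tau=\imag$, where $h_+(\imag)=h_-(\imag)=0$, gives $h(\imag)=0$, so the vanishing of $\shin h$ collapses to $h_+(\tau)+h_-(1/\bar\tau)=0$ for all $\tau\in\Hyp$. Replacing $\tau$ by $1/\bar\tau$ yields the companion identity $h_-(\tau)+h_+(1/\bar\tau)=0$, and summing the two produces the functional equation
\[
h(\tau)=-h(1/\bar\tau),\qquad\tau\in\Hyp.
\]
On the unit semicircle $\Te_+$, where $1/\bar\tau=\tau$, this forces $h\equiv 0$.

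Next, define $H(z):=h(\lambda^{-1}(z))$ on the slit half-plane $\C_{\re<\frac12}\setminus\,]-\infty,0]$, using the conformal isomorphism of Proposition \ref{prop:lambda-classical}. Since $\lambda^{-1}$ is holomorphic, $H$ is harmonic. Because $\lambda$ sends $\Te_+$ to $\calL_{\frac12}$, $H$ extends continuously to $\calL_{\frac12}$ with value $0$. The slit $]-\infty,0[$ is the common image of $\calL_{-1}^+$ and $\calL_1^+$ approached from opposite sides; the $2$-periodicity $h(\tau+2)=h(\tau)$ matches the two boundary traces of $H$ across the slit, so $H$ extends continuously, and hence harmonically, across it. At the cusp $\tau=+\imag\infty$, corresponding to $z=0$, the Fourier expansion \eqref{eq:lambdaf2.02} yields $|z|\sim 16\,\e^{-\pi\im\tau}$, converting the hypothesis $|h(\tau)|=\ordo(\im\tau)$ into $|H(z)|=\ordo(\log(1/|z|))$ as $z\to 0$; this is precisely the classical removable-singularity threshold for harmonic functions, so $H$ extends harmonically to all of $\C_{\re<\frac12}$. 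At the cusps $\tau=\pm 1$, corresponding to $z=\infty$, the inverse asymptotics \eqref{eq:inverse_lambda1}--\eqref{eq:inverse_lambda2} give $\im\tau\sim \pi/\log|z|$, translating $|h(\tau)|=\ordo(\exp(\pi/\im\tau))$ into the growth bound $|H(z)|=\ordo(|z|)$ as $|z|\to\infty$.

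Now apply Schwarz reflection across $\calL_{\frac12}$, separately to the real and imaginary parts of $H$, to extend $H$ to an entire harmonic function on $\C$ still subject to $|H(z)|=\ordo(|z|)$ at infinity. The standard gradient estimate for harmonic functions (applied to $\partial_x H$ and $\partial_y H$ on large disks) then forces $H$ to be constant, and the boundary vanishing on $\calL_{\frac12}$ gives $H\equiv 0$. Pulling back through $\lambda$ yields $h\equiv 0$ on $\calD_\Theta$, and the $2$-periodicity together with the functional equation $h(\tau)=-h(1/\bar\tau)$ then propagate this to all of $\Hyp$.

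The main obstacle lies in the cusp analysis: both growth hypotheses are sharp, since weakening either from $\ordo$ to $\Ordo$ would either admit a logarithmic singularity at $z=0$ or admit a nontrivial linear harmonic function at infinity (the reflected function $b(\re z-\tfrac12)$), so the proof only works at the exact threshold dictated by the $\lambda$-asymptotics recorded in \eqref{eq:lambdaf2.02} and \eqref{eq:inverse_lambda1}--\eqref{eq:inverse_lambda2}. Once these translations are pinned down, the remaining reflection plus Liouville steps are routine.
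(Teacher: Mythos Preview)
Your proof is correct and takes a genuinely different route from the paper's. The paper argues directly on the Riemann surface $\mathscr{S}_\Theta$ obtained by gluing the vertical edges of $\calD_\Theta$: it uses a M\"obius map $\psi(\tau)=(\tau-2)/(\tau-1)$ to send the cusp at $\pm1$ to $+\imag\infty$, where the growth hypothesis becomes $\ordo(\exp(\pi\,\im\tau))$ in a vertical strip with vanishing lateral boundary data, and then invokes Phragm\'en--Lindel\"of to deduce boundedness near the cusp, followed by the maximum principle on the punctured compact surface. Your approach instead uniformizes via $\lambda$, converts the two cusp hypotheses into a removable singularity at $0$ and sublinear growth at $\infty$, Schwarz-reflects across $\calL_{1/2}$, and finishes with Liouville for entire harmonic functions. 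Your route is more global in flavor and exploits the $\lambda$-asymptotics already recorded in the paper; the paper's route is more self-contained (no $\lambda$ needed) but leans on Phragm\'en--Lindel\"of.

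One expository point worth tightening: the phrase ``extends continuously, and hence harmonically, across the slit'' is too quick as stated, since continuity across an arc does not by itself give harmonicity (think of $|y|$ across the real axis). The correct reason here is that $\lambda$ is $2$-periodic with nonvanishing derivative, so on the cylinder $\Hyp/2\Z$ it is a local biholomorphism near any preimage of a point on $]-\infty,0[$; composing the local inverse with the $2$-periodic $h$ gives a single harmonic function in a full neighborhood of the slit that agrees with $H$ on both sides. This is implicit in your setup but deserves a sentence.
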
  


\begin{proof}
We consider the Riemann surface $\mathscr{S}_\Theta$ obtained by gluing
together the vertical linear boundary segments of the fundamental domain
$\calD_\Theta$ to form a cylindrical surface whose boundary consists of the
circle $\T_+$, the points $\pm1$ which are identified with one another,
and the isolated boundary point $+\imag\infty$.
The assumptions on $h$ entail that it is a harmonic function on
$\mathscr{S}_\Theta$.
The growth condition $|h(\tau)|=\ordo(\im\tau)$ as $\tau$ approaches
$+\imag\infty$ means that $h$ has a removable singularity at $+\imag\infty$,
and hence it extends harmonically across that point. 

We need to show that $h$ is uniquely determined by its Schwarz transform
$\shin\,h$. By linearity, it is enough to show that if $\shin\,h=0$ holds, then
we must have $h=0$ as well. By Proposition \ref{prop:shin1}, the assumption
$\shin\,h=0$ entails that $h$ vanishes on $\Te_+$. On the Riemann surface
$\mathscr{S}_\Theta\cup\{+\imag\infty\}$, we then know that $h$ is harmonic
and extends continuously to vanish along $\Te_+$, which constitutes most of
the boundary. The only boundary point not included is the cusp point $\pm1$.
In order to model the cusp in $\mathscr{S}_\Theta$ better, we consider the
alternative realization of in the form of
\begin{equation}
\calD_\Theta^{+}:=\big\{\tau\in\Hyp:\,0<\re\,\tau<2,\,\,|\tau|>1,\,\,
|\tau-2|>1\big\},  
\end{equation}
where the surface $\mathscr{S}_\Theta$ again results from gluing together
the vertical linear segments. On $\calD_\Theta^{+}$, the cusp is located at
the boundary point $\tau=1$. Let $\psi:\Hyp\to\Hyp$ be the involutive
M\"obius mapping
\[
\psi(\tau):=\frac{\tau-2}{\tau-1},
\]
which maps $\calD_\Theta^{+}$ onto itself, and the cusp at $1$ gets sent to
$+\imag\infty$, while $+\imag\infty$ gets mapped to $1$. We note in addition
that the circular parts where $|\tau|=1$ and where $|\tau-2|=1$ get sent to the
vertical linear segments of the boundary of $\calD_\Theta^{+}$.  
This means that the vanishing of $h$ on $\Te_+$ entails that the harmonic
function $h\circ\psi(\tau)$ vanishes for $\tau\in\partial\calD_\Theta^+$
with $\re\,\tau\in\{0,2\}$. Moreover, the growth bound on $h$ near the cusp
amounts to having
\begin{equation}
h\circ\psi(\tau)=\ordo\big(\exp(\pi\im\,\tau)\big)\quad \calD_\Theta^{+}\ni
\tau\to+\imag\infty.   
\end{equation}
By a version of the classical Phragm\'en-Lindel\"of principle, the permitted
growth in the strip is insufficient to allow the function to be unbounded near
the boundary point $+\imag\infty$. In other words, $h$ must be bounded in
a neighborhood of the cusp at $1$ within the domain $\calD_\Theta^+$. By
periodicity, then, $h$ is bounded in a neighborhood of the cusp at $\pm1$
inside $\calD_\Theta$. Since $h$ is continuous at all other boundary points of  
$\mathscr{S}_\Theta$, including the removable singularity at $+\infty$, it
follows that we may apply the maximum principle to the real and imaginary
parts of $h$ separately (see, e.g., Tsuji \cite{tsuji}). The result is that
$h=0$ on the surface $\mathscr{S}_\Theta\cup\{+\imag\infty\}$, in particular on
$\mathscr{D}_\Theta$. But $h$ is assumed harmonic in $\Hyp$, and vanishing
on the open set $\calD_\Theta$ forces $h$ to vanish throughout $\Hyp$.
\end{proof}

The harmonic $2$-periodic functions on $\Hyp$ have a characterization in terms
of harmonic Fourier series, provided we have the appropriate growth bound
in the imaginary direction. With some rather specific growth bounds, this
topic has been covered back in Propositions \ref{prop:function->coeff0} and
\ref{prop:function->coeff1.0}. 

\begin{prop}
\label{prop:function->coeff2.0}
Suppose $h:\Hyp\to\C$ is harmonic and $2$-periodic, i.e., $h(\tau+2)=h(\tau)$
holds, and enjoys the growth estimate
\[
|h(\tau)|=\ordo(\im\tau)\quad\text{as}
\,\,\,\,\im\tau\to+\infty.
\]
Then $h$ has a harmonic Fourier series expansion
\[
h(\tau)=\sum_{n\in\Z}c_n\,e_n(\tau),\qquad\tau\in\Hyp,
\]
where the functions $e_n$ are given by \eqref{eq:harmext2} for $n\ge0$ and
\eqref{eq:harmext3} for $n<0$, and the
coefficients grow subexponentially: $|c_n|=\Ordo(\exp({\epsilon|n|}))$ as
$|n|\to+\infty$.
\end{prop}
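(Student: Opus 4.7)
The plan is to mimic the argument of Proposition \ref{prop:function->coeff0} (cf.\ also the sketch of Proposition \ref{prop:function->coeff1.0}), but accommodating the much weaker growth control. For each fixed $y>0$, the restriction $t\mapsto h(t+\imag y)$ is smooth and $2$-periodic on $\R$, and hence admits an ordinary Fourier expansion with coefficients
\[
c_n(y)=\frac12\int_{-1}^{1}\e^{-\imag\pi n t}\,h(t+\imag y)\,\diff t.
\]
Differentiating twice in $y$ under the integral sign (permissible because $h$ is $C^\infty$ on any strip $\{y_0\le\im\tau\le y_1\}$ with $0<y_0<y_1$), converting $\partial_y^2$ to $-\partial_t^2$ via the Laplace equation, and integrating by parts twice in $t$ (the boundary terms cancel by $2$-periodicity), I obtain the ODE $c_n''(y)=\pi^2 n^2 c_n(y)$. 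Its general solution is $c_n(y)=A_n\e^{\pi n y}+B_n\e^{-\pi n y}$ for $n\ne 0$, while $c_0(y)=A_0 y+B_0$.

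From the trivial bound $|c_n(y)|\le\tfrac12\int_{-1}^{1}|h(t+\imag y)|\,\diff t$ together with the hypothesis $|h(\tau)|=\ordo(\im\tau)$ as $\im\tau\to+\infty$ (the bound being uniform in $\re\tau$), one gets $|c_n(y)|=\ordo(y)$ as $y\to+\infty$, uniformly in $n$. This suffices to kill the exponentially growing branch: for $n\ne 0$ we must have $c_n(y)=c_n\,\e^{-\pi|n|y}$ for a single complex constant $c_n$, while $c_0(y)=c_0$ is independent of $y$. The key observation is that the constant $c_n$ can be recovered from $c_n=c_n(y)\,\e^{\pi|n|y}$ at \emph{any} chosen value of $y>0$. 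Since $h$ is continuous on each horizontal slice, the quantity $M(y)\eqd\tfrac12\int_{-1}^{1}|h(t+\imag y)|\,\diff t$ is finite for every $y>0$, and consequently $|c_n|\le M(y)\,\e^{\pi|n|y}$. Given any $\epsilon>0$, choosing $y=\epsilon/\pi$ yields the desired subexponential estimate $|c_n|=\Ordo(\e^{\epsilon|n|})$ as $|n|\to+\infty$.

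Finally, one verifies that the series $\sum_{n\in\Z} c_n\,e_n(\tau)$ actually converges to $h(\tau)$ on $\Hyp$. The subexponential bound gives $|c_n\,e_n(\tau)|\le M(\epsilon/\pi)\,\e^{(\epsilon-\pi\,\im\tau)|n|}$, and choosing $\epsilon<\pi\inf\im\tau$ on any compact subset of $\Hyp$ yields absolute uniform convergence there, so that the sum defines a harmonic function. Matching its Fourier coefficients on each horizontal slice with those of $h$ (both periodic and harmonic) identifies the sum with $h$ throughout $\Hyp$. The only genuine technical point is the justification of differentiation under the integral sign in deriving the ODE; this is routine given the interior $C^\infty$-smoothness of $h$ and the local uniform control of its derivatives on closed horizontal strips, which enables standard dominated-convergence arguments.
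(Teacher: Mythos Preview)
Your proof is correct and complete. The route differs from the paper's in one structural point: the paper first invokes the Phragm\'en--Lindel\"of principle to upgrade the hypothesis $|h(\tau)|=\ordo(\im\tau)$ to $h(\tau)=\Ordo(1)$ as $\im\tau\to+\infty$, then applies Proposition~\ref{prop:function->coeff0} to the shifted function $h(\cdot+\imag\epsilon)$ (which is bounded on $\Hyp$ by periodicity and compactness) to obtain the expansion with $c_n(\epsilon)=\Ordo(1)$, and finally reads off $c_n=\e^{\pi\epsilon|n|}c_n(\epsilon)$. You instead bypass Phragm\'en--Lindel\"of entirely by deriving the second-order ODE $c_n''(y)=\pi^2n^2c_n(y)$ directly from harmonicity and using the sublinear bound $c_n(y)=\ordo(y)$ to kill the growing exponential branch. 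Your approach is more self-contained and elementary; the paper's is terser because it leans on the already-proved Proposition~\ref{prop:function->coeff0} and a classical maximum-principle tool. Both arrive at $c_n(y)=c_n\e^{-\pi|n|y}$ and then extract the subexponential bound the same way, by evaluating at small $y$.
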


\begin{proof}
The periodicity and the growth bound on $h$ entail that $h(\tau)=\Ordo(1)$
as $\im\tau\to+\infty$, by the Phragm\'en-Lindel\"of principle for harmonic
functions. But the
by periodicity and compactness, $h(\cdot+\imag\epsilon)$ is uniformly bounded
in $\Hyp$ for any $\epsilon>0$. Proposition \ref{prop:function->coeff0}
gives that there are coefficients $c_n(\epsilon)$ with
$c_n(\epsilon)=\Ordo(1)$ as $|n|\to+\infty$ such that
\[
h(\tau+\imag\epsilon)=\sum_{n\in\Z}c_n(\epsilon)\,e_n(\tau),\qquad\tau\in\Hyp,
\]
and the proof of Proposition \ref{prop:function->coeff0} also shows that
$c_n(\epsilon)=\e^{-\pi\epsilon |n|}c_n$ holds, where $c_n$ are the coefficients
in the expansion of $h$. Since $c_n(\epsilon)$ is bounded for each
$\epsilon>0$, it now follows that the coefficients $c_n$ grow subexponentially.
\end{proof}

\begin{prop}
Suppose $h:\Hyp\to\C$ is harmonic and represented by a harmonic Fourier
series
\[
h(\tau)=\sum_{n\in\Z}c_n\,e_n(\tau), \qquad \tau\in\Hyp,
\]
where the coefficients $c_n$ grow subexponentially. Then the Schwarz transform
of $h$ is a holomorphic hyperbolic Fourier series given by
\[
\shin\,h(\tau)=\sum_{n\in\Z_{\ge0}}c_n\,e_n(\tau)+\sum_{n\in\Z_{<0}}
c_n\,e_n(1/\bar\tau)  ,\qquad\tau\in\Hyp.
\]
\label{prop:shinexpansion1}
\end{prop}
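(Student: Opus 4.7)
\medskip

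\textbf{Proof plan.} The plan is to reduce everything to the uniqueness of the decomposition of a harmonic function into a holomorphic and a conjugate-holomorphic part, and then simply to read off the Schwarz transform term by term from the series.

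First I would check that the two one-sided series
\[
h_{+}^{\circ}(\tau):=\sum_{n>0}c_n\,e_n(\tau),\qquad
h_{-}^{\circ}(\tau):=\sum_{n<0}c_n\,e_n(\tau)
\]
define a holomorphic, respectively conjugate-holomorphic, function on $\Hyp$. In view of \eqref{eq:harmext2} and \eqref{eq:harmext3} one has $|e_n(\tau)|=\e^{-\pi|n|\im\tau}$ for every $n\in\Z$, so the subexponential growth assumption $|c_n|=\Ordo(\e^{\epsilon|n|})$ (for every $\epsilon>0$) guarantees absolute and locally uniform convergence on $\Hyp$. Consequently $h_{+}^{\circ}$ is holomorphic and $h_{-}^{\circ}$ conjugate-holomorphic, and both have limit $0$ at $+\imag\infty$. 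Since $h=c_0+h_{+}^{\circ}+h_{-}^{\circ}$ is a decomposition of $h$ into a constant plus a holomorphic plus a conjugate-holomorphic piece, and since any such decomposition is unique once we normalize the holomorphic and conjugate-holomorphic pieces to vanish at a chosen point, the functions $h_{+},h_{-}$ appearing in the definition of the Schwarz transform must be
\[
h_{+}(\tau)=h_{+}^{\circ}(\tau)-h_{+}^{\circ}(\imag),\qquad
h_{-}(\tau)=h_{-}^{\circ}(\tau)-h_{-}^{\circ}(\imag),
\]
and then $h(\imag)=c_0+h_{+}^{\circ}(\imag)+h_{-}^{\circ}(\imag)$.

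Next I would plug these formulas into the definition
\[
\shin\,h(\tau)=h(\imag)+h_{+}(\tau)+h_{-}(1/\bar\tau),
\]
and observe that the constants $h_{+}^{\circ}(\imag)$ and $h_{-}^{\circ}(\imag)$ cancel, leaving
\[
\shin\,h(\tau)=c_0+h_{+}^{\circ}(\tau)+h_{-}^{\circ}(1/\bar\tau)
=\sum_{n\ge0}c_n\,e_n(\tau)+\sum_{n<0}c_n\,e_n(1/\bar\tau),
\]
which is exactly the claimed identity. Substitution term by term in the second sum is justified by locally uniform convergence, since $\tau\mapsto1/\bar\tau$ is a conjugate-holomorphic automorphism of $\Hyp$. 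To see that the result is a bona fide holomorphic hyperbolic Fourier series, I would note that for $n<0$ one has $e_n(1/\bar\tau)=\e^{\imag\pi n\,\overline{1/\bar\tau}}=\e^{\imag\pi n/\tau}$, so after writing $m=-n>0$ the expansion takes the familiar form
\[
\shin\,h(\tau)=c_0+\sum_{n>0}c_n\,\e^{\imag\pi n\tau}
+\sum_{m>0}c_{-m}\,\e^{-\imag\pi m/\tau},\qquad\tau\in\Hyp,
\]
with coefficients $a_0=c_0$, $a_n=c_n$ and $b_m=c_{-m}$, all still subexponentially bounded, so the holomorphic hyperbolic Fourier series converges.

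There is essentially no deep obstacle here; the only point that needs a brief justification is the uniqueness of the holomorphic/conjugate-holomorphic splitting, which follows from the fact that a function on $\Hyp$ that is simultaneously holomorphic and conjugate-holomorphic must be constant. Everything else is formal manipulation of convergent series.
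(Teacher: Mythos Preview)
Your proof is correct and follows essentially the same route as the paper: identify the holomorphic and conjugate-holomorphic pieces of the harmonic Fourier series, normalize them to vanish at $\imag$, and read off the Schwarz transform from its definition. The only cosmetic difference is that the paper absorbs the constant term $c_0$ into the holomorphic piece rather than keeping it separate, but the computation and the outcome are identical.
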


\begin{proof}
We put
\[
h_+(\tau)=\sum_{n\in\Z_{\ge0}}c_n\,e_n(\tau)-\sum_{n\in\Z_{\ge0}}c_n\,e_n(\imag)
\]
and
\[
h_-(\tau)=\sum_{n\in\Z_{<0}}c_n\,e_n(\tau)-\sum_{n\in\Z_{<0}}c_n\,e_n(\imag),
\]
so that $h_+:\Hyp\to\C$ is holomorphic while $h_-:\Hyp\to\C$ is
conjugate-holomorphic, with $h_+(\imag)=h_-(\imag)=0$. The decomposition
\[
h(\tau)=h(\imag)+h_+(\tau)+h_-(\tau),\qquad\tau\in\Hyp,
\]
then holds, and by the definition of the Schwarz transform, we have
\[
\shin\,h(\tau)=h(\imag)+h_+(\tau)+h_-(1/\bar\tau)=
\sum_{n\in\Z_{\ge0}}c_n\,e_n(\tau)+\sum_{n\in\Z_{<0}}
c_n\,e_n(1/\bar\tau)  ,\qquad\tau\in\Hyp,
\]
as claimed.
\end{proof}

\subsection{Solution scheme for the inverse Schwarz
transform problem}
\label{subsec:Solutionscheme-gen}
It remains to decide when we can solve the inverse problem:
\emph{To find for a given holomorphic function $f:\Hyp\to\C$
an associated harmonic function $h:\Hyp\to\C$ which is $2$-periodic and
has $\shin\,h=f$}. As it turns out, we can always do this. But it is not
always possible to do this in the uniqueness regime of Proposition
\ref{prop:shin2}, simply because any solution must have $h=f$ on $\Te_+$,
and the given $f$ need not obey the boundary growth bound at $\pm1$, which
means that we cannot expect $h$ to do so either. We follow an algorithm to
define $h$, where the first step is the only one where there is some freedom. 
We call it the \emph{harmonic extension algorithm}.

\medskip

\noindent{\sc Step I}: \emph{The Dirichlet problem in $\calD_\Theta$}.
We find a harmonic function $h_0:\calD_\Theta\to\C$
which solves a Dirichlet boundary problem along $\Te_+$, namely $h_0=f$ on
$\Te_+$, and has periodic boundary conditions along the linear boundary of
$\calD_\Theta$:
\[
\forall y>0:\,\,\,h_0(-1+\imag y)=h_0(1+\imag y)\quad\text{and}\quad
\partial_x h_0(-1+\imag y)=\partial_x h_0(1+\imag y).  
\]  
In addition, $h_0(\tau)=\Ordo(1)$ as $\im\,\tau\to+\infty$ is required.
In terms of the alternative nome coordinate $q=\e^{\imag\pi \tau}$, 
this amounts to an ordinary Dirichlet problem for a bounded simply connected 
domain $\hat\calD_\Theta$ with real-analytic Jordan boundary curve, except
for a single cusp at $q=-1$ corresponding to the points $\tau=\pm1$.
After all, the periodic boundary condition means that in the $q$-domain
$\hat\calD_\Theta$, $h_0$ extends harmonically across the segment $]-1,0[$, so
we may as well include the segment in $\hat\calD_\Theta$. Being simply connected,
the domain $\hat\calD_\Theta$ is conformally equivalent to the upper half-plane
$\Hyp$, and the cusp point $q=-1$ may be sent to the boundary point at
infinity of $\Hyp$. \emph{Proposition \ref{prop:Dirichlet-general} now
guarantees that such a harmonic function $h_0$ exists, no matter how quickly
the function $f$ grows near the cusp $\pm1$ along $\Te_+$}.
If we want growth control on $h_0$ for some type of growth bound on $f$,
we can use for instance Propositions \ref{prop:loggrowth} and
\ref{prop:powergrowth}.

\medskip

\noindent{\sc Step II}: \emph{Periodic extension}.
We extend $h_0$ periodically from $\calD_\Theta$ to the domain
\begin{equation}
\Omega_0:=\Hyp\setminus\bigcup_{j\in\Z}\bar\D(2j,1).
\label{eq:Omega_0.01}
\end{equation}
Then $h_0(\tau+2)=h_0(\tau)$ holds for all $\tau\in\Omega_0$, and the extended
function, still called  $h_0$, is harmonic in $\Omega_0$ and bounded at
$+\imag\infty$. These properties are immediate if we express the Dirichlet
problem in terms of the nome variable $q$.

\medskip

\noindent{\sc Step III}: \emph{Schwarz reflection}. We now use the fact that
the Dirichlet boundary data on $\Te_+$ comes from a holomorphic function $f:
\Hyp\to\C$. Indeed, we consider the function $g_0:=h_0-f$ which by the above
Step II is harmonic in the periodic domain $\Omega_0$. The periodicity of
$h_0$ does not carry over to $g_0$, because the function $f$ need not be
periodic, but a good thing is that $g_0=0$ holds on $\Te_+$.
This is good because it puts us in the setting of the Schwarz reflection
principle (see, e.g., \cite{Ahlfors}, pp. 172-173, in the instance of a
linear boundary), which asserts that $g_0$ extends harmonically across
$\Te_+$, and the extension has the form 
\begin{equation}
g_0(\tau)=-g_0(1/\bar\tau),
\label{eq:Schwarzreflection}
\end{equation}
provided that one of the sides makes sense. So, if $1/\bar\tau\in \Omega_0$,
the left-hand side of \eqref{eq:Schwarzreflection} gets defined by 
the right-hand side.
If we put (note that $\Omega_0\cap\D_\e=\Omega_0$ trivially)
\begin{equation}
\Omega_{1}':=(\Omega_0\cap\D_\e)\cup\Te_+\cup\,\Strans^\ast(\Omega_0\cap\D_\e),
\label{eq:Omega0'}
\end{equation}
where we recall the notation $\Strans^\ast(\tau)=1/\bar\tau$,  it is clear that
$\Omega_1'$ is open and connected, and that $\Omega_0\subset\Omega_1'$.
Moreover, the function $g_0$ extends harmonically to $\Omega'_1$. But then 
$h_0=g_0+f$ gets extended as a harmonic function to $\Omega'_1$ as well. 

\medskip

\noindent{\sc Step IV}: \emph{Iterative application of the steps II and III}. 
The domain $\Omega_0$ is periodic, and it was extended by forming $\Omega'_1$. 
We note that the enlargement from $\Omega_0$ to $\Omega_1'$ was localized
to the semidisk $\bar\D\cap\Hyp$. 
We may now use the periodicity of $h_0$ to further extend the function $h_0$
to the periodized domain
\begin{equation}
\Omega_{1}:=\big\{\tau\in\Hyp:\,\,\tau-2k\in\Omega'_{1}
\,\,\,\,\text{holds for some }\,\,k\in\Z \big\}.
\label{eq:Omega1}
\end{equation}
Then clearly $\Omega_0\subset\Omega_1'\subset\Omega_1$ holds, and we let 
$h_1$ denote the resulting $2$-periodic harmonic function 
$h_1:\Omega_1\to\C$ with restriction $h_1=h_0$ on $\Omega_1'$. 
After that, we again apply Step III based on Schwarz reflection to extend
$h_1$ harmonically to the bigger domain 
\[
\Omega_{2}':=(\Omega_1\cap\D_\e)
\cup\Te_+\cup\,\Strans^\ast(\Omega_1\cap\D_\e),
\]
After all, we proceed as before and write $g_1:=h_1-f$, which is harmonic in 
$\Omega_1$ since both $h_1$ and $f$ are, and the Schwarz reflection formula
\eqref{eq:Schwarzreflection} remains valid:
\begin{equation}
g_1(\tau)=-g_1(1/\bar\tau),
\label{eq:Schwarzreflection2}
\end{equation}
In view of
\eqref{eq:Schwarzreflection2}, $g_1$ extends harmonically to $\Omega_2'$,
and hence $h_1=g_1+f$ extends harmonically to $\Omega_2'$ as well.
At the next level, we use the periodicity of $h_1$ to further extend
the function $h_1$ to the periodized domain
\[
\Omega_{2}:=\big\{\tau\in\Hyp:\,\,\tau-2k\in\Omega'_{2}
\,\,\,\,\text{holds for some }\,\,k\in\Z \big\}.
\]
We let $h_2$ denote the resulting $2$-periodic function $h_2:\Omega_2\to\C$
with restriction $h_2=h_1$ on $\Omega_2'$.
These are just the first few iteration steps of a general iteration procedure.
Suppose that we have, for a given $j=1,2,3,\ldots$, a $2$-periodic harmonic
function $h_{j}$ on a $2$-periodic domain $\Omega_{j}$. We then extend it
to a $2$-periodic harmonic function $h_{j+1}$ on a larger $2$-periodic domain
$\Omega_{j+1}$ in the following two-step fashion. 
We form $g_{j}:=h_j-f$, which is harmonic in $\Omega_j$. This function vanishes
on the semicircle $\Te_+$, so that by Schwarz reflection,
\begin{equation}
g_j(\tau)=-g_j(1/\bar\tau),
\label{eq:Schwarzreflectionj}
\end{equation}
which extends $g_j$ harmonically to the domain
\begin{equation}
\Omega_{j+1}':=(\Omega_j\cap\D_\e)\cup\Te_+\cup\,\Strans^\ast(\Omega_j\cap\D_\e).
\label{eq:Omegaj'}
\end{equation}
Consequently, $h_j=g_j+f$ extends harmonically to $\Omega_{j+1}'$ as well. In a
second step, the harmonic function $h_j$ gets extended by $2$-periodicity to
the larger domain
\begin{equation}
\Omega_{j+1}:=\big\{\tau\in\Hyp:\,\,\tau-2k\in\Omega'_{j+1}
\,\,\,\,\text{holds for some }\,\,k\in\Z \big\},
\label{eq:Omegaj+1}
\end{equation}
and the extension gets to be called $h_{j+1}$, and it is harmonic and
$2$-periodic on $\Omega_{j+1}$. 

\medskip

We need to make some remarks concerning the structure of the sets $\Omega_j$
for $j=0,1,2,\ldots$. The statement involves the concept of a 
cuspidal hyperbolic polygon.

\begin{defn}
An open subset $\Omega$ of the upper half-plane $\Hyp$ is a 
\emph{cuspidal hyperbolic polygon} if  the complement $\Hyp\setminus\Omega$ 
is a finite or countable union of disjoint relatively closed disks or
half-planes, provided that the boundary of each such disk or half-plane is 
hyperbolic geodesic. 
\end{defn}

We remark that the hyperbolic geodesics are either semicircles centered    
along the real line, or vertical half-lines.  

\begin{figure}[t!]
\centering  
\includegraphics[width=1\linewidth]{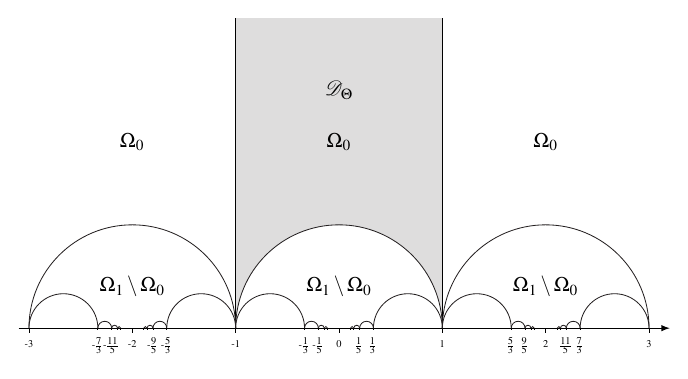}
\caption{The domain $\Omega_0$ is marked as well as $\Omega_1\setminus
\Omega_0$. The fundamental domain $\calD_\Theta$ is shaded in  gray.}  
\label{fig-2.1}
\end{figure}

\begin{prop}
For each $j=0,1,2,\ldots$, the domains $\Omega_j$ and $\Omega_{j+1}'$ are 
cuspidal hyperbolic polygons, and we have the containments
\[
\Omega_j\subset\Omega_{j+1}'\subset\Omega_{j+1}.
\]  
\end{prop}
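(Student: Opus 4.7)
The plan is induction on $j$, maintaining four claims in parallel: (P) $\Omega_j$ is a cuspidal hyperbolic polygon; (S) every connected component of $\Hyp\setminus\Omega_j$ lies in exactly one semidisk $\bar\D(2k,1)\cap\Hyp$; (Per) $\Omega_j$ is $2$-periodic; and for $j\ge 1$ the reflection identity (R) $\Omega_j\cap(\bar\D\cap\Hyp)=\Te_+\cup\Strans^\ast(\Omega_{j-1}\cap\D_\e)$. All four hold at $j=0$: the complement $\Hyp\setminus\Omega_0=\bigsqcup_{k\in\Z}\bar\D(2k,1)\cap\Hyp$ is a pairwise disjoint (in $\Hyp$) family of closed hyperbolic half-planes bounded by the geodesic semicircles $\{|\tau-2k|=1\}\cap\Hyp$, and $\Omega_0\subset\D_\e$.

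The key geometric input is that $\Strans^\ast\colon\tau\mapsto 1/\bar\tau$ is a conjugate-holomorphic involutive isometry of $\Hyp$ that fixes $\Te_+$ pointwise, interchanges $\D\cap\Hyp$ with $\D_\e\cap\Hyp$, and carries hyperbolic geodesics to hyperbolic geodesics; the real translations $\tau\mapsto\tau+2k$ enjoy the same geodesic-preserving property. Assuming the four claims at stage $j$, (S) identifies $\D_\e\cap(\Hyp\setminus\Omega_j)$ as the disjoint union of those components of $\Hyp\setminus\Omega_j$ lying inside $\bar\D(2k,1)\cap\Hyp$ for $k\ne 0$, and applying $\Strans^\ast$ produces a disjoint family of closed disks with geodesic boundaries nested inside $\bar\D\cap\Hyp$. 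Hence
\[
\Hyp\setminus\Omega_{j+1}'=\big[\D_\e\cap(\Hyp\setminus\Omega_j)\big]\,\sqcup\,\Strans^\ast\big[\D_\e\cap(\Hyp\setminus\Omega_j)\big]
\]
is a disjoint union of closed disks with geodesic boundaries, so (P) holds for $\Omega_{j+1}'$, and (S) persists (the components in $\bar\D(2k,1)\cap\Hyp$ for $k\ne 0$ are unchanged, and the new $\Strans^\ast$-images sit inside $\bar\D\cap\Hyp$). The periodization $\Omega_{j+1}=\bigcup_{k\in\Z}(\Omega_{j+1}'+2k)$ then preserves (P), (S), (Per), and yields (R) at stage $j+1$.

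For the containments, $\Omega_{j+1}'\subset\Omega_{j+1}$ is built into the definition. For $\Omega_j\subset\Omega_{j+1}'$, decompose $\Omega_j=[\Omega_j\cap\D_\e]\cup[\Omega_j\cap\Te_+]\cup[\Omega_j\cap(\D\cap\Hyp)]$; the first two pieces lie in $\Omega_{j+1}'$ by construction, while the interior piece is handled via (R) and the outer inductive containment $\Omega_{j-1}\subset\Omega_j$:
\[
\Omega_j\cap(\D\cap\Hyp)\subset\Te_+\cup\Strans^\ast(\Omega_{j-1}\cap\D_\e)\subset\Te_+\cup\Strans^\ast(\Omega_j\cap\D_\e)\subset\Omega_{j+1}'.
\]

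The principal technical obstacle is verifying that (R) propagates through the periodization step, i.e.\ that for $m\ne 0$ one has $(\Omega_{j+1}'+2m)\cap(\bar\D\cap\Hyp)\subset\Omega_{j+1}'\cap(\bar\D\cap\Hyp)$. A case analysis on which of the three defining pieces of $\Omega_{j+1}'$ contains $\tau-2m$, for $\tau\in\bar\D\cap\Hyp$, rules out the options $\tau-2m\in\Te_+$ and $\tau-2m\in\Strans^\ast(\Omega_j\cap\D_\e)$ by elementary location arguments (both would place $\tau-2m$ in $\bar\D\cap\Hyp$, forcing $\tau\in\bar\D\cap(\bar\D+2m)$, which is empty in $\Hyp$ for $m\ne 0$ since the disks $\bar\D$ and $\bar\D+2m$ are tangent only at real points); the remaining case $\tau-2m\in\Omega_j\cap\D_\e$ gives $\tau\in\Omega_j$ by (Per), and then (R) at stage $j$ combined with the preceding display places $\tau$ in $\Omega_{j+1}'$ as required.
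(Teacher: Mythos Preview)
Your proof is correct and follows essentially the same route as the paper: induction on $j$, a three-part split according to location in $\D_\e$, $\Te_+$, or $\D\cap\Hyp$, and the key triangle-inequality observation that $\tau\in\bar\D\cap\Hyp$ with $m\ne 0$ forces $|\tau-2m|>1$. Your named invariant (R), namely $\Omega_j\cap(\bar\D\cap\Hyp)=\Te_+\cup\Strans^\ast(\Omega_{j-1}\cap\D_\e)$, is exactly what the paper extracts implicitly in its $k=0$ versus $k\ne 0$ case analysis; packaging it as a standing hypothesis is a tidy organizational choice but not a new idea. One point where you go further than the paper: you carry the cuspidal-polygon claim (P) and the localization claim (S) explicitly through the induction via the complement description $\Hyp\setminus\Omega_{j+1}'=[\D_\e\cap(\Hyp\setminus\Omega_j)]\sqcup\Strans^\ast[\D_\e\cap(\Hyp\setminus\Omega_j)]$, whereas the paper leaves that verification to the reader. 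Note that your last-paragraph claim, $(\Omega_{j+1}'+2m)\cap(\bar\D\cap\Hyp)\subset\Omega_{j+1}'$ for $m\ne 0$, is doing triple duty: it yields (R) at level $j+1$ directly, and combined with periodicity it also gives $(\Hyp\setminus\Omega_{j+1})\cap\bar\D(2k,1)=\big[\Strans^\ast(\D_\e\cap(\Hyp\setminus\Omega_j))\big]+2k$, from which (P) and (S) at level $j+1$ follow. You might make that dependence explicit, since as written you assert that periodization ``preserves (P), (S)'' before supplying the lemma that justifies it.
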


\begin{proof}
We read off from the definition of $\Omega_{j+1}$ that the containment 
$\Omega_{j+1}'\subset\Omega_{j+1}$ holds, so we need only be concerned with
the remaining containment $\Omega_j\subset\Omega_{j+1}'$.
We resort to induction to deal with the outstanding issues.
We first devote attention to the containment $\Omega_j\subset\Omega_{j+1}'$.
For $j=0$ the assertion is clearly true. Indeed, inspection of
\eqref{eq:Omega0'}
gives that $\Omega_1'\cap\D_\e=\Omega_0\cap\D_\e$, while clearly 
$\Omega_0\cap\D=\emptyset$. To proceed with the induction, we assume that
the containment $\Omega_j\subset\Omega_{j+1}'$ holds for all $j=0,\ldots,j_0$,
and aim to show that the containment holds for $j=j_0+1$ as well.
We show that $\Omega_{j_0+1}\subset\Omega_{j_0+2}'$ holds by establishing
separately that (a) $\Omega_{j_0+1}\cap\D_\e\subset\Omega_{j_0+2}'\cap\D_\e$,
(b) $\Omega_{j_0+1}\cap\D\subset\Omega_{j_0+2}'\cap\D$, and that
(c) $\Omega_{j_0+1}\cap\Te_+\subset\Omega_{j_0+2}'\cap\Te_+$.

To arrive at the inclusion (a), we assume that $\tau\in\Omega_{j_0+1}\cap\D_\e$.
 By \eqref{eq:Omegaj'} for $j=j_0+1$,
it follows that $\tau\in\Omega_{j_0+2}'\cap\D_\e$, so that the containment holds
within the exterior disk $\D_\e$.  This finishes the verification of part (a).

To arrive at the inclusion (b), we assume that 
$\tau\in\Omega_{j_0+1}\cap\D$. Then in view of \eqref{eq:Omegaj+1} with 
$j=j_0$, we have that there exists a $k=k(\tau)\in\Z$ such that 
$\tau-2k\in\Omega'_{j_0+1}$. If $k=0$, then $\tau\in\Omega'_{j_0+1}\cap\D$
and by \eqref{eq:Omegaj'} with $j=j_0$ it follows that
\[
\Omega'_{j_0+1}\cap\D=\Strans^\ast(\Omega_{j_0}\cap\D_\e)
\subset\Strans^\ast(\Omega_{j_0+1}'\cap\D_\e)\subset
\Strans^\ast(\Omega_{j_0+1}\cap\D_\e),
\] 
where the first containment is a consequence of the induction hypothesis.
In particular, it follows that if $\tau\in\Omega_{j_0+1}\cap\D$ with associated
$k(\tau)=0$, then $\tau\in \Omega_{j_0+2}'\cap\D$, by \eqref{eq:Omegaj'}
for $j=j_0+1$. As for the remaining case when  $k\ne0$, then by the triangle 
inequality,
\[
|\tau-2k|\ge 2|k|-|\tau|>2k-1\ge1,
\]
so that by \eqref{eq:Omegaj'} with $j=j_0$, 
\[
\tau-2k\in\Omega_{j_0+1}'\cap\D_\e=\Omega_{j_0}\cap\D_\e.
\]
By the $2$-periodicity of $\Omega_{j_0}$, it follows that $\tau\in\Omega_{j_0}$,
and since $\tau\in\D$ was assumed, we obtain that 
\[
\tau\in\Omega_{j_0}\cap\D\subset\Omega_{j_0+1}'\cap\D\subset
\Omega_{j_0+1}\cap\D,
 \]
where the first containment follows from the induction hypothesis. In view
of the definition \eqref{eq:Omegaj'} with $j=j_0+1$, we find that 
$\tau\in\Omega_{j_0+2}'\cap\D$, as claimed. This finishes the verification of
(b). 
 
Finally, we turn to the inclusion (c).
To this end, we assume that $\tau\in\Omega_{j_0+1}\cap\Te_+$.
Since by \eqref{eq:Omegaj'} with $j=j_0+1$ we know that
$\Omega_{j_0+1}'\cap\Te_+=\Te_+$, the conclusion that $\tau\in
\Omega_{j_0+1}\cap\Te_+$ is immediate. This finishes the verification of part 
(c).
 
We turn to the remaining assertion that the domains $\Omega_j$ and 
$\Omega_{j+1}'$ are cuspidal hyperbolic polygons. First, this is true about 
$\Omega_0$ by inspection, and about $\Omega_1'$ by reflection.
The general case can be obtained by another induction argument, which is
left to the interested reader. The important matter is that each of the
two domains ($\Omega_j$ and $\Omega_{j+1}'$) has the points $1$ 
and $-1$ as endpoints of geodesic complementary disks, and this property
survives along the induction process, since all we do is Schwarz reflection
in $\Te_+$ and exploitation of the $2$-periodicity. 
\end{proof}  

We need to understand some symmetry properties of the domains $\Omega_j$,
and how they grow with $j$.  Let $\Rtrans^\ast(\tau):=-\bar\tau$ denote the 
reflection in the imaginary axis.

\begin{prop}
For $j=0,1,2,\ldots$, the domains $\Omega_j$ are $2$-periodic and symmetric with
respect to the imaginary axis: $\Rtrans^\ast(\Omega_j)=\Omega_j$ and 
$\Ttrans^2(\Omega_j)=\Omega_j$. Moreover, we have that
we have that 
\[
\sup\{\im\,\tau:\,\,\tau\in\Hyp\setminus\Omega_j\}=\frac{1}{2j+1},
\]
so that in particular, the union of all the domains $\Omega_j$ over
$j\in\Z_{\ge0}$ equals all of $\Hyp$.
\label{prop:Omega_j}
\end{prop}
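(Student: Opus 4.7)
The plan is to prove the symmetries by a direct induction on $j$, and derive the height identity from a simultaneous structural induction that tracks the complementary geodesic half-disks of $K_j := \Hyp\setminus\Omega_j$ explicitly. For the symmetries, the base case $\Omega_0 = \Hyp\setminus\bigcup_k \bar\D(2k,1)$ is manifestly $\Ttrans^2$-periodic and $\Rtrans^\ast$-symmetric. For the inductive step, observe that $\Strans^\ast\colon\tau\mapsto 1/\bar\tau$ preserves each of $\D$, $\D_e$, and $\Te_+$, and that $\Strans^\ast\Rtrans^\ast = \Rtrans^\ast\Strans^\ast$ since both compositions equal $\tau\mapsto -1/\tau$. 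Hence if $\Omega_j$ has the two symmetries, so do $\Omega_j\cap\D_e$, $\Strans^\ast(\Omega_j\cap\D_e)$, and therefore $\Omega_{j+1}'$; the periodization $\Omega_{j+1} = \bigcup_k(\Omega_{j+1}'+2k)$ is automatically $\Ttrans^2$-invariant and retains $\Rtrans^\ast$-symmetry since $\Rtrans^\ast(\Omega_{j+1}'+2k)=\Omega_{j+1}'-2k$.

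For the height identity, I would first invoke the preceding proposition to obtain the chain $\Omega_0\subset\Omega_1\subset\cdots$, hence $K_j\subset K_0\subset\bigcup_k \bar\D(2k,1)$, so that each $\tau\in K_j$ admits a unique integer $k_0(\tau)$ with $|\tau-2k_0(\tau)|\le 1$. Unwinding the two-step construction of $\Omega_{j+1}$ then yields the clean recursion
\[
\tau\in K_{j+1} \iff \tau\in K_j,\ |\tau-2k_0(\tau)|<1,\ \text{and}\ 1/\overline{\tau-2k_0(\tau)}\in K_j,
\]
because the $\Te_+$-part of $\Omega_{j+1}'$ absorbs the tangency circles, the $\D_e$-part reproduces $\Omega_j$, and the $\D$-part equals $\Strans^\ast(\Omega_j\cap\D_e)$. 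Building on this, I would prove by simultaneous induction that (i) $K_j$ is a disjoint union of closed geodesic half-disks $\bar\D(c,r)\cap\Hyp$ with real $c,r$, (ii) $\sup\{\im\tau:\tau\in K_j\}=1/(2j+1)$, and (iii) the extremal half-disks of $K_j$ in the strip $|\re\tau|<1$ are $\bar\D(\pm 2j/(2j+1),1/(2j+1))$, each internally tangent to $\Te$ at $\tau=\pm 1$.

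The decisive computation is the reflection identity $\Strans^\ast(\bar\D(c,r))=\bar\D(c/(c^2-r^2),r/(c^2-r^2))$, valid for real $0<r<c$. Under the inductive constraints $c-r\ge 1$ (the source disk lies in $\D_e$) and $r\le 1/(2j+1)$, the reflected radius $r/((c-r)(c+r))$ is maximized precisely at $c-r=1$, $r=1/(2j+1)$, giving value $1/(2j+3)$. The companion source disk $\bar\D((2j+2)/(2j+1),1/(2j+1))$ belongs to $K_j\cap\D_e$ by inductive hypothesis, being the $\Ttrans^2$-translate of the extremal disk $\bar\D(-2j/(2j+1),1/(2j+1))$, and its reflection $\bar\D((2j+2)/(2j+3),1/(2j+3))$ enters $K_{j+1}\cap\D$, confirming (ii) and (iii) at level $j+1$. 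Statement (i) is preserved because Schwarz reflection and $\Ttrans^2$-translation both act on families of real-centered half-disks.

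The hard part will be ruling out that some other half-disk of $K_j\cap\D_e$, or a region introduced by periodization, reflects to a half-disk of radius exceeding $1/(2j+3)$. This is handled by the sharpness of the optimum $c-r=1$, $r=1/(2j+1)$ in the reflection formula together with the observation that $\Ttrans^2$-periodization merely translates the pre-existing family horizontally, so it cannot enlarge any radius. The concluding assertion $\bigcup_j \Omega_j = \Hyp$ then follows immediately from $h_j\to 0$: any $\tau\in\Hyp$ has $\im\tau>1/(2j+1)$ for $j$ sufficiently large, placing $\tau$ in $\Omega_j$.
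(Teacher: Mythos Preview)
Your symmetry argument matches the paper's exactly, including the commutation relation $\Rtrans^\ast\Strans^\ast=\Strans^\ast\Rtrans^\ast$; there is nothing to add there.

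For the height identity your route is genuinely more self-contained than the paper's. The paper simply exhibits the half-disk $\bar\D(2j/(2j+1),1/(2j+1))\subset K_j$ for the lower bound and then, for the upper bound, defers entirely to Lemma~2 on p.~66 of Radchenko--Viazovska \cite{rad}. You replace that citation by the explicit reflection formula $\Strans^\ast(\bar\D(c,r))=\bar\D(c/(c^2-r^2),r/(c^2-r^2))$ and the sharp optimization $\max\{r/((c-r)(c+r)):c-r\ge1,\,r\le1/(2j+1)\}=1/(2j+3)$, which is correct and clean. Your recursion
\[
\tau\in K_{j+1}\iff \tau\in K_j,\ |\tau-2k_0(\tau)|<1,\ 1/\overline{\tau-2k_0(\tau)}\in K_j
\]
is the right organizing device: it forces $\tau-2k_0$ into $\Strans^\ast(K_j\cap\D_e)$, whence $\im\tau\le1/(2j+3)$ immediately via the optimization.

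Two points deserve tightening. First, your final paragraph worries about periodization ``enlarging radii,'' but periodization of $\Omega_{j+1}'$ \emph{shrinks} $K$, so the real issue is the opposite one: checking that the extremal disk $\bar\D((2j+2)/(2j+3),1/(2j+3))$ survives into $K_{j+1}$, not merely $K_{j+1}'$. Your recursion handles this once you note that this disk sits inside $\bar\D(2j/(2j+1),1/(2j+1))\subset K_j$ (internal tangency at $1$), so the first recursion condition holds; you should say this explicitly. Second, your claim (i) that $K_{j+1}$ is again a disjoint union of geodesic half-disks is asserted via ``Schwarz reflection and $\Ttrans^2$-translation both act on families of half-disks,'' but periodization is an \emph{intersection} on the complement side, and intersections of half-disks need not be half-disks in general. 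Here it works because the reflected half-disks inside $\D$ are already contained in the corresponding half-disks of $K_j$ (the nesting $K_{j+1}\subset K_j$ combined with the cuspidal-polygon structure from the preceding proposition); either invoke that proposition directly for (i) or add a line explaining why no lens-shaped components arise.
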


\begin{proof}
The domain $\Omega_0$ is symmetric with respect to the imaginary axis. This
property gets inherited by $\Omega_1'$, since $\Strans^\ast(\Omega_0\cap\D_\e)$ 
is symmetric too, which we see from the commutation relation 
$\Rtrans^\ast\Strans^\ast=\Strans^\ast\Rtrans^\ast$. The periodicity operation
which produces $\Omega_1$ must then also preserve the symmetry, since 
$\Rtrans^\ast\Ttrans^2=\Ttrans^2\Rtrans^\ast$. The same arguments apply under
the two-step iteration which gives $\Omega_{j+1}$ from the smaller domain 
$\Omega_j$, and we find that all the domains $\Omega_j$ are symmetric under 
reflection in the imaginary axis. The fact that they are all $2$-periodic
follows from the definition of \eqref{eq:Omegaj+1}.

We turn to the assertion about the maximal imaginary part of a point from the 
complement $\Hyp\setminus\Omega_j$. For $j=0$,  the complement $\Hyp\setminus
\Omega_0$ contains the half-disk $\bar\D\cap\Hyp$ which contains the point
$\imag$ with imaginary part $1$. For $j>0$,  $\Hyp\setminus\Omega_j$ contains
a relatively closed half-disk whose boundary semicircle is a hyperbolic
geodesic and passes through  the points $1$ and $(2j-1)/(2j+1)$. It is a
circle centered at $2j/(2j+1)$ of radius $1/(2j+1)$, and it contains the point 
\[
\frac{2j}{2j+1}+\frac{\imag}{2j+1}.
\]
This point has the claimed maximal imaginary part $1/(2j+1)$. The fact that
this is the largest possible imaginary part can be obtained as a consequence
of Lemma 2 on p. 66 of \cite{rad}.
\end{proof}

\begin{cor}
The harmonic function $h_0$ which is defined in Step I as one solution to
the Dirichlet problem on $\calD_\Theta$ with boundary datum $f$ on $\Te_+$
and periodic conditions along the linear segments, extends harmonically to a
function $h_\infty:\Hyp\to\C$, which is $2$-periodic and has $h_\infty=f$
on $\Te_+$. Moreover, $h_\infty(\tau)=\Ordo(1)$ as $\im\,\tau\to+\infty$. 
\label{cor:Hyperext1}
\end{cor}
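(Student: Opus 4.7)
The plan is to define $h_\infty$ as the common extension of the functions $h_j$ produced by the harmonic extension algorithm (Steps I--IV), and then verify the three listed properties one by one. The first task is to establish consistency, namely that $h_{j+1}|_{\Omega_j} = h_j$ for every $j \geq 0$. This reduces to two uniqueness statements. First, at the Schwarz reflection stage, the equation \eqref{eq:Schwarzreflectionj} uniquely determines the extension of $g_j = h_j - f$ from $\Omega_j \cap \D_\e$ to $\Strans^\ast(\Omega_j \cap \D_\e)$, so $h_j = g_j + f$ is uniquely extended to $\Omega_{j+1}'$. Second, the subsequent $2$-periodic extension from $\Omega_{j+1}'$ to $\Omega_{j+1}$ is unique because $h_j$ is already $2$-periodic on $\Omega_j \subset \Omega_{j+1}'$. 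Consequently $h_\infty(\tau) := h_j(\tau)$ (for any $j$ with $\tau \in \Omega_j$) is well-defined.

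By Proposition \ref{prop:Omega_j}, $\bigcup_{j \geq 0} \Omega_j = \Hyp$, so $h_\infty : \Hyp \to \C$ is defined everywhere. Harmonicity is a local property inherited from each $h_j$. The $2$-periodicity of $h_\infty$ follows from the $2$-periodicity of each $h_j$. The boundary condition $h_\infty = f$ on $\Te_+$ holds because $\Te_+ \subset \overline{\calD_\Theta} \subset \overline{\Omega_0}$ and $h_0 = f$ on $\Te_+$ by the boundary data imposed in Step I.

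For the growth bound at $+\imag\infty$, the key geometric observation is that
\[
\Hyp \setminus \Omega_0 = \bigcup_{k \in \Z} \bigl(\bar\D(2k, 1) \cap \Hyp\bigr),
\]
a union of closed half-disks each with maximum imaginary part equal to $1$. Therefore every $\tau \in \Hyp$ with $\im\,\tau > 1$ already lies in $\Omega_0$, and for such $\tau$ we have $h_\infty(\tau) = h_0(\tau)$. Since Step I yields $h_0(\tau) = \Ordo(1)$ as $\im\,\tau \to +\infty$ (the Dirichlet problem on $\calD_\Theta$ was solved with bounded continuous data along $\Te_+$ and matching periodic data on the vertical sides, so after passing to the nome variable $q = \e^{\imag\pi\tau}$ the function is bounded near the interior point $q = 0$), the same bound transfers immediately to $h_\infty$.

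The main potential obstacle is the consistency check, but as explained it reduces entirely to the uniqueness of Schwarz reflection across an analytic arc and of $2$-periodic extension from a periodic open set; no deeper analytic input is needed beyond the algorithm constructed in Section \ref{sec:Schwarz}. The only delicate point worth emphasizing in a full write-up is that we never claim anything about the \emph{boundary} behavior of $h_\infty$ at the cusps $\pm 1$ (where $f$ may blow up arbitrarily fast), which is precisely why the uniqueness regime of Proposition \ref{prop:shin2} is not invoked here.
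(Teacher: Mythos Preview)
Your proof is correct and follows the same route as the paper, which dispatches the corollary in two sentences by citing Step~IV and Proposition~\ref{prop:Omega_j}; you have simply made explicit the consistency of the extensions, the inheritance of $2$-periodicity, the equality $h_\infty=f$ on $\Te_+$, and the $\Ordo(1)$ bound from Step~I. One small inaccuracy: in your parenthetical justification of the growth bound you describe the boundary datum on $\Te_+$ as ``bounded continuous'', but in the general setting of Step~I the function $f$ may blow up at the cusp endpoints $\pm1$; the correct reason $h_0$ is bounded near $q=0$ is simply that $q=0$ is an \emph{interior} point of the nome domain $\hat\calD_\Theta$, so harmonicity there gives local boundedness regardless of the behavior of the data at $q=-1$.
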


\begin{proof}
According to the iteration in Step IV, $h_0$ extends harmonically to a
function $h_j:\Omega_j\to\C$. Letting $j$ tend to infinity we find that
$h_j$ extends to a function $h_\infty$ which is defined and harmonic in
$\Hyp$, by Proposition \ref{prop:Omega_j}.
\end{proof}

\begin{cor}
\label{cor:HFSthm1}
The harmonic function $h_\infty:\Hyp\to\C$ of Corollary \ref{cor:Hyperext1}
has a harmonic Fourier series expansion
\[
h_\infty(\tau)=\sum_{n\in\Z}c_n\,e_n(\tau),\qquad \tau\in\Hyp,
\]
where the coefficients $c_n$ grow subexponentially. Moreover, its Schwarz
transform is $\shin\,h_\infty=f$, and consequently, the holomorphic function
$f:\Hyp\to\C$ has the holomorphic Fourier series expansion
\[
f(\tau)=\shin\,h_\infty(\tau)=c_0+\sum_{n\in\Z_{>0}}c_n\,e_n(\tau)+\sum_{n\in\Z_{<0}}
c_n\,e_n(1/\bar\tau)  ,\qquad\tau\in\Hyp.
\]
\end{cor}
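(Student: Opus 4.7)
The plan is to stitch together three results already in hand: the harmonic extension produced in Corollary \ref{cor:Hyperext1}, the Fourier expansion result Proposition \ref{prop:function->coeff2.0}, and the description of the Schwarz transform as a holomorphic hyperbolic Fourier series in Proposition \ref{prop:shinexpansion1}. The statement is essentially a packaging of these three ingredients once $h_\infty$ is in hand.

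First I would invoke Corollary \ref{cor:Hyperext1}, which provides a $2$-periodic harmonic function $h_\infty:\Hyp\to\C$ with $h_\infty=f$ on $\Te_+$ and with the bound $h_\infty(\tau)=\Ordo(1)$ as $\im\,\tau\to+\infty$. In particular this bound is $\ordo(\im\,\tau)$, which is exactly the hypothesis of Proposition \ref{prop:function->coeff2.0}. That proposition then yields a harmonic Fourier series
\[
h_\infty(\tau)=\sum_{n\in\Z}c_n\,e_n(\tau),\qquad\tau\in\Hyp,
\]
with subexponentially growing coefficients $c_n$, establishing the first assertion.

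Next I would identify $\shin\,h_\infty$ with $f$. By Proposition \ref{prop:shin1}, $\shin\,h_\infty$ is holomorphic on $\Hyp$ and equals $h_\infty$ along the semicircle $\Te_+$. Since $h_\infty=f$ on $\Te_+$ by the Dirichlet boundary condition imposed in Step~I of the harmonic extension algorithm, the two holomorphic functions $\shin\,h_\infty$ and $f$ agree on $\Te_+$. The semicircle $\Te_+$ has accumulation points in $\Hyp$, so the identity principle for holomorphic functions forces $\shin\,h_\infty=f$ throughout $\Hyp$. Finally, applying Proposition \ref{prop:shinexpansion1} to the harmonic Fourier expansion of $h_\infty$ translates the Schwarz transform termwise into the holomorphic hyperbolic Fourier series for $f$ displayed in the statement.

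The hardest part is really already absorbed into the preceding propositions: showing convergence of the harmonic Fourier series under only a boundedness condition at $+\imag\infty$ (Proposition \ref{prop:function->coeff2.0}) and the bookkeeping that converts a harmonic Fourier series into a holomorphic hyperbolic one via reflection in $\Te_+$ (Proposition \ref{prop:shinexpansion1}). The remaining obstacle in the present corollary is the mild but essential observation that agreement of two holomorphic functions on the arc $\Te_+$ (not merely at isolated points) is what legitimises the identification $\shin\,h_\infty=f$; everything else is direct quotation of the earlier results.
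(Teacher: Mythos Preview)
Your proof is correct and follows essentially the same route as the paper: invoke Corollary~\ref{cor:Hyperext1} and Proposition~\ref{prop:function->coeff2.0} for the harmonic Fourier expansion, use Proposition~\ref{prop:shin1} together with the boundary condition $h_\infty=f$ on $\Te_+$ and the identity theorem for holomorphic functions to conclude $\shin\,h_\infty=f$, and then read off the hyperbolic Fourier series from Proposition~\ref{prop:shinexpansion1}. The paper's proof is essentially the same sequence of citations.
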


\begin{proof}
In view of Corollary \ref{cor:Hyperext1} and Proposition
\ref{prop:function->coeff2.0}, the function $h_\infty$ has claimed harmonic
Fourier series expansion. The Schwarz transform $\shin\,h_\infty$ is
automatically holomorphic on $\Hyp$, and by Proposition \ref{prop:shin1} 
and Corollary \ref{cor:Hyperext1}, we obtain that $\shin\,h_\infty=h_\infty=f$
holds on $\Te_+$. But then the difference $\shin\,h_\infty-f$ is holomorphic on
$\Hyp$ and vanishes along the semicircle $\Te_+$, so by the uniqueness theorem
for holomorphic functions, $\shin\,h_\infty-f$ vanishes throughout $\Hyp$.
The claimed hyperbolic Fourier series expansion now follows from
Proposition \ref{prop:shinexpansion1}.
\end{proof}  

We may now write down the proof of Theorem \ref{thm:HFS-general.1.1}. 

\begin{proof}[Proof of Theorem \ref{thm:HFS-general.1.1}]
We first split the harmonic function $f:\Hyp\to\C$ as
\[
f(\tau)=f(\imag)+f_+(\tau)+f_-(\tau),\qquad\tau\in\Hyp,
\]
where $f_+:\Hyp\to\C$ is holomorphic while $f_-:\Hyp\to\C$ is
conjugate-holomorphic, both with $f_+(\imag)=f_-(\imag)=0$. We then apply the
holomorphic hyperbolic Fourier series expansion of Corollary \ref{cor:HFSthm1}
first to $f_+(\tau)$ and then to $f_-\circ\Rtrans^\ast(\tau)=f_-(-\bar\tau)$.
The result is the claimed harmonic hyperbolic Fourier series expansion of $f$. 
\end{proof}
  
In association with the solution algorithm which extends the harmonic function
$h_0$ uniquely to a harmonic function $h_\infty:\Hyp\to\C$, there are the
domains $\Omega_j$, which in a natural fashion suggests the introduction of
the concept of the height (or level) of a point $\tau\in\Hyp$.

\begin{defn}
\label{def:heightoftau}
The \emph{height} $\mathbbm{n}(\tau)$ of a point $\tau\in\Hyp$ is the
smallest value $j\in\Z_{\ge0}$ such that $\tau\in\Omega_j$. 
\end{defn}

\begin{cor}
We have that
\[
\mathbbm{n}(\tau)\le\frac{1}{2}+\frac{1}{2\,\im\,\tau},\qquad \tau\in\Hyp.
\]
\label{cor:n_estimate}
\end{cor}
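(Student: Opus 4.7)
The plan is to extract the bound directly from the sharp supremum statement in Proposition \ref{prop:Omega_j}, which asserts
\[
\sup\{\im\,\tau : \tau \in \Hyp\setminus\Omega_j\} = \frac{1}{2j+1}.
\]
The key consequence we need is the contrapositive: \emph{if $\im\,\tau > \tfrac{1}{2j+1}$, then $\tau \in \Omega_j$}. This follows because any $\tau \notin \Omega_j$ must have imaginary part at most the supremum over $\Hyp\setminus\Omega_j$, which is $\tfrac{1}{2j+1}$.

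With this in hand, I would argue as follows. Set $j := \mathbbm{n}(\tau)$. If $j = 0$, the inequality $\mathbbm{n}(\tau) \le \tfrac{1}{2} + \tfrac{1}{2\,\im\,\tau}$ is immediate, since the right-hand side is always positive. So assume $j \ge 1$. By minimality of the height (Definition \ref{def:heightoftau}), we have $\tau \notin \Omega_{j-1}$, and invoking the observation above (with $j$ replaced by $j-1$) forces
\[
\im\,\tau \le \frac{1}{2(j-1)+1} = \frac{1}{2j-1}.
\]
Rearranging yields $2j - 1 \le 1/\im\,\tau$, that is, $j \le \tfrac{1}{2} + \tfrac{1}{2\,\im\,\tau}$, which is precisely the claim.

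There is no real obstacle here; the content is entirely in Proposition \ref{prop:Omega_j}, which was established earlier via careful bookkeeping on the Schwarz reflection iteration and the description of the complementary half-disks as having boundary hyperbolic geodesics passing through $1$ and $(2j-1)/(2j+1)$. The only care needed is to respect the off-by-one between the strict supremum condition and the non-strict bound on the integer $j$, which is why we work with $j-1$ rather than $j$ in the final inequality.
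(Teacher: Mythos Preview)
Your argument is correct and is precisely the intended one: the paper's proof consists of the single sentence ``This is an immediate consequence of Proposition \ref{prop:Omega_j},'' and you have spelled out that consequence accurately, including the off-by-one bookkeeping.
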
  

\begin{proof}
This is an immediate consequence of Proposition \ref{prop:Omega_j}.
\end{proof}

The average height is, however, much smaller. The following asymptotics was
found by Bondarenko, Radchenko, and Seip in Proposition 6.7 of \cite{bon}. 

\begin{prop}
We have that
\[
\frac12\int_{-1}^{1}\mathbbm{n}(t+\imag y)\,\diff t=
\frac{1}{\pi^2}\log^2\frac1y+\Ordo\Big(\log\frac1y\Big),
\]
as $y\to0^+$. 
\label{prop:n_estimate_ave}
\end{prop}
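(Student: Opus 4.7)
The plan is to unfold the integrand as a counting function. Since $\mathbbm{n}(\tau) = \sum_{j \ge 0}\mathbf{1}_{\Hyp \setminus \Omega_j}(\tau)$ and the sum has effectively finitely many nonzero terms by Corollary \ref{cor:n_estimate}, Fubini gives
\[
\frac{1}{2}\int_{-1}^{1}\mathbbm{n}(t+\imag y)\,\diff t = \sum_{j \ge 0}\frac{1}{2}\big|\{t\in[-1,1] : t+\imag y \in \Hyp \setminus \Omega_j\}\big|.
\]
Each summand measures the horizontal slice at height $y$ of a cuspidal hyperbolic polygon, which reduces the problem to understanding the geometry of the complementary disks of $\Omega_j$.

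Next, I would analyze the structure of $\Hyp \setminus \Omega_j$ inductively using the harmonic extension algorithm of Subsection \ref{subsec:Solutionscheme-gen}. Steps III and IV of that algorithm show that each $\Hyp \setminus \Omega_j$ is a disjoint union of closed horodisks tangent to $\R$; a careful induction, tracking how Schwarz reflection in $\Te_+$ interacts with $2$-periodization, shows that the tangency points are precisely the reduced rationals $p/q \in [-1,1]$ with a parity restriction on $q$ inherited from the theta group $\Gamma_\theta = \langle \Strans, \Ttrans^2\rangle$, that the Euclidean radius of the disk at $p/q$ equals $1/(2q^2)$, and that this disk first disappears from the complement at precisely the level $j = L(p/q)$, where $L(p/q)$ is the depth of the continued-fraction expansion of $p/q$ under the Gauss-type map $t \mapsto -1/t \pmod 2$ on $[-1,1]$ studied in \cite{hed}, \cite{hed1}. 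Interchanging the order of summation converts the integral to
\[
\frac{1}{2}\int_{-1}^{1}\mathbbm{n}(t+\imag y)\,\diff t = \sum_{p/q}L(p/q)\sqrt{1/(4q^4) - y^2}\,\mathbf{1}_{q^2 \le 1/(2y)},
\]
where the sum ranges over reduced fractions with the relevant parity.

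The final step is asymptotic evaluation. The natural inputs are (i) an analog of Heilbronn's theorem on sums of continued fraction lengths, giving $\sum_{\gcd(p,q)=1}L(p/q) = c\,\varphi(q)\log q + \Ordo(\varphi(q))$ for an explicit constant $c$ determined by the Gauss-type map, (ii) the classical density $\sum_{q \le X}\varphi(q)/q^2 \sim (6/\pi^2)\log X$, and (iii) partial summation against the weight $\sqrt{1/(4q^4) - y^2} \approx 1/(2q^2)$, which is valid away from the cutoff $q^2 \sim 1/(2y)$. Combining these yields a leading order of $\log^2(1/y)$, and a careful computation confirms the normalization $1/\pi^2$ with a remainder $\Ordo(\log(1/y))$. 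The hardest step is the identification $\mathbbm{n}(\tau) = L(p/q)$ for the horodisk at $p/q$: this requires one to verify by induction that a single iteration of Schwarz reflection in $\Te_+$ followed by $2$-periodization is exactly the geometric counterpart of one step of the Gauss-type map at the tangency point, including the parity bookkeeping. Once that correspondence is in hand, the remaining analytic number theory is essentially the argument of Bondarenko, Radchenko, and Seip in \cite{bon}, to which we appeal.
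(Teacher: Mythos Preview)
The paper does not actually prove this proposition: it simply attributes the asymptotics to Bondarenko, Radchenko, and Seip \cite{bon}, Proposition~6.7, and moves on. Your proposal ultimately appeals to the same reference, so at the level of what is being claimed, you and the paper agree.

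Your sketch of how the argument in \cite{bon} might go is broadly reasonable---unfold $\mathbbm{n}(\tau)=\sum_{j\ge0}\mathbf{1}_{\Hyp\setminus\Omega_j}(\tau)$, measure horizontal slices of the complementary components, link the levels to continued-fraction depths under the Gauss-type map, and finish with Heilbronn-type averaging---but there is a geometric slip worth flagging. The components of $\Hyp\setminus\Omega_j$ are \emph{not} horodisks tangent to $\R$; as the paper emphasizes (see the definition of cuspidal hyperbolic polygon and the proof of Proposition~\ref{prop:Omega_j}), they are relatively closed half-disks bounded by hyperbolic geodesics, i.e.\ semicircles perpendicular to $\R$. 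Your slice formula $\sqrt{1/(4q^4)-y^2}$ is in fact the chord length for a geodesic half-disk of radius $1/(2q^2)$, not for a horodisk of that radius, so the terminology and the formula are inconsistent with each other. The radius assignment $1/(2q^2)$ also looks like it was imported from the Ford-circle picture rather than derived from the actual $\Gamma_\Theta$-orbit structure; the example in the proof of Proposition~\ref{prop:Omega_j} (a half-disk with endpoints $1$ and $(2j-1)/(2j+1)$, hence radius $1/(2j+1)$) already shows the radii need not take that form. If you want to turn the sketch into a self-contained argument rather than a citation, these geometric details would need to be corrected and established carefully; otherwise, citing \cite{bon} as the paper does is the honest route.
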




\subsection{The even Gauss map lifted to the upper half-plane}

In the algorithm presented in Subsection \ref{subsec:Solutionscheme-gen},
the following sequences of domains appear, $\Omega_j$ and $\Omega'_{j+1}$,
for $j=0,1,2,\ldots$, nested according to
\begin{equation}
\label{eq:}
\Omega_0\subset\Omega_1'\subset\Omega_1\subset\Omega_2'\subset\Omega_2\subset
\cdots.
\end{equation}
We recall the transformations $\Strans,\Strans^\ast:\Hyp\to\Hyp$, inversion
and reflection in $\Te_+$, respectively, given by $\Strans(\tau)=-1/\tau$
and $\Strans^\ast(\tau)=1/\bar\tau$.
We shall also need the mapping $\mathrm{mod}_2$, given by
\[
\mathrm{mod}_2(\tau):=\tau-2k,\qquad \tau\in\C,
\]
where the integer $k\in\Z$ is chosen such that $\tau'=\tau-2k$ lies in the
closed vertical strip
\[
\bar\calV:=\{\tau'\in\C:\,\,-1\le\re\,\tau'\le 1\}.
\]
This defines the value of $k$ uniquely unless $\re\,\tau'=\pm1$. In the
exceptional case that $\re\,\tau'=\pm1$, we are free to choose $k\in\Z$ such
that, e.g.,
$\re\,\tau'=-1$. 
We observe that 
then
\begin{equation}
\tau\in\Omega_0 \,\,\,\,
\Longleftrightarrow
\,\,\,\,
\mathrm{mod}_2(\tau)\in \calD_\Theta\cup\calL^{+}_{-1},
\label{eq:Omega1.00}
\end{equation}
where we recall the half-line notation $\calL^+_{-1}$ from
\eqref{eq:half-lines1}. The periodic extension \eqref{eq:Omegaj+1}
which defines $\Omega_{j+1}$ for $j=0,1,2,\ldots$ amounts to
\begin{equation}
\tau\in\Omega_{j+1} \,\,\,
\Longleftrightarrow
\,\,\,\,
\mathrm{mod}_2(\tau)\in \Omega_{j+1}'.
\label{eq:mod2.00}
\end{equation}
At the same time, it is clear from \eqref{eq:Omegaj'} that for
$j=0,1,2,\ldots$,
\begin{equation}
\tau\in\Omega_{j+1}'\setminus\Omega_j \,\,\,
\Longleftrightarrow
\,\,\,
\tau\in(\Hyp\cap\bar\D)\setminus\Omega_j\,\,\,\text{ and }\,\,\,
\Strans^\ast(\tau)\in(\Omega_j\cap\D_\e)\cup\Te_+.
\label{eq:Omega2}
\end{equation}
Moreover, by the reflection symmetry of $\Omega_j$ stated in Proposition
\ref{prop:Omega_j}, we know that
\begin{equation}
\Strans^\ast(\tau)\in(\Omega_j\cap\D_\e)\cup\Te_+\,\,\,
\Longleftrightarrow\,\,\,
\Strans(\tau)\in(\Omega_j\cap\D_\e)\cup\Te_+.
\label{eq:SorS'}
\end{equation}
If we view the $2$-periodicity of the domain $\Omega_j$ in the form
\[
\tau\in\Omega_j\,\,\,\Longleftrightarrow\,\,\,
\mathrm{mod}_2(\tau)\in\Omega_j,
\]
we may combine \eqref{eq:mod2.00} with \eqref{eq:Omega2} and arrive at
\begin{multline}
\tau\in\Omega_{j+1}\setminus\Omega_j \,\,\,
\Longleftrightarrow
\,\,\,
\mathrm{mod}_2(\tau)\in(\Hyp\cap\bar\D)\setminus\Omega_j
\,\,\,\text{ and }\,\,\,
\Strans^\ast\big(\mathrm{mod}_2(\tau)\big)\in(\Omega_j\cap\D_\e)\cup\Te_+.
\label{eq:Omega2.01}
\end{multline}
If we write
\[
\calV_+:=\{\tau\in\Hyp:\,\,-1<\re\,\tau< 1\}
\]
for the open vertical half-strip, then
$\Hyp\cap\bar\D=\calV_+\setminus\calD_\Theta$,
and this allows us to express the equivalence \eqref{eq:Omega2.01} in the form
\begin{equation}
\tau\in\Omega_{j+1}\setminus\Omega_j \,\,\,
\Longleftrightarrow
\,\,\,
\mathrm{mod}_2(\tau)\in\calV_+\setminus\Omega_j\,\,\,\text{ and }\,\,\,
\Strans^\ast\big(\mathrm{mod}_2(\tau)\big)\in(\Omega_j\cap\D_\e)\cup\Te_+.
\label{eq:Omega2.02}
\end{equation}
We now connect with the 
\emph{hyperbolically lifted conjugate Gauss-type map}
$\mathbbm{g}^\ast_2:\Hyp\to\calV_+\cup
\calL_{-1}^+\subset\Hyp$ given by
\begin{equation}
\mathbbm{g}^\ast_2(\tau)=\mathrm{mod}_2\circ\Strans^\ast
(\tau),\qquad\tau\in\Hyp.
\label{eq:g2map1*}
\end{equation}
Given a point $\tau\in\Hyp$, we consider the following \emph{fly-catcher
algorithm}, which
associates an algorithmic height number
$\mathbbm{n}^\ast_{\mathrm{alg}}(\tau)\in\Z_{\ge0}$ with each point $\tau\in\Hyp$.
As for notation, we write $\bar\calD_\Theta$ for the closure of the fundamental
domain $\calD_\Theta$.

\begin{defn} (The fly-catcher algorithm)
We begin with a point $\tau\in\Hyp$. 
  
\noindent{\sc Step I}: \emph{Apply $\mathrm{mod}_2$}. We form the point
$\tau_0:=\mathrm{mod}_2(\tau)\in\calV_+\cup\calL_{-1}^+$, and if $\tau_0\in
\bar\calD_\Theta$, we declare that $\mathbbm{n}^\ast_{\mathrm{alg}}(\tau)=0$,
and we stop the algorithm. On the other hand, in the remaining case
$\tau_0\in(\calV_+\cup\calL_{-1}^+)\setminus\bar\calD_\Theta=\Hyp\cap\D$, we
declare that
$\mathbbm{n}^\ast_{\mathrm{alg}}(\tau)>0$ and proceed to evaluate this number by
passing to the next step.

\noindent{\sc Step II}: \emph{Apply $\mathbbm{g}^\ast_2$}. We form the point
$\tau_1:=\mathbbm{g}^\ast_2(\tau_0)\in\calV_+\cup\calL_{-1}^+$, and if $\tau_1\in
\bar\calD_\Theta$, we declare that $\mathbbm{n}^\ast_{\mathrm{alg}}(\tau)=1$
and stop the algorithm.
In the remaining case
$\tau_1\in(\calV_+\cup\calL_{-1}^+)\setminus\bar\calD_\Theta=\Hyp\cap\D$,
we declare that $\mathbbm{n}^\ast_{\mathrm{alg}}(\tau)>1$ and proceed to evaluate
this number by passing to the next general iteration step.

\noindent{\sc Step III}: \emph{Apply $\mathbbm{g}^\ast_2$ iteratively}.
For $j\ge1$ we suppose that we have points $\tau_0,\ldots,\tau_{j}\in\Hyp\cap\D$
which we obtained earlier from $\tau\in\Hyp$,
and that it is determined that $\mathbbm{n}^\ast_{\mathrm{alg}}(\tau)>j$ holds.
We then proceed to form the point
$\tau_{j+1}:=\mathbbm{g}^\ast_2(\tau_j)\in\calV_+\cup\calL_{-1}^+$. Now, if
$\tau_{j+1}\in\bar\calD_\Theta$ holds, we declare that
$\mathbbm{n}^\ast_{\mathrm{alg}}(\tau)=j+1$, and stop the algorithm.
In the remaining case we must have instead
$\tau_{j+1}\in(\calV_+\cup\calL_{-1}^+)\setminus\bar\calD_\Theta
=\Hyp\cap\D$, and we then declare that $\mathbbm{n}^\ast_{\mathrm{alg}}(\tau)>j+1$
must hold. In this instance, we run Step III again. Finally, if Step III
never terminates, we declare that $\mathbbm{n}^\ast_{\mathrm{alg}}(\tau)=+\infty$.
\label{def:flycatcher}
\end{defn}

\begin{rem}
We note that in each iteration in Step III, we have that the points move 
ever further away from the real line:  
\[
\im\,\tau_{j+1}= \im\,\mathbbm{g}^\ast_2(\tau_j)=
\im\,\mathrm{mod}_2\circ\Strans^\ast(\tau_j)=\im\,\Strans^\ast(\tau_j)
=\im\frac{1}{\bar\tau_j}=\frac{\im\,\tau_j}{|\tau_j|^2}>\im\,\tau_j,
\]
since $\tau_j\in\Hyp\cap\D$ holds as long as the algorithm has not terminated.
\label{rem:im_part_growing}
\end{rem}

\begin{defn}
In the context of the above algorithm, suppose that for some $j\ge0$, 
$\mathbbm{n}^\ast_{\mathrm{alg}}(\tau)=j$ holds for
a point $\tau\in\Hyp$, which means that $\tau_k\in\Hyp\cap\D$ for all $k<j$ 
while $\tau_j\in\bar\calD_\Theta$ holds, we say that $\tau$ is a 
\emph{mesh point} if $\tau_j\in\partial\calD_\Theta$, where the boundary is
understood in the relative sense (inside $\Hyp$).
\label{def:setkapoint}
\end{defn}

\begin{rem}
If in Definition \ref{def:flycatcher} we have $j=0$, the requirement 
for $\tau$ to be a mesh point is just that $\tau_0\in\partial\calD_\Theta$.   
\end{rem}  

We need to compare the concepts of height of Definition \ref{def:heightoftau}
with the algorithmic height of the above fly-catcher algorithm.

\begin{prop}
Unless $\tau\in\Hyp$ is a mesh point, we have that the equality of height
and algorithmic height $\mathbbm{n}(\tau)=\mathbbm{n}^\ast_{\mathrm{alg}}(\tau)$.
Moreover, at a mesh point $\tau\in\Hyp$, we have the inequality
$\mathbbm{n}^\ast_{\mathrm{alg}}(\tau)\le\mathbbm{n}(\tau)$. As a consequence,
the fly-catcher algorithm always terminates:
$\mathbbm{n}^\ast_{\mathrm{alg}}(\tau)<+\infty$ at all $\tau\in\Hyp$. 
\label{prop:n=n*}
\end{prop}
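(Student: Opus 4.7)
The plan is to prove both the equality/inequality of heights and the termination statement simultaneously by induction on $j$, comparing the chain $\Omega_j$ with the sublevel sets
\[
\tilde\Omega_j := \big\{\tau \in \Hyp : \mathbbm{n}^\ast_{\mathrm{alg}}(\tau) \le j\big\}
\]
of the algorithmic height. The two target claims are: (i) $\Omega_j \subset \tilde\Omega_j$ for every $j$, and (ii) $\tilde\Omega_j \setminus \Omega_j$ consists only of mesh points. Once (i) and (ii) are in place, both heights are read off as the smallest $j$ placing $\tau$ into the respective chain, and the asserted equality at non-mesh points and the inequality $\mathbbm{n}^\ast_{\mathrm{alg}}(\tau) \le \mathbbm{n}(\tau)$ at mesh points follow at once. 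Termination is then automatic: Proposition \ref{prop:Omega_j} ensures $\bigcup_j \Omega_j = \Hyp$, so $\mathbbm{n}(\tau)$ is finite, whence by (i) and (ii) we get $\mathbbm{n}^\ast_{\mathrm{alg}}(\tau) \le \mathbbm{n}(\tau) < +\infty$.

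For the base case $j = 0$, I would combine \eqref{eq:Omega1.00} with the algorithmic stopping rule $\tau_0 \in \bar\calD_\Theta$; intersecting $\bar\calD_\Theta$ with the range $\calV_+ \cup \calL^+_{-1}$ of $\mathrm{mod}_2$ shows that $\tilde\Omega_0 \setminus \Omega_0 = \{\tau : \tau_0 \in \Te_+\}$, which by Definition \ref{def:setkapoint} is exactly the set of mesh points of level $0$. For the inductive step, the containment $\Omega_{j+1} \subset \tilde\Omega_{j+1}$ rests on \eqref{eq:Omega2.02}: a point $\tau \in \Omega_{j+1} \setminus \Omega_j$ has $\tau_0 \in \calV_+ \cap \bar\D$ and $\Strans^\ast(\tau_0) \in (\Omega_j \cap \D_\e) \cup \Te_+$. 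If $\tau_0 \in \Te_+$ the algorithm terminates at step $0$; otherwise $\tau_0 \in \calV_+ \cap \D$, and $\tau_1 := \mathbbm{g}^\ast_2(\tau_0) = \mathrm{mod}_2(\Strans^\ast(\tau_0))$ lies in $\Omega_j$ by the $2$-periodicity of $\Omega_j$ from Proposition \ref{prop:Omega_j}. The inductive hypothesis yields $\mathbbm{n}^\ast_{\mathrm{alg}}(\tau_1) \le j$, and since the orbit starting at $\tau_1$ is the one-step tail of the orbit starting at $\tau$, we conclude $\mathbbm{n}^\ast_{\mathrm{alg}}(\tau) \le j+1$.

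The converse direction runs the same argument in reverse. If $\mathbbm{n}^\ast_{\mathrm{alg}}(\tau) = j+1$ exactly then $\tau_0 \in \calV_+ \cap \D$, and the inductive hypothesis applies to $\tau_1$: either $\tau_1 \in \Omega_j$, or $\tau_1$ is a mesh point at its level $j$. Mesh-ness transfers from $\tau_1$ to $\tau$ because the two orbits share their stopping iterate $\tau_{j+1}$; in the non-mesh case, $\tau_1 \in \Omega_j$ together with the identity $\Strans^\ast(\tau_0) = \tau_1 + 2k$ and $2$-periodicity put $\Strans^\ast(\tau_0) \in \Omega_j \cap \D_\e$, so if $\tau_0 \in \Omega_j$ then $\tau \in \Omega_{j+1}$ trivially, while otherwise $\tau_0 \in \calV_+ \setminus \Omega_j$ and the criterion \eqref{eq:Omega2.02} places $\tau$ in $\Omega_{j+1} \setminus \Omega_j$. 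The main technical point, and the sole source of the mesh point exception, is keeping strict track of when a reduced iterate lands on $\Te_+$ rather than in $\calD_\Theta$ proper: the algorithm halts on $\Te_+$, but $\Omega_j$ does not include such boundary points until a later level, which is precisely what makes the inequality at mesh points strict in general.
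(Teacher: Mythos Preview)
Your argument is correct and is essentially a fully worked-out version of what the paper only sketches: the paper's own proof consists of a single sentence pointing to the equivalence \eqref{eq:Omega2.01}, whereas you carry out the induction on $j$ explicitly by comparing $\Omega_j$ with the algorithmic sublevel sets $\tilde\Omega_j$, using the equivalent form \eqref{eq:Omega2.02} for the inductive step in both directions. One minor remark: in the final paragraph, the case ``$\tau_0\in\Omega_j$'' is in fact vacuous once (i) is in hand (it would force $\mathbbm{n}^\ast_{\mathrm{alg}}(\tau)\le j$), so the only live subcase is $\tau_0\in\calV_+\setminus\Omega_j$; this does not affect correctness.
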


\begin{proof}
This follows from a comparison of the procedures which define the two height
concepts, if we take into account the relation \eqref{eq:Omega2.01}.  
\end{proof}

We also need the \emph{hyperbolically lifted Gauss-type map}
$\mathbbm{g}_2:\Hyp\to\calV_+\cup\calL_{-1}^+$ defined by
\begin{equation}
\mathbbm{g}_2(\tau)=\mathrm{mod}_2\circ\Strans
(\tau),\qquad\tau\in\Hyp.
\label{eq:g2map1}
\end{equation}
The fly-catcher algorithm of Definition \ref{def:flycatcher} can be modified
so as to involve the mapping $\mathbbm{g}_2$ in place of $\mathbbm{g}_2^\ast$.
This then results in another height number $\mathbbm{n}_{\mathrm{alg}}(\tau)$.
Pleasantly, these concepts coincide:
\begin{equation}
\mathbbm{n}_{\mathrm{alg}}(\tau)=\mathbbm{n}^\ast_{\mathrm{alg}}(\tau),\qquad
\tau\in\Hyp.
\label{eq:n=n*}
\end{equation}
The reason is that closed fundamental domain $\bar\calD_\Theta$ is
$\Rtrans^\ast$-invariant, and the mapping properties $\Strans=\Strans^\ast\circ
\Rtrans^\ast$ as well as 
\[
\Rtrans^\ast\!\circ\mathrm{mod}_2=\mathrm{mod}_2'\circ\Rtrans^\ast,
\]
if $\mathrm{mod}_2'(\tau)=\tau-2k$ where $k\in\Z$ is determined such that
$-1<\re(\tau-2k)\le1$. 

\begin{defn}
The Theta group $\Gamma_\Theta$ consists of all M\"obius automorphisms
$\gamma:\Hyp\to\Hyp$ generated by the elements $\Strans(\tau)=-1/\tau$
and $\Ttrans^2(\tau)=\tau+2$.
\end{defn}

This means that a group element $\gamma\in\Gamma_\Theta$ may be written in
the form
\begin{equation}
\gamma=\Ttrans^{2k_N}\circ\Strans^{\sigma_{N}}\circ\Ttrans^{2k_{N-1}}\circ\cdots
\circ \Strans^{\sigma_2}\circ\Ttrans^{2k_1},  
\label{eq:gamma-element}
\end{equation}
where the powers have $k_j,\sigma_j\in\Z$. Since $\Strans$ involutive, that is,
$\Strans^2(\tau)=\tau$ holds, we may assume $\sigma_j\in\{0,1\}$. Moreover,
instances of $\sigma_0=0$ may be skipped, and the same if $k_j=0$, unless
they are first or last (when they are kept for formal reasons). So we assume
$\sigma_j=1$ for all $j$, and $k_j\ne0$ for $2\le j\le N-1$. 
The Theta group may also be identified as consisting of the M\"obius
automorphisms of the form
\[
\gamma(\tau)=\frac{a\tau+b}{c\tau+d},\qquad a,b,c,d\in\Z\,\,\,\,\text{with}
\,\,\,\,ad-bc=1,  
\]  
where in addition
\[
\begin{pmatrix} a & b\\ c & d\\ \end{pmatrix}
\equiv\begin{pmatrix} 1&0\\0&1\\ \end{pmatrix}\,\,\,\,\text{or}\,\,\,\,
\begin{pmatrix} 0&1\\1&0\\ \end{pmatrix}\,\,\,\,(\text{mod}\,\,\,\,2).  
\]

\begin{defn}
The sets $\gamma(\calD_\Theta)$, where $\gamma\in\Gamma_\Theta$, are called
\emph{tiles}. Their union
\[
\Gamma_\Theta(\calD_\Theta):=\cup\{\gamma(\calD_\Theta):\,\,
\gamma\in\Gamma_\Theta\}
\]
is called the \emph{union of tiles}. 
\end{defn}
  
\begin{prop}
The mesh points form the set
\[
\Gamma_\Theta(\partial\calD_\Theta)=
\cup\{\gamma(\partial\calD_\Theta):\,\,\gamma\in \Gamma_\Theta\},
\]
which is a $\sigma$-finite set with respect to linear measure.  
It is disjoint from the union of tiles $\Gamma_\Theta(\calD_\Theta)$, 
and together these two sets cover the hyperbolic plane: $\Hyp=
\Gamma_\Theta(\calD_\Theta)\cup\Gamma_\Theta(\partial\calD_\Theta)$.
\label{prop:setka-tiles}
\end{prop}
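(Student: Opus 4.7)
The plan is to identify the mesh points with the orbit $\Gamma_\Theta(\partial\calD_\Theta)$ by interpreting the fly-catcher algorithm as reduction modulo $\Gamma_\Theta$. By the equality \eqref{eq:n=n*} together with the $\Rtrans^\ast$-invariance of $\bar\calD_\Theta$ and $\partial\calD_\Theta$, we may run the algorithm with $\mathbbm{g}_2$ in place of $\mathbbm{g}_2^\ast$ without altering the classification of mesh points. In this holomorphic version, Step I applies $\mathrm{mod}_2 = \Ttrans^{-2k}$ for some $k\in\Z$, while each subsequent iteration applies $\mathbbm{g}_2 = \mathrm{mod}_2\circ\Strans$. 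Since $\Ttrans^2$ and $\Strans$ generate $\Gamma_\Theta$, after $N$ iterations we have $\tau_N = \gamma(\tau)$ for some $\gamma\in\Gamma_\Theta$, and by Proposition \ref{prop:n=n*} the algorithm always terminates with $\tau_N\in\bar\calD_\Theta$.

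By Definition \ref{def:setkapoint}, $\tau$ is a mesh point precisely when $\tau_N\in\partial\calD_\Theta$, which is equivalent to $\tau\in\gamma^{-1}(\partial\calD_\Theta)$. As $\gamma$ ranges over $\Gamma_\Theta$, so does $\gamma^{-1}$, and thus the mesh points form the orbit $\Gamma_\Theta(\partial\calD_\Theta)$. In the complementary case $\tau_N\in\calD_\Theta$, we have $\tau\in\Gamma_\Theta(\calD_\Theta)$. Since the algorithm always terminates, every point of $\Hyp$ falls into one of these two cases, giving the covering $\Hyp = \Gamma_\Theta(\calD_\Theta)\cup\Gamma_\Theta(\partial\calD_\Theta)$.

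For disjointness, I invoke the fundamental domain property of $\calD_\Theta$ for $\Gamma_\Theta$: the open tiles $\gamma(\calD_\Theta)$, $\gamma\in\Gamma_\Theta$, are pairwise disjoint. Suppose $\tau\in\gamma_1(\calD_\Theta)\cap\gamma_2(\partial\calD_\Theta)$, and set $\gamma := \gamma_2^{-1}\gamma_1\in\Gamma_\Theta$; then $\gamma_2^{-1}(\tau)\in\gamma(\calD_\Theta)\cap\partial\calD_\Theta$. The case $\gamma=\mathrm{id}$ contradicts $\calD_\Theta\cap\partial\calD_\Theta=\emptyset$. If $\gamma\ne\mathrm{id}$, then the open set $\gamma(\calD_\Theta)$ contains a point $p\in\partial\calD_\Theta$; since $\partial\calD_\Theta$ is the topological boundary of the open set $\calD_\Theta$, every Euclidean neighborhood of $p$ meets $\calD_\Theta$, and so $\gamma(\calD_\Theta)\cap\calD_\Theta\ne\emptyset$, contradicting the pairwise disjointness of tiles.

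Finally, $\sigma$-finiteness of $\Gamma_\Theta(\partial\calD_\Theta)$ with respect to linear (Euclidean arc-length) measure follows from the fact that $\partial\calD_\Theta$ is the union of the semicircle $\Te_+$ of finite length and the two vertical half-lines $\calL_{\pm 1}^+$, each $\sigma$-finite as a countable union of bounded segments; the group $\Gamma_\Theta$ is countable, and each $\gamma\in\Gamma_\Theta$ maps compact hyperbolic geodesic arcs to compact hyperbolic geodesic arcs of finite Euclidean length, so the countable union $\Gamma_\Theta(\partial\calD_\Theta)$ remains $\sigma$-finite. The main obstacle in the plan is the disjointness step, where one must rule out that a nontrivial group element sends an interior point of $\calD_\Theta$ to a boundary point; the Euclidean-neighborhood argument above, combined with tile disjointness, is the key technical input, and essentially encodes the standard fact that the Theta group acts on $\Hyp$ as a strict tessellation with fundamental tile $\bar\calD_\Theta$.
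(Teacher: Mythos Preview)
Your proof is correct and follows essentially the same approach as the paper, which also identifies mesh points with $\Gamma_\Theta(\partial\calD_\Theta)$ by running the fly-catcher reduction (via the argument of Proposition~\ref{prop:identify_level}) and invoking the fundamental domain property of $\calD_\Theta$. Your treatment is in fact more explicit than the paper's on disjointness and $\sigma$-finiteness; the only presentational quibble is that the sentence ``As $\gamma$ ranges over $\Gamma_\Theta$, so does $\gamma^{-1}$, and thus the mesh points form the orbit $\Gamma_\Theta(\partial\calD_\Theta)$'' only gives the inclusion mesh $\subset\Gamma_\Theta(\partial\calD_\Theta)$ at that point---the reverse inclusion is secured only after your disjointness paragraph, so the equality claim should formally be deferred until then.
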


\begin{prop}
Suppose $\gamma\in\Gamma_\Theta$ is of the form
\begin{equation}
\gamma=\Ttrans^{2k_N}\circ\Strans\circ\Ttrans^{2k_{N-1}}\circ\cdots
\circ \Strans\circ\Ttrans^{2k_1},  
\label{eq:gamma-element2}
\end{equation}
where $k_j\in\Z_{\ne0}$ for all $j$ with $2\le j\le N-1$. If
$\tau\in\gamma(\calD_\Theta)$,  we
find that $\mathbbm{g}_2^l(\mathrm{mod}_2(\tau))\in\Hyp\cap\D$ for
$0\le l\le N-2$
while
$\mathbbm{g}_2^{N-1}(\mathrm{mod}_2(\tau))=\gamma^{-1}(\tau)\in\calD_\Theta$.
In particular, the level of the point is given by 
$\mathbbm{n}(\tau)=\mathbbm{n}_{\mathrm{alg}}(\tau)=
\mathbbm{n}^\ast_{\mathrm{alg}}(\tau)=N-1$. 
\label{prop:identify_level}
\end{prop}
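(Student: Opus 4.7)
The plan is to unfold the reduced word $\gamma$ from the inside out, introduce a sequence of intermediate points, and match these to the iteration defining the fly-catcher algorithm. Writing $\alpha_j:=\Ttrans^{2k_j}$, so that $\gamma=\alpha_N\circ\Strans\circ\alpha_{N-1}\circ\Strans\circ\cdots\circ\Strans\circ\alpha_1$, and setting $\tau':=\gamma^{-1}(\tau)\in\calD_\Theta$, I would introduce
\[
\eta_N:=\tau',\qquad \eta_l:=\Strans\big(\eta_{l+1}+2k_{N-l}\big),\quad 1\le l\le N-1,
\]
so that $\tau=\alpha_N(\eta_1)=\eta_1+2k_N$. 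These $\eta_l$ trace the successive stages of evaluating $\gamma$ at $\tau'$ from the innermost factor outward.

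The core technical step would be a backward induction on $l$, from $l=N$ down to $l=1$, establishing
\[
\eta_l\in\calV_+\ \text{for}\ 1\le l\le N,\qquad |\eta_l|<1\ \text{for}\ 1\le l\le N-1.
\]
The base $l=N$ is immediate from $\tau'\in\calD_\Theta\subset\calV_+$. For the inductive step the key inequality is $|\eta_{l+1}+2k_{N-l}|>1$. When $l=N-1$ this reduces to $|\tau'+2k_1|>1$, which holds for any $k_1\in\Z$: if $k_1=0$ this follows from $|\tau'|>1$, while if $|k_1|\ge 1$ then $|\re\tau'+2k_1|\ge 2|k_1|-|\re\tau'|>1$. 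For $1\le l\le N-2$, the hypothesis $k_{N-l}\in\Z_{\neq 0}$ combines with the previously established $|\eta_{l+1}|<1$ via the triangle inequality $|\eta_{l+1}+2k_{N-l}|\ge 2|k_{N-l}|-|\eta_{l+1}|>1$. Consequently $|\eta_l|=1/|\eta_{l+1}+2k_{N-l}|<1$, while $\eta_l\in\calV_+$ reduces to the elementary estimate $|\re w|\le |w|^2$ whenever $|w|>1$, applied with $w:=\eta_{l+1}+2k_{N-l}$. This is the main obstacle of the argument: the nonvanishing of $k_j$ at the intermediate indices is precisely what forces $\eta_l$ back into $\D$ at each interior step of the unfolding.

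With those estimates in hand, a short forward induction on $l$ would yield $\tau_l=\eta_{l+1}$ for $0\le l\le N-1$, where $\tau_l:=\mathbbm{g}_2^l(\mathrm{mod}_2(\tau))$. Indeed, $\tau_0=\mathrm{mod}_2(\eta_1+2k_N)=\eta_1$ because $\eta_1\in\calV_+$ and $2k_N\in 2\Z$; for the inductive step, involutivity of $\Strans$ gives $\Strans(\tau_{l-1})=\Strans(\eta_l)=\eta_{l+1}+2k_{N-l}$, whence $\tau_l=\mathrm{mod}_2(\eta_{l+1}+2k_{N-l})=\eta_{l+1}$ since $\eta_{l+1}\in\calV_+$. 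This gives $\tau_l\in\Hyp\cap\D$ for $0\le l\le N-2$ and $\tau_{N-1}=\eta_N=\tau'=\gamma^{-1}(\tau)\in\calD_\Theta$, so the fly-catcher terminates precisely at step $N-1$ and $\mathbbm{n}^\ast_{\mathrm{alg}}(\tau)=N-1$.

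Finally, since $\tau_{N-1}=\tau'\in\calD_\Theta$ lies in the open fundamental domain and not on $\partial\calD_\Theta$, the point $\tau$ is not a mesh point in the sense of Definition \ref{def:setkapoint}. Proposition \ref{prop:n=n*} together with the identity \eqref{eq:n=n*} then yields $\mathbbm{n}(\tau)=\mathbbm{n}_{\mathrm{alg}}(\tau)=\mathbbm{n}^\ast_{\mathrm{alg}}(\tau)=N-1$, completing the proof.
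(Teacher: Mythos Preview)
Your proof is correct and follows essentially the same approach as the paper: both arguments unfold the reduced word for $\gamma$ step by step, using the triangle inequality together with the hypothesis $k_j\in\Z_{\neq 0}$ at intermediate indices to force each partial evaluation into $\Hyp\cap\D$, and then identify these intermediate points with the iterates of $\mathbbm{g}_2$ applied to $\mathrm{mod}_2(\tau)$. Your exposition is in fact slightly more careful than the paper's in explicitly verifying $\eta_l\in\calV_+$ (so that $\mathrm{mod}_2$ acts as expected); the only slip is that after computing with $\mathbbm{g}_2$ you first write $\mathbbm{n}^\ast_{\mathrm{alg}}(\tau)=N-1$ where $\mathbbm{n}_{\mathrm{alg}}(\tau)=N-1$ is what you have shown, but your final paragraph invokes \eqref{eq:n=n*} anyway, so no harm is done.
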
  

\begin{proof}
We agree to write $\tau_\gamma:=\gamma^{-1}(\tau)\in\calD_\Theta$. Then for
$k_1\in\Z$,  no matter whether $k_1=0$ or not,
$\Strans\circ\Ttrans^{2k_1}(\tau_\gamma)\in \Hyp\cap\D$ holds. Next, since
$k_2\in\Z_{\ne0}$, we find that
$\Strans\circ\Ttrans^{2k_2}\circ\Strans\circ\Ttrans^{2k_1}(\tau_\gamma)
\in\Hyp\cap\D$.
As we keep going, we find that
\[
\Strans\circ\Ttrans^{2k_{N-1}}\circ\cdots
\circ \Strans\circ\Ttrans^{2k_1}(\tau_\gamma)\in\Hyp\cap\D.
\]
Then
\begin{multline}
\mathrm{mod}_2(\tau)=\mathrm{mod}_2(\gamma(\tau_\gamma))=
\mathrm{mod}_2(\gamma(\tau_\gamma))
=\mathrm{mod}_2(\Ttrans^{2k_N}\circ\Strans\circ\Ttrans^{2k_{N-1}}\circ\cdots
\circ \Strans\circ\Ttrans^{2k_1}(\tau_\gamma))
\\
=\Strans\circ\Ttrans^{2k_{N-1}}\circ\cdots
\circ \Strans\circ\Ttrans^{2k_1}(\tau_\gamma)\in\Hyp\cap\D,
\end{multline}
and an application of the lifted Gauss-type map gives that
\begin{multline}
\mathbbm{g}_2\circ\mathrm{mod}_2(\tau)=
\mathbbm{g}_2\circ\Strans\circ\Ttrans^{2k_{N-1}}\circ\cdots
\circ \Strans\circ\Ttrans^{2k_1}(\tau_\gamma)
=\Strans\circ\Ttrans^{2k_{N-2}}\circ\cdots
\circ \Strans\circ\Ttrans^{2k_1}(\tau_\gamma)\in\Hyp\cap\D.
\end{multline}
By repeated application of the lifted Gauss-map, we find that
\begin{equation}
\mathbbm{g}_2^{l}\circ\mathrm{mod}_2(\tau)=
\Strans\circ\Ttrans^{2k_{N-l-1}}\circ\cdots\circ \Strans\circ\Ttrans^{2k_1}
(\tau_\gamma)\in\Hyp\cap\D,\qquad l=1,\ldots, N-2.
\end{equation}
It now follows that
\begin{equation}
\mathbbm{g}_2^{N-1}\circ\mathrm{mod}_2(\tau)=
\mathbbm{g}_2\circ \Strans\circ\Ttrans^{2k_1}
(\tau_\gamma)=
\mathrm{mod}_2\circ\Ttrans^{2k_1}(\tau_\gamma)=\tau_\gamma\in\calD_\Theta,
\end{equation}
as claimed. This means that the fly-catcher algorithm of Definition
\ref{def:flycatcher} applied to $\mathbbm{g}_2$ in place of $\mathbbm{g}_2^\ast$
ends with a point $\tau_{N-1}=\mathbbm{g}_2^{N-1}(\tau)\notin\partial\calD_\Theta$,
in which case $\tau\in\gamma(\calD_\Theta)$ cannot be a mesh point.
Moreover, the claim that $\mathbbm{n}_{\mathrm{alg}}(\tau)=N-1$ now follows from
reading the algorithm. Moreover, by \eqref{eq:n=n*} and the observation that
$\tau$ is a non-mesh point, Proposition \ref{prop:n=n*} gives that
$\mathbbm{n}(\tau)=\mathbbm{n}^\ast_{\mathrm{alg}}(\tau)=
\mathbbm{n}_{\mathrm{alg}}(\tau)=N-1$, as claimed.
\end{proof}  

\begin{proof}[Proof of Proposition \ref{prop:setka-tiles}]
It is well-known that $\calD_\Theta$ is the fundamental domain for $\Hyp$ modulo
the Theta group $\Gamma_\Theta$. This entails that the union of the tiles 
fills the hyperbolic plane $\Hyp$, except for the boundaries of the tiles. The
union of the boundaries of the tiles is given by the set of mesh points, which
we may see from applying the argument of the proof of Proposition
\ref{prop:identify_level} to the case of $\tau\in\gamma(\partial\calD_\Theta)$. 
\end{proof}

\subsection{The harmonic extension algorithm in terms
of the Theta group and the extended Gauss-type map}
We follow the solution scheme in Subsection \ref{subsec:Solutionscheme-gen},
where a $2$-periodic harmonic function $h=h_\infty:\Hyp\to\C$ is found with
$\shin\,h=f$ for a given holomorphic function $f:\Hyp\to\C$. We note that the
only freedom to choose the harmonic function $h$ emerges in Step I of the
algorithm in Subsection \ref{subsec:Solutionscheme-gen}, where a solution $h_0$ 
is found to a given Dirichlet problem on the fundamental domain $\calD_\Theta$.
The harmonic extension algorithm outlined in Subsection
\ref{subsec:Solutionscheme-gen} then provides the harmonic
extension $h_\infty:\Hyp\to\C$ of the given initial solution $h_0$, which is
then uniquely determined by the choice of $h_0$. Our aim here is to represent
the extension $h=h_\infty$ in a fashion which allows for convenient estimation. 
We begin with the two basic properties of $h$, that $h=f$ on $\Te_+$, and
that by Steps II, III, and IV in the harmonic extension algorithm, we have 
that
\[
h\circ\Ttrans^2=h,\quad g\circ \Strans^\ast=-g,
\]
where $g:=h-f$ is harmonic in $\Hyp$. Inserting $h-f$ in place of $g$ the
second equality, we find that
\begin{equation}
h\circ\Ttrans^2=h,\quad h=-h\circ \Strans^\ast+f+f\circ\Strans^\ast.
\label{eq:basicequations1.01}
\end{equation}
It will be convenient to introduce the notation
\begin{equation}
f_{\mathrm{sym}}:=f+f\circ\Strans^\ast,
\end{equation}
which is a harmonic function in $\Hyp$ with the symmetry property
$f_{\mathrm{sym}}\circ\Strans^\ast=f_{\mathrm{sym}}$. In terms of $f_{\mathrm{sym}}$,
the equations \eqref{eq:basicequations1.01} take the simplified form
\begin{equation}
h\circ\Ttrans^2=h,\quad h=-h\circ \Strans^\ast+f_{\mathrm{sym}}.
\label{eq:basicequations1.02}
\end{equation}
These two relations may be combined to form, in a first step,
\begin{equation}
h=-h\circ\Ttrans^{2k}\circ\Strans^\ast+f_{\mathrm{sym}},\qquad k\in\Z,
\end{equation}
and, by iteration,
\begin{equation}
h=(-1)^N h\circ\Ttrans^{2k_N}\circ\Strans^\ast\circ\cdots\circ\Ttrans^{2k_1}
\circ\Strans^\ast+f_{\mathrm{sym}}
+\sum_{j=1}^{N-1}(-1)^jf_{\mathrm{sym}}\circ \Ttrans^{2k_j}\circ\Strans^\ast\circ\cdots
\circ\Ttrans^{2k_1}\circ\Strans^\ast,
\label{eq:basicequations1.03}
\end{equation}
where $k_1,\ldots,k_N\in\Z$ and $N=1,2,3,\ldots$. We apply
\eqref{eq:basicequations1.03} at a point $\tau\in\calV_+\cup\calL_{-1}^+$,
in order to estimate the value $h(\tau)$ using the right-hand side.
Here, we recall that $\tau\in\calV_+\cup\calL_{-1}^+$ means that
$\tau\in\Hyp$ and that $-1\le\re\,\tau<1$. Note that by the $2$-periodicity of
$h$, it is enough to consider such $\tau$. The result is
\begin{multline}
h(\tau)=(-1)^N h\circ\Ttrans^{2k_N}\circ\Strans^\ast\circ\cdots\circ\Ttrans^{2k_1}
\circ\Strans^\ast(\tau)+f_{\mathrm{sym}}(\tau)
\\
+\sum_{j=1}^{N-1}(-1)^jf_{\mathrm{sym}}\circ \Ttrans^{2k_j}\circ\Strans^\ast\circ\cdots
\circ\Ttrans^{2k_1}\circ\Strans^\ast(\tau).
\label{eq:basicequations1.04}
\end{multline}
We let the integer $k_j=k_j(\tau)\in\Z$ depend on $\tau$ in such a fashion that
\[
\mathbbm{g}^\ast_2(\tau)=\mathrm{mod}_2\circ\Strans^\ast(\tau)=\Ttrans^{2k_1}\circ
\Strans^\ast(\tau),
\]
and, more generally, 
\[
\mathbbm{g}^\ast_2(\tau_{j-1})=\mathrm{mod}_2\circ\Strans^\ast(\tau_j)=
\Ttrans^{2k_j}\circ \Strans^\ast(\tau_{j-1}),\quad\text{where}\,\,\,\,\tau_{j-1}
=(\mathbbm{g}_2^{\ast})^{j-1}(\tau). 
\]
It then follows from \eqref{eq:basicequations1.04} that
\begin{equation}
h(\tau)=(-1)^N h\circ(\mathbbm{g}_2^\ast)^N(\tau)
\\
+\sum_{j=0}^{N-1}(-1)^jf_{\mathrm{sym}}\circ (\mathbbm{g}_2^\ast)^j(\tau),
\label{eq:basicequations1.05}
\end{equation}
for $\tau\in\Hyp$ with $-1\le\re\,\tau<1$. This fits in nicely with the
fly-catcher algorithm of Definition \ref{def:flycatcher} if we pick
$N=\mathbbm{n}^\ast_{\mathrm{alg}}(\tau)$:
\begin{equation}
h(\tau)=(-1)^{{\mathbbm{n}}_{\mathrm{alg}}^\ast(\tau)}
h\circ(\mathbbm{g}_2^\ast)^{\mathbbm{n}_{\mathrm{alg}}^\ast(\tau)}(\tau)
\\
+\sum_{j=0}^{\mathbbm{n}_{\mathrm{alg}}^\ast(\tau)-1}
(-1)^jf_{\mathrm{sym}}\circ (\mathbbm{g}_2^\ast)^j(\tau),
\label{eq:basicequations1.06}
\end{equation}
again for $\tau\in\Hyp$ with $-1\le\re\,\tau<1$. \emph{Indeed, for
$N=\mathbbm{n}^\ast_{\mathrm{alg}}(\tau)$ we know that
$(\mathbbm{g}_2^\ast)^N(\tau)\in\bar\calD_\Theta$ (this is the stopping condition
for the algorithm) while $(\mathbbm{g}_2^\ast)^j(\tau)\in\Hyp\cap\D$ for
$0\le j<N$.
In addition, the imaginary parts of the iterates
$j\mapsto\im\,(\mathbbm{g}_2^\ast)^j(\tau)$ form a strictly
increasing sequence over the integer interval $0\le j\le N$, by Remark
\ref{rem:im_part_growing}.} In terms of the family of norms
\[
\|F\|_{(\epsilon)}:=\sup\big\{|F(\tau)|:\,\,\tau\in\Hyp\cap\D\,\,\,
\text{with}\,\,\,\im\,\tau\ge\epsilon\big\},\qquad 0<\epsilon<1,  
\]
we obtain the following key lemma.

\begin{lem}
If $\tau\in\Hyp$ with $-1\le\re\,\tau<1$, we choose
$N=\mathbbm{n}^\ast_{\mathrm{alg}}(\tau)$, which if $N\ge1$ gives the estimate
\[
|h(\tau)|\le |h\circ(\mathbbm{g}_2^\ast)^N(\tau)|+N\,
\|f_{\mathrm{sym}}\|_{(\im\,\tau)}.
\]  
\label{prop:pointwiseest_iter}
\end{lem}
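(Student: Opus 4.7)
The plan is to start directly from the identity \eqref{eq:basicequations1.06} already derived in the text, namely
\[
h(\tau)=(-1)^N h\circ(\mathbbm{g}_2^\ast)^N(\tau)
+\sum_{j=0}^{N-1}(-1)^j f_{\mathrm{sym}}\circ(\mathbbm{g}_2^\ast)^j(\tau),
\]
with $N=\mathbbm{n}^\ast_{\mathrm{alg}}(\tau)$, and to apply the triangle inequality to the finite sum. The work therefore reduces to verifying that each of the $N$ summands $f_{\mathrm{sym}}\circ(\mathbbm{g}_2^\ast)^j(\tau)$ is bounded by $\|f_{\mathrm{sym}}\|_{(\im\,\tau)}$.

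First I would reconcile the iterates. Since $-1\le\re\,\tau<1$, one has $\mathrm{mod}_2(\tau)=\tau$, so the sequence $\tau_j:=(\mathbbm{g}_2^\ast)^j(\tau)$ coincides for $j=0,1,\ldots,N$ with the iterates produced by the fly-catcher algorithm of Definition \ref{def:flycatcher}. Because $N\ge 1$ is the algorithmic height, the stopping rule guarantees that $\tau_j\in\Hyp\cap\D$ for every $0\le j\le N-1$, while $\tau_N\in\bar\calD_\Theta$; the strict monotonicity provided by Remark \ref{rem:im_part_growing} then gives
\[
\im\,\tau_0<\im\,\tau_1<\cdots<\im\,\tau_N,
\]
so in particular $\im\,\tau_j\ge\im\,\tau_0=\im\,\tau$ for $0\le j\le N-1$.

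These two facts, $\tau_j\in\Hyp\cap\D$ and $\im\,\tau_j\ge\im\,\tau$, are precisely the conditions that place $\tau_j$ in the set over which the supremum defining $\|f_{\mathrm{sym}}\|_{(\im\,\tau)}$ is taken. Consequently $|f_{\mathrm{sym}}(\tau_j)|\le\|f_{\mathrm{sym}}\|_{(\im\,\tau)}$ for each $j=0,\ldots,N-1$, and summing over the $N$ terms produces the bound $N\|f_{\mathrm{sym}}\|_{(\im\,\tau)}$ for the sum, while the first term on the right of \eqref{eq:basicequations1.06} contributes $|h\circ(\mathbbm{g}_2^\ast)^N(\tau)|$. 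Adding these two contributions yields the asserted inequality.

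The only nontrivial point is the bookkeeping that aligns the iterates of $\mathbbm{g}_2^\ast$ appearing in \eqref{eq:basicequations1.06} with those of the fly-catcher algorithm; once this is in place, monotonicity of imaginary parts and the stopping rule together immediately deliver the uniform bound, and no further estimation is required.
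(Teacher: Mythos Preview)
Your proof is correct and follows exactly the paper's approach: the lemma is stated immediately after identity \eqref{eq:basicequations1.06} and the observations that the iterates $(\mathbbm{g}_2^\ast)^j(\tau)$ lie in $\Hyp\cap\D$ for $0\le j<N$ with strictly increasing imaginary parts, and the paper regards the estimate as an immediate consequence of the triangle inequality applied to that identity together with the definition of the norm $\|\cdot\|_{(\epsilon)}$. Your explicit verification that $\mathrm{mod}_2(\tau)=\tau$ aligns the iterates with those of the fly-catcher algorithm is a helpful clarification, but otherwise the argument is identical.
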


\begin{rem}
In view of Corollary \ref{cor:n_estimate} and Proposition \ref{prop:n=n*},
we have that
\[
0\le \mathbbm{n}^\ast_{\mathrm{alg}}(\tau)\le\mathbbm{n}(\tau)
\le\frac12+\frac{1}{2\,\im\,\tau},\qquad \tau\in\Hyp.
\]
\label{rem:n*estimate}
\end{rem}

\subsection{Hyperbolic Fourier series for distributions on
  the extended real line }
\label{subsec:hfs_distr1}
We implement the estimate of Lemma \ref{prop:pointwiseest_iter},
while assuming that $f:\Hyp\to\C$ is holomorphic with
\begin{equation}
|f(\tau)|=\Ordo\bigg(\frac{1+|\tau|^2}{\im\tau}\bigg)^k
\label{eq:polgrowth0*}
\end{equation}
uniformly in $\tau\in\Hyp$, for some $k$ with $0<k<+\infty$.
It is immediate from \eqref{eq:polgrowth0*} that the estimate carries over to
$f_{\mathrm{sym}}$: 
\begin{equation}
|f_{\mathrm{sym}}(\tau)|=|f(\tau)+f(1/\bar\tau)|=
\Ordo\bigg(\frac{1+|\tau|^2}{\im\tau}\bigg)^k,
\label{eq:polgrowth0*.01}
\end{equation}
again uniformly in $\tau\in\Hyp$. 
As a next step, we find a solution $h_0$ to the Dirichlet problem 
outlined in Step I of the harmonic extension algorithm in
Subsection \ref{subsec:Solutionscheme-gen}. 
As outlined in Proposition \ref{prop:lambda-classical}, the modular lambda 
function maps the fundamental domain $\calD_\Theta$ conformally onto the
half-plane $\C_{\re<\frac12}$ with a slit removed along the negative real axis. 
Let us write $\lambda^{-1}$ for the inverse conformal mapping from the slit
half-plane onto $\calD_\Theta$, which then maps the boundary line
$\calL_{\frac12}=\{\zeta\in\C:\,\re\,\zeta=\tfrac12\}$ onto the semicircle
$\Te_+$.
Let $H_0$ be a harmonic function in the half-plane $\C_{\re<\frac12}$ which
equals $F:=f\circ\lambda^{-1}$ along $\calL_{\frac12}$. We choose the Poisson
extension given by \eqref{eq:Poisson0}, with the obvious modifications required
by rotating and shifting the half-plane:
\begin{equation}
H_0(\zeta):=\frac{1}{\pi}
\int_{\R}\frac{\frac12-\re\,\zeta}{|\zeta-\frac12-\imag t|^2}
\,F(\tfrac12+\imag t)\,\diff t,\qquad \zeta\in\C_{\re<\frac12}.
\end{equation}
The growth condition \eqref{eq:polgrowth0*} entails that the estimate
\begin{equation}
|f(\tau)|=\Ordo({\im\tau})^{-k},\qquad \tau\in\Te_+,
\label{eq:polgrowth0*1}
\end{equation}
holds uniformly, and in view of \eqref{eq:inverse_lambda1} and
\eqref{eq:inverse_lambda2}, we know that
\begin{equation}
\frac{1}{\im\,\lambda^{-1}(\tfrac12+\imag t)}=\pi^{-1}{\log(16|t|)}+\Ordo(1)
\label{eq:lambda_critical_line}  
\end{equation}
uniformly for real $t$ with $|t|\gg1$, it follows from this that
$F=f\circ\lambda^{-1}$
has
\begin{equation}
F(\tfrac12+\imag t)=\Ordo(\log^k(2+|t|))
\end{equation}
uniformly in $t\in\R$. 
We can now appeal to Proposition \ref{prop:loggrowth}, 
which tells us that the solution $H_0$ to the Dirichlet problem has the 
growth bound
\begin{equation}
H_0(\zeta)=\Ordo(\log^k(2+|\zeta|)),
\end{equation}
uniformly in $\C_{\re<\frac12}$. The induced function $h_0:=H_0\circ\lambda$ is
$2$-periodic and harmonic in $\Omega_0$, with boundary values $h_0=f$ along
$\Te_+$. Near the cusp at $-1$, we can use the approximation
\eqref{eq:lambdaproperty3} and the observation that $\im\,\tau\sim|\tau+1|$ for
$\tau\in\Omega_0$ near $-1$, together with $2$-periodicity to obtain that
\begin{equation}
h_0(\tau)=H_0\circ\lambda(\tau)=\Ordo(\log^k(2+|\lambda(\tau)|))
=\Ordo\big(1+(\im\tau)^{-k}\big),\qquad \tau\in\bar\Omega_0,  
\label{eq:est_h0_Omega0}
\end{equation}
with a uniform implicit constant. Here, $\bar\Omega_0$ stands for the relative
closure of the domain $\Omega_0$ in $\Hyp$. This means that we have an effective
estimate of the harmonic function $h=h_\infty:\Hyp\to\C$ in the region
$\bar\Omega_0$. 
We want to extend this estimate to all of $\Hyp$.
To this end, Lemma \ref{prop:pointwiseest_iter} comes in handy. 
In that context, we use periodicity to reduce to $-1\le\re\,\tau<1$,
and if addition, $\tau\notin\bar\Omega_0$, we have that
for $N=\mathbbm{n}^\ast_{\mathrm{alg}}(\tau)$, the iterate
$(\mathbbm{g}_2^\ast)^N(\tau)$ is in $\bar\calD_\Theta\subset\bar\Omega_0$,
so that we may apply the estimate \eqref{eq:est_h0_Omega0} at that point.
Moreover, by the monotonicity of the imaginary parts of the iterates, we have
\begin{equation}
\im\,(\mathbbm{g}_2^\ast)^N(\tau)>\im\,(\mathbbm{g}_2^\ast)^{N-1}(\tau)>\cdots
>\im\,\tau,  
\end{equation}
so that, consequently, the estimate \eqref{eq:est_h0_Omega0} entails that
uniformly
\begin{equation}
h_0\circ(\mathbbm{g}_2^\ast)^N(\tau)
=\Ordo\big(1+(\im\,(\mathbbm{g}_2^\ast)^N(\tau))^{-k}\big)
=\Ordo\big(1+(\im\,\tau)^{-k}\big),\qquad \tau\in\calV\setminus\bar\Omega_0.  
\label{eq:est_h0_Omega1}
\end{equation}
We arrive at the following proposition.

\begin{prop}
Suppose $f:\Hyp\to\C$ is holomorphic, subject to the growth bound
\eqref{eq:polgrowth0*} for some $k$ with $0<k<+\infty$. Then there
exists a $2$-periodic harmonic function
$h:\Hyp\to\C$ such that $h=f$ along the semicircle $\Te_+$, with
\begin{equation}
|h(\tau)|=\Ordo\big(1+(1+\mathbbm{n}^\ast_{\mathrm{alg}}(\tau))(\im\,\tau)^{-k}
\big)=\Ordo\big(1+(\im\,\tau)^{-k-1}\big),
\end{equation}
with implied constants that are uniform in $\tau\in\Hyp$. 
\label{prop:8.5.1}
\end{prop}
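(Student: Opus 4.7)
The plan is to assemble the ingredients developed in the preceding subsections. First I would construct the initial solution $h_0$ as in Step I of the harmonic extension algorithm: transport the Dirichlet problem on the fundamental domain $\calD_\Theta$ to the slit half-plane $\C_{\re<\frac12}$ via the modular $\lambda$-function, solve it by Poisson extension for the transported boundary datum $F:=f\circ\lambda^{-1}$ on the critical line $\calL_{1/2}$ to obtain a harmonic function $H_0$, and then set $h_0:=H_0\circ\lambda$.

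Next I would convert the polynomial growth of $f$ along $\Te_+$ into logarithmic growth of $F$ along $\calL_{1/2}$. By \eqref{eq:polgrowth0*}, $|f(\tau)|=\Ordo((\im\tau)^{-k})$ on $\Te_+$; invoking the inverse cusp asymptotics \eqref{eq:inverse_lambda1}--\eqref{eq:inverse_lambda2}, which via \eqref{eq:lambda_critical_line} yield $1/\im\,\lambda^{-1}(\tfrac12+\imag t)=\Ordo(\log(2+|t|))$, I obtain $F(\tfrac12+\imag t)=\Ordo(\log^k(2+|t|))$. Proposition \ref{prop:loggrowth} then delivers $|H_0(\zeta)|=\Ordo(\log^k(2+|\zeta|))$ uniformly on $\C_{\re<\frac12}$. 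Pulling back through $\lambda$ and using the cusp expansion \eqref{eq:lambdaproperty3} together with \eqref{eq:lambdaproperty4} (and $2$-periodicity) gives $|h_0(\tau)|=\Ordo(1+(\im\tau)^{-k})$ uniformly on the relative closure $\bar\Omega_0$, which is precisely \eqref{eq:est_h0_Omega0}.

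With this initial estimate in hand, let $h=h_\infty$ be the $2$-periodic harmonic extension of $h_0$ to $\Hyp$ furnished by Corollary \ref{cor:Hyperext1}. By $2$-periodicity it suffices to estimate $h(\tau)$ for $\tau\in\calV_+\cup\calL_{-1}^+$. For such $\tau\in\bar\Omega_0$ the bound is already established; otherwise I would apply Lemma \ref{prop:pointwiseest_iter} with $N=\mathbbm{n}^\ast_{\mathrm{alg}}(\tau)\ge1$. The stopping point $(\mathbbm{g}_2^\ast)^N(\tau)$ lies in $\bar\calD_\Theta\subset\bar\Omega_0$, and by the strict monotonicity of the iterates (Remark \ref{rem:im_part_growing}) its imaginary part exceeds $\im\,\tau$, so the $\bar\Omega_0$-estimate yields $|h\circ(\mathbbm{g}_2^\ast)^N(\tau)|=\Ordo(1+(\im\tau)^{-k})$. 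Combining this with the uniform bound $\|f_{\mathrm{sym}}\|_{(\im\tau)}=\Ordo((\im\tau)^{-k})$ coming from \eqref{eq:polgrowth0*.01} produces
\[
|h(\tau)|=\Ordo\big(1+(1+\mathbbm{n}^\ast_{\mathrm{alg}}(\tau))(\im\tau)^{-k}\big),
\]
which is the first claimed bound. The second bound is then immediate from the estimate $\mathbbm{n}^\ast_{\mathrm{alg}}(\tau)\le\tfrac12+\tfrac{1}{2\,\im\,\tau}$ recorded in Remark \ref{rem:n*estimate}.

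I do not expect a genuine obstacle, as essentially every ingredient has been prepared. The one delicate point is keeping the logarithmic control of the boundary datum $F$ uniform across both cusps of $\calD_\Theta$, which is why the two-sided inverse asymptotics \eqref{eq:inverse_lambda1}--\eqref{eq:inverse_lambda2} are needed rather than a one-sided analysis; the rest is a careful bookkeeping of the iteration in the fly-catcher algorithm combined with the growth of $f_{\mathrm{sym}}$.
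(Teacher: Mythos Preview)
Your proposal is correct and follows essentially the same route as the paper: the paper sets up the very same chain of estimates in the paragraphs preceding the proposition (transport via $\lambda$, Proposition~\ref{prop:loggrowth} for $H_0$, pullback to get \eqref{eq:est_h0_Omega0}, monotonicity of iterates to get \eqref{eq:est_h0_Omega1}), and its proof then simply cites Corollary~\ref{cor:Hyperext1}, Lemma~\ref{prop:pointwiseest_iter}, \eqref{eq:polgrowth0*.01}, and Remark~\ref{rem:n*estimate} exactly as you do.
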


\begin{proof}
By the harmonic extension algorithm in Subsection
\ref{subsec:Solutionscheme-gen}, the function $h_0$ extends to harmonic
function $h=h_\infty:\Hyp\to\C$ that is $2$-periodic and solves $h=f$ along
$\Te_+$ (see Corollary \ref{cor:Hyperext1}). To obtain the claimed estimate,
we observe first that it holds on $\bar\Omega_0$, by \eqref{eq:est_h0_Omega0}. 
In the remainder $\Hyp\setminus\bar\Omega_0$, we use the $2$-periodicity of
$h$ to restrict to the half-strip with $-1\le\re\,\tau<1$ and then apply
Lemma \ref{prop:pointwiseest_iter} to get the asserted estimate from
\eqref{eq:polgrowth0*.01} combined with Remark \ref{rem:n*estimate}, as well  
as the estimates \eqref{eq:est_h0_Omega0} and \eqref{eq:est_h0_Omega1}. 
\end{proof}

We obtain a more precise version of Theorem \ref{thm:hfs2}.

\begin{cor}
Suppose $f:\Hyp\to\C$ is harmonic, subject to the growth bound
\eqref{eq:polgrowth0*} for some $k$ with $0<k<+\infty$. Then there exist
complex coefficients $a_0$ and $a_n,b_n$ for $n\in\Z_{\ne0}$, with growth
\[
a_n,b_n=\Ordo\big((|n|+1)^{k}\log^2(2+|n|)\big)
\quad\text{as}\,\,\,\,|n|\to+\infty,
\]
such that $f$ has the convergent hyperbolic Fourier series expansion 
\[
f(\tau)=a_0+\sum_{n\in\Z_{>0}}\big(a_n\e^{\imag\pi n\tau}+b_n\e^{-\imag\pi n/\tau}\big)
+\sum_{n\in\Z_{<0}}\big(a_n\e^{\imag\pi n\bar\tau}+b_n\e^{-\imag\pi n/\bar\tau}\big),
\qquad\tau\in\Hyp.
\]
As such, the coefficients are uniquely determined by $f$.
\label{cor:8.5.2}
\end{cor}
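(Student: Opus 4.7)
The plan is to reduce to the holomorphic case, apply the Schwarz-reflection extension algorithm of Proposition \ref{prop:8.5.1}, and then sharpen the resulting Fourier coefficient bound via the average-height estimate of Proposition \ref{prop:n_estimate_ave}.

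First, I would apply Proposition \ref{prop:beta3} with $\alpha = k$ to decompose
\[
f(\tau) = f(\imag) + f_+(\tau) + f_-(\tau), \qquad \tau \in \Hyp,
\]
where $f_+$ is holomorphic, $f_-$ is conjugate-holomorphic, $f_+(\imag) = f_-(\imag) = 0$, and both pieces inherit the growth bound \eqref{eq:polgrowth0*}. It then suffices to handle the holomorphic case: for $f_-$, the function $\overline{f_-}$ is holomorphic on $\Hyp$ and inherits the same polynomial growth bound, so applying the holomorphic result to $\overline{f_-}$ and then conjugating and re-indexing $n \mapsto -n$ produces the desired coefficients $a_n, b_n$ for $n < 0$ with the required bound.

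Given a holomorphic $g:\Hyp\to\C$ satisfying \eqref{eq:polgrowth0*}, Proposition \ref{prop:8.5.1} furnishes a $2$-periodic harmonic function $h:\Hyp\to\C$ with $h = g$ along $\Te_+$ and the refined pointwise bound
\[
|h(\tau)| = \Ordo\bigl(1 + (1+\mathbbm{n}^\ast_{\mathrm{alg}}(\tau))(\im\tau)^{-k}\bigr).
\]
Since $h$ is $2$-periodic and of at most polynomial growth in $(\im\tau)^{-1}$, it admits a harmonic Fourier expansion $h(\tau) = \sum_{n\in\Z} c_n\, e_n(\tau)$. By the uniqueness argument of Corollary \ref{cor:HFSthm1}, the Schwarz transform satisfies $\shin\,h = g$ on all of $\Hyp$, and Proposition \ref{prop:shinexpansion1} then rearranges this identity into the holomorphic hyperbolic Fourier series
\[
g(\tau) = c_0 + \sum_{n>0}\bigl(c_n\,\e^{\imag\pi n\tau} + c_{-n}\,\e^{-\imag\pi n/\tau}\bigr),\qquad \tau\in\Hyp.
\]

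The hard part is to sharpen the bound on $c_n$ beyond the crude $\Ordo(|n|^{k+1})$ that would follow from the worst-case estimate $\mathbbm{n}^\ast_{\mathrm{alg}}(\tau) = \Ordo(1/\im\tau)$. I would use the standard recovery formula
\[
c_n = \frac{\e^{\pi|n|s}}{2}\int_{-1}^{1}\e^{-\imag\pi n t}\,h(t+\imag s)\,\diff t,\qquad s>0,
\]
combine the refined pointwise bound with the inequality $\mathbbm{n}^\ast_{\mathrm{alg}} \le \mathbbm{n}$ from Proposition \ref{prop:n=n*}, and invoke the average estimate of Proposition \ref{prop:n_estimate_ave} to deduce
\[
\tfrac12\int_{-1}^{1}|h(t+\imag s)|\,\diff t = \Ordo\bigl(s^{-k}\log^2(1/s)\bigr),\qquad s\to 0^+.
\]
This yields $|c_n| \le C\,\e^{\pi|n|s}\,s^{-k}\log^2(1/s)$, and optimizing by setting $s = 1/|n|$ gives $|c_n| = \Ordo(|n|^k\log^2(|n|+1))$, as required. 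The entire technical obstacle is isolated in the passage from a pointwise bound involving $\mathbbm{n}^\ast_{\mathrm{alg}}$ to an integral bound of order $\log^2(1/s)$; all remaining steps are routine given the earlier propositions. Uniqueness of the coefficients in the resulting harmonic hyperbolic Fourier series follows from Theorem \ref{thm:uniq1}, since growth of order $|n|^k\log^2|n|$ is comfortably $\ordo(|n|^{-3/4}\exp(2\pi\sqrt{|n|}))$.
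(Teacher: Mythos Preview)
Your proposal is correct and follows essentially the same route as the paper's proof: split via Proposition \ref{prop:beta3}, apply Proposition \ref{prop:8.5.1} to the holomorphic part, recover the Fourier coefficients $c_n$ by the integral formula, replace the pointwise height $\mathbbm{n}^\ast_{\mathrm{alg}}$ by its average via Propositions \ref{prop:n=n*} and \ref{prop:n_estimate_ave}, optimize at $s=1/|n|$, and treat $f_-$ by conjugation; uniqueness is from Theorem \ref{thm:uniq1}. The only cosmetic difference is that the paper invokes Proposition \ref{prop:function->coeff0} explicitly for the crude $\Ordo(|n|^{k+1})$ bound before refining, whereas you go straight to the refinement.
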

  
\begin{proof}
The uniqueness part follows from Theorem \ref{thm:uniq1}. 
We proceed to show that the coefficient sequence exists.
We apply Proposition \ref{prop:beta3} with $\alpha=k$, which gives the
unique splitting of the harmonic function $f$ in three parts:
\[
f(\tau)=f_+(\tau)+f_-(\tau)+f(\imag),
\]
where $f_+$ is holomorphic with $f_+(\imag)=0$, $f_-$ is conjugate-holomorphic
with $f_-(\imag)=0$, and $f(\imag)$ is constant. The functions $f_+$ and $f_-$
enjoy the same growth bound as $f$ itself. We apply Proposition
\ref{prop:8.5.1} to the holomorphic function $f_+$, which gives us a
$2$-periodic harmonic function $h_+:\Hyp\to\C$ with $h_+=f_+$ on $\Te_+$
subject to the growth bound
\begin{equation}
|h_+(\tau)|=\Ordo\big(1+(1+\mathbbm{n}^\ast_{\mathrm{alg}}(\tau))
(\im\,\tau)^{-k}\big)=\Ordo\big(1+(\im\,\tau)^{-k-1}\big),
\label{eq:h+bound1}
\end{equation}
uniformly in $\Hyp$. Then, by Proposition \ref{prop:function->coeff2.0},
$h_+$ has a convergent harmonic Fourier series expansion
\[
h_+(\tau)=\sum_{n\in\Z}c_n\,e_n(\tau),\qquad \tau\in\Hyp,
\]
and the Schwarz transform of $h_+$ gives back $f_+$: $\shin\,h_+=f_+$.
Moreover, Proposition \ref{prop:function->coeff0} together with the growth
control on $h_+$ ensures that the Fourier coefficients have the growth bound
$c_n=\Ordo(|n|^{k+1})$ as $|n|\to+\infty$. We can, however, do slightly better
if we use the intermediate bound in \eqref{eq:h+bound1}. Indeed, by
\eqref{eq:get_coeff_cn}, we may represent the coefficients in the form
\begin{equation}
c_n=\frac{\e^{\pi|n|s}}{2}\int_{-1}^{1}\e^{-\imag\pi nt}h_+(t+\imag s)\,\diff t,
\label{eq:get_coeff_cn01}
\end{equation}
provided that $s>0$. By the triangle inequality for integrals, this leads to
\begin{equation}
|c_n|\le\frac{\e^{\pi|n|s}}{2}\int_{-1}^{1}|h_+(t+\imag s)|\,\diff t,
\label{eq:get_coeff_cn02}
\end{equation}
and, as we implement the first estimate in \eqref{eq:h+bound1}, we find that
if $C>0$ is the implied constant, 
\begin{multline}
|c_n|\le C\,\e^{\pi|n|s}\Big(1+s^{-k}\int_{-1}^{1}
\big(1+\mathbbm{n}^\ast_{\mathrm{alg}}(t+\imag s)\big)\,\frac{\diff t}{2}\Big)
\le C\,\e^{\pi|n|s}\Big(1+s^{-k}\frac{1}{\pi^2}\log^2\frac{1}{s}+
\Ordo\Big(s^{-k}\log\frac1s\Big)\Big),
\label{eq:get_coeff_cn02.1}
\end{multline}
if $s>0$ is small. Here, we used the comparison of height functions in
Proposition \ref{prop:n=n*} as well as the asymptotic average height
calculation of Proposition \ref{prop:n_estimate_ave}.
For large $|n|$ we plug in $s=1/|n|$, which gives
\begin{equation}
|c_n|\le C\,\e^{\pi}\big(\pi^{-2}|n|^k\log^2|n|+\Ordo(|n|^k\log|n|)\big)
=\Ordo(|n|^k\log^2|n|).
\label{eq:get_coeff_cn02.2}
\end{equation}

From Proposition
\ref{prop:shinexpansion1}, we know that $\shin\,h_+=f_+$ is represented by a
holomorphic hyperbolic Fourier series. In an analogous fashion, we 
deal with the conjugate-holomorphic part. For instance, we can proceed as
follows. We take the complex
conjugate $\bar f_-$ of $f_-$ to get a holomorphic function, and find a
$2$-periodic harmonic function $h_-$ with $h_-=\bar f_-$ on $\Te_+$ with the
same type of growth bound as for $h_+$. Hence the function $h_-$ has a
harmonic Fourier series expansion, with the same kind of coefficient bound,
and if we take its Schwarz transform we 
recover $\bar f_-$: $\shin\,h_-=\bar f_-$. 
This supplies a conjugate-holomorphic hyperbolic Fourier series expansion
for $f_-$, and together, the holomorphic and conjugate-holomorphic hyperbolic
Fourier series give a harmonic hyperbolic Fourier series expansion of $f$.
\end{proof}  

\begin{rem}
As we discussed back in Subsections \ref{subsec:harmext-gen} and
\ref{subsec:harmext-hfs}, Theorems \ref{thm:hfs1} and \ref{thm:hfs2} are
equivalent. For this reason, we will not supply a separate proof of Theorem
\ref{thm:hfs1}. 
\label{rem:equivalent_hfs}
\end{rem}

\subsection{Hyperbolic Fourier series for bounded harmonic
functions}
\label{subsec:hfs_for_bounded}
The instance of $k=0$ in Proposition \ref{cor:8.5.2} is of particular
interest. We cannot plug in $k=0$ directly in Corollary \ref{cor:8.5.2},
but we could of course settle for any $k>0$ instead.
We prefer to derive a more precise result.
Let us begin with a holomorphic function $f:\Hyp\to\C$ subject to the
logarithmic growth bound
\begin{equation}
|f(\tau)|=\Ordo\bigg(\log\frac{1+|\tau|^2}{\im\,\tau}\bigg),
\label{eq:log_growth1.01}
\end{equation}
uniformly in $\tau\in\Hyp$.
It is immediate that \eqref{eq:log_growth1.01} carries over to
$f_{\mathrm{sym}}$ as well:
\begin{equation}
|f_{\mathrm{sym}}(\tau)|=|f(\tau)+f(1/\bar\tau)|=
\Ordo\bigg(\log\frac{1+|\tau|^2}{\im\,\tau}\bigg),
\label{eq:log_growth1.02}
\end{equation}
again uniformly in $\tau\in\Hyp$. Next, we find the solution $h_0$ to the
Dirichlet problem outlined in Step I of the harmonic extension algorithm
in Subsection \ref{subsec:Solutionscheme-gen}. 
As in Subsection \ref{subsec:hfs_distr1}, we let $H_0$ be a harmonic function
in the half-plane $\C_{\re<\frac12}$ which equals $F:=f\circ\lambda^{-1}$ along
the boundary line $\calL_{\frac12}$.
We choose the Poisson extension given by \eqref{eq:Poisson0}, with the
obvious modifications required by rotating and shifting the half-plane:
\begin{equation}
H_0(\zeta):=\frac{1}{\pi}
\int_{\R}\frac{\frac12-\re\,\zeta}{|\zeta-\frac12-\imag t|^2}
\,F(\tfrac12+\imag t)\,\diff t,\qquad \zeta\in\C_{\re<\frac12}.
\end{equation}
The growth condition \eqref{eq:log_growth1.01} entails that the estimate
\begin{equation}
|f(\tau)|=\Ordo\Big(\log\frac{2}{\im\,\tau}\Big),\qquad \tau\in\Te_+,
\label{eq:polgrowth0*1.1}
\end{equation}
holds uniformly, and in view of \eqref{eq:lambda_critical_line},
it follows from this that $F=f\circ\lambda^{-1}$ has
\begin{equation}
F(\tfrac12+\imag t)=\Ordo(\log\log(10+|t|))
\end{equation}
uniformly in $t\in\R$. 
We can now appeal to Proposition \ref{prop:logloggrowth}, 
which tells us that the solution $H_0$ to the Dirichlet problem has the 
growth bound
\begin{equation}
H_0(\zeta)=\Ordo(\log\log(10+|\zeta|)),
\end{equation}
uniformly in $\C_{\re<\frac12}$. The induced function $h_0:=H_0\circ\lambda$ is
$2$-periodic and harmonic in $\Omega_0$, with boundary values $h_0=f$ along
$\Te_+$. Near the cusp at $-1$, we can use the approximation
\eqref{eq:lambdaproperty3} and the observation that $\im\,\tau\sim|\tau+1|$ for
$\tau\in\Omega_0$ near $-1$, together with $2$-periodicity to obtain that
\begin{equation}
h_0(\tau)=H_0\circ\lambda(\tau)=\Ordo(\log\log(10+|\lambda(\tau)|))
=\Ordo\Big(\log\Big(2+\frac{1}{\im\,\tau}\Big)\Big),\qquad
\tau\in\bar\Omega_0,  
\label{eq:est_h0_Omega0.1}
\end{equation}
with a uniform implicit constant. This means that we have an effective
estimate of the harmonic function $h=h_\infty:\Hyp\to\C$ in the region
$\bar\Omega_0$. 
We want to extend this estimate to all of $\Hyp$.
To this end, Lemma \ref{prop:pointwiseest_iter} comes in handy. 
In that context, we use periodicity to reduce to $-1\le\re\,\tau<1$,
and note that if $\tau\notin\bar\Omega_0$,
we find that for $N=\mathbbm{n}^\ast_{\mathrm{alg}}(\tau)$, the iterate
$(\mathbbm{g}_2^\ast)^N(\tau)$ is in $\bar\calD_\Theta\subset\bar\Omega_0$,
so that we may apply the estimate \eqref{eq:est_h0_Omega0.1} at that point.
Moreover, by the monotonicity of the imaginary parts of the iterates, we have
\begin{equation}
\im\,(\mathbbm{g}_2^\ast)^N(\tau)>\im\,(\mathbbm{g}_2^\ast)^N-1(\tau)>\cdots
>\im\,\tau,  
\end{equation}
so that, consequently, the estimate \eqref{eq:est_h0_Omega0.1} entails that
uniformly
\begin{multline}
h_0\circ(\mathbbm{g}_2^\ast)^N(\tau)
=\Ordo\Big(\log\Big(2+\frac{1}{\im\,(\mathbbm{g}_2^\ast)^N(\tau)}\Big)\Big)
=\Ordo\Big(\log\Big(2+\frac{1}{\im\,\tau}\Big)\Big),
\qquad \tau\in\calV_+\setminus\bar\Omega_0.  
\label{eq:est_h0_Omega1.1}
\end{multline}
We arrive at the following proposition.

\begin{prop}
Suppose $f:\Hyp\to\C$ is holomorphic, subject to the logarithmic growth bound
\eqref{eq:log_growth1.01}. Then there
exists a $2$-periodic harmonic function
$h:\Hyp\to\C$ such that $h=f$ along the semicircle $\Te_+$, with
\begin{equation}
|h(\tau)|=\Ordo\Big((1+\mathbbm{n}^\ast_{\mathrm{alg}}(\tau))
\log\Big(2+\frac{1}{\im\,\tau}\Big)\Big),
\end{equation}
with an implied constant that is uniform in $\tau\in\Hyp$. 
\label{prop:8.6.1}
\end{prop}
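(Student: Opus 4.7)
The plan is to mirror the proof of Proposition \ref{prop:8.5.1}, substituting the logarithmic growth bound \eqref{eq:log_growth1.01} for the polynomial one. Nearly all of the needed ingredients have already been assembled in the discussion leading up to the statement, so the task is essentially one of assembly.

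First, I would apply the harmonic extension algorithm of Subsection \ref{subsec:Solutionscheme-gen}. Step I produces a $2$-periodic harmonic function $h_0$ on $\calD_\Theta$ satisfying $h_0=f$ on $\Te_+$. The construction transfers the problem via $\lambda^{-1}$ to a Dirichlet problem in $\C_{\re<\frac12}$ with boundary data $F=f\circ\lambda^{-1}$ on $\calL_{\frac12}$. Combining \eqref{eq:log_growth1.01} with the asymptotic \eqref{eq:lambda_critical_line} gives $F(\tfrac12+\imag t)=\Ordo(\log\log(10+|t|))$, so Proposition \ref{prop:logloggrowth} yields $H_0(\zeta)=\Ordo(\log\log(10+|\zeta|))$ uniformly in $\C_{\re<\frac12}$. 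Pulling back via $\lambda$, using the near-cusp asymptotic \eqref{eq:lambdaproperty3} and $2$-periodicity, this delivers the estimate \eqref{eq:est_h0_Omega0.1} on $\bar\Omega_0$.

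Next, Steps II--IV of the algorithm extend $h_0$ to a $2$-periodic harmonic function $h=h_\infty:\Hyp\to\C$ with $h=f$ on $\Te_+$ (cf.\ Corollary \ref{cor:Hyperext1}). On $\bar\Omega_0$ the desired bound is already in hand, since $\mathbbm{n}^\ast_{\mathrm{alg}}(\tau)=0$ there (after reduction to $-1\le\re\,\tau<1$ by periodicity). For $\tau\in\Hyp\setminus\bar\Omega_0$ I would again reduce by periodicity and invoke Lemma \ref{prop:pointwiseest_iter} with $N=\mathbbm{n}^\ast_{\mathrm{alg}}(\tau)$: the terminal iterate $(\mathbbm{g}_2^\ast)^N(\tau)$ lies in $\bar\calD_\Theta\subset\bar\Omega_0$, so $|h\circ(\mathbbm{g}_2^\ast)^N(\tau)|$ is bounded by \eqref{eq:est_h0_Omega1.1}, while the symmetrized datum satisfies $\|f_{\mathrm{sym}}\|_{(\im\,\tau)}=\Ordo\big(\log(2+1/\im\tau)\big)$ by \eqref{eq:log_growth1.02}. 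Adding these two contributions gives the claimed bound.

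There is no genuine obstacle here; the only point that needs care is ensuring that the $\Ordo\big(\log(2+1/\im\tau)\big)$ bound on $f_{\mathrm{sym}}$ applied along the orbit $\{(\mathbbm{g}_2^\ast)^j(\tau)\}_{j=0}^{N-1}$ is dominated by the value at $\tau$ itself, which is guaranteed by the monotonicity of imaginary parts along the iteration (Remark \ref{rem:im_part_growing}). This is precisely what makes the family of norms $\|\cdot\|_{(\im\,\tau)}$ appearing in Lemma \ref{prop:pointwiseest_iter} the right vehicle for the estimate.
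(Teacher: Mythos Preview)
Your proposal is correct and follows essentially the same approach as the paper's proof: construct $h_0$ via the Dirichlet problem transported by $\lambda$, invoke Corollary \ref{cor:Hyperext1} to extend to $h=h_\infty$, verify the bound on $\bar\Omega_0$ via \eqref{eq:est_h0_Omega0.1}, and on the complement apply Lemma \ref{prop:pointwiseest_iter} together with \eqref{eq:log_growth1.02} and \eqref{eq:est_h0_Omega1.1}. Your write-up is in fact somewhat more explicit than the paper's, which relegates the preparatory estimates to the surrounding discussion and keeps the proof itself terse.
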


\begin{proof}
By the harmonic extension algorithm in Subsection
\ref{subsec:Solutionscheme-gen}, the function $h_0$ extends to harmonic
function $h=h_\infty:\Hyp\to\C$ that is $2$-periodic and solves $h=f$ along
$\Te_+$ (see Corollary \ref{cor:Hyperext1}). To obtain the claimed estimate,
we observe first that it holds on $\bar\Omega_0$, by
\eqref{eq:est_h0_Omega0.1}. 
In the remainder $\Hyp\setminus\bar\Omega_0$, we use the $2$-periodicity of
$h$ to restrict to the half-strip with $-1\le\re\,\tau<1$ and then apply
Lemma \ref{prop:pointwiseest_iter} to get the asserted estimate from
\eqref{eq:log_growth1.01} as well as the estimates \eqref{eq:est_h0_Omega0.1}
and \eqref{eq:est_h0_Omega1.1}. 
\end{proof}

We obtain a slow growth bound on the coefficients in the hyperbolic Fourier
series expansion for a given bounded function. 

\begin{cor}
Suppose $f:\Hyp\to\C$ is harmonic and bounded. Then there exist
complex coefficients $a_0$ and $a_n,b_n$ for $n\in\Z_{\ne0}$, with growth
\[
a_n,b_n=\Ordo\big(\log^3(2+|n|)\big)
\quad\text{as}\,\,\,\,|n|\to+\infty,
\]
such that $f$ has the convergent hyperbolic Fourier series expansion 
\[
f(\tau)=a_0+\sum_{n\in\Z_{>0}}\big(a_n\e^{\imag\pi n\tau}+b_n\e^{-\imag\pi n/\tau}\big)
+\sum_{n\in\Z_{<0}}\big(a_n\e^{\imag\pi n\bar\tau}+b_n\e^{-\imag\pi n/\bar\tau}\big),
\qquad\tau\in\Hyp.
\]
As such, the coefficients are uniquely determined by $f$.
\label{cor:8.6.2}
\end{cor}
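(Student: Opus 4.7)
The plan is to parallel the proof of Corollary \ref{cor:8.5.2}, with the sharper growth input of Proposition \ref{prop:8.6.1} replacing Proposition \ref{prop:8.5.1}. First, I would invoke Proposition \ref{prop:beta2} to split the bounded harmonic function as
\[
f(\tau)=f(\imag)+f_+(\tau)+f_-(\tau),\qquad \tau\in\Hyp,
\]
where $f_+$ is holomorphic, $f_-$ conjugate-holomorphic, both vanishing at $\imag$, and each having the logarithmic growth control $|f_\pm(\tau)|=\Ordo\big(\log\tfrac{1+|\tau|^2}{\im\tau}\big)$ uniformly in $\Hyp$.

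Next, I would apply Proposition \ref{prop:8.6.1} to $f_+$ to obtain a $2$-periodic harmonic function $h_+:\Hyp\to\C$ with $h_+=f_+$ along $\Te_+$ and the pointwise bound
\[
|h_+(\tau)|=\Ordo\Big((1+\mathbbm{n}^\ast_{\mathrm{alg}}(\tau))\log\big(2+\tfrac{1}{\im\tau}\big)\Big),
\]
uniform in $\Hyp$. By Proposition \ref{prop:function->coeff2.0}, $h_+$ has a convergent harmonic Fourier series $h_+(\tau)=\sum_{n\in\Z} c_n\, e_n(\tau)$, and the Schwarz transform recovers the original function: $\shin\, h_+=f_+$, which by Proposition \ref{prop:shinexpansion1} is the desired holomorphic hyperbolic Fourier series for $f_+$. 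The same recipe applied to $\bar f_-$ yields the conjugate-holomorphic part; combining everything gives the harmonic hyperbolic Fourier series expansion of $f$, and uniqueness is immediate from Theorem \ref{thm:uniq1}.

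The key technical step — and the only place the argument differs substantively from that of Corollary \ref{cor:8.5.2} — is the bound on $|c_n|$. Starting from the coefficient formula
\[
c_n=\frac{\e^{\pi|n|s}}{2}\int_{-1}^{1}\e^{-\imag\pi nt}h_+(t+\imag s)\,\diff t,\qquad s>0,
\]
the triangle inequality combined with the bound on $h_+$ yields, with some constant $C>0$,
\[
|c_n|\le C\,\e^{\pi|n|s}\log\big(2+\tfrac1s\big)\int_{-1}^{1}\big(1+\mathbbm{n}^\ast_{\mathrm{alg}}(t+\imag s)\big)\,\frac{\diff t}{2}.
\]
Using the comparison $\mathbbm{n}^\ast_{\mathrm{alg}}\le\mathbbm{n}$ from Proposition \ref{prop:n=n*} together with the asymptotic average height estimate of Proposition \ref{prop:n_estimate_ave}, the integral is $\tfrac{1}{\pi^2}\log^2(1/s)+\Ordo(\log(1/s))$ as $s\to0^+$, which leads to
\[
|c_n|\le C'\,\e^{\pi|n|s}\,\log^3\big(2+\tfrac1s\big),\qquad 0<s\le 1.
\]
Optimizing by plugging in $s=1/|n|$ for $|n|\ge 1$ gives $|c_n|=\Ordo(\log^3(2+|n|))$, which is the desired estimate.

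The main obstacle I anticipate is essentially bookkeeping: one must be careful that the logarithmic term $\log(2+1/\im\tau)$ from Proposition \ref{prop:8.6.1} and the averaged height $\log^2(1/s)$ combine multiplicatively (not additively) in the integral estimate, so that the extra logarithmic factor pushes the coefficient bound from $\log^2$ (as in the polynomial-growth case) up to $\log^3$. Aside from this, the argument is structurally identical to the proof of Corollary \ref{cor:8.5.2}, and the analogous treatment of $f_-$ via conjugation closes out the proof.
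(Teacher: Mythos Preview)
Your proposal is correct and follows essentially the same route as the paper's own proof: split via Proposition \ref{prop:beta2}, apply Proposition \ref{prop:8.6.1} to the holomorphic part, extract Fourier coefficients and estimate them using Propositions \ref{prop:n=n*} and \ref{prop:n_estimate_ave} with the choice $s=1/|n|$, and handle $f_-$ by conjugation; uniqueness comes from Theorem \ref{thm:uniq1}. Your identification of the multiplicative combination of $\log(2+1/s)$ with the averaged height $\log^2(1/s)$ as the source of the $\log^3$ bound matches the paper's computation exactly.
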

  
 \begin{rem}
 \label{rem:8.6.3}
 The estimate is uniform in $f$, provided that the $L^\infty$-norm of $f$
 is bounded by a given constant. 
 \end{rem} 
  
\begin{proof}[Proof of Corollary \ref{cor:8.6.2}]
The uniqueness part follows from Theorem \ref{thm:uniq1}. 
We proceed to show that the coefficient sequence exists.
We apply Proposition \ref{prop:beta2}, which gives the
unique splitting of the harmonic function $f$ in three parts:
\[
f(\tau)=f_+(\tau)+f_-(\tau)+f(\imag),
\]
where $f_+$ is holomorphic with $f_+(\imag)=0$, $f_-$ is conjugate-holomorphic
with $f_-(\imag)=0$, and $f(\imag)$ is constant. The functions $f_+$ and $f_-$
then enjoy the logarithmic growth bound \eqref{eq:log_growth1.01}.
We apply Proposition
\ref{prop:8.6.1} to the holomorphic function $f_+$, which gives us a
$2$-periodic harmonic function $h_+:\Hyp\to\C$ with $h_+=f_+$ on $\Te_+$
and the growth bound
\begin{equation}
|h_+(\tau)|=\Ordo\Big((1+\mathbbm{n}^\ast_{\mathrm{alg}}(\tau))\log\Big(
2+\frac{1}{\im\,\tau}\Big)\Big),
\label{eq:h+bound1.1}
\end{equation}
uniformly in $\Hyp$. Then, by Proposition \ref{prop:function->coeff2.0},
$h_+$ has a convergent harmonic Fourier series expansion
\[
h_+(\tau)=\sum_{n\in\Z}c_n\,e_n(\tau),\qquad \tau\in\Hyp,
\]
and the Schwarz transform of $h_+$ gives back $f_+$: $\shin\,h_+=f_+$.
By \eqref{eq:get_coeff_cn}, we may represent the coefficients in the form
\begin{equation}
c_n=\frac{\e^{\pi|n|s}}{2}\int_{-1}^{1}\e^{-\imag\pi nt}h_+(t+\imag s)\,\diff t,
\label{eq:get_coeff_cn01.1}
\end{equation}
provided that $s>0$. By the triangle inequality for integrals, this leads to
\begin{equation}
|c_n|\le\e^{\pi|n|s}\int_{-1}^{1}|h_+(t+\imag s)|\,\frac{\diff t}{2},
\label{eq:get_coeff_cn02.1*}
\end{equation}
and, as we implement the estimate in \eqref{eq:h+bound1.1}, we find that
if $C>0$ is the implied constant, 
\begin{multline}
|c_n|\le C\,\e^{\pi|n|s}\Big(\int_{-1}^{1}
\big(1+\mathbbm{n}^\ast_{\mathrm{alg}}(t+\imag s)\big)\,\frac{\diff t}{2}
\,\,\log\Big(2+\frac{1}{s}\Big)\Big)
\\
\le C\,\e^{\pi|n|s}
\Big(\frac{1}{\pi^2}\log^2\frac{1}{s}+\Ordo\Big(\log\frac1s\Big)\Big)
\log\Big(2+\frac{1}{s}\Big),
\label{eq:get_coeff_cn02.1.1}
\end{multline}
provided that $s>0$ is small.
Here, we used as before the comparison of height functions in
Proposition \ref{prop:n=n*} as well as the asymptotic average height
calculation of Proposition \ref{prop:n_estimate_ave}.
For large $|n|$, we plug in $s=1/|n|$, which
gives
\begin{equation}
|c_n|\le C\,\e^{\pi}\big(\pi^{-2}|n|^k\log^2|n|+\Ordo(\log|n|)\big)\log(2+|n|)
=\Ordo(|\log^3|n|).
\label{eq:get_coeff_cn02.2.1}
\end{equation}

From Proposition
\ref{prop:shinexpansion1}, we know that $\shin\,h_+=f_+$ is represented by a
holomorphic hyperbolic Fourier series. In an analogous fashion, we 
deal with the conjugate-holomorphic part. For instance, we can proceed as
follows. We take the complex
conjugate $\bar f_-$ of $f_-$ to get a holomorphic function, and find a
$2$-periodic harmonic function $h_-$ with $h_-=\bar f_-$ on $\Te_+$ with the
same type of growth bound as for $h_+$. Hence the function $h_-$ has a
harmonic Fourier series expansion, with the same kind of coefficient bound,
and if we take its Schwarz transform we 
recover $\bar f_-$: $\shin\,h_-=\bar f_-$. 
This supplies a conjugate-holomorphic hyperbolic Fourier series expansion
for $f_-$, and together, the holomorphic and conjugate-holomorphic hyperbolic
Fourier series give a harmonic hyperbolic Fourier series expansion of $f$.
\end{proof}  

\subsection{Harmonic hyperbolic Fourier series representation
of exponentially growing functions}
\label{subsec:exponentially_growing}

Here, we consider a holomorphic function $f:\Hyp\to\C$ whose growth is
allowed to be substantially faster:
\begin{equation}
|f(\tau)|=\Ordo\big(\exp(\beta\pi\,\Mfun(\tau))\big)
\label{eq:expbeta_est1}
\end{equation}
with an implicit constant that is uniform in $\tau\in\Hyp$. The parameter
$\beta$ is positive: $0<\beta<+\infty$. Moreover, we recall that
the function $\Mfun(\tau)$ is given by \eqref{eq:defM}, that is,
\[
\Mfun(\tau)=\frac{\max\{1,|\tau|^2\}}{\im\,\tau}. 
\]  
It is immediate from the symmetry of $\Mfun$ expressed in \eqref{eq:Mprop}
that the symmetrized function $f_{\mathrm{sym}}(\tau)=f(\tau)+f(1/\bar\tau)$
has the same estimate \eqref{eq:expbeta_est1}:
\begin{equation}
|f_{\mathrm{sym}}(\tau)|=\Ordo\big(\exp(\beta\pi\,\Mfun(\tau))\big)
\label{eq:expbeta_est2}
\end{equation}
uniformly in $\tau\in\Hyp$. 
As we restrict the estimate \eqref{eq:expbeta_est1} to the semi-circle $\Te_+$,
we find that 
\begin{equation}
|f(\tau)|=\Ordo\big(\exp(\beta\pi/\im\,\tau)\big),\qquad\tau\in\Te_+,
\label{eq:expbeta_est2.11}
\end{equation}
holds uniformly. We now follow the same scheme as outlined in
Subsections \ref{subsec:hfs_distr1} and \ref{subsec:hfs_for_bounded}.
We let $H_0$ be a harmonic function in the half-plane $\C_{\re<\frac12}$ which
equals the function $F=f\circ\lambda^{-1}$ along the boundary line
$\calL_{\frac12}$.  In view of \eqref{eq:lambda_critical_line},
the growth condition \eqref{eq:expbeta_est2.11} entails that
\begin{equation}
|F(\tfrac12+\imag t)|=\Ordo\big((1+|t|)^{\beta}\big),\qquad t\in\R,
\end{equation}
uniformly in $t$.
We may now appeal to Proposition \ref{prop:powergrowth}, which guarantees that
if $\beta\notin\Z_{>0}$, then such a harmonic function 
$H_0$ may be found with the same order of growth:
\begin{equation}
|H_0(\zeta)|=\Ordo\big((1+|\zeta|)^{\beta}\big),\qquad \zeta\in\C_{\re<\frac12},
\label{eq:H_0_est1.01}
\end{equation}
with a uniform implicit constant. Moreover, if instead $\beta\in\Z_{>0}$,
the estimate gets slightly worse,
\begin{equation}
|H_0(\zeta)|=\Ordo\big((1+|\zeta|)^{\beta}\log(2+|\zeta|)\big),
\qquad \zeta\in\C_{\re<\frac12},
\label{eq:H_0_est1.02}
\end{equation}
again with a uniform implicit constant. We put as before $h_0:=H_0\circ\lambda$,
which defines a $2$-periodic harmonic function in $\Omega_0$, which extends
continuously up to the relative boundary $\partial\Omega_0$, and as such
has $h_0=f$ on $\Te_+$. For $\tau\in\calD_\Theta$ close to the cusp point $-1$,
we calculate that
\begin{equation}
\re\frac{\imag\pi}{\tau+1}=\frac{\pi}{\im\,\tau}+\Ordo(\im\,\tau), 
\end{equation}
so that by \eqref{eq:lambdaproperty3}, 
\begin{equation}
|\lambda(\tau)|=\frac{1}{16}\big(1+\Ordo(\im\,\tau)\big)\,\e^{\pi/\im\,\tau}. 
\label{eq:lambda-estimate_101}
\end{equation}
Moreover, by $2$-periodicity, the estimate \eqref{eq:lambda-estimate_101} 
extends to $\bar\Omega_0$, in the limit as $\im\,\tau\to0^+$. 
It now follows from the estimate \eqref{eq:H_0_est1.01} that if
$\beta\notin\Z_{>0}$, 
\begin{equation}
|h_0(\tau)|=\Ordo\big((1+|\lambda(\tau)|)^{\beta}\big)
=\Ordo\big(\exp(\beta\pi/\im\,\tau)\big),\qquad \tau\in\bar\Omega_0,
\label{eq:h_0_est1.01}
\end{equation}
whereas if $\beta\in\Z_{>0}$,
\begin{multline}
|h_0(\tau)|=\Ordo\big((1+|\lambda(\tau)|)^{\beta}\log(2+|\lambda(\tau)|)\big)
=\Ordo\Big(\Big(1+\frac{1}{\im\,\tau}\Big)
\exp(\beta\pi/\im\,\tau)\Big),\qquad \tau\in\bar\Omega_0.
\label{eq:h_0_est1.02}
\end{multline}
Both estimates \eqref{eq:h_0_est1.01} and \eqref{eq:h_0_est1.02}
hold uniformly, for fixed $\beta$. This means that we have an effective
estimate of the function $h=h_\infty$ in the region $\bar\Omega_0$. 
In that context, we use periodicity to reduce to $-1\le\re\,\tau<1$,
and note that if $\tau\notin\bar\Omega_0$,
we find that for $N=\mathbbm{n}^\ast_{\mathrm{alg}}(\tau)$, the iterate
$(\mathbbm{g}_2^\ast)^N(\tau)$ is in $\bar\calD_\Theta\subset\bar\Omega_0$,
so that we may apply the estimate \eqref{eq:h_0_est1.01} at that point.
Moreover, by the monotonicity of the imaginary parts of the iterates, we have
\begin{equation}
\im\,(\mathbbm{g}_2^\ast)^N(\tau)>\im\,(\mathbbm{g}_2^\ast)^N-1(\tau)>\cdots
>\im\,\tau,  
\end{equation}
and, consequently, the estimate \eqref{eq:h_0_est1.01} entails that
\begin{equation}
|h_0\circ(\mathbbm{g}_2^\ast)^N(\tau)|
=\Ordo\Big(\exp\big(\beta\pi/\im(\mathbbm{g}_2^\ast)^N(\tau)\big)\Big)
=\Ordo\big(\exp(\beta\pi/\im\,\tau)\big)  
\label{eq:est_h0_Omega1.1.exp}
\end{equation}
holds uniformly for $\tau\in\Hyp\setminus\bar\Omega_0$ with $-1\le\re\,\tau<1$,
provided that $\beta\notin\Z_{>0}$. For $\beta\in\Z_{>0}$, we instead have the
estimate
\begin{equation}
|h_0\circ(\mathbbm{g}_2^\ast)^N(\tau)|
=\Ordo\Big(\Big(1+\frac{1}{\im\,\tau}\Big)\exp(\beta\pi/\im\,\tau)\Big), 
\label{eq:est_h0_Omega1.1.exp2}
\end{equation}
uniformly in $\tau\in\Hyp\setminus\bar\Omega_0$. 
We obtain the following proposition.

\begin{prop}
Suppose $f:\Hyp\to\C$ is holomorphic and meets the exponential growth bound
\eqref{eq:expbeta_est1} for a real parameter $\beta$ with $0<\beta<+\infty$.
Then, if $\beta$ is not an integer, there exists a $2$-periodic harmonic
function $h:\Hyp\to\C$ such that $h=f$ along the semicircle $\Te_+$, with
\begin{equation}
|h(\tau)|=\Ordo\big((1+\mathbbm{n}^\ast_{\mathrm{alg}}(\tau))
\big(\exp(\beta\pi/\im\,\tau)\big)\big),
\end{equation}
with an implied constant that is uniform in $\tau\in\Hyp$. On the other hand,
if $\beta$ is an integer, then there is still such a harmonic function, but the
growth estimate gets slightly worse:
\begin{equation}
|h(\tau)|=\Ordo\Big(\Big(1+\frac{1}{\im\,\tau}\Big)
\big(\exp(\beta\pi/\im\,\tau)\Big)\Big),
\end{equation}
uniformly in $\tau\in\Hyp$.
\label{prop:8.7.1}
\end{prop}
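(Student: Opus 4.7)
The plan is to follow exactly the template used in the proofs of Propositions \ref{prop:8.5.1} and \ref{prop:8.6.1}, since all the essential groundwork has already been laid in the paragraphs preceding the statement. First, I would invoke Corollary \ref{cor:Hyperext1} to extend the function $h_0$ constructed above (as $H_0\circ\lambda$) to a $2$-periodic harmonic function $h = h_{\infty}:\Hyp\to\C$ that agrees with $f$ along $\Te_+$. This step requires no growth information on $f$; it is supplied by the harmonic extension algorithm of Subsection \ref{subsec:Solutionscheme-gen}.

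Second, I would observe that the desired estimate already holds on the relative closure $\bar\Omega_0$: when $\beta\notin\Z_{>0}$ this is precisely \eqref{eq:h_0_est1.01}, and when $\beta\in\Z_{>0}$ it is \eqref{eq:h_0_est1.02}. So the problem reduces to controlling $h$ on $\Hyp\setminus\bar\Omega_0$, and by the $2$-periodicity of $h$ it suffices to consider $\tau\in\Hyp\setminus\bar\Omega_0$ with $-1\le\re\,\tau<1$.

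Third, for such $\tau$, I would apply the pointwise iteration identity of Lemma \ref{prop:pointwiseest_iter} with $N = \mathbbm{n}^\ast_{\mathrm{alg}}(\tau)$, which yields
\[
|h(\tau)|\le |h\circ(\mathbbm{g}_2^\ast)^N(\tau)|+N\,\|f_{\mathrm{sym}}\|_{(\im\,\tau)}.
\]
The terminal point $(\mathbbm{g}_2^\ast)^N(\tau)$ lies in $\bar\calD_\Theta\subset\bar\Omega_0$, and its imaginary part exceeds $\im\,\tau$ by the monotonicity recorded in Remark \ref{rem:im_part_growing}, so the first summand is bounded by \eqref{eq:est_h0_Omega1.1.exp} in the non-integer case and by \eqref{eq:est_h0_Omega1.1.exp2} in the integer case. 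For the second summand, I would use the key observation that every $\zeta\in\Hyp\cap\D$ with $\im\,\zeta\ge\im\,\tau$ satisfies $\Mfun(\zeta)=1/\im\,\zeta\le 1/\im\,\tau$, so the hypothesis \eqref{eq:expbeta_est2} on $f_{\mathrm{sym}}$ gives $\|f_{\mathrm{sym}}\|_{(\im\,\tau)}=\Ordo(\exp(\beta\pi/\im\,\tau))$ with a uniform constant.

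Combining these two estimates delivers $|h(\tau)|=\Ordo((1+\mathbbm{n}^\ast_{\mathrm{alg}}(\tau))\exp(\beta\pi/\im\,\tau))$ for non-integer $\beta$, and $|h(\tau)|=\Ordo((1+1/\im\,\tau)\exp(\beta\pi/\im\,\tau))$ for integer $\beta$; in the latter case I use that $\mathbbm{n}^\ast_{\mathrm{alg}}(\tau)\le \tfrac12+\tfrac{1}{2\im\,\tau}$ by Remark \ref{rem:n*estimate}, so the $N$-factor is absorbed into the prefactor $1+1/\im\,\tau$ already present from the Dirichlet-step estimate. There is essentially no genuine obstacle left at this stage: the only point requiring mild care is keeping track of which of the two estimates \eqref{eq:H_0_est1.01}–\eqref{eq:H_0_est1.02} (supplied by Proposition \ref{prop:powergrowth}) applies, and verifying that the logarithmic loss incurred at integer $\beta$ translates precisely to the $(1+1/\im\,\tau)$ prefactor claimed in the statement.
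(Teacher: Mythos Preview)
Your proposal is correct and follows essentially the same approach as the paper's own proof: invoke Corollary \ref{cor:Hyperext1} to get the extension, verify the estimate on $\bar\Omega_0$ via \eqref{eq:h_0_est1.01}--\eqref{eq:h_0_est1.02}, and then handle the complement using Lemma \ref{prop:pointwiseest_iter} together with \eqref{eq:expbeta_est2}, \eqref{eq:est_h0_Omega1.1.exp}--\eqref{eq:est_h0_Omega1.1.exp2}, and Remark \ref{rem:n*estimate}. Your explicit observation that $\Mfun(\zeta)=1/\im\,\zeta$ for $\zeta\in\Hyp\cap\D$ is exactly the reason the $f_{\mathrm{sym}}$ term is controlled by $\exp(\beta\pi/\im\,\tau)$, and your remark that the $N$-factor is absorbed into $(1+1/\im\,\tau)$ in the integer case is precisely what the paper's appeal to Remark \ref{rem:n*estimate} accomplishes.
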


\begin{proof}
By the harmonic extension algorithm in Subsection
\ref{subsec:Solutionscheme-gen}, the function $h_0$ extends to harmonic
function $h=h_\infty:\Hyp\to\C$ that is $2$-periodic and solves $h=f$ along
$\Te_+$ (see Corollary \ref{cor:Hyperext1}). To obtain the claimed estimate,
we observe first that it holds on $\bar\Omega_0$, by
\eqref{eq:est_h0_Omega0.1}. 
In the remainder $\Hyp\setminus\bar\Omega_0$, we use the $2$-periodicity of
$h$ to restrict to the half-strip with $-1\le\re\,\tau<1$, and then we apply
Lemma \ref{prop:pointwiseest_iter} to get the asserted estimates from
\eqref{eq:expbeta_est2} as well as from the estimates
\eqref{eq:est_h0_Omega1.1.exp} and \eqref{eq:est_h0_Omega1.1.exp2} combined
with Remark \ref{rem:n*estimate}. 
\end{proof}

We now obtain the holomorphic hyperbolic Fourier series expansion
claimed in Theorem \ref{thm:function->coeff}. 

\begin{proof}[Proof of Theorem \ref{thm:function->coeff}]
If $h:\Hyp$ is the harmonic function of Proposition \ref{prop:8.7.1},  
we may appeal to Proposition \ref{prop:function->coeff1.0} with a slightly
bigger value of $\beta$. This guarantees that $h$ has a harmonic Fourier
series expansion \eqref{eq:Fourierseries1.01}:
\[
h(\tau)=\sum_{n\in\Z}c_n\,e_n(\tau),\qquad\tau\in\Hyp,
\]
where the coefficients are recovered by the formula \eqref{eq:get_coeff_cn},
that is, for any $s>0$, we have
\begin{equation}
c_n=\frac{\e^{\pi|n|s}}{2}\int_{-1}^{1}\e^{-\imag\pi nt}h(t+\imag s)\,\diff t,
\label{eq:get_coeff_cn01.ny}
\end{equation}
so that
\begin{equation}
|c_n|\le\frac{\e^{\pi|n|s}}{2}\int_{-1}^{1}|h(t+\imag s)|\,\diff t.
\label{eq:get_coeff_cn01.ny=ny}
\end{equation}
In case that $\beta$ is not an integer, we have from Proposition
\ref{prop:8.7.1} applied to \eqref{eq:get_coeff_cn01.ny=ny} that if $C>0$
is the implicit constant in the estimate of Proposition \ref{prop:8.7.1},
\begin{multline}
|c_n|\le C\,\e^{\pi|n|s}\exp(\beta\pi/s)\Big(1+\int_{-1}^{1}
\mathbbm{n}^\ast_{\mathrm{alg}}(t+\imag s)\,\frac{\diff t}{2}\Big)
\\
=C\,\e^{\pi|n|s}\exp(\beta\pi/s)\Big(\frac{1}{\pi^2}\log^2\frac{1}{s}+
\Ordo\Big(\log\frac{1}{s}\Big)\Big),
\label{eq:get_coeff_cn01.ny=ny2}
\end{multline}
provided $s>0$ is small enough. 
Here, we used the comparison of height functions in
Proposition \ref{prop:n=n*} as well as the asymptotic average height
calculation of Proposition \ref{prop:n_estimate_ave}.
For large $|n|$, we plug in $s=\beta^{\frac12}|n|^{-\frac12}$, which gives that
\[
|c_n|=\Ordo\big((\log^2|n|)\exp\big(2\pi\sqrt{\beta|n|}\big)\big)
\]
as $|n|\to+\infty$. In the remaining case when $\beta$ is an integer,
the analogous argument gives
\[
|c_n|=\Ordo\big(|n|^{\frac12}\exp\big(2\pi\sqrt{\beta|n|}\big)\big),
\]
again as $|n|\to+\infty$. Since $h=f$ holds on the semicircle $\Te_+$,
the Schwarz transform recovers $f$: $\shin\,h=f$. Moreover, the
harmonic Fourier series coefficients of $h$ become the holomorphic
hyperbolic Fourier series coefficients of $f$ in a natural fashion: $a_n=c_n$
for $n\ge0$, while $b_n=c_{-n}$ for $n>0$. The
proof is now complete.
\end{proof}

\section{Hyperbolic Fourier series expansion of a
point mass}
\label{sec:Diracmass}

\subsection{The Cauchy and Poisson kernels}
\label{subsec:Cauchy_Poisson}
For a given point $x\in\R$, we consider the associated \emph{Cauchy kernel}
\begin{equation}
f_x(\tau):=\frac{1}{\imag2\pi}\,\frac{1}{x-\tau},\qquad \tau\in\Hyp,
\label{eq:Cauchykernel1}
\end{equation}
whose boundary values on $\R$ in the sense of distribution theory are given by
\[
2f_x(t)=\delta_x(t)+\frac{\imag}{\pi}\,\mathrm{pv}\,\frac{1}{t-x},
\]
where ``pv'' indicates the \emph{principal value} interpretation of the
given function.
In particular, a direct calculation gives that
\[
2\re f_x(\tau)=P(\tau,x)=\pi^{-1}\frac{\im\tau}{|x-\tau|^2},\qquad \tau\in\Hyp,
\]
which is the Poisson kernel, and hence the harmonic extension of the unit 
point mass at $x$. 
Now, according to Theorem \ref{thm:hfs2} and its improvement
Corollary \ref{cor:8.5.2}, there exist coefficients $a_n=a_n(f_x)$ and
$b_n=b_n(f_x)$ with growth
\[
a_n,b_n=\Ordo\big((n+1)\log^2(2+n)\big)
\]
as $n\to+\infty$, such that
\[
f_x(\tau)=a_0+\sum_{n\in\Z_{>0}}\big(a_n\,\e^{\imag\pi n\tau}+b_n\,
\e^{-\imag\pi n/\tau}\big),\qquad\tau\in\Hyp.
\]
Moreover, as such, these coefficients are unique. Taking real parts on
both sides, we see that
\begin{multline}
\pi^{-1}\frac{\im\,\tau}{|x-\tau|^2}=
2\re\,f_x(\tau)=A_0(x)+\sum_{n\in\Z_{>0}}\big(A_n(x)\,\e^{\imag\pi n\tau}
+B_n(x)\,\e^{-\imag\pi n/\tau}\big)
\\
+\sum_{n\in\Z_{<0}}\big(A_n(x)\,\e^{-\imag\pi n\bar\tau}
+B_n(x)\,\e^{\imag\pi n/\bar\tau}\big),\qquad\tau\in\Hyp,
\end{multline}
where we write
\begin{equation}
A_0(x):=2\re\, a_0(f_x),\quad \forall n\in\Z_{>0}:\,\,\,A_n(x):=a_n(f_x)\,\,\,
\text{and}\,\,\,B_n(x):=b_n(f_x),
\label{eq:A_n_B_n}
\end{equation}
and the coefficient functions get extended to negative indices via the 
symmetry property
\begin{equation}
A_n(x)=\bar A_{-n}(x),\quad B_n(x)=\bar B_{-n}(x).
\label{eq:basicsymmetryA_nB_n}
\end{equation}
Here, the "bar" indicates complex conjugation of the whole expression. 
The Cauchy kernel has the functional properties
\begin{equation}
\bar f_{-x}\circ\Rtrans^\ast(\tau)=\bar f_{-x}(-\bar\tau)=f_x(\tau),\qquad
\tau\in\Hyp,
\label{eq:functionalprop0}
\end{equation}
and, for $x\ne0$,
\begin{equation}
f_x\circ\Strans(\tau)=f_x(-1/\tau)=\frac{1}{\imag 2\pi x}+
x^{-2}f_{-1/x}(\tau),\qquad\tau\in\Hyp.
\label{eq:functionalprop1}
\end{equation}
As we plug in the hyperbolic Fourier series expansion on both sides of
\eqref{eq:functionalprop0}, we find that
\begin{multline} 
\bar f_{-x}(-\bar\tau)=
\bar a_0(f_{-x})+\sum_{n\in\Z_{>0}}\big(\bar a_n(f_{-x})\,
\e^{\imag\pi n\tau}+\bar b_n(f_{-x})\,\e^{-\imag\pi n/\tau}\big)
\\
=a_0(f_{x})+
\sum_{n\in\Z_{>0}}\big(a_n(f_{x})\,\e^{\imag\pi n\tau}
+b_n(f_{x})\,\e^{-\imag\pi n/\tau}\big)
=f_{x}(\tau),\qquad\tau\in\Hyp.
\label{eq:identifycoeff0}
\end{multline}
On the other hand, if we instead work with the identity 
\eqref{eq:functionalprop1} we obtain that 
\begin{multline}
f_x(-1/\tau)=
a_0(f_x)+\sum_{n\in\Z_{>0}}\big(a_n(f_x)\,\e^{-\imag\pi n/\tau}
+b_n(f_x)\,\e^{\imag\pi n\tau}\big)
\\
=\frac{1}{\imag2\pi x}+x^{-2}a_0(f_{-1/x})+
x^{-2}\sum_{n\in\Z_{>0}}\big(a_n(f_{-1/x})\,\e^{\imag\pi n\tau}
+b_n(f_{-1/x})\,\e^{-\imag\pi n/\tau}\big)
\\
=\frac{1}{\imag2\pi x}+x^{-2}f_{-1/x}(\tau),\qquad\tau\in\Hyp.
\label{eq:identifycoeff1}
\end{multline}
Both sides of \eqref{eq:identifycoeff0} and \eqref{eq:identifycoeff1}
are hyperbolic Fourier series, so in view of Theorem \ref{thm:uniq1},
the corresponding coefficients must coincide, given that the coefficient
growth is rather modest. These two identities lead to symmetry properties 
of the functions $A_n$ and $B_n$. 

\begin{prop} {\rm{(symmetry properties)}}

\noindent {\rm(a)} We have that  $A_0(x)$ is real-valued and even, so that 
$A_0(-x)=A_0(x)$ holds for each $x\in\R$. Moreover, for $x\in\R_{\ne0}$, 
we have that $A_0(x)=x^{-2}A_0(-1/x)$. 

\noindent{\rm(b)} For $n\in\Z_{\ne0}$, the functions $A_n(x),B_n(x)$ have the symmetry 
properties that for each $x\in\R$, 
\[
A_n(x)=\bar A_{-n}(x)=A_{-n}(-x),\quad B_n(x)=\bar B_{-n}(x)=B_{-n}(-x),
\]
and, moreover, for $x\in\R_{\ne0}$, they are connected via 
\[
B_n(x)=x^{-2}A_n(-1/x).
\]
\label{prop:symmetry011}
\end{prop}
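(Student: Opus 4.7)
The plan is to deduce all the asserted symmetries by combining the two functional identities \eqref{eq:identifycoeff0} and \eqref{eq:identifycoeff1} with the uniqueness theorem for hyperbolic Fourier series (Theorem \ref{thm:uniq1}). The coefficients $a_n(f_x)$ and $b_n(f_x)$ of the Cauchy kernel satisfy polynomial growth in $n$ (indeed, of order $\Ordo(n\log^2 n)$ uniformly in $x$ on compact sets by Corollary \ref{cor:hfs1.1}), which puts them well inside the uniqueness regime of Theorem \ref{thm:uniq1}. Part (a), the realness of $A_0(x)$, is built into its definition $A_0(x) := 2\re a_0(f_x)$.

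First, I would apply Theorem \ref{thm:uniq1} to \eqref{eq:identifycoeff0}, equating the two hyperbolic Fourier expansions of $\bar f_{-x}(-\bar\tau) = f_x(\tau)$ in the variable $\tau$. Matching the constant term and the coefficients of $e^{\imag\pi n\tau}$ and $e^{-\imag\pi n/\tau}$ for $n>0$ yields
\[
\bar a_0(f_{-x})=a_0(f_x),\qquad \bar a_n(f_{-x})=a_n(f_x),\qquad \bar b_n(f_{-x})=b_n(f_x),
\]
which via \eqref{eq:A_n_B_n} translate to $\bar A_n(-x)=A_n(x)$ and $\bar B_n(-x)=B_n(x)$ for $n>0$. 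Taking real parts of the constant-term identity gives $A_0(-x)=A_0(x)$, completing the first assertion of (a). Combining $\bar A_n(-x)=A_n(x)$ with the extension convention \eqref{eq:basicsymmetryA_nB_n}, namely $A_n=\bar A_{-n}$, we obtain $A_n(-x)=\bar A_n(x)=A_{-n}(x)$, that is $A_n(x)=A_{-n}(-x)$. The analogous computation for $B_n$ gives the corresponding symmetry in (b).

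Next, I would apply Theorem \ref{thm:uniq1} to \eqref{eq:identifycoeff1}, which expresses $f_x\circ\Strans$ in two different ways as a hyperbolic Fourier series. The transformation $\tau\mapsto-1/\tau$ swaps the roles of the basis functions $e_n$ and $\Jop_1^\ast e_n$, so matching the coefficient of $e^{\imag\pi n\tau}$ on the two sides yields $b_n(f_x)=x^{-2}a_n(f_{-1/x})$ for $n>0$, which is exactly $B_n(x)=x^{-2}A_n(-1/x)$. The constant term gives $a_0(f_x)=\frac{1}{\imag 2\pi x}+x^{-2}a_0(f_{-1/x})$; since $(\imag 2\pi x)^{-1}$ is purely imaginary for real $x\neq 0$, taking real parts yields $A_0(x)=x^{-2}A_0(-1/x)$, finishing (a).

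The proof is largely bookkeeping once the ingredients are in place. The only point needing care is verifying the growth hypotheses of Theorem \ref{thm:uniq1} at both instances: the series on the right-hand sides of \eqref{eq:identifycoeff0} and \eqref{eq:identifycoeff1} inherit polynomial coefficient growth in $n$ from the original expansion of $f_x$, so Theorem \ref{thm:uniq1} indeed applies and the coefficient-matching is legitimate. No genuinely new analytic input is required beyond the uniqueness theorem and the two functional identities established above.
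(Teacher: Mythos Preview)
Your proof is correct and follows essentially the same approach as the paper: the paper likewise derives all the symmetries by identifying coefficients in the hyperbolic Fourier series on both sides of \eqref{eq:identifycoeff0} and \eqref{eq:identifycoeff1}, invoking Theorem \ref{thm:uniq1} (the relevant growth condition being noted in the text just before the proposition) together with the extension convention \eqref{eq:basicsymmetryA_nB_n}. Your write-up simply makes the bookkeeping explicit.
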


\begin{rem}
It is a consequence of the relations \eqref{eq:identifycoeff0} and 
\eqref{eq:identifycoeff1} that 
\[
\bar a_0(f_{-x})=a_0(f_x),\quad a_0(f_x)=\frac{1}{\imag2\pi x}
+x^{-2}a_0(f_{-1/x}),
\]
which at least looks superficially stronger than the corresponding assertion 
of part (a) to the effect that $A_0(-x)=a_0(x)$ and  $A_x(t)=x^{-2}A_0(-1/x)$.
\end{rem}

\begin{proof}[Proof of Proposition \ref{prop:symmetry011}]
The assertions are immediate consequences of 
\eqref{eq:basicsymmetryA_nB_n} and
the identities \eqref{eq:identifycoeff0}, \eqref{eq:identifycoeff1}, by identifying 
the coefficients in the two holomorphic Fourier series expansions. 
\end{proof}

\subsection{Hyperbolic Fourier series expansion of the Cauchy kernel}
\label{subsec:HFS_Cauchy}

For $x\in\R$, we let $h_x$ denote the function
\begin{equation}
\label{eq:function_h_x}
h_x(\tau):=\frac{1}{\pi}\int_{\Te_+}
\frac{\frac12-\re\,\lambda(\tau)}{|\lambda(\tau)-\lambda(\eta)|^2}\,f_x(\eta)\,
|\lambda'(\eta)|\,|\diff\eta|,\qquad\tau\in\calD_\Theta.
\end{equation}
Here, $f_x$ denotes the Cauchy kernel given by \eqref{eq:Cauchykernel1}.
Any possible worries about the convergence of the integral 
\eqref{eq:function_h_x} are dispelled by the relations \eqref{eq:lambdaproperty3}
and \eqref{eq:lambdaproperty4} combined with \eqref{eq:lambda'property3} 
and the comment thereafter. 
From the properties of the Poisson kernel in the upper half-plane 
\eqref{eq:Poissonkernel1.1} and the mapping properties of $\lambda$ outlined
in Proposition \ref{prop:lambda-classical}, it follows that $h_x$ is harmonic in the
fundamental domain $\calD_\Theta$, and that its boundary values on the 
semicircle $\Te_+$ equal those of $f_x$, so that $h_x=f_x$ on $\Te_+$. 
Moreover, as the function $h_x$ may be interpreted as the composition of the 
harmonic function
\[
H_x(\zeta):=\frac{1}{\pi}\int_{\Te_+}
\frac{\frac12-\re\,\zeta}{|\zeta-\lambda(\eta)|^2}\,f_x(\eta)\,
|\lambda'(\eta)|\,|\diff\eta|,\qquad\zeta\in\C_{\re<\frac12},
\] 
with the modular lambda function, that is, $h_x=H_x\circ\lambda$,  we conclude
from the mapping properties and the $2$-periodicity of $\lambda$ that $h_x$
extends to a $2$-periodic harmonic function in the domain $\Omega_0$ given by
\eqref{eq:Omega_0.01}. This puts us in the setting of the algorithm of Subsection
\ref{subsec:Solutionscheme-gen}, with $f_x$ as $f$ and $h_x$ as $h_0$. By 
Corollary \ref{cor:Hyperext1}, $h_x$ extends harmonically to all of $\Hyp$, and by
Corollary \ref{cor:HFSthm1}, the fact that $h_x=f_x$ holds on $\Te_+$  tells us that
$f_x=\shin\,h_x$ is the holomorphic hyperbolic Fourier series expansion of $f_x$.
Moreover, the harmonic Fourier series coefficients of $h_x$ give us the 
holomorphic hyperbolic Fourier series coefficients of $f_x$. 
We proceed to obtain expressions for the harmonic Fourier coefficients 
$c_n=c_n(h_x)$ associated with the expansion of $h_x$:
\begin{equation}
h_x(\tau)=\sum_{n\in\Z}c_n\,e_n(\tau)=c_0+
\sum_{n\in\Z_{>0}}c_n\,\e^{\imag\pi n\tau}+
\sum_{n\in\Z_{<0}}c_n\,\e^{\imag\pi n\bar\tau},\qquad\tau\in\Hyp.
\end{equation}
The corresponding holomorphic hyperbolic Fourier series expansion of the Cauchy
kernel $f_x$ is then
\begin{equation}
f_x(\tau)=\shin\,h_x(\tau)=c_0+
\sum_{n\in\Z_{>0}}c_n\,\e^{\imag\pi n\tau}+
\sum_{n\in\Z_{<0}}c_n\,\e^{\imag\pi n/\tau},\qquad\tau\in\Hyp,
\end{equation}
which means that the coefficients $a_n(f_x)$ and $b_n(f_x)$ are given by
\begin{equation}
a_0(f_x)=c_0(h_x),\,\,\,\,\forall n\in\Z_{>0}:\,\,\,a_n(f_x)=c_n(h_x),\,\,\,
b_n(f_x)=c_{-n}(h_x).
\label{eq:a_n_b_n_c_n}
\end{equation}
Clearly, the function $f_x$ meets the uniform growth bound 
\[
|f_x(\tau)|=\Ordo(\im\,\tau)^{-1},\qquad\tau\in\Hyp, 
\]
which is uniform not only in $\tau$ but also in $x\in\R$.
In particular, the growth condition \eqref{eq:polgrowth0*} holds with parameter
value $k=1$, so that by 
Corollary \ref{cor:8.5.2}, we know that 
\begin{equation}
a_n(f_x),b_n(f_x)=\Ordo\big((n+1)\log^2(2+n)\big)
\label{eq:coefficient_growth_a_n_b_n798}
\end{equation}
holds uniformly in $n$ and $x\in\R$. We can improve this estimate for 
large values of $|x|$. 

\begin{prop}
We have the growth control
\[
A_n(x),\,B_n(x)=\Ordo\Big(\frac{(|n|+1)\log^2(2+|n|)}{1+x^2}\Big),
\]
uniformly in $n\in\Z$ and $x\in\R$.
\label{prop:improved_bound_A_n_B_n999}
\end{prop}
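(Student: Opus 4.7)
The plan is to leverage two pieces of information already in hand: the uniform coefficient bound \eqref{eq:coefficient_growth_a_n_b_n798} that was produced in Subsection \ref{subsec:HFS_Cauchy} (which is already uniform in $x\in\R$ because the growth condition $|f_x(\tau)|=\Ordo((\im\tau)^{-1})$ is itself uniform in $x$), and the inversion symmetries of Proposition \ref{prop:symmetry011}(b). The idea is that the uniform bound alone handles the regime $|x|\le1$, while in the regime $|x|\ge1$ the symmetry $B_n(x)=x^{-2}A_n(-1/x)$ trades spatial decay for a visit to a point near the origin, where the uniform bound takes over again.

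More concretely, I would first record that \eqref{eq:coefficient_growth_a_n_b_n798} asserts
\[
\sup_{x\in\R}|A_n(x)|+\sup_{x\in\R}|B_n(x)|=\Ordo\big((|n|+1)\log^2(2+|n|)\big)
\]
as $|n|\to+\infty$, which immediately yields the desired estimate in the compact region $|x|\le1$, since there $1+x^2\le2$ is bounded above and below. The remaining task is the region $|x|\ge1$.

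For $x\in\R$ with $|x|\ge1$, I would invoke the two identities from Proposition \ref{prop:symmetry011}(b), namely $B_n(x)=x^{-2}A_n(-1/x)$ together with its inverse form $A_n(x)=x^{-2}B_n(-1/x)$ (obtained by substituting $x\mapsto-1/x$ in the first). Applying the uniform bound of the previous paragraph to the values $A_n(-1/x)$ and $B_n(-1/x)$, for which $|-1/x|\le1$, gives
\[
|A_n(x)|+|B_n(x)|\le\frac{1}{x^{2}}\Big(\sup_{|y|\le1}|A_n(y)|+\sup_{|y|\le1}|B_n(y)|\Big)
=\Ordo\Big(\frac{(|n|+1)\log^2(2+|n|)}{x^{2}}\Big),
\]
and since $x^{2}\ge\tfrac12(1+x^{2})$ when $|x|\ge1$, this is exactly the bound claimed, with implied constant uniform in $n$ and $x$.

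I do not anticipate a genuine obstacle here; the proposition is really a bookkeeping consequence of two earlier results. The only minor point to verify carefully is that the uniform-in-$x$ nature of Corollary \ref{cor:8.5.2} as applied to the family $\{f_x\}_{x\in\R}$ is indeed available, i.e.\ that the implicit constant in $|f_x(\tau)|\le C(\im\tau)^{-1}$ does not depend on $x$ (which is plain from \eqref{eq:Cauchykernel1} since $|x-\tau|\ge\im\tau$), so that the constants in the proof of Corollary \ref{cor:8.5.2} propagate uniformly in $x$. Once this is noted, the two-case split above completes the argument.
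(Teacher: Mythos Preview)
Your proposal is correct and follows essentially the same route as the paper: the paper's proof is the one-line remark that the assertion is immediate from the uniform-in-$x$ bound \eqref{eq:coefficient_growth_a_n_b_n798} together with the symmetry properties of Proposition \ref{prop:symmetry011}, and your two-case split $|x|\le1$ versus $|x|\ge1$ is precisely how one unpacks that remark. The only small addition worth making is that the case $n=0$ (for $A_0$) uses the self-symmetry $A_0(x)=x^{-2}A_0(-1/x)$ from part (a) of Proposition \ref{prop:symmetry011} rather than part (b), but the argument is otherwise identical.
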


\begin{proof}
The assertion is immediate from \eqref{eq:coefficient_growth_a_n_b_n798}
combined with the symmetry properties of Proposition \ref{prop:symmetry011}.
\end{proof}

We now attempt to calculate the coefficient functions $A_n(x)$ and $B_n(x)$. 
To obtain these coefficients, we appeal to formula \eqref{eq:get_coeff_cn}, 
which asserts that for any $s>0$, we have
\begin{equation}
c_n(h_x)=\frac{\e^{\pi|n|s}}{2}\int_{-1}^{1}\e^{-\imag\pi nt}
h_x(t+\imag s)\,\diff t,\qquad n\in\Z.
\label{eq:get_coeff_cn01.ny_h_x}
\end{equation}
As we implement the formula \eqref{eq:function_h_x} which defines $h_x$ in
$\calD_\Theta$, the formula \eqref{eq:get_coeff_cn01.ny_h_x} becomes, for
$s>1$,
\begin{equation}
c_n(h_x)=\frac{\e^{\pi|n|s}}{2\pi}\int_{-1}^{1}\int_{\Te_+}\e^{-\imag\pi nt}
\frac{\frac12-\re\,\lambda(t+\imag s)}{|\lambda(t+\imag s)-
\lambda(\eta)|^2}\,f_x(\eta)\,|\lambda'(\eta)|\,|\diff\eta|\,\diff t,\qquad n\in\Z.
\label{eq:get_coeff_cn01.ny_h_x.2}
\end{equation}
Fubini's theorem allows us to interchange the integrations, which leaves us 
with the formula (for $n\in\Z$)
\begin{equation}
c_n(h_x)=\frac{\e^{\pi|n|s}}{2\pi}\int_{\Te_+}\int_{-1}^{1}\e^{-\imag\pi nt}
\frac{\frac12-\re\,\lambda(t+\imag s)}{|\lambda(t+\imag s)-
\lambda(\eta)|^2}\,\diff t\,f_x(\eta)\,|\lambda'(\eta)|\,|\diff\eta|.
\label{eq:get_coeff_cn01.ny_h_x.3}
\end{equation}
The instance $n=0$ is of particular interest:
\begin{equation}
c_0(h_x)=\frac{1}{2\pi}\int_{\Te_+}\int_{-1}^{1}
\frac{\frac12-\re\,\lambda(t+\imag s)}{|\lambda(t+\imag s)-
\lambda(\eta)|^2}\,\diff t\,f_x(\eta)\,|\lambda'(\eta)|\,|\diff\eta|
\label{eq:get_coeff_cn01.ny_h_x.4}
\end{equation}
In view of the asymptotics \eqref{eq:lambdaf2.02}, it follows that
$\lambda(t+\imag s)\to0$ uniformly in $t\in\R$ as $s\to+\infty$, 
so that by letting $s\to+\infty$ in \eqref{eq:get_coeff_cn01.ny_h_x.4},
we find that
\begin{equation}
a_0(f_x)=c_0(h_x)=\frac{1}{2\pi}\int_{\Te_+}
\frac{|\lambda'(\eta)|}{|\lambda(\eta)|^2}\,f_x(\eta)\,|\diff\eta|.
\label{eq:get_coeff_cn01.ny_h_x.5}
\end{equation}
As we take real parts in \eqref{eq:get_coeff_cn01.ny_h_x.5},
we obtain an expression for $A_0$.

\begin{prop}
\label{prop:A_0_formula999}
We have the formula for $A_0$:
\begin{multline}
A_0(x)=2\re\,a_0(f_x)=\frac{1}{2\pi}\int_{\Te_+}
\frac{|\lambda'(\eta)|}{|\lambda(\eta)|^2}\,P(\eta,x)\,|\diff\eta|
=\frac{1}{2\pi^2}\int_{\Te_+}
\frac{|\lambda'(\eta)|}{|\lambda(\eta)|^2}\,\frac{\im\,\eta}{|x-\eta|^2}
\,|\diff\eta|,\qquad x\in\R,
\label{eq:get_coeff_cn01.ny_h_x.6}
\end{multline}
so that  $A_0(x)>0$ for each $x\in\R$. Moreover, the function $A_0$ 
is $C^\infty$-smooth on $\R$.
\end{prop}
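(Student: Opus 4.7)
The plan is to read the formula directly off of equation \eqref{eq:get_coeff_cn01.ny_h_x.5}, which already identifies $a_0(f_x) = c_0(h_x)$ as an integral of $f_x$ against the positive weight $|\lambda'(\eta)|/(2\pi|\lambda(\eta)|^2)$ on $\Te_+$. Since this weight is real, taking real parts commutes with the integration, so we are reduced to the computation $2\re f_x(\eta) = \pi^{-1}\im\frac{1}{x-\eta} = \pi^{-1}\frac{\im\eta}{|x-\eta|^2} = P(\eta,x)$, which was already recorded in Subsection \ref{subsec:Cauchy_Poisson}. This delivers the displayed formula for $A_0(x)$.

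For the strict positivity $A_0(x) > 0$, I would simply observe that every factor in the integrand is strictly positive for $\eta \in \Te_+$ and $x \in \R$: the line element $|\diff\eta|$ is positive, $|\lambda'(\eta)| > 0$ (since $\lambda$ is a local conformal equivalence between $\calD_{2\Theta}$ and the slit plane by Proposition \ref{prop:lambda-classical}, so $\lambda'$ does not vanish in $\Hyp$), $|\lambda(\eta)|^2 > 0$ as $\vartheta_{00}$ is zero-free, and $P(\eta,x) > 0$ whenever $\im\eta > 0$. Hence the integral is strictly positive.

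For the $C^\infty$-smoothness, the plan is differentiation under the integral sign. The only place where trouble could occur is at the two endpoints $\eta = \pm 1$ of $\Te_+$, because when $x$ is near $\pm 1$ the Poisson kernel $P(\eta,x)$ and its $x$-derivatives blow up like negative powers of $|x - \eta|$, hence like negative powers of the arclength distance $\epsilon$ to the cusp. The key point is that the competing weight $|\lambda'(\eta)|/|\lambda(\eta)|^2$ decays at the cusps faster than any polynomial in $\epsilon$. Indeed, the asymptotics \eqref{eq:lambdaproperty3}, \eqref{eq:lambdaproperty4}, and \eqref{eq:lambda'property3} give, upon parametrizing $\eta = \e^{\imag(\pi - \epsilon)}$ near $-1$ (and symmetrically near $+1$), that $|\lambda(\eta)| \asymp \e^{\pi/\epsilon}$ while $|\lambda'(\eta)| \asymp \epsilon^{-2} \e^{\pi/\epsilon}$, so $|\lambda'(\eta)|/|\lambda(\eta)|^2 = \Ordo(\epsilon^{-2} \e^{-\pi/\epsilon})$. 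Each $x$-derivative of $P(\eta,x)$ contributes at worst a further factor $|x - \eta|^{-k} = \Ordo(\epsilon^{-k})$, which is annihilated by $\e^{-\pi/\epsilon}$. Therefore, on any compact set of $x$-values, the integrand and all its $x$-derivatives are bounded by an integrable function independent of $x$, and the standard dominated convergence argument yields smoothness to all orders.

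The only step that requires any real work is the cusp asymptotic estimate of $|\lambda'|/|\lambda|^2$; everything else is bookkeeping. Since the needed local expansions for $\lambda$ and $\lambda'$ are already established in Subsection 7.2, no substantive obstacle remains.
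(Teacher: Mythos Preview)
Your proposal is correct and matches the paper's own argument essentially line for line: both read the formula off \eqref{eq:get_coeff_cn01.ny_h_x.5} by taking real parts, note positivity of the integrand, and justify differentiation under the integral sign via the cusp asymptotics \eqref{eq:lambdaproperty3}, \eqref{eq:lambdaproperty4}, \eqref{eq:lambda'property3} for $|\lambda'|/|\lambda|^2$. The only cosmetic difference is that the paper differentiates the Cauchy-kernel integral for $a_0(f_x)$ and takes real parts afterward, whereas you differentiate the Poisson-kernel integral for $A_0(x)$ directly; the underlying estimate is identical.
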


\begin{proof}
Taking real parts in \eqref{eq:get_coeff_cn01.ny_h_x.5} gives us
the indicated formula for $A_0$, and the pointwise positivity is
immediate from the formula. As for the smoothness, we calculate
successive derivatives in \eqref{eq:get_coeff_cn01.ny_h_x.5}:
\[
\partial_x^j a_0(f_x)=\frac{1}{2\pi}\int_{\Te_+}
\frac{|\lambda'(\eta)|}{|\lambda(\eta)|^2}\,\partial_x^j f_x(\eta)\,
|\diff\eta|
=\frac{(-1)^j}{\imag 4\pi^2}\int_{\Te_+}
\frac{|\lambda'(\eta)|}{|\lambda(\eta)|^2}\, (x-\eta)^{-j-1}\,
|\diff\eta|
\] 
which integrals converge nicely by the known asymptotics of $\lambda$
and $\lambda'$ near the cusp points at $\pm1$ (see 
\eqref{eq:lambdaproperty3}, \eqref{eq:lambdaproperty4}, 
\eqref{eq:lambda'property3}). Taking real parts, this smoothness carries 
over to $A_0$.
\end{proof}

We proceed to evaluate
$A_n$ for $n>0$, since the symmetry properties of  Proposition 
\ref{prop:symmetry011} then give $A_n$ and $B_n$ for all $n\ne0$. 
In view of \eqref{eq:A_n_B_n} and \eqref{eq:a_n_b_n_c_n} in combination 
with \eqref{eq:get_coeff_cn01.ny_h_x.3}, we have, for $s>1$, 
\begin{multline}
A_n(x)=a_n(f_x)=c_n(h_x)
\\
=\frac{1}{2\pi}
\int_{\Te_+}\int_{-1}^{1}\e^{-\imag\pi n(t+\imag s)}
\frac{\frac12-\re\,\lambda(t+\imag s)}{|\lambda(t+\imag s)-
\lambda(\eta)|^2}\,\diff t\,f_x(\eta)\,|\lambda'(\eta)|\,|\diff\eta|
\\
=\frac{1}{2\pi}
\int_{\Te_+}\int_{\gamma_s}\e^{-\imag\pi n\tau}
\frac{\frac12-\re\,\lambda(\tau)}{|\lambda(\tau)-
\lambda(\eta)|^2}\,\diff \tau\,f_x(\eta)\,|\lambda'(\eta)|\,|\diff\eta|,
\qquad n\in\Z_{>0},
\label{eq:get_coeff_cn01.ny_h_x.7}
\end{multline}
provided that the line segment $\gamma_s:=[-1,1]+\imag s$
oriented from left to right.
The mapping properties of the modular lambda function presented in 
Proposition \ref{prop:lambda-classical} entail that 
$\re\,\lambda(\eta)=\frac12$ for $\eta\in\Te_+$, and we calculate that
\begin{equation}
\frac{\frac12-\re\,\lambda(\tau)}{|\lambda(\tau)-\lambda(\eta)|^2}
=\frac{1-\lambda(\tau)}{2(1-\lambda(\eta))(\lambda(\eta)-\lambda(\tau))}
+\frac{\bar\lambda(\tau)}
{2\bar\lambda(\eta)(\bar\lambda(\eta)-\bar\lambda(\tau))}.
\label{eq:Poisson_lambdadecomp}
\end{equation}
The mapping properties of $\lambda$ also show that 
as $\eta$ moves along the semicircle $\Te_+$ in the counterclockwise
direction, we have the equality of forms
\begin{equation}
\imag\,\lambda'(\eta)\,\diff\eta=-\imag\,\bar\lambda'(\eta)\,\diff\bar\eta=
|\lambda'(\eta)|\,|\diff\eta|.
\label{eq:Poisson_lambdadecomp2}
\end{equation}
It now follows from \eqref{eq:Poisson_lambdadecomp} that
\begin{multline}
\int_{\gamma_s}\e^{-\imag\pi n\tau}
\frac{\frac12-\re\,\lambda(\tau)}{|\lambda(\tau)-
\lambda(\eta)|^2}\,\diff \tau
\\
=\frac12\int_{\gamma_s}\e^{-\imag\pi n\tau}
\Big(\frac{1-\lambda(\tau)}
{(1-\lambda(\eta))(\lambda(\eta)-\lambda(\tau))}
+\frac{\bar\lambda(\tau)}
{\bar\lambda(\eta)(\bar\lambda(\eta)-\bar\lambda(\tau))}\Big)
\,\diff \tau
\\
=\frac1{2(1-\lambda(\eta))}\int_{\gamma_s}\e^{-\imag\pi n\tau}
\,\frac{1-\lambda(\tau)}
{\lambda(\eta)-\lambda(\tau)}
\,\diff \tau,
\qquad n\in\Z_{>0}.
\label{eq:Poisson_lambdadecomp3}
\end{multline}
The omitted term in the last step of \eqref{eq:Poisson_lambdadecomp3}
vanishes, as it is the complex conjugate of
\begin{multline}
\frac12\int_{\gamma_s}\e^{\imag\pi n\bar\tau}
\frac{\lambda(\tau)}{\lambda(\eta)(\lambda(\eta)-\lambda(\tau))}
\,\diff \tau
=\frac{\e^{2\pi ns}}{2\lambda(\eta)}\int_{\gamma_s}
\e^{\imag\pi n\tau}
\frac{\lambda(\tau)}{\lambda(\eta)-\lambda(\tau)}
\,\diff \tau=0,
\\
\label{eq:Poisson_lambdadecomp4}
\end{multline}
where the latter equality holds because the integral 
\begin{equation}
\int_{\gamma_s}\e^{\imag\pi n\tau}
\frac{\lambda(\tau)}{\lambda(\eta)-\lambda(\tau)}
\,\diff \tau
\label{eq:Poisson_lambdadecomp5}
\end{equation} 
is independent of $s>1$ (by deformation of countours), and at the same 
time it tends to $0$ as $s\to+\infty$, for each $n>0$. Moreover, a 
calculation gives that
\[
\partial_\tau\,\frac{1-\lambda(\tau)}{\lambda(\eta)-\lambda(\tau)}=
(1-\lambda(\eta))\,\frac{\lambda'(\tau)}{(\lambda(\eta)-\lambda(\tau))^2},
\]
so that by integration by parts and periodicity,
\begin{equation}
\int_{\gamma_s}\e^{-\imag\pi n\tau}\,
\frac{1-\lambda(\tau)}{\lambda(\eta)-\lambda(\tau)}\,\diff\tau
=\frac{1-\lambda(\eta)}{\imag\pi n}\int_{\gamma_s}\e^{-\imag\pi n\tau}
\,\frac{\lambda'(\tau)}{(\lambda(\eta)-\lambda(\tau))^2}\,\diff \tau.
\label{eq:Poisson_lambdadecomp6}
\end{equation}
If we combine \eqref{eq:get_coeff_cn01.ny_h_x.7} with 
\eqref{eq:Poisson_lambdadecomp2}, \eqref{eq:Poisson_lambdadecomp3}, 
and \eqref{eq:Poisson_lambdadecomp6}, we arrive at the formula
\begin{multline}
A_n(x)
=\frac{1}{2\pi}
\int_{\Te_+}\int_{\gamma_s}\e^{-\imag\pi n\tau}
\frac{\frac12-\re\,\lambda(\tau)}{|\lambda(\tau)-
\lambda(\eta)|^2}\,\diff \tau\,f_x(\eta)\,|\lambda'(\eta)|\,|\diff\eta|
\\
=\frac{1}{4\pi^2n}\int_{\Te_+}\int_{\gamma_s}\e^{-\imag\pi n\tau}
\,\frac{\lambda'(\tau)}{(\lambda(\eta)-\lambda(\tau))^2}\diff\tau\,
f_x(\eta)\lambda'(\eta)\,\diff\eta,
\qquad n\in\Z_{>0},
\label{eq:get_coeff_cn01.ny_h_x.7.1}
\end{multline}
for any $s>1$. 

We shall need the polynomials $S_n$ defined by the following property.
 
\begin{prop}
\label{prop:pols_S_n}
For each $n\in\Z_{>0}$, there exists a unique polynomial $S_n$ of 
degree $n$ with $S_n(0)=0$,
such that 
\[
\e^{-\imag\pi n\tau}-S_n(1/\lambda(\tau))=\Ordo(1)
\]
holds as $\im\,\tau\to+\infty$.
\end{prop}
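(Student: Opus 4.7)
The plan is to expand both sides in powers of the nome $q = \e^{\imag\pi\tau}$ near the cusp at $+\imag\infty$ and reduce the assertion to solving a triangular linear system for the coefficients of $S_n$.

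First I would invoke the Fourier expansion \eqref{eq:lambdaf2.02}, giving $\lambda(\tau) = 16q - 128q^2 + \Ordo(q^3)$ as $q\to0$. Viewed as a holomorphic function of $q$ near $q=0$, $\lambda$ vanishes simply with $\lambda'(0) = 16 \neq 0$, so $1/\lambda(\tau)$ extends meromorphically in $q$ with a simple pole at $q=0$, whose Laurent expansion reads
\[
\frac{1}{\lambda(\tau)} = \frac{1}{16q} + \frac{1}{2} + \Ordo(q), \qquad q\to 0.
\]
Raising to powers, each $(1/\lambda(\tau))^j$ is a convergent Laurent series in $q$ whose principal part has exactly $j$ terms, beginning with $16^{-j}q^{-j}$.

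Next I would write the unknown polynomial as $S_n(w) = c_1 w + c_2 w^2 + \cdots + c_n w^n$, which automatically satisfies $S_n(0)=0$, and reformulate the condition
\[
\e^{-\imag\pi n\tau} - S_n(1/\lambda(\tau)) = q^{-n} - \sum_{j=1}^n c_j\,(1/\lambda(\tau))^j = \Ordo(1)
\]
as the vanishing of the principal part in $q$. Matching the coefficients of $q^{-n}, q^{-n+1}, \ldots, q^{-1}$ yields $n$ linear equations in the $n$ unknowns $c_1,\ldots,c_n$. Ordered from most to least singular, this system is upper triangular with diagonal entries $16^{-j}$, all nonzero; in particular the top equation forces $c_n = 16^n$, so $S_n$ has degree exactly $n$. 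The triangular system has a unique solution, delivering both existence and uniqueness.

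No serious analytic obstacle arises here: the crux is the simple-zero behavior of $\lambda$ at the cusp $+\imag\infty$, which ensures that $(1/\lambda(\tau))^j$ has a pole of order exactly $j$ and makes the coefficient-matching triangular. Once this is observed the argument reduces to elementary linear algebra, and the only bookkeeping is to verify that no higher-order contribution from the $\Ordo(q)$ tail of $1/\lambda$ can affect the principal-part equations, which is automatic because the expansion converges in a punctured neighborhood of $q=0$.
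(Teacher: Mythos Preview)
Your argument is correct and is essentially the same as the paper's: both pass to the nome $q=\e^{\imag\pi\tau}$, use that $\lambda$ has a simple zero at $q=0$ so that $(1/\lambda)^j$ has a pole of exact order $j$, and then match principal parts to determine $S_n$ uniquely. The paper phrases the last step as the vector-space assertion that the principal parts of $(1/\lambda)^k$ for $k=1,\ldots,n$ span the same space modulo holomorphic functions as $q^{-1},\ldots,q^{-n}$, while you make the equivalent triangular linear system explicit.
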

 
\begin{proof}
In terms of the nome variable $q=\e^{\imag\pi\tau}$, the lambda function
is given by 
\[
\lambda(\tau)=\sum_{n=1}^{+\infty}\hat\lambda(n)\,q^n,
\] 
where $\hat\lambda(1)=16$, and the series converges absolutely for 
$|q|<1$, with a simple zero at $q=0$ and no other zeros in the disk 
$|q|<1$. As a consequence, for $k\in\Z_{>0}$,  the function 
\[
\frac{1}{(\lambda(\tau))^k}
\]
is holomorphic in $q\in\D\setminus\{0\}$ with a pole of order $k$ at $q=0$. 
Consequently, modulo the holomorphic functions in the variable $q\in\D$, 
the functions $1/(\lambda(\tau))^k$, $k=1,\ldots,n$ span the same vector 
space as $q^{-j}$ for $j=1,\ldots,n$. In particular, there exists a unique 
polynomial $S_n$ of degree $n$ with $S_n(0)=0$ such that
\[
q^{-n}-S_n(1/\lambda(\tau))
\]
is holomorphic at the point $q=0$. Finally, for holomorphic functions in
a punctured disk, boundedness at the puncture is the same as extending
holomorphically across it.
\end{proof}

In terms of the polynomial $S_n$, we obtain expressions for $A_n$ and $B_n$, 
for $n>0$.

\begin{prop}
\label{prop:formula_A_n_B_n999}
For $n\in\Z_{>0}$, the function $A_n$ is given by
\[
A_n(x)=-\frac{1}{4\pi^2n}\int_{\Te_+}\frac{S_n(1/\lambda(\eta))}{(x-\eta)^2}\,
\diff\eta,\qquad x\in\R,
\]
while the function $B_n$ is given by 
\[
B_n(x)=-\frac{1}{4\pi^2n}\int_{\Te_+}
\frac{S_n(1/\lambda(\eta))}{(1+x\eta)^2}\,\diff\eta,\qquad x\in\R.
\]
As a consequence, the functions $A_n$ and $B_n$ are both 
$C^\infty$-smooth on $\R$.
\end{prop}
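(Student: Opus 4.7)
The plan is to start from the already-derived double-integral representation \eqref{eq:get_coeff_cn01.ny_h_x.7.1} for $A_n(x)$, evaluate the inner $\tau$-integral exactly by residue calculus using the polynomial $S_n$ from Proposition \ref{prop:pols_S_n}, and then transform the resulting single integral over $\Te_+$ by an integration by parts.

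For fixed $\eta\in\Te_+$ and $s>1$, I would write $\e^{-\imag\pi n\tau}=S_n(1/\lambda(\tau))+R_n(\tau)$ where, by Proposition \ref{prop:pols_S_n}, $R_n$ is $2$-periodic and bounded near $+\imag\infty$. In the nome coordinate $q=\e^{\imag\pi\tau}$, the contour $\gamma_s=[-1,1]+\imag s$ becomes the circle $|q|=\e^{-\pi s}$ traversed once counterclockwise, and the $R_n$-contribution to
\[
I(\eta):=\int_{\gamma_s}\e^{-\imag\pi n\tau}\frac{\lambda'(\tau)}{(\lambda(\eta)-\lambda(\tau))^2}\,\diff\tau
\]
is the integral around this circle of a function that extends holomorphically across $q=0$ (the denominator is regular there since $\lambda(\eta)\ne 0$), hence vanishes by Cauchy's theorem. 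For the $S_n$-part the substitution $w=\lambda(\tau)$ transforms the integral into $\int_{\lambda(\gamma_s)}S_n(1/w)/(\lambda(\eta)-w)^2\,\diff w$, where $\lambda(\gamma_s)$ is a small loop winding once counterclockwise around $0$. A direct Laurent-series computation (or, equivalently, using that the residue at infinity vanishes because $S_n(0)=0$, together with $\mathrm{Res}_{w=\lambda(\eta)}=-S_n'(1/\lambda(\eta))/\lambda(\eta)^2$) gives $I(\eta)=2\pi\imag\,S_n'(1/\lambda(\eta))/\lambda(\eta)^2$, independently of $s$.

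Substituting this together with $f_x(\eta)=1/(2\pi\imag(x-\eta))$ into \eqref{eq:get_coeff_cn01.ny_h_x.7.1} collapses $A_n(x)$ to a single integral over $\Te_+$ whose integrand contains the factor $S_n'(1/\lambda(\eta))\lambda'(\eta)/\lambda(\eta)^2=-\frac{d}{d\eta}S_n(1/\lambda(\eta))$. Integration by parts against $1/(x-\eta)$ then produces the stated formula for $A_n$; the boundary terms at $\eta=\pm1$ vanish because by \eqref{eq:lambdaproperty3}--\eqref{eq:lambdaproperty4} one has $|\lambda(\eta)|\to+\infty$ exponentially fast along $\Te_+$ near these cusps, so $S_n(1/\lambda(\eta))=\Ordo(1/\lambda(\eta))$ decays exponentially and dominates any (at worst polynomial) singularity of $1/(x-\eta)$, even at $x=\pm1$. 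The formula for $B_n$ is then immediate from the symmetry $B_n(x)=x^{-2}A_n(-1/x)$ of Proposition \ref{prop:symmetry011} via $(-1/x-\eta)^{-2}=x^2/(1+x\eta)^2$ (with $x=0$ handled by continuity), and the $C^\infty$-smoothness claim follows by differentiation under the integral sign, since the exponential decay of $S_n(1/\lambda(\eta))$ at the cusps absorbs every power of $1/(x-\eta)$ or $\eta/(1+x\eta)$ produced by repeated differentiation in $x$. The most delicate step in the whole scheme is the vanishing of the $R_n$-contribution to $I(\eta)$: what makes it go through is that the passage to the nome $q$ turns the whole integrand into a $1$-form that extends holomorphically across $q=0$, and then the residue machinery applies cleanly.
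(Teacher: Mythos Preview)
Your proposal is correct and follows essentially the same route as the paper: split $\e^{-\imag\pi n\tau}$ using the polynomial $S_n$, kill the remainder term by Cauchy's theorem, evaluate the $S_n$-contribution by residues in the $\lambda$-plane, and then integrate by parts over $\Te_+$, with the $B_n$ formula and smoothness handled exactly as you describe. The only cosmetic difference is that you dispose of the $R_n$-term in the nome variable $q$ whereas the paper does it in the $\zeta=\lambda(\tau)$ variable, but this is the same argument.
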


\begin{proof}
We split in accordance with Proposition \ref{prop:pols_S_n}
the function 
\begin{equation}
\e^{-\imag\pi n\tau}=S_n(1/\lambda(\tau))+R_n(\lambda(\tau)),
\end{equation}
or, equivalently, 
\begin{equation}
\e^{-\imag\pi n\lambda^{-1}(\zeta)}=S_n(1/\zeta)+R_n(\zeta),
\label{eq:decomp_explambda}
\end{equation}
where $R_n(\zeta)$ is holomorphic and bounded in a neighborhood of  $0$. 
The reason that we may represent the remainder as a function of
$\lambda(\tau)$ is that $\e^{-\imag\pi n\tau}$ is $2$-periodic, while
$\lambda(\tau)$ is $2$-periodic too and has the mapping properties outlined
in Proposition \ref{prop:lambda-classical}. Indeed, we see that the function 
$R_n(\zeta)$ extends holomorphically to the slit plane $\C\setminus[1,+\infty[$. 
As we apply the change-of-variables formula to the inner integral in 
\eqref{eq:get_coeff_cn01.ny_h_x.7.1}, we find that
\begin{equation}
\int_{\gamma_s}\e^{-\imag\pi n\tau}
\,\frac{\lambda'(\tau)}{(\lambda(\eta)-\lambda(\tau))^2}\diff\tau\,
=\int_{\lambda(\gamma_s)}\e^{-\imag\pi n\lambda^{-1}(\zeta)}
\,\frac{\diff\zeta}{(\lambda(\eta)-\zeta)^2}
\label{eq:decomp_explambda2}
\end{equation}
where the contour $\lambda(\gamma_s)$ is a closed loop in the half-plane 
$\C_{\re<\frac12}$ which goes around the origin once in the positive direction 
(counter-clockwise). In view of \eqref{eq:decomp_explambda}, we can handle
the expressions $S_n(1/\zeta)$ and $R_n(\zeta)$ separately. By Cauchy's theorem,
\begin{equation}
\int_{\lambda(\gamma_s)}R_n(\zeta)
\,\frac{\diff\zeta}{(\lambda(\eta)-\zeta)^2}=0,\qquad \eta\in\Te_+,
\end{equation}
since $\lambda(\eta)\in\frac12+\imag\R$ is exterior to the loop 
$\lambda(\gamma_s)$. On the other hand, Cauchy's theorem for the domain
exterior to the loop $\lambda(\gamma_s)$ gives that 
\begin{equation}
\int_{\lambda(\gamma_s)}S_n(1/\zeta)
\,\frac{\diff\zeta}{(\lambda(\eta)-\zeta)^2}=
-\imag2\pi\,\partial_\zeta\,S_n(1/\zeta)\Big|_{\zeta:=\lambda(\eta)},
\qquad \eta\in\Te_+.
\end{equation}
It now follows from \eqref{eq:decomp_explambda2} and 
\eqref{eq:decomp_explambda2} together with the above two computations
that
\begin{equation}
\int_{\gamma_s}\e^{-\imag\pi n\tau}
\,\frac{\lambda'(\tau)}{(\lambda(\eta)-\lambda(\tau))^2}\diff\tau\,
=-\imag2\pi\,\partial_\zeta\,S_n(1/\zeta)\Big|_{\zeta:=\lambda(\eta)}
=\imag2\pi\,\lambda(\eta)^{-2}\,S'_n(1/\lambda(\eta)),
\label{eq:decomp_explambda3}
\end{equation}
so that \eqref{eq:get_coeff_cn01.ny_h_x.7.1} gets simplified to
(recall the definition \eqref{eq:Cauchykernel1} of $f_x$)
\begin{multline}
A_n(x)
=\frac{1}{4\pi^2n}\int_{\Te_+}\int_{\gamma_s}\e^{-\imag\pi n\tau}
\,\frac{\lambda'(\tau)}{(\lambda(\eta)-\lambda(\tau))^2}\diff\tau\,
f_x(\eta)\lambda'(\eta)\,\diff\eta
\\
=\frac{1}{4\pi^2n}\int_{\Te_+}
\frac{\lambda'(\eta)S_n'(1/\lambda(\eta))}{\lambda(\eta)^2}
\frac{\diff\eta}{x-\eta},
\qquad n\in\Z_{>0},
\label{eq:get_decomp_explambda4}
\end{multline}
where $s>1$ in the middle expression. Finally, by integration by parts, 
it follows that
\begin{multline}
A_n(x)
=\frac{1}{4\pi^2n}\int_{\Te_+}
\frac{\lambda'(\eta)S_n'(1/\lambda(\eta))}{\lambda(\eta)^2}
\frac{\diff\eta}{x-\eta}
=-\frac{1}{4\pi^2n}\int_{\Te_+}
S_n(1/\lambda(\eta))\,\frac{\diff\eta}{(x-\eta)^2},
\qquad n\in\Z_{>0},
\label{eq:get_decomp_explambda5}
\end{multline}
as claimed, but we need to motivate why there is no contributions from the
endpoints. The reason is that according to \eqref{eq:lambdaproperty3}
and \eqref{eq:lambdaproperty4}, the function $1/\lambda(\eta)$ decays rapidly
to $0$ as $\eta\in\Te_+$ approaches the endpoints $\pm1$ while the 
polynomial $S_n$ vanishes at the origin, and the function $(x-\eta)^{-1}$ cannot
perturb much this rapid decay to $0$. This establishes the formula for $A_n$, 
and the formula for $B_n(x)$ follows from Proposition \ref{prop:symmetry011}
for $x\in\R_{\ne0}$, and for $x=0$ by continuity in the parameter $x$.
As for the smoothness, we may represent successive derivatives of $A_n$ and
$B_n$ as absolutely convergent integrals:
 \begin{equation}
A_n^{(j)}(x)
=\frac{(-1)^{j+1}(j+1)!}{4\pi^2n}\int_{\Te_+}
S_n(1/\lambda(\eta))\,\frac{\diff\eta}{(x-\eta)^{j+2}},
\qquad n\in\Z_{>0},
\label{eq:get_decomp_explambda6}
\end{equation}
for $j=0,1,2,\ldots$, and likewise for $B_n$. This settles the smoothness issue
and completes the proof.
\end{proof}

\begin{rem}
We note that if $\lambda^{-1}$ is  (as in Subsection
\ref{subsec:lambda-map}) the local inverse mapping to $\lambda$, we may apply
the change-of-variables
$\zeta=\lambda(\eta)$ in Proposition \ref{prop:formula_A_n_B_n999}
and arrive at the representation (the contour is assumed directed upwards)
\begin{multline}
A_n(x)=\frac{1}{4\pi^2n}\int_{\frac12+\imag\R}
\frac{S_n(1/\zeta)}{(x-\lambda^{-1}(\zeta))^2}\,
(\lambda^{-1})'(\zeta)\,\diff\zeta
\\
=\frac{1}{\imag 4\pi^3n}\int_{\frac12+\imag\R}
\frac{S_n(1/\zeta)}{\zeta(1-\zeta)\big(xF_{\frac12,\frac12;1}(\zeta)-
\imag F_{\frac12,\frac12;1}(1-\zeta)\big)}\,
\,\diff\zeta,\qquad x\in\R,
\end{multline}
with an analogous formula for $B_n$ as well. Here, the Gauss hypergeometric
function appears, as it is well-known that the local
inverse $\lambda^{-1}$ may be expressed in terms of a ratio of
hypergeometric functions (see, e.g., \cite{bh2}).
\end{rem}

The explicit representation of the functions $A_n,B_n$ has leads to
a simplified formula for interpolating basis $u_{(n,0)}$ and $u_{(0,n)}$ in
spacelike quarterplanes. We formulate the statement for $u_{(n,0)}$ with $n\ge0$
only. As for the notation, see Theorem \ref{thm:PBeurling}.

\begin{cor}
\label{cor:PBeurlingformula}
The function $u_{(0,0)}$ is real-valued and has the symmetry property
$u_{(0,0)}(x,y)=u_{(0,0)}(-x,-y)=u_{(0,0)}(y,x)$,
and moreover, it is expressed by the formula
\[
u_{(0,0)}(x,y)=\frac{1}{2\pi}\int_{\Te_+}
\frac{|\lambda'(\eta)|}{|\lambda(\eta)|^2}\,\e^{\imag x\eta+\imag y/\eta}
\,|\diff\eta| \quad\text{if}\quad x>0,\,y<0.
\]
Furthermore, for $n\in\Z_{>0}$, the function $u_{(n,0)}$ is real-valued,
and in the spacelike quarter-planes we have that
\[
u_{(n,0)}(x,y)=0\quad\text{if} \quad x<0,\,\,y>0,
\]
while
\[
u_{(n,0)}(x,y)=\frac{1}{2\pi n}\int_{\Te_+}
S_n(1/\lambda(\eta))\,(x-y\eta^{-2})\,\e^{\imag x\eta+\imag y/\eta}\,
\diff\eta\quad\text{if} \quad x>0,\,\,y<0.
\]  
\end{cor}

\begin{proof}
We focus our attention to $n>0$, the case $n=0$ being analogous.
The real-valuedness is a consequence of the symmetry properties of
$A_n$ as presented in Proposition \ref{prop:symmetry_hfs_first1}. 
In view of Proposition \ref{prop:formula_A_n_B_n999} combined with the
definition of $u_{(n,0)}$ in Theorem \ref{thm:PBeurling}, we have that
\begin{equation}
u_{(n,0)}(x,y)=\int_\R \e^{\imag xt+\imag y/t}A_n(t)\,\diff t
=-\frac{1}{4\pi^2n}\int_{\Te_+}S_n(1/\lambda(\eta))\int_{\R}
\frac{\e^{\imag xt+\imag y/t}}{(t-\eta)^2}\,\diff t\diff \eta,  
\end{equation}
where in the last step we used Fubini's theorem to interchange the order of
integration. When $x<0,\,y>0$, the function $t\mapsto\e^{\imag xt+\imag y/t}$
extends to a bounded holomorphic function in the lower half-plane,
and we may deform the contour to see that
\[
\int_{\R}
\frac{\e^{\imag xt+\imag y/t}}{(t-\eta)^2}\,\diff t=0,\qquad \eta\in\Hyp,
\]
from which $u_{(n,0)}(x,y)=0$ is immediate. On the other hand, when $x>0,\,y>0$,
we have instead that
\[
\int_\R \frac{\e^{\imag xt+\imag y/t}}{(t-\eta)^2}\,\diff t=
-2\pi\,(x-y\eta^{-2})\,\e^{\imag x\eta+\imag y/\eta},\qquad\eta\in\Hyp,  
\]
by deforming the contour in the upper half-plane,
which gives the corresponding expression for $u_{(n,0)}(x,y)$. 
\end{proof}

\begin{proof}[Proof of Corollary \ref{cor:hfs1.1}]
The fact that the functions $A_0$ and $A_n,B_n$ for $n\in\Z_{\ne0}$ belong to the
test function space $C^\infty_1(\R\cup\{\infty\})$ follows from  
Propositions \ref{prop:A_0_formula999} and 
\ref{prop:formula_A_n_B_n999} combined with the symmetry properties
of Proposition \ref{prop:symmetry011}. The growth control on the
coefficient functions
can be found in Proposition \ref{prop:improved_bound_A_n_B_n999}.
The expansion of the Dirac point mass $\delta_x$ is a direct consequence of
\eqref{eq:A_n_B_n} if we take into account the correspondence between
distributions
on the extended real line and harmonic functions in $\Hyp$ outlined
in Subsection \ref{subsec:harmext-gen}.
\end{proof}

\begin{proof}[Proof of Corollary \ref{cor:biorthogonal_2.3.5}]
Since Proposition \ref{prop:symmetry011} asserts that $\Jop_1 A_0=A_0$ holds,  
while  $\Jop_1 A_n=B_n$ and consequently $\Jop_1 B_n=A_n$ holds for 
$n\in\Z_{\ne0}$, we may focus our attention to the assertions that
\[
\langle A_0,e_m\rangle_\R=\delta_{m,0},\qquad m\in\Z,
\]
while 
\[
\langle A_n,e_m\rangle_\R=\delta_{m,n},\qquad m\in\Z,\,\,n\in\Z_{\ne0},
\]
and 
\[
\langle B_n,e_m\rangle_\R=0,\qquad m\in\Z,\,\,n\in \Z_{\ne0}.
\]

We note that the quantities $\langle A_n,e_m\rangle_\R$ and
$\langle B_n,e_m\rangle_\R$ are well-defined in the usual integral sense 
by the estimate in \ref{cor:hfs1.1}, and that we in fact have
\begin{equation}
\langle A_n,e_m\rangle_\R,\,\,\langle B_n,e_m\rangle_\R
=\Ordo\big((|n|+1)\log^2(2+|n|)\Big)
\label{eq:coeff_e_m1}
\end{equation}
uniformly in $n$.
Next, we observe that in the sense of distribution theory,
we may represent
\[
e_m(t)=\int_\R e_m(x)\,\delta_x(t)\,\diff x,
\] 
so by the expansion of the Dirac point mass in Corollary \ref{cor:hfs1.1}
we have the hyperbolic Fourier series expansion
\[
e_m(t)=\e^{\imag\pi mt}=\langle A_0,e_m\rangle_\R+\sum_{n\in\Z_{\ne0}}\big(
\langle A_n,e_m\rangle_\R\,\e^{\imag\pi nt}+\langle B_n,e_m\rangle_\R
\,\e^{-\imag\pi n/t}\big),
\] 
understood as an equality of distributions on the extended real line. In view
of the modest growth of the coefficients established in \eqref{eq:coeff_e_m1},
the uniqueness aspect of Theorem \ref{thm:hfs1} allows us to equate the
coefficients. After all, the left-hand side is a hyperbolic Fourier series too.
This gives rise to all the claims regarding $\langle A_n,e_m\rangle_\R$ 
$\langle B_n,e_m\rangle_\R$. 
\end{proof}

\begin{proof}[Proof of Theorem \ref{thm:PBeurling}]
The assertion is an immediate consequence of the biorthogonality properties 
stated in Corollary \ref{cor:biorthogonal_2.3.5}.
\end{proof}  

\begin{thm}
The $L^1(\R)$-norms of $A_n,B_n$ obey the growth bound
\[
\int_\R |A_n(x)|\,\diff x=\int_\R|B_n(x)|\,\diff x=\Ordo(\log^3|n|)
\]
as $|n|\to+\infty$.
\label{thm:A_n_L1_est}
\end{thm}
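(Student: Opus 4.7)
The plan is to derive the $L^1$-bound by a duality argument built on top of the biorthogonality of Corollary \ref{cor:biorthogonal_2.3.5} and the slow coefficient growth for bounded harmonic functions established in Corollary \ref{cor:8.6.2}. First I would note that, by the pointwise estimate of Proposition \ref{prop:improved_bound_A_n_B_n999}, both $A_n$ and $B_n$ lie in $L^1(\R)$, so the pairing $\int_\R A_n(x)g(x)\,\diff x$ is defined for every $g\in L^\infty(\R)$ and standard $L^\infty$--$L^1$ duality (attained by $g=\overline{A_n}/|A_n|$) gives
\[
\|A_n\|_{L^1(\R)}=\sup_{\|g\|_{\infty}\le 1}\bigg|\int_\R A_n(x)\,g(x)\,\diff x\bigg|,
\]
with the analogous identity for $B_n$.

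Next I would identify each such pairing with a hyperbolic Fourier coefficient of $g$. Given $g\in L^\infty(\R)$, its Poisson extension $\tilde g:\Hyp\to\C$ is a bounded harmonic function with $\|\tilde g\|_{L^\infty(\Hyp)}\le\|g\|_\infty$; via Proposition \ref{prop:Schwartz} it represents $g$ as a distribution on the extended real line. Corollary \ref{cor:8.6.2} combined with Remark \ref{rem:8.6.3} then yields the HFS expansion of $g$ with coefficients obeying the uniform bound $|a_n(g)|,|b_n(g)|\le C\|g\|_\infty\log^3(2+|n|)$.

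The key step is the identity $\int_\R A_n(x)g(x)\,\diff x=a_n(g)$, which I would obtain by inserting the test function $\varphi=A_n$ into the distributional convergence formula of Definition \ref{defn:distrconv} applied to $g$. Since $A_n\in C^\infty_1(\R\cup\{\infty\})$ by Corollary \ref{cor:hfs1.1}, this is admissible; using $\Jop_1 A_n=B_n$ from Proposition \ref{prop:symmetry_hfs_first1} together with the three biorthogonality relations of Corollary \ref{cor:biorthogonal_2.3.5}, every pairing $\langle A_n,e_m\rangle_\R$ and $\langle B_n,e_m\rangle_\R$ that appears is $\delta_{m,n}$ or $0$, so the full series collapses to the single term $a_n(g)$. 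Combining this identity with the coefficient bound and taking the supremum over $\|g\|_\infty\le 1$ yields $\|A_n\|_{L^1(\R)}=\Ordo(\log^3|n|)$. The argument for $B_n$ runs identically, employing $\Jop_1 B_n=A_n$ in place.

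No serious obstacle is expected: the nominal difficulty lies in justifying the interchange of integration with the distributional series defining $a_n(g)$, but this is precisely the absolute-convergence requirement already built into Definition \ref{defn:distrconv}. In the present biorthogonal situation the collapse to a single term is then immediate, so no additional uniformity or approximation argument is needed.
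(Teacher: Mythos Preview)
Your proposal is correct and follows essentially the same route as the paper: identify $\int_\R A_n(x)g(x)\,\diff x$ with the hyperbolic Fourier coefficient $a_n(g)$ via biorthogonality, invoke Corollary~\ref{cor:8.6.2} with Remark~\ref{rem:8.6.3} for the uniform $\Ordo(\log^3|n|)$ bound, and take the supremum over $\|g\|_\infty\le1$ (the paper picks $g_n$ with $A_ng_n=|A_n|$ directly). The only cosmetic difference is that the paper obtains the equality $\|A_n\|_{L^1}=\|B_n\|_{L^1}$ from the change of variables $B_n(x)=x^{-2}A_n(-1/x)$ rather than rerunning the argument for $B_n$.
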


\begin{proof}
The equality of norms holds for $n\ne0$ by the change-of-variables formula
in the integral if we take into account the symmetry property of Proposition
\ref{prop:symmetry011}.
Next, by the argument employed in the proof of Corollary 
\ref{cor:biorthogonal_2.3.5}, we have that
the coefficients of the hyperbolic Fourier series expansion of a bounded 
measurable function $g\in L^\infty(\R)$ are given by
\[
a_n(g)=\langle A_n,g\rangle_\R=\int_\R A_n(x)g(x)\,\diff x,\quad
b_n(g)=\langle B_n,g\rangle_\R=\int_\R B_n(x)g(x)\,\diff x.
\]
We now estimate the $L^1(\R)$-norm of $A_n$, since the $L^1(\R)$-norm
of $B_n$ is just the same.
In view of Corollary \ref{cor:8.6.2}, we have the growth bound
\[
a_n(g)=\Ordo(\log^3|n|)\quad\text{as}\,\,\,\,|n|\to+\infty,
\]
and the estimate is uniform in the unit ball of $L^\infty(\R)$, by
Remark \ref{rem:8.6.3}.
So, by choosing $g=g_n$ with $|g_n|\le1$ and $A_ng_n=|A_n|$, 
the assertion is immediate.
\end{proof}

\begin{proof}[Proof of Theorem \ref{thm:interpolation0.01}]
The assertion is an immediate consequence of
Corollary \ref{cor:biorthogonal_2.3.5} and the estimate
of Theorem \ref{thm:A_n_L1_est}.
\end{proof}

\subsection{Summation properties of the biorthogonal system}

We may now identify the periodizations of the functions
$A_n$ and $B_n$.

\begin{thm}
The function $A_0$ has the symmetry $A_0(x)=x^{-2}A_0(-1/x)$
and the summation property
\[
\sum_{j\in\Z}A_0(x+2j)=\frac12,\qquad x\in\R.
\]
Moreover, for each $n\in\Z_{\ne0}$, we have the symmetry
property 
\[
B_n(x)=x^{-2}A_n(-1/x),\qquad x\in\R_{\ne0},
\]
and the summation properties
\[
\sum_{j\in\Z}A_n(x+2j)=\frac12\,{\e^{-\imag\pi nx}},\quad
\sum_{j\in\Z}B_n(x+2j)=0,
\]
hold for each $x\in\R$. 
\label{thm:symmetry+summation}
\end{thm}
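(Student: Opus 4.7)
The plan is to observe that the two symmetry statements are already available, and that the three summation identities follow directly from the biorthogonality of Corollary \ref{cor:biorthogonal_2.3.5} combined with the standard periodization argument used in Lemma \ref{lem:periodization}. The functions involved decay fast enough that everything converges with room to spare; there is no deep analytic obstacle.

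First I would note that the claimed symmetries $A_0(x)=x^{-2}A_0(-1/x)$ and $B_n(x)=x^{-2}A_n(-1/x)$ are exactly the content of Proposition \ref{prop:symmetry011}, so nothing new is needed here. The same proposition also provides the reflection rule $A_n(x)=A_{-n}(-x)$ and $B_n(x)=B_{-n}(-x)$, which will be used in the computation below. By Corollary \ref{cor:hfs1.1}, the functions $A_0,A_n,B_n$ all satisfy $\Ordo(1/(1+x^2))$-type bounds, so they lie in $L^1(\R)$ and their two-periodic periodizations converge absolutely almost everywhere, and in $L^1([-1,1])$.

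Next I would exploit the calculation inside the proof of Lemma \ref{lem:periodization}: if $\psi\in L^1(\R)$ and $\Psi(t):=\sum_{j\in\Z}\psi(t+2j)$, then the Fourier coefficients of the $2$-periodic function $\Psi$ are
\[
\widehat\Psi(m)=\frac12\int_{[-1,1]}\e^{-\imag\pi mt}\Psi(t)\,\diff t
=\frac12\int_\R \e^{-\imag\pi mt}\psi(t)\,\diff t,\qquad m\in\Z.
\]
I would apply this with $\psi=A_0$, $\psi=A_n$, and $\psi=B_n$ in turn. Using the symmetry $\psi(-t)$ (even for $A_0$; equal to $A_{-n}(t)$ for $A_n$; equal to $B_{-n}(t)$ for $B_n$) together with the biorthogonality relations of Corollary \ref{cor:biorthogonal_2.3.5}, the Fourier coefficients evaluate cleanly: for $A_0$ only $\widehat\Psi(0)=\tfrac12$ is nonzero; for $A_n$ only $\widehat\Psi(-n)=\tfrac12$ is nonzero; for $B_n$ every $\widehat\Psi(m)$ vanishes.

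Since each resulting Fourier series has at most one nonzero term, the uniqueness of Fourier series for $2$-periodic $L^1$ functions then gives the three identities
\[
\sum_{j\in\Z}A_0(x+2j)=\tfrac12,\quad
\sum_{j\in\Z}A_n(x+2j)=\tfrac12\,\e^{-\imag\pi nx},\quad
\sum_{j\in\Z}B_n(x+2j)=0,
\]
for almost every $x\in\R$. Smoothness of $A_0,A_n,B_n$ (Propositions \ref{prop:A_0_formula999} and \ref{prop:formula_A_n_B_n999}) together with the uniform $\Ordo(1/(1+x^2))$ decay make the periodizations continuous in $x$, so the identities upgrade to hold pointwise for every $x\in\R$. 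There is no essential obstacle; the only thing requiring care is the bookkeeping of the substitution $t\mapsto -t$ which converts the biorthogonality pairing $\int_\R A_n(t)\,\e^{\imag\pi mt}\diff t=\delta_{m,n}$ into the Fourier coefficient $\int_\R\e^{-\imag\pi mt}A_n(t)\diff t=\delta_{m,-n}$ that actually appears in $\widehat\Psi(m)$.
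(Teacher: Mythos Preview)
Your proposal is correct and follows essentially the same route as the paper: cite Proposition \ref{prop:symmetry011} for the symmetries, use the $\Ordo((1+x^2)^{-1})$ bounds from Corollary \ref{cor:hfs1.1} to justify the periodizations, compute their Fourier coefficients via the biorthogonality of Corollary \ref{cor:biorthogonal_2.3.5}, and then invoke Fourier uniqueness plus continuity to pass from almost-everywhere to everywhere. The only cosmetic difference is that the paper computes $\int_{[-1,1]}\pmb\Sigma[A_n](x)\,\e^{\imag\pi mx}\diff x=\delta_{m,n}$ directly with the positive sign and then subtracts $\tfrac12\e^{-\imag\pi nx}$, whereas you track the sign via the convention $\widehat\Psi(m)=\tfrac12\int_\R\e^{-\imag\pi mt}\psi(t)\diff t$; the $t\mapsto -t$ bookkeeping you flag is not actually needed since the biorthogonality already pairs $A_n$ against $e_m$ in the correct form.
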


\begin{proof}
The symmetry properties were established in 
Proposition \ref{prop:symmetry011}.
By Corollary \ref{cor:hfs1.1}, the functions $A_n,B_n$ are 
$C^\infty$-smooth and satisfy the estimate
\[
|A_0(x)|\le \frac{C_0}{1+x^2},\qquad x\in\R,
\]
and for $n\in\Z_{\ne0}$, the further estimates
\begin{equation}
|A_n(x)|,\,|B_n(x)|\le \frac{C_n}{1+x^2},\qquad x\in\R,
\end{equation}
for a sequence of positive constants $C_n$ with
\[
C_n=\Ordo\big((|n|+1)\log^2(|n|+2)\big)
\]
as $|n|\to+\infty$. Since $n$ is fixed here, we need not worry
about the growth of $C_n$. It is now easy to see that the
two periodization series
\[
\pmb{\Sigma}[A_n](x):=\sum_{j\in\Z}A_n(x+2j),\quad
\pmb{\Sigma}[B_n](x)=\sum_{j\in\Z}B_n(x+2j)
\]
converge absolutely and uniformly on $\R$, and the two sums
$\pmb\Sigma[A_n](x)$ and $\pmb\Sigma[B_n](x)$ are then 
$2$-periodic and continuous. Moreover, we calculate that for 
$m,n\in\Z$, 
\begin{equation}
\int_{[-1,1]}\pmb\Sigma[A_n](x)\,\e^{\imag\pi mx}\,\diff x=
\int_{[-1,1]}\sum_{j\in\Z}A_n(x+2j)\,\e^{\imag\pi m(x+2j)}\,\diff x
=\int_\R A_n(x)\,\e^{\imag\pi m x}\,\diff x=\delta_{m,n}
\end{equation}
where the last delta is in Kronecker's sense, and the right-most
equality relies on Corollary \ref{cor:biorthogonal_2.3.5}. Similarly,
for $n\in\Z_{\ne0}$, we obtain from  
Corollary \ref{cor:biorthogonal_2.3.5} that for $m\in\Z$,
\begin{equation}
\int_{[-1,1]}\pmb\Sigma[B_n](x)\,\e^{\imag\pi mx}\,\diff x=
\int_{[-1,1]}\sum_{j\in\Z}B_n(x+2j)\,\e^{\imag\pi m(x+2j)}\,\diff x
=\int_\R B_n(x)\,\e^{\imag\pi m x}\,\diff x=0.
\end{equation}
It is now an immediate consequence of the completeness of the
Fourier system 
that $\pmb\Sigma[B_n](x)=0$ holds almost everywhere, and then, 
by continuity, we conclude that $\Sigma[B_n](x)=0$ holds everywhere.
Similarly, to identify $\pmb\Sigma[A_n]$, we consider the $2$-periodic
function
\[
\Psi_n(x)=\pmb\Sigma[A_n](x)-\frac{1}{2}\,\e^{-\imag\pi n x},
\]
and see all its Fourier coefficients vanish. Consequently, then,
$\Psi_n$ vanishes almost everywhere, and hence everywhere, by
continuity.
\end{proof}

It is a natural question to ask if it is possible to express the properties
of Theorem \ref{thm:symmetry+summation} in a way that produces more
straightforward equations. One such approach is explored below.

\begin{cor}
For $n\in\Z_{\ne0}$, we put
\[
A_n^+(x):=A_n(x)+B_n(x),\quad A_n^-(x):=A_n(x)-B_n(x).
\]
These functions have the symmetries 
\[
A_n^+(x)=x^{-2}A_n^+(-1/x),\quad A_n^-(x)=-x^{-2}A_n^{-}(x),
\]
and the summation properties
\[
\sum_{j\in\Z}A_n^+(x+2j)=\frac12\,\e^{-\imag\pi nx},\quad \sum_{j\in\Z}
A_n^-(x+2j)=\frac12\,\e^{-\imag\pi nx}.
\]
hold for all $x\in\R$.
\label{cor:A_n^+_A_n^-}
\end{cor}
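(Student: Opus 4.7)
The plan is to derive both assertions directly from Theorem \ref{thm:symmetry+summation} by elementary manipulations, since $A_n^+$ and $A_n^-$ are defined as linear combinations of $A_n$ and $B_n$. No new analytical input is required; the content is purely a matter of organizing the symmetry and summation properties already available.

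For the summation properties, I would simply add and subtract the two identities in Theorem \ref{thm:symmetry+summation}. Since $\sum_{j\in\Z}A_n(x+2j)=\tfrac12\,\e^{-\imag\pi nx}$ and $\sum_{j\in\Z}B_n(x+2j)=0$, adding gives the claim for $A_n^+$ and subtracting gives the claim for $A_n^-$. The absolute and uniform convergence established in the proof of Theorem \ref{thm:symmetry+summation} (via the $\Ordo((1+x^2)^{-1})$ decay from Corollary \ref{cor:hfs1.1}) justifies termwise combination.

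For the symmetry properties, the plan is first to rewrite the relation $B_n(x)=x^{-2}A_n(-1/x)$ from Theorem \ref{thm:symmetry+summation} in a symmetric dual form. Substituting $x\mapsto -1/x$ in $B_n(x)=x^{-2}A_n(-1/x)$ and solving yields $A_n(x)=x^{-2}B_n(-1/x)$ for $x\in\R_{\ne0}$. Adding these two identities gives
\[
A_n^+(x)=A_n(x)+B_n(x)=x^{-2}B_n(-1/x)+x^{-2}A_n(-1/x)=x^{-2}A_n^+(-1/x),
\]
while subtracting yields
\[
A_n^-(x)=A_n(x)-B_n(x)=x^{-2}B_n(-1/x)-x^{-2}A_n(-1/x)=-x^{-2}A_n^-(-1/x),
\]
which are the claimed symmetries on $\R_{\ne0}$. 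These extend to all of $\R$ by continuity, recalling that the relevant smoothness and behavior at the origin/infinity are guaranteed because $A_n,B_n\in C^\infty_1(\R\cup\{\infty\})$ by Corollary \ref{cor:hfs1.1}.

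There is essentially no obstacle here; the statement is a bookkeeping consequence of Theorem \ref{thm:symmetry+summation}. The only minor point worth flagging is to check that the derived identity $A_n(x)=x^{-2}B_n(-1/x)$ is consistent at $x=0$ and $x=\infty$, which is handled by the fact that $\Jop_1 B_n=A_n$ in the sense of the action on $C^\infty_1(\R\cup\{\infty\})$ recorded in Proposition \ref{prop:symmetry_hfs_first1}.
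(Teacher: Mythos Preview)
Your proposal is correct and matches the paper's own approach: the paper simply records that the corollary is an immediate consequence of Theorem \ref{thm:symmetry+summation}, and you have spelled out exactly those elementary add/subtract manipulations. Your derivation also correctly yields $A_n^-(x)=-x^{-2}A_n^-(-1/x)$, which is clearly what was intended in the displayed symmetry (the printed ``$-x^{-2}A_n^-(x)$'' is a typo).
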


\begin{proof}
The assertions are immediate consequences of Theorem 
\ref{thm:symmetry+summation}.
\end{proof}

We recall the notation 
\[
\Tope_0 f(x)=\sum_{j\in\Z_{\ne0}}\frac{1}{(2j-x)^2}\,
f\Big(\frac{1}{2j-x}\Big)
\]
from \eqref{eq:Topeomega}.

\begin{cor}
For $n\in\Z_{\ne0}$, the functions $A_n^+$ and $A_n^-$ 
solve the equations
\[
(\Iop+\Tope_0)[A_n^+](x)=\frac12\,\e^{-\imag\pi nx},\quad
(\Iop-\Tope_0)[A_n^-](x)=\frac{1}{2}\,\e^{-\imag\pi nx},
\]
for all $x\in\R\setminus2\Z$, where $\Iop$ denotes the identity 
operator. 
\end{cor}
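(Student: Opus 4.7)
The plan is to verify the two operator identities by directly combining the symmetry relations and summation identities for $A_n^\pm$ that were established in Theorem \ref{thm:symmetry+summation} and Corollary \ref{cor:A_n^+_A_n^-}. The key observation is that the transfer operator $\Tope_0$, defined in \eqref{eq:Topeomega} with $\omega = 0$, can be rewritten via a change of variables as a translation sum once we apply the circle-reflection symmetry of $A_n^\pm$.

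First I would compute $\Tope_0[A_n^+](x)$. The symmetry $A_n^+(y) = y^{-2}A_n^+(-1/y)$ from Corollary \ref{cor:A_n^+_A_n^-}, applied with $y = 1/(2j-x)$ so that $-1/y = x-2j$, yields
\[
\frac{1}{(2j-x)^2}\,A_n^+\!\Big(\frac{1}{2j-x}\Big) = A_n^+(x-2j),\qquad j\in\Z_{\ne0}.
\]
Summing over $j\in\Z_{\ne0}$ gives $\Tope_0[A_n^+](x) = \sum_{j\in\Z_{\ne0}} A_n^+(x-2j)$, and adding the $j=0$ term produces
\[
(\Iop+\Tope_0)[A_n^+](x) = \sum_{j\in\Z} A_n^+(x-2j) = \sum_{j\in\Z} A_n^+(x+2j),
\]
which equals $\tfrac12 \e^{-\imag\pi nx}$ by the summation identity of Corollary \ref{cor:A_n^+_A_n^-}. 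The analogous computation for $A_n^-$ uses the antisymmetric relation $A_n^-(y) = -y^{-2}A_n^-(-1/y)$, which inserts a minus sign and gives $\Tope_0[A_n^-](x) = -\sum_{j\in\Z_{\ne0}} A_n^-(x-2j)$. Consequently
\[
(\Iop-\Tope_0)[A_n^-](x) = A_n^-(x) + \sum_{j\in\Z_{\ne0}} A_n^-(x-2j) = \sum_{j\in\Z}A_n^-(x+2j) = \tfrac12\,\e^{-\imag\pi nx},
\]
again by the summation identity.

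The remaining issue is the justification of the series manipulations: we need absolute convergence of $\Tope_0[A_n^\pm](x)$ at every $x\in\R\setminus2\Z$ and of the periodization series $\sum_{j\in\Z} A_n^\pm(x+2j)$. Both follow from the decay estimate $A_n(x),B_n(x) = \Ordo((1+x^2)^{-1})$ established in Corollary \ref{cor:hfs1.1}, since then the $j$-th term in $\Tope_0$ is $\Ordo(j^{-2})$ uniformly on compact subsets of $\R\setminus2\Z$, and the periodization is similarly dominated by a convergent majorant. The restriction $x\notin 2\Z$ is only needed to avoid the poles of the individual summands in $\Tope_0$.

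The main (very mild) obstacle here is not conceptual but bookkeeping: getting the signs and the index shifts right when turning the circle-reflection symmetry into a translation identity, and making sure one may rearrange the periodization sum by substituting $j\mapsto -j$. No step appears to require any new analytic input beyond what has already been proved.
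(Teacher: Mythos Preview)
Your proof is correct and follows essentially the same route as the paper's: both use the symmetry $A_n^\pm(y)=\pm y^{-2}A_n^\pm(-1/y)$ from Corollary~\ref{cor:A_n^+_A_n^-} to convert each term of $\Tope_0[A_n^\pm](x)$ into a translate $\pm A_n^\pm(x-2j)$, and then invoke the periodization identity. The only cosmetic difference is that the paper starts from the summation identity and unwinds it to recognize $A_n^+ + \Tope_0 A_n^+$, whereas you compute $\Tope_0 A_n^\pm$ first and then add the $j=0$ term; the two are the same argument read in opposite directions.
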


\begin{proof}
By the symmetry and summation property of $A_n^+$ stated in
Corollary \ref{cor:A_n^+_A_n^-}, 
\begin{multline}
\frac12\,\e^{-\imag\pi nx}=\sum_{j\in\Z}A_n^+(x-2j)
=\sum_{j\in\Z}\frac{1}{(2j-x)^{2}}\,A_n^+\Big(\frac{1}{2j-x}\Big)
\\
=x^{-2}A_n^+(-1/x)+\Tope_0\, A_n^+(x)=A_n^+(x)
+\Tope_0\, A_n^+(x),
\end{multline}
as claimed. This establishes the equation for $A_n^+$, and the
analogous argument applied to $A_n^-$ gives the remaining 
equation.
\end{proof}

\section{Skewed hyperbolic Fourier series}
\label{sec:skewed}

\subsection{Power skewed hyperbolic Fourier series}
\label{subsec:powerskewed2}

It is well-known that the theta function $\vartheta_{00}$
is zero-free in the upper half-plane:
\begin{equation}
\vartheta_{00}(\tau)=\sum_{n\in\Z}\e^{\imag\pi n^2\tau}\ne0,
\qquad \tau\in\Hyp.
\end{equation}
This is a direct consequence of the classical Jacobi triple product
representation (see any book on theta functions).
Moreover, along the imaginary semi-axis $\imag\,\R_{>0}$,
we have that
\[
\vartheta_{00}(\imag\,y)=\sum_{n\in\Z}\e^{-\pi n^2 y}=1+
\sum_{n\in\Z_{\ne0}}\e^{-\pi n^2 y}>1.
\]
From these two properties, it follows that there exists a unique
holomorphic function $L_{00}(\tau)$ such that 
\[
\exp(L_{00}(\tau))=\vartheta_{00}(\tau),\qquad \tau\in\Hyp,
\]
while at the same time $L_{00}(\imag\,y)>0$ for 
$y\in\imag\,\R_{>0}$. This function has limit 
$L_{00}(+\imag\,\infty)=0$, so that $L_{00}(\tau)=
\log\theta_{00}(\tau)$ with the principal branch of the logarithm,
provided that $\im\,\tau$ is big enough. But then $L_{00}$ is $2$-periodic
far up, and consequently it must be $2$-periodic throughout:
\[
L_{00}(\tau+2)=L_{00}(\tau),\qquad\tau\in\Hyp.
\]
The functional equation
\[
\vartheta_{00}(\tau)=(\tau/\imag)^{-1/2}\vartheta_{00}(-1/\tau)
\]
tells us that 
\[
L_{00}(\tau)=-\frac12\log\frac{\tau}{\imag}+L_{00}(-1/\tau),
\qquad\tau\in\Hyp,
\]
for some branch of the logarithm. By inspection of the two sides, we
realize that the logarithm $\log(\tau/\imag)$ must be holomorphic in
$\Hyp$, and that should be real-valued along $\imag\,\R_{>0}$. The
principal branch of the logarithm has these properties, and there is no 
other possibility. This permits us to form powers
\[
(\vartheta_{00}(\tau))^{2\beta}=\exp(2\beta\,L_{00}(\tau))
\]
and the functional relation becomes
\begin{multline}
(\vartheta_{00}(\tau))^{2\beta}=\exp(2\beta\,L_{00}(\tau))=
\exp\Big(2\beta\Big(-\frac12\log\frac{\tau}{\imag}
+L_{00}(-1/\tau)\Big)\Big)
\\
=\exp\Big(-\beta\log\frac{\tau}{\imag}\Big)
\exp\big(2\beta\,L_{00}(-1/\tau)\big)
=(\tau/\imag)^{-\beta}(\vartheta_{00}(-1/\tau))^{2\beta},
\label{eq:functional_theta_beta}
\end{multline}

\begin{proof}[Proof of Proposition \ref{prop:pskewed1}]
The result is an immediate consequence of the
functional relationship \eqref{eq:functional_theta_beta}.
\end{proof}

\subsection{Exponentially skewed hyperbolic Fourier series}
\label{subsec:expskewed3}
Since the modular lambda function has the form 
\begin{equation}
\lambda(\tau)=\frac{\vartheta_{10}(\tau)^4}{\vartheta_{00}(\tau)^4},
\qquad \tau\in\Hyp,
\label{eq:lambdaf1.001}
\end{equation}
and both $\vartheta_{00}$ and $\vartheta_{10}$ lack zeros in 
$\Hyp$, $\lambda(\tau)\ne0$ holds for all $\tau\in\Hyp$. As each of the theta
functions $\vartheta_{10}$ and $\vartheta_{00}$ possesses a Jacobi triple
product representation, the modular lambda function has a similar product
representation as well, and the logarithm of $\lambda(\tau)$ can hence be
expressed as a sum. We now derive some basic properties of this logarithm,
which are most likely known, but we are unable to supply a precise reference. 
Since $\lambda$ has the functional identity
\begin{equation}
\lambda(-1/\tau)=1-\lambda(\tau),
\label{eq:function_lambda_99}
\end{equation}
it follows that $\lambda(\tau)\ne1$ as well for $\tau\in\Hyp$.
Taking logarithms in \eqref{eq:lambdaf1.001}, we obtain that
\begin{equation}
\log\lambda(\tau)=4\log\vartheta_{10}(\tau)-4\log\vartheta_{00}(\tau),
\qquad \tau\in\Hyp,
\label{eq:lambdaf1.002}
\end{equation}
where the right-hand side can be thought of as defining the 
left-hand side. On the right-hand side, the choice of one of the 
logarithms is the function $\log\vartheta_{00}=L_{00}$ discussed in 
Subsection \ref{subsec:powerskewed2}. As for the other logarithm,
we know that
\begin{equation}
\vartheta_{10}(\imag\, y)=\sum_{n\in\Z}\e^{-\pi(n+\frac12)^2y}>
2\,\e^{-\frac14\pi y}>0,
\qquad
y\in\R_{>0},
\end{equation}
so there is a unique holomorphic logarithm function $L_{10}(\tau)$ 
with $\exp(L_{10}(\tau))=\vartheta_{10}(\tau)$, which is in addition
real-valued along $\imag\,\R_{>0}$. Since we may write 
$\vartheta_{10}$ in the form
\[
\vartheta_{10}(\tau)=\sum_{n\in\Z}\e^{\imag\,\pi(n+\frac12)^2\tau}
=2\,\e^{\imag\,\pi\tau/4}\Big(1+\sum_{n=1}^{+\infty}
\e^{\imag\pi n(n+1)\tau}\Big),
\]
we see that 
\[
L_{10}(\tau)=\log2+\frac{\imag\,\pi\tau}{4}+\log\Big(1+
\sum_{n=1}^{+\infty}\e^{\imag\pi n(n+1)\tau}\Big).
\]
For $\im\,\tau\gg1$,  Taylor's formula shows the function 
\[
\tilde L_{10}(\tau):=\log\Big(1+
\sum_{n=1}^{+\infty}\e^{\imag\pi n(n+1)\tau}\Big)
\]
can be expanded in a convergent power series in 
$\e^{\imag2\pi\tau}$, and this property then extends
to all of $\Hyp$. It follows that 
\[
L_{10}(\tau)=\log2+\frac{\imag\,\pi\tau}{4}+\tilde L_{10}(\tau),
\]
so that in particular,  $L_{10}$ has the functional property
\[
L_{10}(\tau+1)=L_{10}(\tau)+\frac{\imag\,\pi}{4}.
\]
Note that $\tilde L_{10}(\tau)$ is holomorphic in $\Hyp$,
and has period $1$, that is, $\tilde L_{10}(\tau+1)=\tilde L_{10}(\tau)$
holds. In addition, $\tilde L_{10}(\imag\,y)>0$ holds for all $y\in\R_{>0}$.
In terms of these functions, we use as logarithm
\begin{equation}
\log\lambda(\tau)=\Lambda(\tau):=4\log2+\imag\,\pi\tau+
4\tilde L_{10}(\tau)-4L_{00}(\tau),
\qquad \tau\in\Hyp.
\label{eq:lambdaf1.003}
\end{equation}
The periodicity property of the logarithm $\Lambda(\tau)$ we read
off from \eqref{eq:lambdaf1.003} is that
\begin{equation}
\Lambda(\tau+2)=\Lambda(\tau)+\imag\,2\pi,
\label{eq:Lambda_per2}
\end{equation}
while the functional identity \eqref{eq:function_lambda_99} takes the form
\[
\exp(\Lambda(-1/\tau))=1-\exp(\Lambda(\tau)).
\]
The fact that $\lambda(\tau)\ne1$ corresponds to having 
\[
\Lambda(\tau)\in\C\setminus\imag\,2\pi\Z,\qquad \tau\in\Hyp.
\]
For $\im\,\tau\gg1$, we can define a holomorphic logarithm of $\lambda(-1/\tau)$ via
\begin{equation}
\log\lambda(-1/\tau)=\log(1-\lambda(\tau))=
-\sum_{k=1}^{+\infty}\frac{1}{k}\,{\lambda(\tau)^k},
\label{eq:lambdaf1.004}
\end{equation}
which takes real values along the imaginary semi-axis $\imag\,\R_{>0}$. 
It must then necessarily coincide with the function $\Lambda$ defined by 
\eqref{eq:lambdaf1.003}, and since the right-hand side of \eqref{eq:lambdaf1.004} 
is $2$-periodic, we find that
\begin{equation}
\Lambda(-1/\tau)=-\sum_{k=1}^{+\infty}\frac{1}{k}\,{\lambda(\tau)^k},\qquad
\im\,\tau\gg1,
\label{eq:Lambda_expansion1}
\end{equation}
and that in addition to \eqref{eq:Lambda_per2}, 
$\Lambda$ enjoys the periodicity property
\begin{equation}
\Lambda\Big(-\frac{1}{\tau+2}\Big)=
\Lambda\Big(-\frac{1}{\tau}\Big),\qquad \tau\in\Hyp.
\end{equation}
For $\omega\in\R$, we use the logarithm to define the power
\[
\lambda(\tau)^\omega=\exp(\omega\Lambda(\tau)),\qquad \tau\in\Hyp,
\]
and then, according to \eqref{eq:Lambda_expansion1},
\begin{equation}
\lambda(-1/\tau)^\omega=\exp(\omega\Lambda(-1/\tau))
=\exp(\omega\log(1-\lambda(\tau)))
=(1-\lambda(\tau))^{\omega},
\qquad \tau\in\Hyp,
\label{eq:binomial999}
\end{equation}
where the right-hand side may be expanded first as a binomial series in 
$\lambda(\tau)$, and hence as a power series in the
nome $q=\e^{\imag\,\pi\tau}$. On the other hand, we also have
\begin{equation}
\lambda(\tau)^\omega=\exp(\omega\Lambda(\tau))
=16^\omega\,\e^{\imag\,\pi\omega\tau}
\exp\big(4\omega(\tilde L_{10}(\tau)-L_{00}(\tau))\big),
\qquad \tau\in\Hyp.
\label{eq:binomial999.1}
\end{equation}

\begin{proof}[Proof of Proposition \ref{prop:expskewed1}]
We see from \eqref{eq:binomial999.1} that
\[
\e^{-\imag\,\pi\omega\tau}\lambda(\tau)^\omega
=16^\omega\,
\exp\big(4\omega(\tilde L_{10}(\tau)-L_{00}(\tau))\big),
\qquad \tau\in\Hyp,
\]
which we may express as convergent Taylor series in the
nome $q=\e^{\imag\,\pi\tau}$. As we apply this formula to 
$\omega=\omega_j$, for $j=1,2$, with $\tau$ possibly replaced
by $-1/\tau$, the correspondence between the holomorphic
hyperbolic Fourier series and the exponentially skewed 
holomorphic hyperbolic Fourier series follows.
\end{proof}


\section{Relation with radial Fourier interpolation}
\label{sec:radial_FI}


\subsection{Heat and Schr\"odinger evolution}
\label{subsec:heatSchr}
The \emph{heat equation} in $\R^d$, where $d\in\Z_{>0}$, reads
\begin{equation}
\partial_s u(x,s)=\Delta_x u(x,s),
\label{eq:heat}
\end{equation}
where $s\in\R$ is the temporal coordinate while $x\in\R^d$ is the spacial
coordinate. The Laplacian is defined in the standard fashion:
\[
\Delta_x:=\partial_{x_1}^2+\cdots+\partial_{x_d}^2,\qquad
x=(x_1,\ldots,x_d)\in\R^d.
\]  
If we endow the heat equation with an initial datum (at time $s=0$)
\begin{equation}
u(x,0)=\varphi(x),\qquad x\in\R^d,
\label{eq:initial}
\end{equation}
the corresponding solution $u(x,s)$ may be written formally as
\begin{equation}
\label{eq:heatevol1}
u(\cdot,s)=\e^{s\Delta}\varphi.
\end{equation}
For $s>0$, it may be expressed in integral form
\begin{equation}
u(x,s)=\int_{\R^d}H_d(x-y,s)\,\varphi(y)\,\mathrm{dvol}_d(y),
\label{eq:heatsoln}
\end{equation}
where $\mathrm{vol}_d$ denotes volume measure in $\R^d$, and
$H_d(x,s)$ denotes the \emph{heat kernel}
\begin{equation}
H_d(x,s):=(4\pi s)^{-d/2}\,\exp\bigg(-\frac{|x|^2}{4s}\bigg).
\label{eq:heatkernel}
\end{equation}
We think of \eqref{eq:heatevol1} and \eqref{eq:heatsoln} as expressing
the same solution, with some caveats. The solution \eqref{eq:heatevol1} is
well-defined for $s=0$, since $\exp(0)$ is interpreted as the
identity operator, whereas the integral \eqref{eq:heatsoln}
is undefined for $s=0$. Moreover, for $s<0$ the operator $\exp(s\Delta)$
would make sense as the densely defined operator inverse of $\exp(-s\Delta)$,
while the integral \eqref{eq:heatsoln} does not make sense for any $s<0$
given the singularity of the kernel $H(x,s)$ for $s<0$.

Some comments are in order here.
For the solution given by \eqref{eq:heatsoln} to be well-defined, the
integral must of course make sense. Moreover, it is a natural question to
ask whether the solution given by \eqref{eq:heatsoln} for $s>0$ is unique.
As a matter of fact, this is not the case in general. If we
consider the homogeneous initial boundary value problem
\begin{equation}
\partial_s v(x,s)=\Delta_x v(x,s),\quad v(x,0)=0
\label{eq:heatinit}
\end{equation}
for time $s\ge0$, the solution is $v=0$ provided that the following rather
weak growth assumption is made:
\begin{equation}
\forall \epsilon>0:\,\,\,v(x,s)=\Ordo\big(\exp(\epsilon|x|^2)\big)
\end{equation}
holds uniformly over $x$ and $0\le s\le s_0$ for each $s_0>0$. For the
necessary details in dimension $d=1$, see, e.g., \cite{cannon-book}, while
\cite{vlad} covers $d>1$ as well. 

We may extend the sense of the heat kernel $H_d(x,s)$ to complex values of
$s$ using the same formula \eqref{eq:heatkernel}. For odd dimensions, this
would involve a choice of the power $(4\pi s)^{-d/2}$, which we do by
using the principal branch of the logarithm, which a branch cut along the
negative real axis. The modulus of the heat kernel is
\begin{equation}
|H_d(x,s)|= (4\pi|s|)^{-d/2}\exp\bigg(-\re\,\frac{|x|^2}{4s}\bigg)
=(4\pi|s|)^{-d/2}\exp\bigg(-\frac{|x|^2\re\, s}{4|s|^2}\bigg),  
\end{equation}
which has a nasty singularity when $\Re s<0$, but has Gaussian decay
for $\re\, s>0$. The imaginary axis is clearly special:
\begin{equation}
|H_d(x,s)|= (4\pi|s|)^{-d/2},
\qquad
\re\, s=0.
\end{equation}
This means that the heat evolution \eqref{eq:heatevol1} makes sense
in terms of \eqref{eq:heatsoln} for $\Re s>0$. 

Given that we like to work with the upper half-plane $\Hyp$ in place of
the right half-plane, we apply the coordinate change $s\tau=\imag s$,
since then $\re s>0$ holds if and only if $\im\,\tau>0$, i.e. if $\tau\in\Hyp$. 
If we agree to write $\tilde u(x,\tau)=u(x,\tau/\imag)$, the heat equation
\eqref{eq:heat} becomes
\begin{equation}
\imag \partial_\tau \tilde u(x,\tau)=\Delta_x \tilde u(x,\tau),
\label{eq:heat2}
\end{equation}
which we recognize as the \emph{Schr\"odinger equation} with trivial potential.
The solution to the Schr\"odinger equation \eqref{eq:heat2} with inital datum
\begin{equation}
\tilde u(x,0)=\varphi(x),\qquad x\in\R^d,
\label{eq:initial2}
\end{equation}
is given by the analogues of \eqref{eq:heatevol1} and \eqref{eq:heatsoln},
\begin{equation}
\label{eq:heatevol2}
\tilde u(\cdot,\tau)=\e^{-\imag\tau \Delta}\varphi
\end{equation}
and, more explicitly,
\begin{equation}
\tilde u(x,\tau)=\int_{\R^d}H_d(x-y,\tau/\imag)\,\varphi(y)\,\mathrm{dvol}_d(y),
\label{eq:heatsoln2}
\end{equation}
for $\tau\in\Hyp$. 

\begin{defn}
The \emph{Schr\"odinger transform} of the function $\varphi$ is given by
\begin{multline}
\mathfrak{S}\varphi(\tau)=\tilde u(0,\tau)=
\big(\e^{-\imag\tau\Delta_x}\varphi\big)(0)
=\int_{\R^d}H_d(y,\tau/\imag)\,\varphi(y)
\,\mathrm{dvol}_d(y)   
\\
= (4\pi\tau/\imag)^{-d/2}\int_{\R^d}
\exp\bigg(-\imag\,\frac{|y|^2}{4\tau}\bigg)
\varphi(y)\,\mathrm{dvol}_d(y),\qquad\tau\in\Hyp.
\end{multline}
\end{defn}

\begin{rem}
(a) A sufficient condition on $\varphi:\R^d\to\C$ for the Schr\"odinger
transform $\mathfrak{S}\varphi$ to be well-defined as a function on $\Hyp$
is that $\varphi$ is Lebesgue measurable, and that
\[
\forall \epsilon>0:\,\,\int_{\R^d}|\varphi(x)|\,
\e^{-\epsilon|x|^2}\mathrm{dvol}_d(x)<+\infty.
\]
(b) Given the shape of the heat kernel \eqref{eq:heatkernel}, the Schr\"odinger 
transform only takes into account the radial part $\varphi_{\mathrm{rad}}$ 
of $\varphi$, that is, $\mathfrak{S}\varphi=\mathfrak{S}\varphi_{\mathrm{rad}}$
holds. Here, $\varphi_{\mathrm{rad}}$ is the unique radial function whose integrals 
along spherical shells coincide with those of $\varphi$.
\label{rem:Schrodinger1}
\end{rem}

For a more restrictive collection of functions $\varphi$ the Schr\"odinger
transform extends continuously to $\R_{\ne0}$, and the formula gives
$\mathfrak{S}\varphi(\tau)$ also for $\tau\in\R_{\ne0}$. This is the case if
$\varphi\in L^1(\R^d)$. Indeed, for $\tau\in\Hyp\cup\R_{\ne0}$, we have the
estimate
\begin{multline}
|\mathfrak{S}\varphi(\tau)|\le\int_{\R^d}|H_d(y,\tau/\imag)|\,|\varphi(y)|
\,\mathrm{dvol}_d(y)   
\\
= (4\pi|\tau|)^{-d/2}\int_{\R^d}\exp\bigg(-\frac{|y|^2\im\tau}{4|\tau|^2}\bigg)
|\varphi(y)|\,\mathrm{dvol}_d(y)
\le(4\pi|\tau|)^{-d/2}\int_{\R^d}|\varphi(y)|\,\mathrm{dvol}_d(y),
\end{multline}
and an approximation argument gives the continuity. Next, if
$\varphi\in L^1(\R^d)$ and $\psi\in L^1(\R,\mu_d)$, where
$\diff\mu_d(t):=|t|^{-d/2}\diff t$, then, by Fubini's theorem,
\begin{multline}
\int_\R\mathfrak{S}\varphi(t)\psi(t)\,\diff t=
\int_\R\int_{\R^d} H_d(y,t/\imag)\psi(t)\,\varphi(y)
\,\mathrm{dvol}_d(y) \,\diff t  
\\
=\int_{\R^d}\int_\R (4\pi t/\imag)^{-d/2}\exp\bigg(-\imag\frac{|y|^2}{4t}\bigg)
\psi(t)\,\diff t\,\varphi(y)\,\mathrm{dvol}_d(y)
=\int_{\R^d}\mathfrak{S}^\ast
\psi(y)\,\varphi(y)\,\mathrm{dvol}_d(y),
\end{multline}
where we write
\begin{multline}
\mathfrak{S}^\ast\psi(y)
:=\int_\R (4\pi t/\imag)^{-d/2}\exp\bigg(-\imag\frac{|y|^2}{4t}\bigg)
\psi(t)\,\diff t
\\
=\int_\R (\imag 4\pi/t)^{-d/2}\exp\bigg(\imag \frac{|y|^2t}{4}\bigg)\,
\Jop_1\psi(t)\,\diff t,\qquad y\in\R^d,
\label{eq:Schrod*}
\end{multline}
which is a radial expression.
The involutive operator $\Jop_1$ is as before given by equation
\eqref{eq:Jop1}. The expression \eqref{eq:Schrod*} is of Fourier type,
essentially the Fourier transform of $(t/\imag)^{d/2}\Jop_1\psi(t)$.  

\subsection{The Schr\"odinger transform of complex exponential waves
and point masses}

If $\varphi=\delta_\xi$, the unit point mass at a point $\xi\in\R^d$, we see
from the definition that the associated Schr\"odinger transform is
\begin{equation}
\mathfrak{S}\delta_\xi(\tau)=H_d(\xi,\tau/\imag)=(4\pi\tau/\imag)^{-d/2}
\exp\Big(-\imag\,\frac{|\xi|^2}{4\tau}\Big),\qquad\tau\in\Hyp,
\label{eq:Schrodingertrans-point}
\end{equation}
which makes sense also if $\tau\in\R_{\ne0}$. We shall also need
the Schr\"odinger transform of a complex exponential wave
\[
e_\eta(x):=\e^{\imag \langle x,\eta\rangle}, 
\]
where we write
\[
\langle x,\eta\rangle=x_1\eta_1+\ldots+x_d\eta_d,\qquad x,\eta\in\R^d,
\]
and use the notational convention $x=(x_1,\ldots,x_d)$, 
$\eta=(\eta_1,\ldots,\eta_d)$. The function 
\[
\tilde u(x,\tau)=\exp\big(\imag\langle x,\eta\rangle+\imag|\eta|^2\tau\big)
\]
is bounded in $\tau\in\Hyp\cup\R$, and has initial datum
$\tilde u(x,0)=e_\eta(x)$ for $x\in\R^d$. It moreover solves the 
Schr\"odinger equation \eqref{eq:heat2}, so that by the uniqueness theorem
for the heat equation, this must be the same as the function given by
\eqref{eq:heatsoln2}. Consequently, the Schr\"odinger transform of the
wave $e_\eta$ is given by
\begin{equation}
\mathfrak{S}e_\eta(\tau)=\tilde u(0,\tau)=\exp(\imag|\eta|^2\tau),\qquad
\tau\in\Hyp\cup\R.
\label{eq:Schrodingertrans-complexexp}
\end{equation}
In view of \eqref{eq:Schrodingertrans-point} and 
\eqref{eq:Schrodingertrans-complexexp}, we recognize the basis functions
of a power skewed hyperbolic Fourier series, provided that 
\[
\frac{|\eta|^2}{\pi}\in\Z_{\ge0}, \quad \frac{|\xi|^2}{4\pi}\in\Z_{>0}.
\]

\subsection{The Fourier transform on radial functions}
We write 
\[
\hat f_d(y):=\int_{\R^d}\e^{-\imag2\pi\langle x,y\rangle}f(x)\,
\mathrm{dvol}_d(x),\qquad y\in\R^d,
\]
for the $d$-dimensional Fourier transform.
When $f$ is radial, we may think of the function as defined on $\R_{\ge0}$
via the identification $f(|x|)=f(x)$ for $x\in\R$, and then its Fourier transform
$\hat f_d$ is radial  too, and given by the formula
\[
\hat f_d(s)=2\pi\,s^{1-\frac{d}{2}}\int_{0}^{+\infty}J_{\frac{d}{2}-1}(2\pi st)
\,f(t)\,t^{d/2}\diff t, \qquad s\in\R_{\ge0},
\]
extended in the same fashion to $\R^d$ via $\hat f_d(y)=\hat f_d(|y|)$. Here,
$J_\nu$ is the familiar Bessel function, 
\[
J_\nu(x)=\sum_{n=0}^{+\infty}\frac{(-1)^n}{n!\Gamma(n+\nu+1)}
\Big(\frac{x}{2}\Big)^{2n+\nu}.
\]
This means that the radialization in $x\in\R^d$ of the complex exponential 
wave $e_{\eta}$ is given by 
\begin{equation}
(e_{\eta})_{\mathrm{rad}}(x)=2^{\frac{d}{2}-1}{\Gamma(d/2)}\,
|x|^{1-\frac{d}{2}}|\eta|^{1-\frac{d}{2}}\,
J_{\frac{d}{2}-1}(|x|\,|\eta|).
\end{equation}

\subsection{Skewed hyperbolic Fourier series expansion of a Schr\"odinger
transform}
We begin with a measurable radial function $f:\,\R^d\to\C$, such that its
Schr\"odinger transform 
\[
F(\tau):=\mathfrak{S}f(\tau),\qquad\tau\in\Hyp,
\] 
is a well-defined holomorphic function (see Remark \ref{rem:Schrodinger1}). 
As such, it may be re\-present\-ed by a power skewed holomorphic hyperbolic 
Fourier series
\begin{equation}
F(\tau)=a_0+\sum_{n=1}^{+\infty}\big(a_n\,\e^{\imag\pi n\tau}+
b_n\,(\tau/\imag)^{-d/2}\e^{-\imag\pi n/\tau}\big),\qquad\tau\in\Hyp.
\label{eq:4.4.1}
\end{equation}
This fact can be seen by combining the holomorphic version of
Theorem \ref{thm:HFS-general.1.1} with Proposition 
\ref{prop:pskewed1} (which connects power skewed hyperbolic Fourier
series with ordinary hyperbolic Fourier series).
Depending on the growth control on $F(\tau)$, it may be that we do not
have uniqueness in the representation \eqref{eq:4.4.1} (this issue is 
basically governed by Theorem \ref{thm:uniq2.1}).  If we have 
non-uniqueness even for slowly growing $F$, that is, for $F$ with the
estimate 
\begin{equation}
\label{eq:F_growthbound1}
|F(\tau)|=\Ordo\Big(\frac{1+|\tau|^2}{\im\,\tau}\Big)^k,
\end{equation}
for some finite $k$, the solution is to throw away some terms in the 
series until we get unique coefficients. Instead of just throwing these 
terms away we can say that we declare some of the coefficients to 
vanish for each such slowly growing $F$ (alternatively,
some finite linear combinations of coefficients would vanish). 
With such a choice of coefficients,
we can get a unique representation of the form \eqref{eq:4.4.1}, 
where $a_n=a_n(F)$ and $b_n=b_n(F)$ are linear functionals.
Suppose we are able to control the growth of the coefficients like
\[
a_n(F),b_n(F)=\Ordo(|n|+1)^{k'}
\]
for some $k'=k'(k)$ for each $F$ with the growth bound 
\eqref{eq:F_growthbound1}, and that this holds for all finite $k$, 
then it can be shown that there exist 
test functions $A_{n,d}$ and $B_{n,d}$ in the class 
$C^\infty_1(\R\cup\{\infty\})$, such that
\[
a_n(F)=\langle A_{n,d},F\rangle_\R,\quad
b_n(F)=\langle B_{n,d},F\rangle_\R,
\]
where $F$ is understood as a distribution on the extended real line
$\R\cup\{\infty\}$. Next, we pick points 
$\xi^{\langle n\rangle},\eta^{\langle n\rangle}\in \R^d$ such that
$|\xi^{\langle n\rangle}|^2=4\pi n$ and 
$|\eta^{\langle n\rangle}|^2=\pi n$
and write the representation \eqref{eq:4.4.1}
in the form
\begin{multline}
\mathfrak{S}f(\tau)=F(\tau)=\langle A_{0,d},F\rangle_\R
+\sum_{n=1}^{+\infty}\big(\langle A_{n,d},F\rangle_\R\,
\mathfrak{S}e_{\eta_n}(\tau)
+
\langle B_{n,d},F\rangle_\R\,(4\pi)^{d/2} 
\mathfrak{S}\delta_{\xi_n}(\tau)\big),
\qquad\tau\in\Hyp.
\label{eq:4.4.2}
\end{multline}
We can remove the Schr\"odinger operator on both sides provided we
have projected to the radial functions:
\begin{equation}
f(x)=\langle A_{0,d},F\rangle_\R
+\sum_{n=1}^{+\infty}\big(\langle A_{n,d},F\rangle_\R\,
(e_{\eta_n})_{\mathrm{rad}}(x)
+
\langle B_{n,d},F\rangle_\R\,(4\pi)^{d/2} 
(\delta_{\xi_n})_{\mathrm{rad}}(x)\big),
\label{eq:4.4.3}
\end{equation}
where the identity is understood in the sense of distribution theory on 
$\R^d$. If $g$ is a $C^\infty$-smooth radial function with compact support, 
this would mean that
\begin{multline}
\int_{\R^d}f(x)\,g(x)\,\mathrm{dvol}_d(x)=\langle A_{0,d},F\rangle_\R
\\
+\sum_{n=1}^{+\infty}\big(\langle A_{n,d},F\rangle_\R\,
\int_{\R^d}e_{\eta_n}(x)\,g(x)\,\mathrm{dvol}_d(x)
+
\langle B_{n,d},F\rangle_\R\,(4\pi)^{d/2} 
g(\xi_n)\big)
\\
=\langle A_{0,d},\mathfrak{S}f\rangle_\R
+\sum_{n=1}^{+\infty}\big(\langle A_{n,d},\mathfrak{S}f\rangle_\R\,
\hat g_d(-\eta_n/{2\pi})
+\langle B_{n,d},\mathfrak{S}f\rangle_\R\,(4\pi)^{d/2} 
g(\xi_n)\big)
\\
=\langle \mathfrak{S}^\ast A_{0,d},f\rangle_{\R^d}
+\sum_{n=1}^{+\infty}\big(\langle \mathfrak{S}^\ast A_{n,d},f\rangle_{\R^d}\,
\hat g_d(\tfrac12\sqrt{n/\pi})
+\langle \mathfrak{S}^\ast B_{n,d},f\rangle_{\R^d}\,(4\pi)^{d/2} 
g(2\sqrt{\pi n})\big).
\label{eq:4.4.4}
\end{multline}
Finally, we change our perspective and think of $f$ as a radial test 
function, and realize that since the 
functions $\mathfrak{S}^\ast A_{n,d}$ and $\mathfrak{S}^\ast B_{n,d}$ 
are all radial, we must have the Fourier interpolation formula
\begin{equation}
g(x)
=\mathfrak{S}^\ast A_{0,d}
+\sum_{n=1}^{+\infty}\big(\hat g_d(\tfrac12\sqrt{n/\pi})\,
\mathfrak{S}^\ast A_{n,d}(x)
+(4\pi)^{d/2} 
g(2\sqrt{\pi n})
\mathfrak{S}^\ast B_{n,d}(x)\big),
\label{eq:4.4.5}
\end{equation}
for $x\in\R^d$, understood in the sense of distribution theory.
Here, once we obtain sufficient quantitative control of the terms of the
interpolation formula, we can thn extend it to wider classes of test functions
$g$.

\begin{rem}
We should mention that Martin Stoller considers more general Fourier
interpolation formul\ae{} for non-radial Schwartzian test functions
in \cite{Stoller-thesis}, \cite{Stoller-TAMS}.
\end{rem}

\subsection*{Acknowledgements}
The work of Hedenmalm is supported
Vetenskapsr\aa{}det grant 2020-03733, and by grant 075-15-2021-602 of the
government of the Russian Federation for the state support of scientific
research carried out under the supervision of leading scientists.
The work of Montes-Rodr\'\i{}guez is supported partially by Plan Nacional
I+D+I ref. PID2021-127842NB-I00, and Junta de Andaluc\'\i{}a FQM-260.
Furthermore, we thank Maryna Viazovska for several valuable conversations
and her interest in this work. In addition, we thank Danylo Radchenko 
for sharing his insights into sphere packing problems and associated 
Riemann-Hilbert problems. We thank Bakan, Radchenko, and Viazovska for taking
part in the earlier preprint version of the present work \cite{BHMRV1}.


\begin{thebibliography}{99}


\bibitem{Ahlfors} Ahlfors, L. V., \emph{Complex analysis}.
An introduction to the theory of analytic functions of one complex variable.
Third edition. International Series in Pure and Applied Mathematics.
McGraw-Hill Book Co., New York, 1978. 



\bibitem{bh2} Bakan, A., Hedenmalm, H., \emph{Exponential integral
representations of theta functions}.
Comput. Methods Funct. Theory {\bf{20}} (2020), 591--621.

\bibitem{bhm} Bakan, A., Hedenmalm, H., Montes-Rodriguez, A.,
Radchenko, D. and Viazovska, M., 
\emph{Fourier uniqueness in even dimensions}.
Proc. Natl. Acad. Sci. USA  {\bf{118}},  (2021), no. 15, e2023227118.

\bibitem{BHMRV1} Bakan, A., Hedenmalm, H., Montes-Rodr\'\i{}guez,
A., Radchenko, D., Viazovska, M., \emph{Hyperbolic Fourier series}.
arXiv:2110.00148






\bibitem{bon}
Bondarenko, A., Radchenko, D., Seip K., \emph{Fourier 
interpolation with zeros of zeta and $L$-functions}. 
Constr. Approx. \textbf{57} (2023), 405-461.

\bibitem{BorHed} Borichev, A., Hedenmalm, H., 
\emph{Completeness of translates in weighted spaces on the half-line}.
Acta Math. \textbf{174} (1995), 1-84.

\bibitem{cannon-book}
Cannon, J. R., 
\emph{The one-dimensional heat equation}.
With a foreword by Felix E. Browder.
Encyclopedia of Mathematics and its Applications, \textbf{23}.
Addison-Wesley Publishing Company, Advanced Book 
Program, Reading, MA, 1984.

\bibitem{CHM}
Canto-Mart\'\i{}n, F., Hedenmalm, H., Montes-Rodr\'\i{}guez, A.,
\emph{Perron-Frobenius operators and the Klein-Gordon equation}.
J. Eur. Math. Soc. (JEMS) \textbf{16} (2014), 31-66. 

\bibitem{cha}  Chandrasekharan, K., \emph{Elliptic functions}. 
Grundlehren der Mathematischen
Wissenschaften [Fundamental Principles of Mathematical Sciences], 
\textbf{281}. Springer-Verlag, Berlin, 1985.

\bibitem{cohn} Cohn, H., \emph{From sphere packing to Fourier 
interpolation}.
Bull. Amer. Math. Soc. \textbf{61} (2024), 3-22.

\bibitem{ckrv} Cohn, H., Kumar, A., Miller, S. D., Radchenko, D.,
Viazovska, M.,
\emph{The sphere packing problem in dimension 24}. Ann. of Math. (2) 
\textbf{185} (2017), 1017-1033.






\bibitem{ger}  Gera, B., \emph{The problem of recovering the
temperature and moisture fields in a porous 
body from incomplete initial data}. J. Math. Sci.
\textbf{86} (1997), no. 2,  2578-2684.

\bibitem{FinkSchein}
Finkelstein, M., Scheinberg, S.,
\emph{Kernels for solving problems of Dirichlet type in a half-plane}.
Adv. Math. \textbf{18} (1975), 108-113. 


\bibitem{Hed-Jacobi} Hedenmalm, H., \emph{Jacobi theta functions and
hyperbolic Fourier series}. In preparation. 

\bibitem{hed}  Hedenmalm, H.,  Montes-Rodriguez, A.,
\emph{Heisenberg uniqueness pairs and the Klein-Gordon equation}. 
Ann. of Math. (2) \textbf{173} (2011), no. 3, 1507--1527.

\bibitem{hed1}
Hedenmalm, H.,  Montes-Rodr\'\i{}guez, A., 
\emph{The Klein-Gordon equation, the Hilbert transform and
dynamics of Gauss-type maps}. 
J. Eur. Math. Soc. (JEMS) \textbf{22} (2020), 1703-1757.

\bibitem{hed2}
 Hedenmalm, H.,  Montes-Rodr\'\i{}guez, A., 
 \emph{The Klein-Gordon equation, the Hilbert transform, and 
 Gauss-type maps: $H^{\infty}$ approximation}. J. Anal. Math. 
 \textbf{144} (2021), 119-190.

\bibitem{HedWenn}
Hedenmalm, H., Wennman, A.,
\emph{Polynomial growth bounds for solutions to the Dirichlet 
problem in the half-plane}, in preparation.
 




\bibitem{Kor-Acta1}
Korenblum, B., \emph{An extension of the Nevanlinna theory}.
Acta Math. \textbf{135} (1975), 187-219. 






\bibitem{lio} Lions, J.-L.,
\emph{Sur les sentinelles des syst\`{e}mes distribu\'{e}s.
Conditions fronti\`{e}res, termes sources, coefficients
incompl\`{e}tement connus} [On the sentinels of distributed systems.
Situations where
boundary conditions, or source terms, or coefficients are incompletely known].
C. R. Acad. Sci., Paris, S\'{e}r. I Math. \textbf{307} (1988), no. 17, 865-870.




\bibitem{rad}  Radchenko, D.,  Viazovska, M.,
\emph{Fourier interpolation on the real line}.
Publ. Math. Inst. Hautes \'Etudes Sci., \textbf{129} (2019), 51-81.

\bibitem{rud} Rudin, W., \emph{Real and complex analysis}. 2nd ed.
McGraw-Hill Book Co., New York, 1974.





\bibitem{Stoller-thesis}
Stoller, M. P., \emph{Fourier uniqueness and interpolation in Euclidean
space}. PhD thesis, EPFL 2022.
  
\bibitem{Stoller-TAMS}
Stoller, M., 
\emph{Fourier interpolation from spheres}. 
Trans. Amer. Math. Soc. \textbf{374} (2021), no. 11, 8045-8079. 
  
\bibitem{Straube}
Straube, E. J.,
\emph{Harmonic and analytic functions admitting a distribution boundary
value}.
Ann. Scuola Norm. Sup. Pisa Cl. Sci. (4) \textbf{11} (1984), no. 4, 559-591. 

\bibitem{tsuji}
Tsuji, M.,
\emph{Potential theory in modern function theory}.
Reprinting of the 1959 original. Chelsea Publishing Co., New York, 1975.

\bibitem{Viaz}
Viazovska, M. S.,
\emph{The sphere packing problem in dimension 8}. 
Ann. of Math. (2) \textbf{185} (2017), no. 3, 991-1015.

\bibitem{vlad}
Vladimirov, V. S.,  \emph{Methods of the theory of generalized functions}.
Taylor \& Francis, London, 2002.


\end{thebibliography}
\end{document}